\newtheorem{thm}{Theorem}[section]
\newtheorem{lem}[thm]{Lemma}
\newtheorem{prop}[thm]{Proposition}
\newtheorem{cor}[thm]{Corollary}
\theoremstyle{definition}
\newtheorem{defn}[thm]{Definition}
\theoremstyle{remark}
\newtheorem{example}[thm]{Example}
\newtheorem{rem}[thm]{Remark}
\newtheorem*{rem*}{Remark}
\numberwithin{equation}{section} 
\numberwithin{figure}{section}
\numberwithin{table}{section}
\let\oldtocsection=\tocsection
\let\oldtocsubsection=\tocsubsection
\let\oldtocsubsubsection=\tocsubsubsection
\renewcommand{\tocsection}[2]{\hspace{-1.2em}\oldtocsection{#1}{#2}}
\renewcommand{\tocsubsection}[2]{\hspace{-.2em}\oldtocsubsection{#1}{#2}}
\renewcommand{\tocsubsubsection}[2]{\hspace{0.8em}\oldtocsubsubsection{#1}{#2}}
\DeclareRobustCommand{\gobblefive}[5]{}
\newcommand*{\SkipTocEntry}{\addtocontents{toc}{\gobblefive}}
\newcommand*\centermathcell[1]{\omit\hfil$\displaystyle#1$\hfil\ignorespaces}
\newcommand{\ntr}{\mathop{\mathrm{tr}}}
\newcommand{\tr}{\mathop{\mathrm{Tr}}}
\newcommand{\M}{\mathrm{M}}
\newcommand{\EE}{\mathbf{E}}
\newcommand{\PP}{\mathbf{P}}
\newcommand{\spc}{\mathrm{sp}}
\newcommand{\id}{\mathbf{1}}
\newcommand{\svl}{\mathrm{s}}
\begin{document}

\title[Matrix concentration and free probability]{Matrix concentration 
inequalities\\and free probability}

\author{Afonso S.\ Bandeira}
\address{Department of Mathematics, ETH Z\"urich, Switzerland}
\email{bandeira@math.ethz.ch}

\author{March T.\ Boedihardjo}
\address{Department of Mathematics, ETH Z\"urich, Switzerland}
\email{march.boedihardjo@ifor.math.ethz.ch}

\author{Ramon van Handel}
\address{Fine Hall 207, Princeton University, Princeton, NJ 08544, USA}
\email{rvan@math.princeton.edu}

\begin{abstract}
A central tool in the study of nonhomogeneous random matrices, the 
noncommutative Khintchine inequality, yields a nonasymptotic bound on the 
spectral norm of general Gaussian random matrices $X=\sum_i g_i A_i$ where 
$g_i$ are independent standard Gaussian variables and $A_i$ are matrix 
coefficients. This bound exhibits a logarithmic dependence on dimension 
that is sharp when the matrices $A_i$ commute, but often proves to be 
suboptimal in the presence of noncommutativity.
In this paper, we develop nonasymptotic bounds on the spectrum of 
arbitrary Gaussian random matrices that can capture noncommutativity. 
These bounds quantify the degree to which the spectrum of $X$ is captured 
by that of a noncommutative model $X_{\rm free}$ that arises from free 
probability theory. This ``intrinsic freeness'' 
phenomenon provides a powerful tool for the study of various questions 
that are outside the reach of classical methods of random matrix theory. 
Our nonasymptotic bounds are easily applicable in concrete situations, and 
yield sharp results in examples where the noncommutative Khintchine 
inequality is suboptimal. When combined with a linearization argument, our 
bounds imply strong asymptotic freeness for a remarkably general class of 
Gaussian random matrix models that may be very sparse, have dependent 
entries, and lack any special symmetries.
When combined with a universality principle, our bounds extend beyond the 
Gaussian setting to general sums of independent random matrices.
\end{abstract}

\subjclass[2010]{60B20; 
		 60E15; 
		 46L53; 
		 46L54; 
		 15B52} 

\keywords{Random matrices; matrix concentration inequalities; 
free probability}

\maketitle

\thispagestyle{empty}

\section{Introduction}

The study of the spectrum of random matrices arises as a central problem 
in many areas of mathematics. Motivated by topics ranging from 
mathematical physics to operator algebras, much of classical random matrix 
theory is concerned with the study of highly homogeneous matrix ensembles, 
such as those with i.i.d.\ entries or that are invariant under symmetry 
groups. Deep results obtained over the past six decades by numerous 
mathematicians have resulted in a very detailed understanding of the 
asymptotic properties of such models \cite{AGZ10,Tao12}.

In contrast, many problems in areas such as functional analysis 
\cite{DS01,Pis03} and in applied mathematics \cite{Tro15,Ban15} fall 
outside the scope of classical random matrix theory. The random matrix 
models that arise in such problems possess two common features. On the one 
hand, such models are often highly nonhomogeneous and lack any natural 
symmetries. On the other hand, the type of questions that arise in these 
areas are generally nonasymptotic in nature, as the study of 
nonhomogeneous models often does not lend itself naturally to an 
asymptotic formulation.

The above considerations motivate the need for nonasymptotic methods that 
can capture the spectral properties of essentially arbitrarily structured 
nonhomogeneous random matrices. It may appear hopeless at first sight that 
anything at all can be said at this level of generality. Nonetheless, as 
we will recall below, there exists a set of tools, known colloquially as 
``matrix concentration inequalities'', that makes it possible to compute 
certain spectral statistics of very general nonhomogeneous random matrices 
up to logarithmic factors in the dimension. The results of this paper 
provide a powerful refinement of this theory that makes it possible to 
achieve sharp results in many situations that are outside the reach of 
classical methods.

\subsection{Matrix concentration inequalities}

As a guiding motivation for this paper, consider the problem of 
estimating the spectral norm (i.e., largest singular value) of an 
arbitrary $d\times d$ self-adjoint random matrix with centered 
jointly Gaussian entries. Any such matrix $X$ can be represented as
\begin{equation}
\label{eq:simplest}
	X = \sum_{i=1}^n g_i A_i,
\end{equation}
where $A_i\in \M_d(\mathbb{C})_{\rm sa}$ are deterministic 
self-adjoint $d\times d$ 
matrices
and $g_i$ are i.i.d.\ standard real Gaussian variables. As was 
noted in \cite{Rud99}, the noncommutative Khintchine inequality of 
Lust-Piquard and Pisier \cite[\S 9.8]{Pis03} implies that\footnote{%
	We write $x\lesssim y$ if $x\le Cy$ for
	a universal constant $C$, and 
	$x\asymp y$ if $x\lesssim y$ and $y\lesssim x$.}
\begin{equation}
\label{eq:nck}
	\sigma(X)
	\lesssim \mathbf{E}\|X\| 
	\lesssim 
	\sigma(X)\sqrt{\log d},
\end{equation}
where we define
\begin{equation}
\label{eq:sigma}
	\sigma(X)^2 = \|\mathbf{E}X^2\|=
	\Bigg\| \sum_{i=1}^n A_i^2\Bigg\|.
\end{equation}
Thus the expected spectral norm of any Gaussian random matrix can be 
explicitly computed up to a logarithmic factor in the dimension. 

It should be emphasized that \eqref{eq:simplest} is an extremely general 
model: no assumption is made on the covariance of the entries of $X$, so 
that the model can capture arbitrary variance profiles and dependencies 
between the entries. Analogues of \eqref{eq:nck} extend even further to 
the model $X=\sum_i Z_i$ where $Z_i$ are arbitrary independent random 
matrices. Due to their generality and ease of use, these ``matrix 
concentration inequalities'' \cite{Tro15} have had a major impact on 
numerous applications. On the other hand, the utility of \eqref{eq:nck} is 
limited by the gap between the upper and lower bounds, which becomes 
increasingly severe in high dimension.

To understand the origin of this gap, it is instructive to recall the 
basic principle behind the proofs of almost all known matrix concentration 
inequalities: the norm of a random matrix is largest when the 
coefficients $A_i$ commute. This idea arises clearly in proofs of 
these inequalities \cite{Tro15,Tro16,vH17}: the key step is  
application of trace inequalities that permute the order of the matrices 
$A_i$, which become equalities when all $A_i$ commute. In the latter case, 
the \emph{upper} bound of \eqref{eq:nck} is typically of the correct 
order. 
Indeed, by simultaneously diagonalizing $A_i$, we may assume $X$ is a 
diagonal matrix. Then $\sigma(X)^2=\|\mathbf{E}X^2\|=\max_i 
\mathrm{Var}(X_{ii})$, while 
$$
	\mathbf{E}\|X\|=\mathbf{E}\max_i|X_{ii}|\asymp 
	\sigma(X)\sqrt{\log d}
$$
under mild assumptions (as the maximum of $d$ Gaussian variables is 
typically of order $\sqrt{\log d}$,
see, e.g., \cite[\S 3.3]{LT91}). On the other 
hand, when the coefficients $A_i$ do not commute, it is observed in many 
examples that it is the \emph{lower} bound of \eqref{eq:nck} that is of 
the correct order. This is already the case for the most basic model of 
random matrix theory: when $X$ has i.i.d.\ standard Gaussian entries 
$X_{ij}$ for $i\ge j$, it is classical that $\mathbf{E}\|X\| \asymp 
\sqrt{d}=\sigma(X)$ \cite[\S 2.3]{Tao12}.

Such examples raise the tantalizing question whether there exists a 
refinement of \eqref{eq:nck} that can capture the correct behavior of 
nonhomogeneous random matrices beyond the commutative case. To date, a 
satisfactory answer to this question has been obtained only in the
special case that $X$ has \emph{independent} entries $X_{ij}$ for $i\ge 
j$ with arbitrary variances $\mathrm{Var}(X_{ij})=b_{ij}^2$. In this 
case, \cite{BvH16} showed that
\begin{equation}
\label{eq:bvh}
	\mathbf{E}\|X\|\lesssim \sigma(X)
	+ \max_{ij}|b_{ij}|\,\sqrt{\log d},
	\quad\qquad
	\sigma(X)^2 = \max_i \sum_j b_{ij}^2,
\end{equation}
which can be reversed under mild assumptions. The key feature of
\eqref{eq:bvh} is that the dimensional factor enters here
through a smaller parameter $\sigma_*(X)=\max_{ij}|b_{ij}|$ that 
controls which extreme case of \eqref{eq:nck} dominates: 
diagonal matrices satisfy $\sigma_*(X)=\sigma(X)$, in which case we 
recover the 
upper bound 
of \eqref{eq:nck}; but as soon as $\sigma_*(X) \lesssim (\log 
d)^{-\frac{1}{2}}\sigma(X)$, the lower bound 
of \eqref{eq:nck} is of the correct order.

The existence of the bound \eqref{eq:bvh} hints at the possibility that an 
analogous refinement of \eqref{eq:nck} might hold even in the setting of 
general Gaussian random matrices \eqref{eq:simplest}. In particular, one 
may conjecture the existence of a general bound
\begin{equation}
\label{eq:secondorder}
	\mathbf{E}\|X\| \,\stackrel{?}{\lesssim}\,
	\sigma(X) + \sigma_{**}(X)(\log d)^\beta
\end{equation}
for some $\beta>0$, where $\sigma(X)$ is as in \eqref{eq:sigma} and
$\sigma_{**}(X)$ is a parameter that is small 
when the coefficients $A_i$ are far from being commutative. This question 
was first considered by Tropp \cite{Tro18}, who introduced a number of 
important ideas that form the basis for the present paper. Using these 
ideas, Tropp was able to prove a bound of the form \eqref{eq:secondorder} 
for a special class of models that satisfy strong symmetry assumptions 
(and for general models with a dimensional factor $(\log d)^{\frac{1}{4}}$ 
in the leading term). To date, however, a general bound of the form 
\eqref{eq:secondorder} has remained elusive.

\subsection{Free probability}
\label{sec:introfree}

The challenge in proving an inequality of the form \eqref{eq:secondorder} 
is to capture the intrinsic noncommutativity of the matrices $A_i$. There 
is however an entirely different way to introduce noncommutativity into 
\eqref{eq:simplest} that arises from Voiculescu's theory of free 
probability \cite{Voi91,NS06}: one may modify the model by replacing the 
scalar Gaussian coefficients $g_i$ by noncommuting random matrices or 
operators. When noncommutativity is externally introduced into 
\eqref{eq:simplest} in this manner, the dimensional factor in 
\eqref{eq:nck} is unnecessary regardless of the properties of the matrices 
$A_i$ (see \eqref{eq:freenck} below). However, on its face, this appears 
to shed little light on the behavior of the original model 
\eqref{eq:simplest}.

Remarkably, this intuition proves to be incorrect.
The central theme that 
will be developed in this paper is described informally by the following 
principle:
\vskip\abovedisplayskip
\begin{quote}
\emph{When the coefficient matrices $A_i$ are sufficiently noncommutative, 
the spectral statistics of the random matrix model 
$X=\sum_i g_iA_i$
are already accurately captured by free probability.}
\end{quote}
\vskip\abovedisplayskip
This ``intrinsic freeness'' phenomenon will prove to have far-reaching 
implications: it will enable us to prove nonasymptotic bounds of the form 
\eqref{eq:secondorder} in complete generality (both for Gaussian random 
matrices and for general sums of independent random matrices), and to 
develop new asymptotic results in free probability in far more general 
situations than are accessible by previous methods.

Before we can formulate precise results along these lines, we must briefly 
recall some relevant notions of free probability. We will use 
the following terminology.

\begin{defn}
\label{defn:wigner}
A \emph{standard Wigner matrix} of dimension $N$ is an $N\times N$ 
self-adjoint random matrix $G^N$ whose entries on and above the diagonal 
are independent real Gaussian variables with mean zero and variance 
$\frac{1}{N}$.
\end{defn}

Free probability provides an asymptotic description of the behavior 
of Wigner matrices as $N\to\infty$. Let $G_1^N,\ldots,G_n^N$ be 
independent standard Wigner matrices; the associated 
limiting objects are certain infinite-dimensional self-adjoint operators 
$s_1,\ldots,s_n$ that form a \emph{free semicircular family}, together 
with a trace $\tau$ acting on the algebra generated by these operators. We 
postpone the precise definitions of these objects to Section 
\ref{sec:free}; for our purposes, they may be viewed as an 
algebraic tool that allows us to compute spectral properties of large 
random matrices. In particular, a celebrated result of Voiculescu 
\cite{Voi91} states that
\begin{equation}
\label{eq:weakfree}
        \lim_{N\to\infty}
        \mathbf{E}[\ntr p(G_1^N,\ldots,G_n^N)]
        = \tau(p(s_1,\ldots,s_n))
\end{equation}
for any noncommutative polynomial $p$, where $\ntr(M):=\frac{1}{N}\tr(M)$ 
denotes the normalized trace of a matrix $M\in\M_N(\mathbb{C})$.
In an important paper, Haagerup and
Thorbj{\o}rnsen \cite{HT05} showed that the \emph{weak asymptotic 
freeness} property \eqref{eq:weakfree} may be considerably strengthened to 
obtain convergence in norm
\begin{equation}
\label{eq:strongfree}
	\lim_{N\to\infty}
	\mathbf{E}[\|p(G_1^N,\ldots,G_n^N)\|]
	=
	\|p(s_1,\ldots,s_n)\|
\end{equation}
for any noncommutative polynomial $p$. This \emph{strong asymptotic 
freeness} property has important applications in the 
theory of operator algebras \cite{HT05,HST06,HT99}.

A noncommutative analogue of the random matrix model
\eqref{eq:simplest} is obtained by replacing the scalar Gaussian 
coefficients $g_i$ by standard Wigner matrices:
\begin{equation}
\label{eq:xtens}
	X^N = \sum_{i=1}^n A_i\otimes G_i^N.
\end{equation}
When $N=1$, this model coincides with \eqref{eq:simplest}; however, as 
$N$ increases, the matrices $G_i^N$ become increasingly noncommutative.
The weak and strong asymptotic freeness properties \eqref{eq:weakfree} 
and \eqref{eq:strongfree} imply that the behavior of the spectrum of $X^N$ 
as $N\to\infty$ is captured by the infinite-dimensional operator
\begin{equation}
\label{eq:xfree}
	X_{\rm free} = \sum_{i=1}^n A_i \otimes s_i
\end{equation}
in that $\lim_{N\to\infty}\mathbf{E}\ntr[(X^N)^p]=
({\ntr}\otimes\tau)(X_{\rm free}^p)$
and $\lim_{N\to\infty}\mathbf{E}\|X^N\|=\|X_{\rm free}\|$. The study of 
such models plays a fundamental role in \cite{HT05}.

While $X_{\rm free}$ may be viewed abstractly as the limiting object 
associated to $X^N$, its considerable utility (from the perspective of 
this paper) is that it enables explicit computation of many spectral 
statistics of the random matrices $X^N$. For example, as we will recall in 
Section \ref{sec:mainspec}, the norm $\|X_{\rm free}\|$ admits an explicit  
formula in terms of the matrices $A_i$ \cite{Leh99} and admits 
the simple estimates \cite[p.\ 208]{Pis03}
\begin{equation}
\label{eq:freenck}
	\sigma(X)\le\|X_{\rm free}\|\le 2\sigma(X).
\end{equation}
Similarly, the limiting spectral distribution of $X^N$ may be computed by 
means of a (matrix-valued) Dyson equation as in classical random matrix 
theory \cite{HT05,AEK20}.

\subsection{Overview of main results}
\label{sec:intromain}

We now give a brief overview of the main results of this paper. A detailed 
presentation of our results will be given in Section \ref{sec:main}, while 
various examples that illustrate our results will be discussed in Section 
\ref{sec:ex}.

\subsubsection{Gaussian random matrices}
\label{sec:introg}

To illustrate the general principle described in Section 
\ref{sec:introfree}, let us begin by stating a special case of 
one of our main results.
For any centered $d\times d$ random matrix $X$ as in 
\eqref{eq:simplest}, we denote
by $\mathrm{Cov}(X)\in M_{d^2}(\mathbb{C})_{\rm sa}$ the covariance matrix 
of its $d^2$ scalar entries, that is,
$$
	\mathrm{Cov}(X)_{ij,kl} = \mathbf{E}[X_{ij} \overline{X_{kl}}]
	= \sum_{s=1}^n (A_s)_{ij} \overline{(A_s)_{kl}}
$$
which we view as a $d^2\times d^2$ positive semidefinite matrix. We now
define
$$
	v(X)^2 = \|\mathrm{Cov}(X)\| =
	\sup_{{\tr}|M|^2\le 1} \sum_{s=1}^n |{\tr}[A_s M]|^2.
$$
It should be far from apparent at this point that the parameter $v(X)$ 
captures noncommutativity of the matrices $A_i$; this will be explained in 
Section \ref{sec:overviewpf}. Note, for example, that $v(X)\asymp 
\max_{ij}|b_{ij}|$ in the setting of \eqref{eq:bvh} (cf.\ section 
\ref{sec:exind}).

\begin{thm}
\label{thm:intro}
For the model \eqref{eq:simplest} we have
$$
	\mathbf{E}\|X\| \le \|X_{\rm free}\| +
	C\,v(X)^{\frac{1}{2}}\sigma(X)^{\frac{1}{2}} 
	(\log d)^{\frac{3}{4}},
$$
where $X_{\rm free}$ is defined in \eqref{eq:xfree}
and $C$ is a universal constant.
\end{thm}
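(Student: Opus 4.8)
The plan is to compare $\mathbf{E}\|X\|$ to $\|X_{\rm free}\|$ by interpolating through a sequence of matrix models that gradually introduce noncommutativity, following the strategy pioneered by Tropp. Concretely, one fixes a large auxiliary dimension $N$ and considers the random matrix $X^N=\sum_i A_i\otimes G_i^N$ of \eqref{eq:xtens}. On the one hand, since $N=1$ recovers the original model, and since adding independent Gaussian coordinates can only increase the expected norm in a controlled way, one expects $\mathbf{E}\|X\|\le \mathbf{E}\|X^N\|+(\text{small})$; more robustly, one works directly with $X^N$ and passes to $N\to\infty$ at the end, where $\mathbf{E}\|X^N\|\to\|X_{\rm free}\|$ by strong asymptotic freeness \eqref{eq:strongfree}. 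The heart of the matter is therefore to bound $\mathbf{E}\|X^N\|$ for finite $N$, uniformly in $N$, by $\|X_{\rm free}\|+C\,v(X)^{1/2}\sigma(X)^{1/2}(\log d)^{3/4}$.

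To estimate $\mathbf{E}\|X^N\|$, I would use the moment method together with an interpolation argument. Since $X^N$ is self-adjoint of size $dN$, we have $\mathbf{E}\|X^N\|\le (\mathbf{E}\,\mathrm{Tr}[(X^N)^{2p}])^{1/2p}$ for any $p$, and choosing $p\asymp\log(dN)$ converts the trace moment into the operator norm up to constants. The key step is to expand $\mathbf{E}\,\mathrm{Tr}[(X^N)^{2p}]$ via Gaussian integration by parts (the matrix Wick/Stein identity) and compare it term by term with the corresponding free expansion $(\mathrm{Tr}\otimes\tau)(X_{\rm free}^{2p})$. The difference between the random matrix moments and the free moments is governed by the ``crossing'' pair partitions, and the contribution of a crossing is precisely where a factor of $v(X)^2$ appears in place of $\sigma(X)^2$, together with a combinatorial factor polynomial in $p$. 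This is the mechanism by which $v(X)$ quantifies noncommutativity: non-crossing pairings reproduce exactly the free moments (hence $\|X_{\rm free}\|$), while each crossing costs a factor $v(X)^2/\sigma(X)^2$ and is paid for by the bound $\sum_s|\mathrm{Tr}[A_s M]|^2\le v(X)^2\,\mathrm{Tr}|M|^2$ applied to suitable intermediate matrices $M$.

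I would organize the error estimate as follows: (i) set up a continuous interpolation $X^N_t=\sum_i A_i\otimes(\sqrt{t}\,G_i^N+\sqrt{1-t}\,\tilde s_i)$ between $X^N$ (at $t=1$) and a semicircular model (at $t=0$), or equivalently interpolate the covariance; (ii) differentiate $\mathbf{E}\,\mathrm{Tr}[f(X^N_t)]$ in $t$ for $f(x)=x^{2p}$, producing a fourth-order term that, after integration by parts in both the Gaussian and the free variables, is expressed through the covariance-induced bilinear form on $\M_d$; (iii) bound this fourth-order remainder by $v(X)^2$ times lower moments of $\|X^N_t\|$, and close the estimate by a self-bounding/Gronwall-type argument in $p$. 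Summing the telescoped errors and optimizing $p\asymp\log d$ yields an error of order $v(X)\,\sigma(X)^{?}$ times a power of $\log d$; tracking the exponents carefully gives the stated $v(X)^{1/2}\sigma(X)^{1/2}(\log d)^{3/4}$ (the square roots arising because the fourth-order term carries $v(X)^2\sigma(X)^{2p-2}$ and one takes a $2p$-th root, with the $3/4$ power of $\log d$ emerging from the interplay between $p\asymp\log d$ and the polynomial-in-$p$ combinatorial factor).

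The main obstacle is controlling the fourth-order interpolation remainder uniformly in the auxiliary dimension $N$ and in the moment order $p$, since a naive bound loses powers of $N$ or $p$ that would destroy the estimate. Overcoming this requires (a) a careful choice of which norm to put on the intermediate matrices $M$ appearing in the expansion — the Hilbert–Schmidt (not operator) norm is what makes the $v(X)$ bound usable — and (b) a self-improving argument that feeds the operator-norm bound back into the moment recursion so that the $\|X^N_t\|$ factors in the remainder can be absorbed rather than estimated crudely. Getting the $N$-uniformity is what ultimately licenses the limit $N\to\infty$ and the replacement of the limiting moments by $\|X_{\rm free}\|$ via \eqref{eq:strongfree}.
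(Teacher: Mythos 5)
Your proposal correctly identifies several ingredients that appear in the paper (moment method with $p\asymp\log d$, Gaussian integration by parts, the role of crossings, the parameter $v(X)$ as a noncommutativity measure), but the interpolation you propose runs in the wrong direction, and this is not a cosmetic difference: it is precisely the distinction the paper flags in Remark~\ref{rem:collins}. You propose $X^N_t = \sum_i A_i\otimes(\sqrt{t}\,G_i^N + \sqrt{1-t}\,\tilde s_i)$, interpolating between Wigner matrices and a semicircular family. Setting aside that this does not literally make sense ($G_i^N$ and $s_i$ live on incompatible spaces, and $s_i$ is not Gaussian so the Gaussian interpolation lemma does not apply along this path), this is the Collins--Guionnet--Parraud direction, which captures only the noncommutativity of the Wigner matrices $G_i^N$ and is insensitive to the structure of the coefficients $A_i$. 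It therefore cannot produce the $v(X)$-dependent error term, which is the whole content of the theorem. The paper's interpolation is $X^N_q=\sum_i A_i\otimes(\sqrt{q}\,D_i^N+\sqrt{1-q}\,G_i^N)$ with $D_i^N$ diagonal Gaussian: both endpoints are concrete Gaussian matrices in the same ambient $dN\times dN$ space, so Lemma~\ref{lem:ginterp} applies; $X_1^N$ has the same moments as $X$ for every $N$, while $X_0^N=X^N$ only needs \emph{weak} asymptotic freeness \eqref{eq:weakfree} to be related to $X_{\rm free}$ in the $N\to\infty$ limit. The specific combination of diagonal versus Wigner noise is what makes the mean-field term vanish (Lemma~\ref{lem:ptrace}, Corollary~\ref{cor:momentrawmf}), which in turn lets the covariance identity (Corollary~\ref{cor:gcov}) convert the derivative in $q$ into a sum over genuine crossings $A_iUA_jVA_iWA_j$; these are controlled by $w(X_1^N,X_0^N)\le\tilde v(X)$ via Lemma~\ref{lem:complexint} and Proposition~\ref{prop:vxwx}. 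None of this machinery is available along your path.

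A second, independent error is the assertion near the start that ``adding independent Gaussian coordinates can only increase the expected norm in a controlled way, one expects $\mathbf{E}\|X\|\le\mathbf{E}\|X^N\|+(\text{small})$.'' This is false exactly where it would matter. If the $A_i$ commute (say, are all diagonal), then $\mathbf{E}\|X\|\asymp\sigma(X)\sqrt{\log d}$ while $\mathbf{E}\|X^N\|\to\|X_{\rm free}\|\le 2\sigma(X)$ as $N\to\infty$, so $\mathbf{E}\|X\|$ can be much \emph{larger} than $\mathbf{E}\|X^N\|$. The theorem is precisely the statement that this gap is controlled by $\tilde v(X)(\log d)^{3/4}$; assuming a version of the conclusion as a first step makes the argument circular. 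Relatedly, invoking strong asymptotic freeness \eqref{eq:strongfree} is both unnecessary (the paper deliberately uses only \eqref{eq:weakfree}) and inconvenient here, because the paper's results are themselves a strengthening of that theorem. Finally, your remark ``$p\asymp\log(dN)$'' does not resolve correctly: the $N\to\infty$ limit must be taken \emph{before} choosing $p$, at the level of the moment comparison (Theorem~\ref{thm:moments} and its resolvent analogue Theorem~\ref{thm:resmoments}), so that the comparison object is the $N$-independent quantity $(\ntr\otimes\tau)(|X_{\rm free}|^{2p})^{1/2p}\le\|X_{\rm free}\|$ and the only dimension in play is $d$; otherwise the logarithmic factor blows up with $N$. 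In summary, the main gap is that the proposed interpolation cannot see the noncommutativity of the $A_i$ and hence cannot yield an error term proportional to $v(X)^{1/2}\sigma(X)^{1/2}$.
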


Using \eqref{eq:freenck} and Young's inequality, Theorem 
\ref{thm:intro} immediately implies a completely general bound of the form 
\eqref{eq:secondorder}:
\begin{equation}
\label{eq:introgood}
	\mathbf{E}\|X\| \lesssim \sigma(X) + v(X) (\log d)^{\frac{3}{2}}.
\end{equation}
However, Theorem~\ref{thm:intro} is much sharper in 
that its leading term captures the exact quantity predicted by free 
probability. In many cases, our results will make it possible to prove that 
$\mathbf{E}\|X\| = (1+o(1))\|X_{\rm free}\|$, that is, to compute the norm
exactly to leading order, as soon as 
$v(X)/\sigma(X)=o((\log d)^{-\frac{3}{2}})$.

Our main results for Gaussian random matrices (see
Sections \ref{sec:mainspec} and \ref{sec:mainstat}) are considerably more 
general than Theorem \ref{thm:intro}. In particular:
\begin{enumerate}[$\bullet$]
\itemsep\abovedisplayskip
\item Our main results are formulated for arbitrary Gaussian random 
matrices, which may have nonzero mean and may be non-self-adjoint.
\item We bound the support of the full spectrum 
$\spc(X)\subseteq \spc(X_{\rm free})+[-\varepsilon,\varepsilon]$ with
high probability, where
$\varepsilon\asymp v(X)^{\frac{1}{2}}\sigma(X)^{\frac{1}{2}}
(\log d)^{\frac{3}{4}}$.
\item We obtain nonasymptotic upper and lower bounds on the moments, 
resolvent, and other spectral statistics of $X$ in terms of $X_{\rm 
free}$.
\end{enumerate}
The ``intrinsic freeness'' phenomenon that is captured by these results 
has strong implications both for matrix concentation inequalities and in 
free probability.

\subsubsection{Asymptotic freeness}
\label{sec:introasympfree}

While our main results are nonasymptotic in nature, they give rise to 
remarkable new asymptotic results in free probability: when combined with 
the linearization trick of \cite{HT05}, our results establish strong 
asymptotic freeness \eqref{eq:strongfree} for a very large class of random 
matrix models. For example, we will prove the following result, as well as 
an analogous strong law (which yields a.s.\ convergence) that will be 
formulated in Section \ref{sec:mainsaf}.

\begin{thm}
\label{thm:introfree}
Let $s_1,\ldots,s_m$ be a free semicircular family.
For each $N\ge 1$, let $H_1^N,\ldots,H_m^N$ be independent self-adjoint 
random matrices of dimension $d=d(N)\ge N$ such that each $H_k^N$ has 
jointly Gaussian entries, $\mathbf{E}[H_k^N]=0$, and 
$\mathbf{E}[(H_k^N)^2]=\id$.
\begin{enumerate}[a.]
\itemsep\abovedisplayskip
\item
If $v(H_k^N)=o(1)$ as $N\to\infty$ for all $k$, then 
for any polynomial $p$
$$
	\lim_{N\to\infty} \mathbf{E}[\ntr p(H_1^N,\ldots,H_m^N)] =
	\tau(p(s_1,\ldots,s_m)).
$$
\item If $v(H_k^N)=o((\log d)^{-\frac{3}{2}})$ as $N\to\infty$ for all $k$, 
then for any polynomial $p$
$$
	\lim_{N\to\infty} \mathbf{E}[\|p(H_1^N,\ldots,H_m^N)\|] =
	\|p(s_1,\ldots,s_m)\|.
$$
\end{enumerate}
\end{thm}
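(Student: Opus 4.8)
The plan is to deduce both statements from the paper's main nonasymptotic results — the spectral-localization bound and the moment/resolvent comparisons for arbitrary, possibly non-centered Gaussian matrices — together with the linearization trick of Haagerup and Thorbj{\o}rnsen and one elementary algebraic fact. Write $H_k^N=\sum_i g_i^{(k)}A_i^{(k)}$ with the $g_i^{(k)}$ independent standard Gaussians and $\sum_i(A_i^{(k)})^2=\EE[(H_k^N)^2]=\id$, and let $H_{k,\mathrm{free}}^N:=\sum_i A_i^{(k)}\otimes s_i^{(k)}$ be the associated free model, built from an auxiliary free semicircular family $\{s_i^{(k)}\}_{i,k}$. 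The elementary fact, which I will call $(\star)$, is that the normalization $\sum_i(A_i^{(k)})^2=\id$ forces the joint $*$-distribution of $(H_{1,\mathrm{free}}^N,\dots,H_{m,\mathrm{free}}^N)$ with respect to $\ntr\otimes\tau$ to coincide \emph{exactly}, for every $N$, with that of a free semicircular family $(s_1,\dots,s_m)$: expanding $(\ntr\otimes\tau)(w(H_{1,\mathrm{free}}^N,\dots))$ for a monomial $w$ produces a sum over non-crossing pair partitions of the word that respect the ``colors'' $k$, and any such partition has a cyclically adjacent pair whose two legs carry the same color and the same summation index, so one may iterate — sum that index out using $\sum_i(A_i^{(k)})^2=\id$, collapse the corresponding piece of the $\M_d$-trace, recurse — and is left with exactly the number of color-respecting non-crossing pairings, namely $\tau(w(s_1,\dots,s_m))$. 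The same peeling argument, carrying along also the $\M_r$-valued coefficients of a self-adjoint linear pencil $L=a_0\otimes1+\sum_ka_k\otimes x_k$, shows that $L_{\mathrm{free}}^N:=a_0\otimes\id_d+\sum_ka_k\otimes H_{k,\mathrm{free}}^N$ has, with respect to $\ntr_{rd}\otimes\tau$, the same $*$-distribution as $L(s_1,\dots,s_m)$; being self-adjoint elements of tracial von Neumann algebras with faithful traces, $\spc(L_{\mathrm{free}}^N)=\spc(L(s_1,\dots,s_m))$ for every $N$.

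For part~a it suffices by multilinearity to treat a single monomial $w$. By $(\star)$ the target equals $(\ntr\otimes\tau)(w(H_{1,\mathrm{free}}^N,\dots,H_{m,\mathrm{free}}^N))$ for every $N$, so the claim reduces to the moment comparison
\[
\EE\,\ntr\,w(H_1^N,\dots,H_m^N)-(\ntr\otimes\tau)\,w(H_{1,\mathrm{free}}^N,\dots,H_{m,\mathrm{free}}^N)\longrightarrow 0
\]
between the Gaussian tuple and its free model. This is the content of the paper's moment-comparison estimates — applied to the pencil linearizing $w+w^*$, or directly in multi-matrix form — whose error, for fixed $\deg w$, is a fixed power of $\max_k v(H_k^N)$ times a constant depending only on $\deg w$ and $\max_k\sigma(H_k^N)=1$, with no dimensional factor; hence $\max_k v(H_k^N)=o(1)$ suffices.

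For part~b, fix a self-adjoint $p$ (the general case follows on replacing $p$ by $\bigl(\begin{smallmatrix}0&p\\p^{*}&0\end{smallmatrix}\bigr)$). I would first linearize: produce a self-adjoint pencil $L=a_0\otimes1+\sum_ka_k\otimes x_k$ with $a_j\in\M_r(\mathbb{C})_{\mathrm{sa}}$ depending only on $p$, so that the spectrum and norm of $p$ evaluated in any $C^{*}$-algebra are recovered, Hausdorff-continuously, from those of $L$ evaluated there. Then $L(H^N)=a_0\otimes\id_d+\sum_ka_k\otimes H_k^N$ is a shifted Gaussian matrix of dimension $rd\asymp d$ with $\sigma(L(H^N))\le C_p$ (as $\sigma(H_k^N)=1$) and, by an elementary bound of the covariance of a block combination by those of its pieces, $v(L(H^N))\le C_p\max_k v(H_k^N)=o((\log d)^{-3/2})$; the paper's spectral-localization bound then yields
\[
\spc(L(H^N))\subseteq\spc(L_{\mathrm{free}}^N)+[-\varepsilon_N,\varepsilon_N],\qquad \varepsilon_N\asymp v(L(H^N))^{1/2}\sigma(L(H^N))^{1/2}(\log rd)^{3/4}=o(1),
\]
with probability $\ge 1-d^{-c}$. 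Combining with $(\star)$ (so $\spc(L_{\mathrm{free}}^N)=\spc(L(s))$) and undoing the linearization, $\|p(H^N)\|\le\|p(s_1,\dots,s_m)\|+C_p\varepsilon_N$ with probability $\ge 1-d^{-c}$; since $\EE\|p(H^N)\|^2\lesssim_p(\log d)^{\deg p}$ by the noncommutative Khintchine bound, the exceptional event is negligible in expectation and $\limsup_N\EE\|p(H^N)\|\le\|p(s_1,\dots,s_m)\|$. For the matching lower bound I would combine part~a with concentration: since $\EE\|H_k^N\|\le\|H_{k,\mathrm{free}}^N\|+Cv(H_k^N)^{1/2}(\log d)^{3/4}=2+o(1)$ (using $(\star)$, or just \eqref{eq:freenck}), the event $\{\max_k\|H_k^N\|\le 3\}$ has probability $\ge 1-d^{-c}$, and on it the map $(g_i^{(k)})\mapsto\|p(H^N)\|$ — composed with a fixed spectral truncation of the $H_k^N$ so as to be globally Lipschitz — has Lipschitz constant $\lesssim_p\max_k v(H_k^N)=o(1)$, because $(g_i^{(k)})\mapsto H_k^N$ has Lipschitz constant $\sup_{\|c\|_2\le1}\|\sum_i c_iA_i^{(k)}\|\le v(H_k^N)$ in operator norm. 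Gaussian concentration then gives $\EE\|p(H^N)\|^{2\ell}=(\EE\|p(H^N)\|)^{2\ell}+o(1)$ for each fixed $\ell$, so $(\EE\|p(H^N)\|)^{2\ell}\ge\EE\,\ntr\,p(H^N)^{2\ell}-o(1)\to\tau(p(s_1,\dots,s_m)^{2\ell})$ by part~a, and letting $\ell\to\infty$ gives $\liminf_N\EE\|p(H^N)\|\ge\|p(s_1,\dots,s_m)\|$.

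The substantive points are the two quantitative inputs to part~b: verifying that the linearization keeps the parameters in range — $v(L(H^N))\le C_p\max_k v(H_k^N)$ and $\sigma(L(H^N))\le C_p$, which is why the hypothesis is phrased with $\log d$ rather than $\log N$, the pencil having dimension $\asymp d$ — and extracting the lower bound, where one must use that in the intrinsic-freeness regime the fluctuations of $\|p(H^N)\|$ are $o(1)$ rather than merely $O(1)$; plain Gaussian concentration is insufficient, and the improvement rests on the a priori bound $\max_k\|H_k^N\|=O(1)$, itself a consequence of the paper's norm estimate. The fact $(\star)$ and the transfer through the linearization are routine bookkeeping; the genuine analytic work is encapsulated in the paper's main theorems, invoked here as black boxes.
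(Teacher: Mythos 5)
Your proposal is correct and follows essentially the same route as the paper: part~a via the Wick-formula/moment comparison in multi-matrix form (the paper's Section 7.1), and part~b via Haagerup--Thorbj{\o}rnsen linearization, the spectral-localization theorem applied to the pencil, the observation that $\EE[(H_k^N)^2]=\id$ forces $\spc(L_{\rm free}^N)=\spc(L(s))$ by matching $*$-moments under a faithful trace (the paper's Lemma \ref{lem:safncsimple}), and a concentration argument on a high-probability event where $\max_k\|H_k^N\|=O(1)$ to convert the trace convergence of part~a into the matching lower bound on norms (the paper's Lemma \ref{lem:safconc}). The only genuine difference is bookkeeping: you work directly with expectations and an exceptional-event estimate, whereas the paper proves a.s.\ convergence via Borel--Cantelli and then passes to expectations; both are valid and neither requires new ideas.
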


A striking consequence of Theorem \ref{thm:introfree} is the unexpected 
ubiquity of the strong asymptotic freeness property. To date, strong 
asymptotic freeness has been proved only for Wigner matrices and for 
certain highly symmetric ensembles; for a detailed overview of prior 
results, see \cite{CGP19,BC20} and the references cited therein. In 
contrast, neither symmetry nor independent entries plays any role in 
Theorem \ref{thm:introfree}, which enables us to establish strong 
asymptotic freeness in models that appear to lie far outside the reach of 
previous methods (for example, for sparse Wigner matrices of dimension $d$ 
with only $O(d\log^4d)$ nonzero entries, see Example \ref{ex:sparse}).
For many such models, even weak asymptotic freeness \eqref{eq:weakfree} 
was not previously known.

\subsubsection{Sums of independent random matrices}
\label{sec:introsums}

When viewed as matrix concentration inequalities, bounds such as 
\eqref{eq:introgood} are easily applicable in concrete situations and 
yield results of optimal order in many examples where classical matrix 
concentration inequalities are suboptimal. To illustrate this, we will 
discuss in Section \ref{sec:ex} a variety of explicit examples that 
appear, at this level of generality, to be outside the reach of 
classical methods of random matrix theory.

Nonetheless, the main results of this paper are obtained for Gaussian 
random matrices, which may be restrictive in applications. One important 
reason for the broad utility of classical matrix concentration 
inequalities \cite{Tro15} is that they extend to arbitrary sums of 
independent random matrices, a setting that captures many non-Gaussian 
models that arise in practice. It turns out, however, that non-Gaussian 
versions of our results already follow as a consequence of the Gaussian 
inequalities, so that the focus of this paper on Gaussian inequalities is 
not a significant restriction. Indeed, in the follow-up work \cite{BV22}, 
it is shown that the spectrum of any sum of independent random matrices 
behaves, under mild conditions, like that of the Gaussian random matrix 
whose entries have the same mean and covariance. When the results of the 
present paper are applied to the resulting Gaussian matrices, one 
immediately obtains non-Gaussian extensions of our main results. For sake 
of illustration we state a non-Gaussian analogue of Theorem 
\ref{thm:intro} here, as well as a tail bound that may be 
compared with the matrix Bernstein inequality \cite{Tro15}.

\begin{thm}
\label{thm:mconcbvh}
Let $Z_1,\ldots,Z_n$ be arbitrary independent $d\times d$ self-adjoint
centered random matrices, and let $X=\sum_{i=1}^n Z_i$. Then
$$
	\EE\|X\| \le
	\|X_{\rm free}\| +
	C\{
	v^{\frac{1}{2}}\sigma^{\frac{1}{2}}(\log d)^{\frac{3}{4}} +
	R^{\frac{1}{3}}\sigma^{\frac{2}{3}}(\log d)^{\frac{2}{3}} +
	R\log d\}
$$
and
$$
	\mathbf{P}\big[|X\| \ge \|X_{\rm free}\| 
	+ C\{
	v^{\frac{1}{2}}\sigma^{\frac{1}{2}}(\log d)^{\frac{3}{4}}
	+ 
	\sigma_*t^{\frac{1}{2}} + 
	R^{\frac{1}{3}}\sigma^{\frac{2}{3}}t^{\frac{2}{3}}
	+ Rt
	\}
	\big] \le de^{-t}
$$
for all $t\ge 0$, where $C$ is a universal constant,
$\sigma=\|\EE X^2\|^{\frac{1}{2}}$, 
$v=\|\mathrm{Cov}(X)\|^{\frac{1}{2}}$,
$\sigma_*=\sup_{\|v\|=\|w\|=1}\EE[|\langle v,Xw\rangle|^2]^{\frac{1}{2}}
\le v$, $R = 
\|\max_i\|Z_i\|\|_\infty$, and $X_{\rm free}$ is the free 
model associated to the centered Gaussian random matrix whose entries have 
the same covariance as those of $X$ \emph{(}in particular, $\|X_{\rm 
free}\|\le 2\sigma$\emph{)}.
\end{thm}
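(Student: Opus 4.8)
The plan is to reduce the statement to the Gaussian results of this paper by a covariance-matching argument, and then to transfer the resulting bounds to $X$ via the universality principle. First I would let $G$ be the centered self-adjoint Gaussian random matrix whose entries have the same covariance as those of $X$; this is precisely the matrix whose free model is the operator $X_{\rm free}$ appearing in the statement. Since $G$ and $X$ have the same covariance, $\EE G^2=\EE X^2$, $\mathrm{Cov}(G)=\mathrm{Cov}(X)$, and $\EE|\langle u,Gw\rangle|^2=\EE|\langle u,Xw\rangle|^2$ for all $u,w$, so all three parameters are preserved: $\sigma(G)=\sigma$, $v(G)=v$, and $\sigma_*(G)=\sigma_*$. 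In particular $\|X_{\rm free}\|\le 2\sigma$ by \eqref{eq:freenck}.

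Next I would bound $\|G\|$ using the Gaussian theory. Theorem~\ref{thm:intro} applied to $G$ gives $\EE\|G\|\le\|X_{\rm free}\|+C\,v^{\frac12}\sigma^{\frac12}(\log d)^{\frac34}$. For the tail, I would observe that $\|G\|=\sup_{\|u\|=1}|\langle u,Gu\rangle|$ is, as a function of the standard Gaussian vector driving $G$, Lipschitz with constant $\sup_{\|u\|=1}\EE[|\langle u,Gu\rangle|^2]^{\frac12}\le\sigma_*$; hence the Gaussian concentration inequality gives $\PP[\|G\|\ge\EE\|G\|+\sigma_*\sqrt{2t}]\le e^{-t}$ for all $t\ge 0$.

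It remains to pass from $G$ to $X$. Here I would invoke the universality principle of \cite{BV22}: under the stated hypotheses the relevant spectral statistics of $X=\sum_i Z_i$ coincide with those of $G$ up to errors governed by $R=\|\max_i\|Z_i\|\|_\infty$. Combining the resulting Bernstein-type comparison with the Gaussian concentration estimate of the previous paragraph yields the tail bound
\[
	\PP\Big[\|X\|\ge\EE\|G\|+C\big\{\sigma_*t^{\frac12}+R^{\frac13}\sigma^{\frac23}t^{\frac23}+Rt\big\}\Big]\le de^{-t};
\]
substituting $\EE\|G\|\le\|X_{\rm free}\|+C\,v^{\frac12}\sigma^{\frac12}(\log d)^{\frac34}$ gives the second display of the statement. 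Integrating this tail over $t$ and using $\sigma_*\le\min(v,\sigma)$ — so that the fluctuation term obeys $\sigma_*(\log d)^{\frac12}\le v^{\frac12}\sigma^{\frac12}(\log d)^{\frac34}$ and is absorbed into the leading error — yields the expectation bound.

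The substance of the argument lies entirely in the two inputs, Theorem~\ref{thm:intro} (proved earlier in the paper) and the universality principle of \cite{BV22}; the remaining work is bookkeeping — checking that $\sigma$, $v$, $\sigma_*$ are preserved under covariance matching and that the error terms assemble into the stated form. The one genuinely delicate point, and the place I would expect the main obstacle, is obtaining the sharp parameter $\sigma_*$ rather than $v$ in the $t^{\frac12}$ term of the tail bound: for the Gaussian part this is exactly the Lipschitz estimate above, but for the non-Gaussian correction it has to be extracted from the Bernstein-type structure of the comparison in \cite{BV22}. If one settles for $v$ in place of $\sigma_*$, even the Gaussian concentration step can be replaced by a crude a priori bound.
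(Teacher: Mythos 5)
Your proposal matches the paper's own approach: the paper does not give a self-contained proof of Theorem~\ref{thm:mconcbvh} but explicitly defers to the universality principle of~\cite{BV22}, which (as you describe) compares the spectrum of $X=\sum_i Z_i$ to that of a covariance-matched Gaussian matrix $G$, after which the Gaussian results of the present paper (Corollary~\ref{cor:norm} with concentration, i.e.\ the content of Theorem~\ref{thm:intro} plus Corollary~\ref{cor:gconcmtx}) are applied to $G$. One small inaccuracy: the Lipschitz constant of $\|G\|$ in the driving Gaussians is $\sigma_*=\sup_{\|v\|=\|w\|=1}\EE[|\langle v,Gw\rangle|^2]^{1/2}$, not the diagonal quantity $\sup_{\|u\|=1}\EE[|\langle u,Gu\rangle|^2]^{1/2}$ you wrote, but your stated bound $\le\sigma_*$ is what is needed and your argument is otherwise the intended one.
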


We refer to \cite{BV22} for analogous extensions of all the main results 
of this paper.
(Further discussion of non-Gaussian extensions may be found in 
Section \ref{sec:universality}.)

\subsection{Overview of the proofs}
\label{sec:overviewpf}

\subsubsection{Crossings}

Before we describe the main technique used in our proofs, let us briefly 
outline the origin of the key parameter $v(X)$ that quantifies 
noncommutativity in our results, and its relation to free probability.

The simplest way to understand the difference between the random matrix
$X$ and its free counterpart $X_{\rm free}$ is in terms of their moments. 
Let us recall that these moments may be expressed combinatorially as
\cite[pp.\ 128--129]{NS06}
$$
	\EE[\ntr X^{2p}] =
	\sum_{\pi\in \mathrm{P}_2([2p])}
	\sum_{(i_1,\ldots,i_{2p})\sim\pi}
	\ntr[A_{i_1}\ldots A_{i_{2p}}]
$$
and
$$
	({\ntr}\otimes\tau)(X_{\rm free}^{2p}) =
	\sum_{\pi\in \mathrm{NC}_2([2p])}
	\sum_{(i_1,\ldots,i_{2p})\sim\pi}
	\ntr[A_{i_1}\ldots A_{i_{2p}}],
$$
where $\mathrm{P}_2([2p])$ and $\mathrm{NC}_2([2p])$ denote the families 
of all pair partitions and noncrossing pair partitions of $[2p]$, 
respectively, and $(i_1,\ldots,i_{2p})\sim\pi$ signifies that $i_k=i_l$ 
whenever $\{k,l\}\in\pi$. In other words, what distinguishes free 
probability from classical probability is the absence of crossings, that 
is, of terms of the form $\sum_{ij} \cdots A_i \cdots A_j\cdots A_i\cdots 
A_j\cdots$ in the moment formulae.

In free probability, the vanishing of crossings arises from the 
noncommutativity of the semicircular family $s_i$. Even in 
\eqref{eq:simplest}, however, crossings may be intrinsically suppressed 
due to the noncommutativity of the coefficients $A_i$. It is a beautiful 
idea of Tropp \cite{Tro18} to quantify the latter effect by the 
parameter
$$
	w(X) = 
	\sup_{U,V,W} \|\mathbf{E}[X_1 U X_2 V X_1 W X_2]\|^{\frac{1}{4}}
	=
	\sup_{U,V,W}\Bigg\|
	\sum_{i,j=1}^n A_i U A_j V A_i W A_j\Bigg\|^{\frac{1}{4}},
$$
where $X_1,X_2$ are i.i.d.\ copies of $X$ and the supremum is taken over 
all (nonrandom) unitary matrices $U,V,W$ of the same dimension as 
$X$. Note that when all $A_i$ commute, $w(X)\ge 
\|\sum_{ij}A_iA_jA_iA_j\|^{\frac{1}{4}}= 
\|(\sum_{i}A_i^2)^2\|^{\frac{1}{4}}=\sigma(X)$; but if
$w(X)\ll\sigma(X)$, the contribution of crossings will be suppressed.

Unfortunately, the quantity $w(X)$ is very unwieldy and is difficult to 
compute in practice. Moreover, as will be explained below, the quantity 
that will arise in our proofs is not $w(X)$, but rather $w(\tilde X)$ for 
an auxiliary matrix $\tilde X$ of much higher dimension. To control this 
parameter, we will show in Section \ref{sec:wxvx} that 
\begin{equation}
\label{eq:wxvx}
	w(X) \le v(X)^{\frac{1}{2}} \sigma(X)^{\frac{1}{2}},
\end{equation}
which enables us to formulate our results in terms of the much simpler 
quantity $v(X)$ that is readily computable in concrete situations. In 
particular, it follows that $v(X)$ does indeed capture noncommutativity, 
as it controls $w(X)$.

The notion that smallness of $w(X)$ should lead to free behavior is 
implicit in the work of Tropp \cite{Tro18}. However, the attempt in 
\cite{Tro18} to exploit this idea by means of moment recursions appears to 
be insufficiently powerful to capture this phenomenon without imposing 
strong symmetry assumptions on the coefficients $A_i$. A key new idea of 
this paper enables us to capture this phenomenon in its full strength.

\subsubsection{Interpolation}

The central idea behind our proofs is the following 
construction. Let $G_1^N,\ldots,G_n^N$ be independent standard Wigner 
matrices as in Section \ref{sec:introfree}, and let $D_1^N,\ldots,D_n^N$ 
be independent $N\times N$ diagonal matrices with i.i.d.\ standard 
Gaussians on the diagonal. Define for $q\in[0,1]$ the random matrix
$$
	X^N_q = \sum_{i=1}^n A_i\otimes (\sqrt{q}\,D_i^N +
	\sqrt{1-q}\,G_i^N).
$$
The point of this construction is that the family 
$(X^N_q)_{q\in[0,1],N\in\mathbb{N}}$ enables us to interpolate between $X$ 
and $X_{\rm free}$. Indeed, $X^N_0=X^N$ is the model \eqref{eq:xtens}, 
whose moments converge as $N\to\infty$ to those of $X_{\rm free}$ by 
\eqref{eq:weakfree} (this is the only property that will be used in 
our proofs; strong asymptotic freeness will not be assumed). On the other 
hand, it is readily verified that $X^N_1$ has the same moments as $X$ in 
the sense $\mathbf{E}[\ntr X^p]=\mathbf{E}\ntr[(X^N_1)^p]$ for every 
$p,N\in\mathbb{N}$.

In order to bound the moments of $X$ by those of $X_{\rm free}$, it 
suffices to bound the rate at which the moments change along the above 
interpolation. Given that the moments of $X$ and $X_{\rm free}$ differ 
only by terms involving crossings, it is natural to expect that the rate 
of change along the interpolation will be controlled by the contributions 
of the crossings. It will turn out that the construction of the matrices 
$X_q^N$ has precisely the right form in order to capture this phenomenon 
in terms of the parameters described in the previous section. More 
precisely, the explicit expression for the derivative 
$\frac{d}{dq}\mathbf{E}\ntr[(X^N_q)^p]$, which can be computed using a 
standard Gaussian interpolation lemma \cite[\S 1.3]{Tal11}, can be 
controlled in terms of the quantity
$$
        \tilde w(X) = \sup_{N} w(X^N_1).
$$
The resulting differential inequality may be integrated to 
bound the moments of $X$ in terms of the moments of $X_{\rm free}$ and the 
parameter $\tilde w(X)$. As the latter is nearly impossible to compute, 
we finally obtain a practical bound $\tilde w(X)\le 
v(X)^{\frac{1}{2}}\sigma(X)^{\frac{1}{2}}$ using \eqref{eq:wxvx} and 
$v(X^N_1)=v(X)$, $\sigma(X^N_1)=\sigma(X)$.

The above interpolation method proves to be a powerful tool for capturing 
``intrinsic freeness''. The same method can be used to control not just 
the moments, but also various other spectral statistics. In particular, we 
will control the full spectrum of $X$ by that of $X_{\rm free}$ by 
applying the interpolation method to large moments of the resolvent 
$\mathbf{E}[\ntr |z\id-X|^{-2p}]$. Such control of the full spectrum is 
crucial for the applications of our results to free probability described 
in Section \ref{sec:introasympfree}.

\begin{rem}
\label{rem:collins}
After the results of this paper were completed, we learned that a 
different interpolation method was recently used by Collins, Guionnet, and 
Parraud \cite{CGP19} to obtain a quantitative form of the strong asymptotic 
freeness of Wigner matrices due to Haagerup-Thorbj{\o}rnsen. Rather than 
interpolating between scalar Gaussians and Wigner matrices, \cite{CGP19} 
interpolate in the opposite direction, between Wigner matrices and a 
semicircular family, using the free Ornstein-Uhlenbeck semigroup.
The latter approach can only capture the noncommutativity of the 
Wigner matrices themselves, in contrast to the results of this paper that 
capture the noncommutativity of the coefficient matrices $A_i$ 
(``intrinsic freeness'') and therefore open the door to the study of 
general Gaussian random matrices. On the other hand, by exploiting the 
special structure of Wigner matrices, the methods of 
\cite{CGP19} can be adapted to obtain higher order expansions \cite{Par22} 
which play a key role in the recent work on the Peterson-Thom
conjecture \cite{BC22}. Taken together, all these results illustrate 
the power of interpolation methods for the study of quantitative phenomena
in free probability theory and random matrix theory.
\end{rem}

\subsection{Organization of this paper}

The rest of this paper is organized as follows. In Section \ref{sec:main}, 
the main results of this paper will be presented in full detail. The 
utility of our main results will then be illustrated in a number of 
concrete examples in Section \ref{sec:ex}. Section \ref{sec:prelim} 
briefly reviews some basic notions of free probability, and introduces 
various tools that are used throughout the rest of the paper. The proofs 
of our main results are given in Sections 
\ref{sec:spec}--\ref{sec:strongfree}.

The final Section \ref{sec:disc} is devoted to a discussion of various 
broader questions arising from our main results. In particular, we will 
show that there cannot exist a canonical choice of the parameter 
$\sigma_{**}(X)$ in the inequality \eqref{eq:secondorder}, as any such 
parameter must violate some natural property of the spectral norm. This 
disproves a conjecture, formulated in \cite{Tro18,vH17,Ban15}, which 
suggests that the parameter $v(X)$ in our main results can be replaced by 
a certain smaller parameter $\sigma_*(X)$ that will be defined below. 
We conclude by discussing a number of further questions.

\subsection{Notation}

The following notations will be frequently used throughout this paper. We 
write $[n]:=\{1,\ldots,n\}$ for $n\in\mathbb{N}$. For a bounded operator 
$X$ on a Hilbert space, we denote by $\|X\|$ its operator (i.e., spectral) 
norm and by $|X|:=(X^*X)^{\frac{1}{2}}$. The spectrum of $X$ is denoted as 
$\spc(X)$. If $X$ is self-adjoint and $h:\mathbb{R}\to\mathbb{C}$ is 
measurable, then the operator $h(X)$ is defined by the usual functional 
calculus (in particular, if $X$ is a self-adjoint matrix, $h$ is applied 
to the eigenvalues while keeping the eigenvectors fixed). The algebra of 
$d\times d$ matrices with values in a *-algebra $\mathcal{A}$ is denoted 
as $\M_d(\mathcal{A})$, and its subspace of self-adjoint matrices is 
denoted as $\M_d(\mathcal{A})_{\rm sa}$. For complex matrices 
$M\in\M_d(\mathbb{C})$, we always denote by $\tr M:=\sum_{i=1}^d M_{ii}$ 
the unnormalized trace and by $\ntr M := \frac{1}{d}\tr M$ the normalized 
trace. We use the convention that when an expectation is followed by
square brackets, the expectation is applied before any external 
operations (in particular, $\mathbf{E}[X]^\alpha := 
(\mathbf{E} X)^\alpha$).

\section{Main results}
\label{sec:main}

\subsection{Concentration of the spectrum}
\label{sec:mainspec}

The strongest results of this paper apply to arbitrary random matrices
with jointly Gaussian entries (this model is more general than the one
that was assumed for sake of illustration in the introduction). 
To define this model, fix $d\ge 2$ and $n\in\mathbb{N}$, let
$A_0,\ldots,A_n\in\M_d(\mathbb{C})$, let $g_1,\ldots,g_n$ be i.i.d.\ real 
Gaussian variables with zero mean and unit variance, and let 
$s_1,\ldots,s_n$ be a free semicircular family 
(cf.\ Section \ref{sec:free}). We now define
\begin{equation}
\label{eq:model}
	X := A_0 + \sum_{i=1}^n g_iA_i,\quad\qquad
	X_{\rm free} := A_0\otimes \id + \sum_{i=1}^n A_i\otimes s_i.
\end{equation}
In formulating our results, it will sometimes be convenient to assume in 
addition that the model is self-adjoint, that is, that $A_0,\ldots,A_n\in 
\M_d(\mathbb{C})_{\rm sa}$. In such cases this assumption will be made 
merely for notational convenience and is not a restriction, as will be 
explained in Remark \ref{rem:sa} below.

The following parameters will play a fundamental role in the sequel:
\begin{alignat*}{3}
	&\sigma(X)^2 &{}:={}& 
	\centermathcell{
		\Bigg\|\sum_{i=1}^n A_i^*A_i\Bigg\|\vee
		\Bigg\|\sum_{i=1}^n A_iA_i^*\Bigg\|
	}
	&&=
	\|\EE \hat X^*\hat X\|\vee \|\EE \hat X\hat X^*\|,
	\\
	&\sigma_*(X)^2 &{}:={}&
	\centermathcell{
		\sup_{\|v\|=\|w\|=1} \sum_{i=1}^n |\langle v,A_iw\rangle|^2
	}
	&&=
	\sup_{\|v\|=\|w\|=1}
	\EE[|\langle v,\hat Xw\rangle|^2],\\
	&v(X)^2 &{}:={}&
	\centermathcell{ 
		\sup_{{\tr}|M|^2\le 1} \sum_{i=1}^n |{\tr}[A_i M]|^2
	}
	&&= \|\mathrm{Cov}(X)\|,	
\end{alignat*}
where $\hat X:=X-\EE X$.
It follows readily from the definitions that $\sigma_*(X)\le v(X)$ and
$\sigma_*(X)\le\sigma(X)$. 
As the following combination will appear frequently, we let
\begin{equation*}
	\tilde v(X)^2 := v(X)\sigma(X).
\end{equation*}
Note that the definitions of these parameters 
do not involve $A_0$.

We can now formulate our main result on concentration of the spectrum of 
$X$. Here $\spc(M)$ denotes the spectrum of a self-adjoint operator $M$.

\begin{thm}
\label{thm:mainsp}
For the model \eqref{eq:model} with 
$A_0,\ldots,A_n\in\M_d(\mathbb{C})_{\rm sa}$, we have 
$$
	\PP\big[\spc(X) \subseteq \spc(X_{\rm free}) +
	C\{\tilde v(X)(\log d)^{\frac{3}{4}}+\sigma_*(X)t\}[-1,1]
	\big]
	\ge 1-e^{-t^2}
$$
for all $t\ge 0$, where $C$ is a universal constant.
\end{thm}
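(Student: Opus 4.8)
The plan is to control the full spectrum of $X$ by controlling large moments of the resolvent, and to transfer these to $X_{\rm free}$ via the Gaussian interpolation described in the overview. Concretely, to show $\spc(X)\subseteq \spc(X_{\rm free})+[-\varepsilon,\varepsilon]$ with high probability it suffices to show that for every $z\in\mathbb{R}$ with $\mathrm{dist}(z,\spc(X_{\rm free}))>\varepsilon$, the resolvent $(z\id-X)^{-1}$ exists and has norm not much larger than $\varepsilon^{-1}$. Since $\|(z\id-X)^{-1}\|^{2p}=\|(z\id-X)^{-2p}(z\id-X)^{-2p*}\|$ is dominated by the trace $\tr\,|z\id-X|^{-2p}$ for $p$ large, I would study the quantity $\EE[\ntr\,|z\id-X|^{-2p}]$ and its free analogue $(\ntr\otimes\tau)(|z\id - X_{\rm free}|^{-2p})$, and bound their difference. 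The first step is therefore to set up the auxiliary interpolation: introduce $X^N_q=\sum_i A_i\otimes(\sqrt{q}\,D_i^N+\sqrt{1-q}\,G_i^N)$, observe that $X^N_1$ has the same $*$-moments as $X$ (so also the same resolvent traces, after truncating the unbounded resolvent suitably, e.g.\ by working with $(z\id-X)$ for $z$ outside the spectrum and using that $\sigma(X^N_1)=\sigma(X)$ controls $\|X\|$ with high probability), while $X^N_0=X^N$ has resolvent traces converging as $N\to\infty$ to those of $X_{\rm free}$ by weak asymptotic freeness \eqref{eq:weakfree} applied to a polynomial approximation of $(z-\cdot)^{-1}$ on a fixed bounded interval.

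The core of the argument is to differentiate in $q$. Writing $\phi_p(M)=\ntr\,(z\id-M)^{-p}(z\id-M)^{-p*}$ or, more cleanly, expanding $(z\id-X^N_q)^{-2p}$ as a power series $\sum_{k}z^{-k-1}(X^N_q)^k$ and differentiating termwise (legitimate for $|z|$ large, then extended by a resolvent identity), the derivative $\frac{d}{dq}\EE[\ntr\,(\text{word in }X^N_q)]$ is computed by the Gaussian interpolation lemma \cite[\S 1.3]{Tal11}: each derivative pairs two occurrences of the same $\sqrt{q}D^N_i+\sqrt{1-q}G^N_i$ factor, and the difference between the $D$-type and $G$-type second moments is exactly what produces a \emph{crossing} — a term of the shape $\sum_{ij}\cdots A_i\cdots A_j\cdots A_i\cdots A_j\cdots$ sandwiched between arbitrary (random, but contractive after taking norms) matrix factors. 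This is where the parameter $\tilde w(X)=\sup_N w(X^N_1)$ enters: every such crossing contributes a factor bounded by $\tilde w(X)^4$ against $\sigma(X)$ to the remaining length, and one gets a differential inequality of the form $\big|\frac{d}{dq}\EE[\ntr\,|z\id-X^N_q|^{-2p}]\big|\lesssim p^{\,a}\,\tilde w(X)^{?}\,\sigma(X)^{?}\cdot(\text{resolvent moment of slightly higher order})$. Integrating over $q\in[0,1]$ and then choosing $p\asymp \log d$ (so that $\ntr$ versus $\|\cdot\|$ costs only a constant factor via $\|M\|\le (\tr\,|M|^{2p})^{1/2p}\le d^{1/2p}\|M\|$) converts this into the claimed spectral inclusion with $\varepsilon\asymp \tilde v(X)(\log d)^{3/4}$; the deterministic part of the bound. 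Finally I would invoke \eqref{eq:wxvx} together with $v(X^N_1)=v(X)$, $\sigma(X^N_1)=\sigma(X)$ to replace $\tilde w(X)$ by $v(X)^{1/2}\sigma(X)^{1/2}=\tilde v(X)$, which is the computable quantity appearing in the statement.

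The tail term $C\sigma_*(X)t$ with probability $1-e^{-t^2}$ is a separate, and easier, ingredient: it comes from Gaussian concentration of $\|X\|$ (more precisely of $\mathrm{dist}$ of each spectral point from a fixed reference) around its mean, since $X\mapsto\|f(X)\|$ for Lipschitz $f$ is a Lipschitz function of the Gaussian vector $(g_1,\dots,g_n)$ with Lipschitz constant controlled by $\sup_{\|v\|=\|w\|=1}(\sum_i|\langle v,A_iw\rangle|^2)^{1/2}=\sigma_*(X)$; the Gaussian concentration inequality then yields the subgaussian tail. One assembles the two pieces by a union bound over a net of reference points $z$ (of size polynomial in $d$, absorbed into the $(\log d)^{3/4}$ via the choice of $p$), controlling the resolvent on the net and interpolating by the resolvent identity.

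I expect the main obstacle to be the bookkeeping in the interpolation derivative: showing that \emph{every} term produced by differentiating a high power of the resolvent either (i) is a genuine crossing, hence small, carrying the factor $\tilde w(X)^4/\sigma(X)^2$, or (ii) reassembles into a resolvent moment of controlled order, so that the differential inequality closes without the power of $p$ (equivalently $\log d$) blowing up. Keeping the exponent of $\log d$ down to exactly $3/4$ — rather than the weaker $1/4$-in-the-leading-term bounds of \cite{Tro18} — is precisely the delicate point, and requires handling the unbounded resolvent (truncation of $z$ away from $\spc(X_{\rm free})$, control of $\|X^N_q\|$ uniformly in $q$ and $N$ with high probability) with some care before the combinatorial estimate can even be run.
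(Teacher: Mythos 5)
Your overall strategy---interpolate between $X^N_1$ and $X^N_0=X^N$, differentiate $\EE[\ntr |z\id-X^N_q|^{-2p}]$ via the Gaussian interpolation lemma, bound crossings by $w(X^N_q,X^N_{q'})^4$, choose $p\asymp\log d$ to pass from $\ntr$ to the norm, and add a Gaussian-concentration tail in $\sigma_*(X)$---is exactly the route the paper takes, down to the role of $\tilde w$ and the reduction $\tilde w\le\tilde v$ via Proposition~\ref{prop:vxwx} and $v(X^N_1)=v(X)$, $\sigma(X^N_1)=\sigma(X)$.

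The genuine gap is in the mechanism that converts resolvent estimates into the spectral inclusion, and this is not a bookkeeping detail but the pivot of the argument. You propose to work with real $z$ outside $\spc(X_{\rm free})$ and to ``handle the unbounded resolvent'' by truncation and a power-series expansion valid for $|z|$ large. This does not go through as stated: for real $z$ the quantity $\ntr\,|z\id - X^N_q|^{-2p}$ has no deterministic a priori bound (it blows up whenever an eigenvalue of $X^N_q$ lands near $z$), so the differential inequality in $q$ cannot be closed, and the termwise-differentiated power series is only valid for $|z|$ larger than $\|X^N_q\|$, a random quantity you have no control over yet. The paper sidesteps both problems by working throughout with $z\in\mathbb{C}$, $\mathrm{Im}\,z>0$, where $\|(z\id - X^N_q)^{-1}\|\le (\mathrm{Im}\,z)^{-1}$ deterministically; this keeps the resolvent-moment recursion closed with a clean $(\mathrm{Im}\,z)^{-5}$ factor (Theorem~\ref{thm:resmoments}). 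The conversion to a spectral inclusion is then done by evaluating the resolvent at $z=\lambda+i\varepsilon$ for $\lambda\in\spc(X)$: there one has $\|(z\id - X)^{-1}\|=1/\varepsilon$ exactly, so a bound of the form $\|(z\id-X)^{-1}\|\le C\|(z\id-X_{\rm free})^{-1}\|+K(\mathrm{Im}\,z)^{-5}+L(\mathrm{Im}\,z)^{-2}$ with $\varepsilon=(4K)^{1/4}\vee 4L$ forces $d(\lambda,\spc(X_{\rm free}))\le 2C\varepsilon$ by contradiction (Lemma~\ref{lem:resspec}). That one-paragraph lemma is the missing piece of your plan; without it (or an equivalent substitute) the reduction from ``resolvent moments are close'' to ``spectra are close'' is not established.

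Two secondary remarks. First, your statement ``to show $\spc(X)\subseteq\spc(X_{\rm free})+[-\varepsilon,\varepsilon]$ it suffices to show $\|(z\id-X)^{-1}\|\lesssim\varepsilon^{-1}$ for real $z$ at distance $>\varepsilon$ from $\spc(X_{\rm free})$'' is the wrong implication: bounding the resolvent by $C\varepsilon^{-1}$ only tells you $z$ is at distance $\ge\varepsilon/C$ from $\spc(X)$, whereas you need $z\notin\spc(X)$, i.e.\ the resolvent is finite, which you cannot certify from moments without a uniform a priori control; the complex-$z$ formulation avoids this. Second, the net argument needs an a priori localization of $\spc(X)$ (in the paper, the crude bound $\spc(X)\subseteq\spc(A_0)+\sigma_*(X)\{d+t\}[-1,1]$ from Lemma~\ref{lem:crudesupp}) to make the net of $z$-values finite; this is not ``absorbed into the choice of $p$'' but costs an extra $\sqrt{\log d}$ in the concentration radius, which is then dominated by $\tilde v(X)(\log d)^{3/4}$ since $\sigma_*(X)\le\tilde v(X)$.
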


The spectrum of $X_{\rm free}$ always consists of a finite union of 
bounded intervals \cite{AEK20}. Theorem~\ref{thm:mainsp} implies that when 
$v(X)\ll (\log d)^{-\frac{3}{2}}\sigma(X)$, \emph{all} eigenvalues of 
$X$ are close to the spectrum of $X_{\rm free}$. In particular, not only 
must the extreme eigenvalues of $X$ lie close to the edge of the spectrum 
of $X_{\rm free}$, but also the interior eigenvalues cannot lie far inside 
the gaps in the spectrum of $X_{\rm free}$.

When specialized to the extreme eigenvalues, Theorem \ref{thm:mainsp} 
yields a bound on the spectral norm of $X$. We formulate it here directly 
for non-self-adjoint matrices.

\begin{cor}
\label{cor:norm}
For the model \eqref{eq:model} with
$A_0,\ldots,A_n\in\M_d(\mathbb{C})$, we have
$$
	\PP\big[\|X\| > \|X_{\rm free}\| +
	C\tilde v(X)(\log d)^{\frac{3}{4}}+C\sigma_*(X)t
	\big]
	\le e^{-t^2}
$$
for all $t\ge 0$, where $C$ is a universal constant. Moreover,
$$
	\EE\|X\| \le \|X_{\rm free}\| +
	C\tilde v(X)(\log d)^{\frac{3}{4}}.
$$
\end{cor}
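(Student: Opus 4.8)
The plan is to derive Corollary~\ref{cor:norm} as a direct consequence of Theorem~\ref{thm:mainsp}, first reducing the non-self-adjoint case to the self-adjoint one and then extracting the expectation bound from the tail bound by integration.

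First I would handle the reduction to self-adjointness. For arbitrary $A_0,\ldots,A_n\in\M_d(\mathbb{C})$, pass to the $2d\times 2d$ Hermitian dilation
$$
	\mathcal{X} := \begin{pmatrix} 0 & X \\ X^* & 0 \end{pmatrix}
	= \mathcal{A}_0 + \sum_{i=1}^n g_i \mathcal{A}_i,
	\qquad
	\mathcal{A}_i := \begin{pmatrix} 0 & A_i \\ A_i^* & 0 \end{pmatrix}
	\in\M_{2d}(\mathbb{C})_{\rm sa}.
$$
One has $\|\mathcal{X}\| = \|X\|$, and since the semicircular elements $s_i$ are self-adjoint the same dilation identity at the level of operators gives $\|\mathcal{X}_{\rm free}\| = \|X_{\rm free}\|$ (here $\mathcal{X}_{\rm free}=\mathcal{A}_0\otimes\id+\sum_i\mathcal{A}_i\otimes s_i$, which is the Hermitian dilation of $X_{\rm free}$). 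A short computation with the block structure shows that the three parameters are essentially unchanged: $\sigma(\mathcal{X})=\sigma(X)$, $\sigma_*(\mathcal{X})=\sigma_*(X)$, and $v(\mathcal{X})=v(X)$ up to an absolute constant (the covariance matrix of $\mathcal{X}$ is a rearrangement of that of $X$ padded with zeros), hence $\tilde v(\mathcal{X})\asymp\tilde v(X)$. Since $\|X\|=\|\mathcal{X}\|=\max\spc(\mathcal{X})$ (the spectrum of a Hermitian dilation is symmetric), applying Theorem~\ref{thm:mainsp} to $\mathcal{X}$ and using $\spc(\mathcal{X}_{\rm free})\subseteq[-\|X_{\rm free}\|,\|X_{\rm free}\|]$ yields exactly the claimed tail bound
$$
	\PP\big[\|X\| > \|X_{\rm free}\| + C\tilde v(X)(\log d)^{\frac{3}{4}} + C\sigma_*(X)t\big] \le e^{-t^2}
$$
after absorbing the constant from the parameter comparison and noting $\log(2d)\asymp\log d$ for $d\ge 2$.

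For the expectation bound, write $M := \|X_{\rm free}\| + C\tilde v(X)(\log d)^{\frac{3}{4}}$ and $s := C\sigma_*(X)$, so the tail bound reads $\PP[\|X\| > M + st] \le e^{-t^2}$ for all $t\ge 0$. Then
$$
	\EE\|X\| \le M + \int_0^\infty \PP[\|X\| > M + u]\,du
	\le M + s\int_0^\infty e^{-(u/s)^2}\,du
	= M + \tfrac{\sqrt\pi}{2}\,s.
$$
Thus $\EE\|X\| \le \|X_{\rm free}\| + C\tilde v(X)(\log d)^{\frac{3}{4}} + C'\sigma_*(X)$. It remains to absorb the residual term $C'\sigma_*(X)$ into the main error term. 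Since $\sigma_*(X)\le\sigma(X)$ and $\sigma_*(X)\le v(X)$ (both noted in the text), we get $\sigma_*(X) \le \sqrt{\sigma_*(X)}\sqrt{v(X)} \le \sqrt{\sigma(X)\,v(X)} = \tilde v(X) \le \tilde v(X)(\log d)^{\frac{3}{4}}$ because $d\ge2$ forces $\log d\ge\log 2>0$; hence $C'\sigma_*(X)$ is dominated by adjusting the universal constant in front of $\tilde v(X)(\log d)^{\frac{3}{4}}$, which gives the stated expectation bound.

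The only place requiring genuine care is the parameter bookkeeping under the Hermitian dilation — one must check that $\sigma$, $\sigma_*$, and especially $v = \|\mathrm{Cov}\|^{1/2}$ really are preserved up to absolute constants, since $\mathrm{Cov}(\mathcal{X})$ lives in $\M_{(2d)^2}(\mathbb{C})$ and is obtained from $\mathrm{Cov}(X)$ by a permutation of coordinates together with zero blocks (the Hermitian dilation only couples the off-diagonal blocks), so its norm equals $\|\mathrm{Cov}(X)\|$. Everything else is a routine combination of Theorem~\ref{thm:mainsp} with the layer-cake formula and the elementary inequalities among $\sigma_*,v,\sigma$; no new idea is needed beyond what is already available in the excerpt.
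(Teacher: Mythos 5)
Your proposal is correct and follows essentially the same route as the paper: reduce to the self-adjoint case by Hermitian dilation, apply Theorem~\ref{thm:mainsp} together with $\max\spc(\mathcal X_{\rm free})\le\|X_{\rm free}\|$ to get the tail bound, then integrate the tail and absorb the residual $\sigma_*(X)$ term using $\sigma_*(X)\le\tilde v(X)$ and $\log d\ge\log 2$. One minor slip worth correcting: $\mathrm{Cov}(\mathcal X)$ is \emph{not} simply a permutation of $\mathrm{Cov}(X)$ padded with zeros with exactly equal norm. The entries of $\mathcal X$ comprise both $X_{ab}$ and $\overline{X_{ab}}$, so the $2d^2\times 2d^2$ nontrivial block of $\mathrm{Cov}(\mathcal X)$ also contains the pseudo-covariances $\EE[X_{ab}X_{a'b'}]$, which are generally nonzero (e.g.\ when the $A_i$ are real). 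The correct relation is $v(X)\le v(\mathcal X)\le\sqrt2\,v(X)$ (as in Remark~\ref{rem:sa} and Section~\ref{sec:sa}), which you had already anticipated with the phrase ``up to an absolute constant,'' so the argument is unaffected.
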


Theorem \ref{thm:mainsp} and Corollary \ref{cor:norm} will be proved
in Section \ref{sec:norm}.

\begin{rem}
In order to apply Corollary \ref{cor:norm} in concrete situations, we must 
be able to compute or estimate $\|X_{\rm free}\|$. For ease of reference, 
we presently recall two useful facts; further discussion and references 
may be found in Section \ref{sec:free}. In the following, $\lambda_{\rm 
max}(M)$ denotes the maximal eigenvalue of a self-adjoint matrix $M$.
\end{rem}

\begin{lem}[Lehner]
\label{lem:lehner}
For the model \eqref{eq:model} with
$A_0,\ldots,A_n\in\M_d(\mathbb{C})_{\rm sa}$, we have
$$
	\|X_{\rm free}\| = \max_{\varepsilon=\pm 1}
	\inf_{Z>0}\lambda_{\rm max}
	\Bigg(Z^{-1}+\varepsilon A_0 + \sum_{i=1}^n A_i Z A_i
	\Bigg),
$$
where the infimum is over positive definite $Z\in\M_d(\mathbb{C})_{\rm sa}$.
The infimum may be further restricted to $Z$ for which the matrix in 
$\lambda_{\rm max}(\cdots)$ is a multiple of the identity.
\end{lem}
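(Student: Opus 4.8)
The plan is to reduce the self-adjoint operator $X_{\rm free}$ to a situation where an operator-valued (matricial) semicircular free probability computation applies, and then extract the scalar norm via a variational principle. First I would recall the general principle: if $s_1,\dots,s_n$ is a free semicircular family in a $W^*$-probability space $(\mathcal{M},\tau_{\mathcal{M}})$ and $A_0,\dots,A_n\in\M_d(\mathbb{C})_{\rm sa}$, then $X_{\rm free}=A_0\otimes\id+\sum_i A_i\otimes s_i$ is an operator-valued semicircular element over $\M_d(\mathbb{C})$ (with respect to the conditional expectation $\id\otimes\tau_{\mathcal{M}}$), with covariance map $\eta:\M_d(\mathbb{C})\to\M_d(\mathbb{C})$, $\eta(B)=\sum_{i=1}^n A_i B A_i$, and mean $A_0$. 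This is standard (see, e.g., Section \ref{sec:free} of the paper, which we are allowed to cite); the key structural input is that freeness of the $s_i$ makes the mixed $\eta$-cumulants vanish, so only the "diagonal" quadratic term $\sum_i A_i(\cdot)A_i$ survives.

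The second step is to compute $\|X_{\rm free}\|$ from the subordination/Dyson equation for matricial semicircular elements. The matrix-valued Cauchy transform $G(z)=(\id\otimes\tau_{\mathcal{M}})\big[(z\id-X_{\rm free})^{-1}\big]\in\M_d(\mathbb{C})$ satisfies the fixed-point equation $G(z)=(z-A_0-\eta(G(z)))^{-1}$ for $z$ in the upper half-plane, and the spectrum of $X_{\rm free}$ is exactly the complement of the set of real $z$ for which this admits a suitable (Herglotz) solution. For the edge $\lambda_{\max}(X_{\rm free})$ one then uses the known variational characterization: $\lambda_{\max}$ equals the infimum over positive definite $Z$ of $\lambda_{\max}(Z^{-1}+A_0+\eta(Z))=\lambda_{\max}(Z^{-1}+A_0+\sum_i A_i Z A_i)$. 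Taking $\varepsilon=-1$ (i.e. replacing $A_0$ by $-A_0$, equivalently looking at $-X_{\rm free}$) handles the smallest eigenvalue, and $\|X_{\rm free}\|=\lambda_{\max}(X_{\rm free})\vee\lambda_{\max}(-X_{\rm free})$ gives the $\max_{\varepsilon=\pm1}$. I would cite \cite{Leh99} for this variational formula (Lehner's original theorem already states precisely this), so the role of the proof here is really to verify that $X_{\rm free}$ as defined in \eqref{eq:model} is indeed an $\M_d(\mathbb{C})$-valued semicircular element with the stated covariance, and then invoke Lehner.

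For the final sentence (that the infimum may be restricted to those $Z$ making $Z^{-1}+\varepsilon A_0+\sum_i A_i Z A_i$ a multiple of the identity), I would argue via the first-order optimality condition: at a minimizer $Z_\star$ of $Z\mapsto\lambda_{\max}(Z^{-1}+\varepsilon A_0+\sum_i A_i Z A_i)$, differentiating in the direction of a Hermitian perturbation and using that the top eigenspace of the matrix is where the "gradient" is tested, one finds the matrix at the optimum must commute with (indeed be detected by) its own maximal eigenprojection in a way that forces it to be scalar; alternatively, since the functional is convex in $Z$ and its value at the optimum is a fixed point of the Dyson-type equation, the solution $Z_\star=G(\lambda_{\max})$ inverts to $Z_\star^{-1}=\lambda_{\max}\id-A_0-\eta(Z_\star)$, which is manifestly $\lambda_{\max}\id$ minus the argument — rearranging shows the argument equals $\lambda_{\max}\id$ at $Z=Z_\star$. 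I expect the main obstacle to be purely expository: assembling the precise form of the operator-valued Dyson equation and the associated edge variational principle from the literature in a self-contained way, and making sure the sign conventions ($\varepsilon=\pm1$ for the two spectral edges, and $+A_0$ versus $-A_0$) are consistent between the statement and the cited results \cite{Leh99,AEK20}. The free-probabilistic content — vanishing of mixed cumulants giving the covariance map $\eta(B)=\sum_i A_i B A_i$ — is routine once the definitions of Section \ref{sec:free} are in hand.
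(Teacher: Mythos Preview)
Your proposal misses the actual content of the paper's proof. You write that ``Lehner's original theorem already states precisely this,'' but the paper explicitly notes (just before the proof) that Lehner's result \cite[Corollary 1.5]{Leh99} is formulated only for $A_0\ge 0$, and moreover it gives $\|X_{\rm free}\|$ as an infimum of \emph{norms}, not $\lambda_{\max}(X_{\rm free})$ as an infimum of maximal eigenvalues. The passage from Lehner's stated formula to the $\lambda_{\max}$ version with arbitrary self-adjoint $A_0$ is precisely what the lemma is asserting and what the proof must supply. Your proposal treats this as given and moves on to other matters.

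The paper's argument is a short shift trick you did not identify: set $t=\|X_{\rm free}\|$, observe $t\ge\|A_0\|$ so that both $X_{\rm free}+t\id\ge 0$ and $A_0+t\id\ge 0$, and then apply Lehner's formula to $X_{\rm free}+t\id$. Because the shifted operator is positive, its norm equals its maximal eigenvalue, so Lehner's norm identity becomes
\[
\lambda_{\max}(X_{\rm free})+t=\inf_{Z>0}\lambda_{\max}\Big(Z^{-1}+A_0+t\id+\textstyle\sum_i A_iZA_i\Big),
\]
and the $t$ cancels. Applying the same to $-X_{\rm free}$ gives the $\varepsilon=-1$ case. The ``multiple of the identity'' refinement is already part of Lehner's statement and is simply carried through the shift; your first-order optimality and Dyson-fixed-point sketches are unnecessary here and, as written, are too vague to stand on their own (the claim that optimality ``forces it to be scalar'' needs a real argument about the structure of the subdifferential of $\lambda_{\max}$).

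The operator-valued semicircular and Dyson-equation framework you describe is correct background, but in this paper it is not the mechanism of the proof---the proof is a two-line reduction to Lehner's positive case.
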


\begin{lem}[Pisier]
\label{lem:freenck}
For the model \eqref{eq:model} with 
$A_0,\ldots,A_n\in\M_d(\mathbb{C})$, we have
$$
	\|A_0\|\vee\sigma(X) \le \|X_{\rm free}\| \le
	\|A_0\|+
	\Bigg\|\sum_{i=1}^n A_i^*A_i\Bigg\|^{\frac{1}{2}} +
	\Bigg\|\sum_{i=1}^n A_iA_i^*\Bigg\|^{\frac{1}{2}}.
$$
\end{lem}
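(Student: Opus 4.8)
The statement is Pisier's lemma, and I would prove the two inequalities by quite different mechanisms: the lower bound via the canonical conditional expectation, and the upper bound via the Fock space realization of a free semicircular family.

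\emph{Lower bound.} Let $(\mathcal{A},\tau)$ be the tracial $W^*$-probability space generated by $s_1,\dots,s_n$, and let $\mathcal{E}:=\mathrm{id}_{\M_d(\mathbb{C})}\otimes\tau$ be the trace-preserving conditional expectation from $\M_d(\mathbb{C})\otimes\mathcal{A}$ onto $\M_d(\mathbb{C})$; being a conditional expectation, it is a contraction. Since the semicircular elements satisfy $\tau(s_i)=0$ and $\tau(s_is_j)=\delta_{ij}$ (variance one, the mixed second moments vanishing by freeness and centeredness), a direct computation gives $\mathcal{E}(X_{\rm free})=A_0$, $\mathcal{E}(X_{\rm free}^*X_{\rm free})=A_0^*A_0+\sum_{i=1}^n A_i^*A_i$, and $\mathcal{E}(X_{\rm free}X_{\rm free}^*)=A_0A_0^*+\sum_{i=1}^n A_iA_i^*$. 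Contractivity of $\mathcal{E}$ applied to the first identity yields $\|A_0\|\le\|X_{\rm free}\|$. Applied to the other two, together with $0\le\sum_i A_i^*A_i\le A_0^*A_0+\sum_i A_i^*A_i$ (and similarly for $A_iA_i^*$) and monotonicity of the norm on the positive cone, it yields $\|\sum_i A_i^*A_i\|\vee\|\sum_i A_iA_i^*\|\le\|X_{\rm free}^*X_{\rm free}\|=\|X_{\rm free}\|^2$, i.e.\ $\sigma(X)\le\|X_{\rm free}\|$.

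\emph{Upper bound.} Here I would pass to the concrete model of Section \ref{sec:free}: write $s_i=\ell_i+\ell_i^*$, where $\ell_1,\dots,\ell_n$ are the left creation operators on the full Fock space over $\mathbb{C}^n$, so that $\ell_i^*\ell_j=\delta_{ij}\,\id$. (This is legitimate because the norm of a noncommutative polynomial in a free semicircular family is intrinsic, being determined by the $*$-distribution together with faithfulness of $\tau$.) Then $\sum_i A_i\otimes s_i=C+D$ with $C=\sum_i A_i\otimes\ell_i$ and $D=\sum_i A_i\otimes\ell_i^*=\big(\sum_i A_i^*\otimes\ell_i\big)^*$, and the relation $\ell_i^*\ell_j=\delta_{ij}\id$ gives $C^*C=\big(\sum_i A_i^*A_i\big)\otimes\id$ and $\big(\sum_i A_i^*\otimes\ell_i\big)^*\big(\sum_i A_i^*\otimes\ell_i\big)=\big(\sum_i A_iA_i^*\big)\otimes\id$. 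Hence $\|C\|=\|\sum_i A_i^*A_i\|^{1/2}$ and $\|D\|=\|\sum_i A_iA_i^*\|^{1/2}$, and the triangle inequality gives $\|X_{\rm free}\|\le\|A_0\otimes\id\|+\|C\|+\|D\|=\|A_0\|+\|\sum_i A_i^*A_i\|^{1/2}+\|\sum_i A_iA_i^*\|^{1/2}$.

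Neither half is genuinely hard — this is a classical estimate (attributed here to Pisier, cf.\ \cite{Pis03}). The only point demanding any care is the replacement, in the upper bound, of the abstract free semicircular family by its Fock space realization, which relies on the standard facts that a free semicircular family is unique up to $*$-isomorphism and that operator norms of polynomials in it are intrinsic in a $C^*$-probability space with faithful trace; these are part of the setup recalled in Section \ref{sec:free}.
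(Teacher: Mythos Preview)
Your proof is correct and follows essentially the same approach as the paper: the lower bound via the contraction $\mathrm{id}\otimes\tau$ (which the paper invokes in the proof of Lemma~\ref{lem:lehner} for $\|A_0\|$ and attributes to Pisier for $\sigma(X)$), and the upper bound via the Fock space realization and the creation/annihilation decomposition (cf.\ the proof of Proposition~\ref{prop:samplefree}, which the paper says is ``very similar''). The paper only sketches the argument and cites \cite[p.~208]{Pis03}; your write-up is a faithful unfolding of that sketch.
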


Note that the combination of Corollary \ref{cor:norm} and Lemma 
\ref{lem:freenck} immediately yields a Gaussian matrix concentration 
inequality of the form \eqref{eq:secondorder}.

\begin{rem}
\label{rem:sa}
For simplicity, we formulated results such as Theorem 
\ref{thm:mainsp} and Lemma \ref{lem:lehner} for self-adjoint matrices.
The following standard device makes it possible to reduce the 
general case to the self-adjoint case. Given
$A_0,\ldots,A_n\in\M_d(\mathbb{C})$, define the matrices
$\breve A_0,\ldots,\breve A_n\in M_{2d}(\mathbb{C})_{\rm sa}$, $\breve X$, 
and $\breve X_{\rm free}$ as
$$
	\breve A_i = \begin{bmatrix}
	0 & A_i \\
	A_i^* & 0
	\end{bmatrix},\qquad
	\breve X = \begin{bmatrix}
	0 & X \\
	X^* & 0
	\end{bmatrix},\qquad
	\breve X_{\rm free} = \begin{bmatrix}
	0 & X_{\rm free} \\
	X_{\rm free}^* & 0
	\end{bmatrix}.
$$
Then it is not difficult to show (see Section \ref{sec:sa}) that
$$
	\spc(\breve X)\cup\{0\} = \spc(|X|) \cup {-\spc(|X|)}\cup\{0\},
$$
and analogously for $X_{\rm free}$; moreover, we have
$$ 
	\sigma(\breve X)=\sigma(X),\qquad
	\sigma_*(\breve X)=\sigma_*(X),\qquad
	v(\breve X) \le \sqrt{2}\,v(X).
$$
Applying Theorem \ref{thm:mainsp} to $\breve X$ therefore shows that 
in the non-self-adjoint case, the \emph{singular values} of $X$ 
concentrate around those of  $X_{\rm free}$. Similarly, we can apply
Lemma \ref{lem:lehner} to $\breve X_{\rm free}$ to obtain an explicit
formula for $\|X_{\rm free}\|$.

The above construction does not require the matrices $A_i$ to be square. 
However, if $A_i$ are $d_1\times d_2$ matrices with $d_1<d_2$, 
the singular values of $X$ are unchanged if we add $d_2-d_1$ zero rows to 
the matrix. Thus there is no loss of generality in restricting attention to 
square matrices, as we do for simplicity throughout this paper. 
\end{rem}

\subsection{Spectral statistics}
\label{sec:mainstat}

The results of the previous section quantify concentration of the 
eigenvalues of $X$ near the spectrum of $X_{\rm free}$. We now formulate 
several complementary results that quantify the closeness of the spectral 
distributions of $X$ and $X_{\rm free}$. We begin by stating a bound on 
the moments.

\begin{thm}
\label{thm:moments}
For the model \eqref{eq:model} with
$A_0,\ldots,A_n\in\M_d(\mathbb{C})$, we have
$$
	|\EE[\ntr |X|^{2p}]^{\frac{1}{2p}}-
	({\ntr}\otimes\tau)(|X_{\rm free}|^{2p})^{\frac{1}{2p}}|
	\le 2p^{\frac{3}{4}}\tilde v(X)
$$
for all $p\in\mathbb{N}$.
\end{thm}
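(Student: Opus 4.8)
The plan is to prove Theorem~\ref{thm:moments} via the interpolation argument outlined in Section~\ref{sec:overviewpf}. Fix $p\in\mathbb{N}$. For $q\in[0,1]$ and $N\in\mathbb{N}$, define
$$
	X^N_q := A_0\otimes\id + \sum_{i=1}^n A_i\otimes(\sqrt{q}\,D_i^N + \sqrt{1-q}\,G_i^N),
$$
where $G_1^N,\ldots,G_n^N$ are independent standard Wigner matrices and $D_1^N,\ldots,D_n^N$ are independent diagonal matrices with i.i.d.\ standard Gaussian entries, all independent. Set $\varphi(q,N) := \EE[(\ntr\otimes\ntr)(|X^N_q|^{2p})]$. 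The two endpoints are the objects of interest: $\varphi(1,N) = \EE[\ntr|X|^{2p}]$ for every $N$ (since $\sqrt{q}D_i^N+\sqrt{1-q}G_i^N$ at $q=1$ is a diagonal matrix of i.i.d.\ Gaussians, and one checks directly that $\ntr$-moments of $A_0\otimes\id+\sum_i A_i\otimes D_i^N$ coincide with those of $X$), while $\lim_{N\to\infty}\varphi(0,N) = (\ntr\otimes\tau)(|X_{\rm free}|^{2p})$ by Voiculescu's weak asymptotic freeness \eqref{eq:weakfree} applied to the self-adjoint dilation $\breve X^N$ (cf.\ Remark~\ref{rem:sa}) — only weak asymptotic freeness is needed here.

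The heart of the argument is to bound $\bigl|\tfrac{\partial}{\partial q}\varphi(q,N)\bigr|$ uniformly in $q$ and $N$. First rewrite $|X^N_q|^{2p} = (\breve X^N_q)^{2p}$ restricted appropriately via the dilation, so that we may work with self-adjoint matrices and the even moment $\EE[(\ntr\otimes\ntr)((\breve X^N_q)^{2p})]$. Writing $Y^N_q := \sqrt{q}\,D_i^N+\sqrt{1-q}\,G_i^N$, the $q$-derivative differentiates the Gaussian family $\{\sqrt{q}(D_i^N)_{ab}+\sqrt{1-q}(G_i^N)_{ab}\}$; applying the Gaussian interpolation lemma \cite[\S1.3]{Tal11} (equivalently, integration by parts) produces a sum over pairs of ``slots'' in the word $\breve A_{i_1}Y_{i_1}\breve A_{i_2}Y_{i_2}\cdots$ where the two differentiated entries are matched, weighted by the difference of covariances $\tfrac12(\text{Cov of }D - \text{Cov of }G)$. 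The key structural point — and this is exactly why the construction of $X^N_q$ has ``the right form'' — is that the covariance of the diagonal matrix $D_i^N$ minus that of the Wigner matrix $G_i^N$ is supported precisely on the configurations that, after re-expanding the trace, force a \emph{crossing}: the resulting terms all have the shape $\sum_{ij}\ntr[\,\cdots A_i\cdots A_j\cdots A_i\cdots A_j\cdots]$ appearing inside $\EE(\ntr\otimes\ntr)$ of a product of two words, each flanked by unitaries (coming from the other, non-differentiated factors via a polar/absorption argument, as in \cite{Tro18}). Collecting these, one obtains a bound
$$
	\Bigl|\tfrac{\partial}{\partial q}\varphi(q,N)\Bigr|
	\le C\,p^{2}\,w(\breve X^N_q)^{4}\,\varphi(q,N)^{1-2/p}
$$
or a similar differential inequality; here $w(\cdot)$ is Tropp's crossing parameter and the power of $\varphi$ comes from bounding the remaining $2p-4$ matrix factors in operator norm inside the normalized trace by Hölder. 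Since $\breve X^N_q$ has the same form as $\breve X^N_1$ up to the Gaussian family used, $w(\breve X^N_q)\le \sup_N w(X^N_1) = \tilde w(X)$, and then \eqref{eq:wxvx} together with $v(X^N_1)=v(X)$, $\sigma(X^N_1)=\sigma(X)$ gives $\tilde w(X)\le \tilde v(X)$, i.e.\ $w(\breve X^N_q)^4\le \tilde v(X)^4$ (up to the harmless $\sqrt2$ from the dilation, absorbed into $C$).

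Integrating the differential inequality from $q=0$ to $q=1$ and taking $N\to\infty$ converts it into a bound on $\bigl|\varphi(1,N)^{1/2p}-\varphi(0,N)^{1/2p}\bigr|$ — the $1/2p$ power arises naturally because $\tfrac{d}{dq}\varphi^{1/2p} = \tfrac{1}{2p}\varphi^{1/2p-1}\tfrac{d}{dq}\varphi$, and the factor $\varphi^{1-2/p}$ on the right combines with $\varphi^{1/2p-1}$ to leave a bounded power of $\varphi$ that can be controlled by an a priori bound $\varphi(q,N)^{1/2p}\lesssim\sigma(X)$ (from Lemma~\ref{lem:freenck}-type estimates valid uniformly along the interpolation). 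Careful bookkeeping of the combinatorial constants — there are at most $O(p^2)$ choices of the crossing pair of slots, and the covariance-difference weights are $O(1/N)$ but summed over $N$ indices to give $O(1)$ — yields the clean constant, producing $|\EE[\ntr|X|^{2p}]^{1/2p} - (\ntr\otimes\tau)(|X_{\rm free}|^{2p})^{1/2p}| \le 2p^{3/4}\tilde v(X)$. I expect the main obstacle to be precisely this last step: extracting the sharp exponent $p^{3/4}$ (rather than a cruder $p$ or $p^{1}$) requires exploiting that the differential inequality is of a \emph{sublinear} type in $\varphi$ — one must track how the Hölder splitting interacts with the $1/2p$-th power, and likely optimize the splitting of the $2p$ factors (e.g.\ bounding $4$ in the crossing word and distributing the rest) so that the integrated bound scales as $p^{3/4}$; getting this exponent right, together with verifying that the unitary-absorption step genuinely reduces the derivative to $w(\cdot)$ and not a larger quantity, is the delicate part of the argument.
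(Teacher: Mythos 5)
Your high-level strategy matches the paper's: interpolate via $X^N_q$, compute $\frac{d}{dq}$ by Gaussian integration by parts, bound the derivative by the crossing parameter, integrate, and let $N\to\infty$ using only weak asymptotic freeness. However, there are two concrete gaps that prevent the sketch from closing.

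First, the step that produces crossings is subtler than you describe. A single application of Gaussian interpolation (the paper's Lemma~\ref{lem:momentraw}) yields terms of the form $\EE[\ntr\, A_{irs}(X_q^N)^k A_{irs}(X_q^N)^{2p-2-k}]$ weighted by $\delta_{rs}-\tfrac1N$; these are \emph{not} yet of crossing type, and in fact they do not vanish when the coefficients commute. Simply saying ``the covariance difference of $D_i^N$ and $G_i^N$ is supported on configurations that force a crossing'' skips the essential mechanism: the paper observes via a signed-permutation symmetry (Lemma~\ref{lem:ptrace}) that the ``decoupled'' version $\sum(\delta_{rs}-\tfrac1N)\ntr A_{irs}\EE[(X_q^N)^k]A_{irs}\EE[(X_q^N)^{2p-2-k}]$ vanishes identically (Corollary~\ref{cor:momentrawmf}), so the raw derivative equals a covariance of two polynomials in the Gaussians; then a \emph{second} round of Gaussian integration by parts (the covariance identity, Corollary~\ref{cor:gcov}) re-expands that covariance and only at that stage do genuine crossing words $\sum_{i,i'}A_{i'}\cdots A_i\cdots A_{i'}\cdots A_i\cdots$ appear, to which Lemma~\ref{lem:complexint} and Proposition~\ref{prop:vxwx} can be applied. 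Without the vanishing observation and the second integration by parts, the derivative is not bounded by $w(\cdot)^4$. (Incidentally, the resulting combinatorial factor is of order $p^4$, not $p^2$: one sums over $k$ and over two additional positions $l,m$ from the covariance identity, with a leading factor of $p$, giving $\binom{2p-1}{3}\cdot p$.)

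Second, you correctly flag the $p^{3/4}$ exponent as the delicate point, but the route you gesture at (optimizing the H\"older split, or tracking $\varphi^{1/(2p)}$ with an a priori bound on the leftover power of $\varphi$) does not produce it. After H\"older, the differential inequality reads $|\frac{d}{dq}\varphi|\lesssim p^4\,w^4\,\varphi^{1-2/p}$. Applying the chain rule to $\varphi^{1/(2p)}$ leaves a factor $\varphi^{-3/(2p)}$; bounding this by $\sigma(X)^{-3}$ yields a quantity of order $p^3 v(X)^2/\sigma(X)$, which is not of the form $p^{3/4}\tilde v(X)$ and has the wrong $p$-dependence. The paper's trick is instead to track $\varphi^{2/p}$: for that power the chain-rule factor $\varphi^{2/p-1}$ \emph{exactly} cancels the $\varphi^{1-2/p}$ from H\"older, giving a bound $|\frac{d}{dq}\varphi^{2/p}|\lesssim p^3\{\dots\}$ with no remaining $\varphi$-dependence; one integrates to get $|\varphi(1)^{2/p}-\varphi(0)^{2/p}|\lesssim p^3\tilde v(X)^4$, and then uses the elementary inequality $x-y\le (x^4-y^4)^{1/4}$ for $x\ge y\ge 0$ to pass from the $2/p$-th power to the $1/(2p)$-th power and produce $p^{3/4}\tilde v(X)$. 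This choice of power is the specific trick you were missing, and it cannot be replaced by an a priori bound on $\varphi$.

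Finally, a minor point: the paper proves the self-adjoint case directly and only applies the dilation of Remark~\ref{rem:sa} at the very end (picking up a factor $2^{1/4}$ in $\tilde v$); dilating at the outset as you propose also works but one must then track the dilation consistently through Lemmas~\ref{lem:momentparm} and \ref{lem:ptrace}.
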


Let us emphasize that unlike the results of Section \ref{sec:mainspec}, 
Theorem \ref{thm:moments} gives a two-sided bound on $X$ in terms of 
$X_{\rm free}$. This opens the door to obtaining sharp asymptotics from 
our nonasymptotic bounds.

The same method of proof is readily applied to other spectral statistics. 
To illustrate this, we will bound the matrix-valued Stieltjes transform, 
which plays an important role in operator-valued free probability 
\cite[Chapters 9--10]{MS17}. A bound of this kind is most naturally 
formulated for self-adjoint matrices.

\begin{thm}
\label{thm:stieltjes}
For the model \eqref{eq:model} with 
$A_0,\ldots,A_n\in\M_d(\mathbb{C})_{\rm sa}$, define the matrix-valued 
Stieltjes transforms $G(Z),G_{\rm free}(Z)\in \M_d(\mathbb{C})$ as
$$
	G(Z) := \EE[(Z - X)^{-1}],
	\qquad\quad
	G_{\rm free}(Z) := (\mathrm{id}\otimes\tau)[(
	Z\otimes\id - X_{\rm free})^{-1}].
$$
Then we have
$$
	\|G(Z)-G_{\rm free}(Z)\| \le \tilde v(X)^4
	\|(\mathrm{Im}\,Z)^{-5}\|
$$
for all $Z\in\M_d(\mathbb{C})$ with
$\mathrm{Im}\,Z:=\frac{1}{2i}(Z-Z^*)>0$.
\end{thm}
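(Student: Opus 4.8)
The plan is to run the same interpolation machinery that underlies Theorems~\ref{thm:mainsp} and~\ref{thm:moments}, but applied to the resolvent rather than to powers of $|X|$. The first step is to introduce the interpolating family $X_q^N = \sum_i A_i\otimes(\sqrt{q}\,D_i^N + \sqrt{1-q}\,G_i^N)$ described in Section~\ref{sec:overviewpf}, and to consider the matrix-valued quantity $\Phi_N(q) := \EE[(\mathrm{id}\otimes\ntr)((Z\otimes\id - X_q^N)^{-1})]$ for fixed $Z$ with $\mathrm{Im}\,Z>0$. By the properties of the interpolation recorded in the introduction, $\Phi_N(1)$ equals $G(Z)$ for every $N$, while $\Phi_N(0) = \EE[(\mathrm{id}\otimes\ntr)((Z\otimes\id - X^N)^{-1})] \to G_{\rm free}(Z)$ as $N\to\infty$ by weak asymptotic freeness (applied entrywise, after approximating the resolvent by polynomials or using that the resolvent is a bounded continuous function of the matrix on the region $\mathrm{Im}\,Z>0$). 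So it suffices to bound $\|\Phi_N(1)-\Phi_N(0)\|$ uniformly in $N$, and for that we differentiate in $q$.

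The second step is the differentiation itself. Writing $R_q := (Z\otimes\id - X_q^N)^{-1}$, the Gaussian interpolation lemma (\cite[\S 1.3]{Tal11}) gives $\frac{d}{dq}\Phi_N(q)$ as a sum over pairs $i,j$ of expectations of terms of the schematic form $(\mathrm{id}\otimes\ntr)\big(R_q (A_i\otimes \cdots) R_q (A_j\otimes\cdots) R_q (A_i\otimes\cdots) R_q (A_j\otimes\cdots) R_q\big)$, where the factors sandwiched between the $A_i$'s come from the diagonal matrices $D_i^N$ and the Wigner matrices $G_i^N$; the point of the specific $\sqrt q / \sqrt{1-q}$ split is that the derivative only produces \emph{crossing} terms, exactly the terms that distinguish $X$ from $X_{\rm free}$. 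The third step is to estimate these crossing terms. Each resolvent factor $R_q$ has norm at most $\|(\mathrm{Im}\,Z)^{-1}\|$; there are five resolvent factors, which is where the power $\|(\mathrm{Im}\,Z)^{-5}\|$ comes from. The remaining matricial factors involving the $A_i$'s are controlled, after taking the supremum over the unitary ``conjugating'' factors that appear, by $w(X_q^N)$, and hence by $\tilde w(X) = \sup_N w(X_1^N)$; invoking \eqref{eq:wxvx} together with $v(X_q^N)=v(X)$ and $\sigma(X_q^N)=\sigma(X)$ bounds this by $\tilde v(X)^2$ raised to the appropriate power. Tracking the combinatorics of which crossing terms appear should yield $\big\|\frac{d}{dq}\Phi_N(q)\big\| \lesssim \tilde v(X)^4\,\|(\mathrm{Im}\,Z)^{-5}\|$ with an absolute constant, and integrating over $q\in[0,1]$ then gives the claimed inequality after letting $N\to\infty$.

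The main obstacle I anticipate is the bookkeeping in the second and third steps: identifying precisely which terms survive in $\frac{d}{dq}\Phi_N(q)$ (the cancellation of the noncrossing contributions), packaging the $A_i$-dependent factors so that a supremum over auxiliary unitaries reduces them to $w$ of an appropriate auxiliary matrix $\tilde X$ of larger dimension (this is why the proof naturally works with $\tilde w$ rather than $w$), and keeping the constants and the exact powers of $\tilde v$ and $\|(\mathrm{Im}\,Z)^{-1}\|$ aligned. A secondary, more routine point is justifying the $N\to\infty$ limit of $\Phi_N(0)$: since $\mathrm{Im}\,Z>0$ the resolvent is uniformly bounded along the interpolation, so one can pass from polynomial test functions in \eqref{eq:weakfree} to the resolvent either by a Weierstrass-type approximation on the relevant spectral window combined with a priori norm bounds (Lemma~\ref{lem:freenck} controls $\|X_{\rm free}\|$, and standard Gaussian concentration controls $\|X^N\|$), or by expanding the resolvent in a Neumann series when $\|\mathrm{Im}\,Z\|$ is large and then using analyticity in $Z$.
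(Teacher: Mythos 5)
Your proposal follows the same interpolation strategy as the paper's proof: interpolate via $X_q^N$, differentiate in $q$, show the derivative is controlled by crossing terms, bound these by $w$ (and hence $\tilde v$) together with five resolvent norms, integrate, and pass to the limit $N\to\infty$ via weak asymptotic freeness. One place where your description papers over a genuine step is in how the crossing structure appears. A single application of the Gaussian interpolation lemma to $\Phi_N(q)$ only yields terms of the schematic form $R_q\,A_{irs}\,R_q\,A_{irs}\,R_q$, with the \emph{same} coefficient $A_{irs}$ appearing twice (the sum over pairs $(i,j)$ collapses to the diagonal because the covariance matrices of the interpolating vectors are diagonal). These are not yet the $A_i\cdots A_j\cdots A_i\cdots A_j$ crossing terms. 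To extract the crossings the paper uses two further ingredients: the permutation-invariance identity (Lemma \ref{lem:ptrace}) which shows that the ``decoupled'' version $\sum_{i,r\ge s}(\delta_{rs}-\tfrac{1}{N})\ntr[A_{irs}\,\EE[R_q]\,A_{irs}\,\EE[R_qMR_q]]$ vanishes identically, and the Gaussian covariance identity (Corollary \ref{cor:gcov}) applied to the difference of the coupled and decoupled quantities, which is what produces the double sum over $(i,j)$ with the pattern $A_{i'r's'}R A_{irs} R A_{i'r's'} R_t A_{irs} R_t M R_t$ that $w(\cdot,\cdot)$ controls via Lemma \ref{lem:complexint}. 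You do flag this as the main obstacle, so the gap is more presentational than conceptual, but it is worth emphasizing that the cancellation is not automatic bookkeeping: it hinges on the specific normalization of $D_i^N$ and $G_i^N$ and requires a second, distinct application of Gaussian integration by parts. The remaining steps (five resolvent factors giving $\|(\mathrm{Im}\,Z)^{-5}\|$, $\tilde v(X)^4$ from Proposition \ref{prop:vxwx} with $\tilde v(X_1^N)=\tilde v(X)$ and $\tilde v(X_0^N)\to 0$, the $N\to\infty$ limit via Lemma \ref{lem:weakfreept}) track the paper's argument closely.
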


Following \cite[\S 6]{HT05}, Theorem \ref{thm:stieltjes} implies a 
bound on smooth spectral statistics. 

\begin{cor}
\label{cor:c6}
For the model \eqref{eq:model} with 
$A_0,\ldots,A_n\in\M_d(\mathbb{C})_{\rm sa}$, we have
$$
	|\EE[\ntr f(X)] - ({\ntr}\otimes\tau)[f(X_{\rm free})]| \lesssim
	\tilde v(X)^4\|f\|_{W^{6,1}(\mathbb{R})}
$$
for every $f\in W^{6,1}(\mathbb{R})$.
\end{cor}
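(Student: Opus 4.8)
The plan is to deduce Corollary~\ref{cor:c6} from Theorem~\ref{thm:stieltjes} by the standard resolvent-to-smooth-functions device used by Haagerup and Thorbj\o{}rnsen in \cite[\S 6]{HT05}. The point is that both $\ntr f(X)$ and $({\ntr}\otimes\tau)[f(X_{\rm free})]$ are linear functionals of $f$ that can be recovered from the (scalar) Stieltjes transforms $g(z) := \ntr\,G(z\id) = \EE[\ntr(z\id-X)^{-1}]$ and $g_{\rm free}(z) := \ntr\,G_{\rm free}(z\id)$ by an inverse-Stieltjes (Cauchy) formula, so that a bound on $g-g_{\rm free}$ in terms of $\mathrm{Im}\,z$ translates into a bound on the difference of the spectral statistics in terms of a Sobolev norm of $f$.

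Concretely, first I would specialize Theorem~\ref{thm:stieltjes} to the scalar transform: taking $Z=z\id$ with $z\in\mathbb{C}$, $\mathrm{Im}\,z>0$, and applying $\ntr$ (which has norm one on $\M_d$) gives
$$
  |g(z)-g_{\rm free}(z)| \le \tilde v(X)^4 (\mathrm{Im}\,z)^{-5}.
$$
Second, I would invoke the representation formula for $\ntr f(M)$ of a self-adjoint operator $M$ in terms of its Stieltjes transform: for $f$ in a suitable class one has, via an almost-analytic (Helffer--Sj\"ostrand) extension $\tilde f$ of $f$,
$$
  \ntr f(M) = \frac{1}{\pi}\int_{\mathbb{C}} \bar\partial \tilde f(z)\,\ntr(z-M)^{-1}\,dx\,dy,
$$
and the analogous identity for $({\ntr}\otimes\tau)[f(X_{\rm free})]$. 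Subtracting and using the scalar bound above, the difference is controlled by $\tilde v(X)^4$ times $\int_{\mathbb{C}} |\bar\partial\tilde f(z)|\,|\mathrm{Im}\,z|^{-5}\,dx\,dy$. The key analytic input is then the quantitative estimate $|\bar\partial\tilde f(z)| \lesssim |\mathrm{Im}\,z|^{5}\sum_{k\le 6}|f^{(k)}(\mathrm{Re}\,z)|$ near the real axis (and rapid decay away from it), obtained by choosing $\tilde f$ to agree with $f$ to order $5$ on $\mathbb{R}$ with a smooth cutoff; this is exactly the order of vanishing needed to cancel the $(\mathrm{Im}\,z)^{-5}$ singularity, and it produces the $W^{6,1}$ norm after integrating out the imaginary direction and using Fubini. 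This is the standard construction (see also \cite[Ch.~2]{AGZ10} for the Helffer--Sj\"ostrand formula) and is carried out in \cite[\S 6]{HT05}; I would follow it essentially verbatim, only tracking that the power of $\mathrm{Im}\,Z$ in Theorem~\ref{thm:stieltjes} is $5$ so that the relevant Sobolev space is $W^{6,1}(\mathbb{R})$.

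The main obstacle is bookkeeping rather than conceptual: one must verify that the almost-analytic extension can be taken with compact support (or sufficient decay) so that all integrals converge, that the error terms coming from the cutoff are absorbed into the $W^{6,1}$ norm, and that the formula is valid for $X_{\rm free}$ — but since $X_{\rm free}$ is a bounded self-adjoint operator in a tracial von Neumann algebra with finite spectrum support (a finite union of intervals by \cite{AEK20}), the functional calculus and the Helffer--Sj\"ostrand formula apply verbatim, with $\ntr$ replaced by the trace ${\ntr}\otimes\tau$. Since $f\in W^{6,1}(\mathbb{R})$ already controls $f,\dots,f^{(6)}$ in $L^1$, and the operators involved are bounded, there is no issue with integrability at infinity once one truncates $f$ appropriately and notes that both spectra are compact; the implied constant in $\lesssim$ is universal (it depends only on the fixed choice of cutoff function in the extension). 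I would therefore present this as a short deduction, citing \cite{HT05} for the details of the almost-analytic extension estimate and spelling out only the substitution of the exponent $5$.
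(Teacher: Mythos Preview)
Your proposal is correct and takes essentially the same approach as the paper: specialize Theorem~\ref{thm:stieltjes} to $Z=z\id$ to get a scalar Stieltjes transform bound $|g(z)-g_{\rm free}(z)|\le \tilde v(X)^4(\mathrm{Im}\,z)^{-5}$, and then invoke the machinery of \cite[\S 6]{HT05} to convert this into a $W^{6,1}$ bound. The only cosmetic difference is that the paper packages the second step as a general lemma (stated for arbitrary probability measures with Stieltjes transforms differing by $K/(\mathrm{Im}\,z)^p$, yielding a $W^{p+1,1}$ bound) and follows the HT05 formulation with the operator $(1+d/dx)^{p+1}$ verbatim rather than phrasing it via an almost-analytic extension; these are equivalent routes to the same estimate.
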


Theorems \ref{thm:moments}--\ref{thm:stieltjes} and
Corollary \ref{cor:c6} will be proved in Section \ref{sec:spec}.

\subsection{Strong asymptotic freeness}
\label{sec:mainsaf}

By combining the bounds of Sections \ref{sec:mainspec}--\ref{sec:mainstat} 
with the linearization trick of \cite{HT05}, we will be able to 
establish strong asymptotic freeness for a remarkably general class of 
random matrices. We presently give a complete formulation of our main 
result in this direction.

\begin{thm}
\label{thm:free}
Let $s_1,\ldots,s_m$ be a free semicircular family.
For each $N\ge 1$, let $H_1^N,\ldots,H_m^N$ be independent self-adjoint 
random matrices of dimension $d=d(N)\ge N$ such that each $H_k^N$ has 
jointly Gaussian entries,
$$
	\lim_{N\to\infty}\|\mathbf{E}[H_k^N]\|=0,\qquad\quad
	\lim_{N\to\infty}\|\mathbf{E}[(H_k^N)^2]-\id\| = 0
$$
for all $k$. Then the following hold.
\begin{enumerate}[a.]
\itemsep\abovedisplayskip
\item
If $v(H_k^N)=o(1)$ as $N\to\infty$ for all $k$, then 
$$
	\lim_{N\to\infty} \mathbf{E}[\ntr p(H_1^N,\ldots,H_m^N)] =
	\tau(p(s_1,\ldots,s_m))
$$
for every noncommutative polynomial $p$.
\item If $v(H_k^N)=o((\log d)^{-\frac{3}{2}})$ as $N\to\infty$ for all $k$, 
then
\begin{align*}
	&\lim_{N\to\infty} \mathbf{E}[\|p(H_1^N,\ldots,H_m^N)\|] =
	\|p(s_1,\ldots,s_m)\|,
	\\
	&\lim_{N\to\infty} \|p(H_1^N,\ldots,H_m^N)\| =
	\|p(s_1,\ldots,s_m)\|\quad\mbox{a.s.}, \\
	&\lim_{N\to\infty} \ntr p(H_1^N,\ldots,H_m^N) =
	\tau(p(s_1,\ldots,s_m))\quad\mbox{a.s.}
\end{align*}
for every noncommutative polynomial $p$.
\end{enumerate}
\end{thm}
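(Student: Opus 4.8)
The plan is to reduce, via linearization, to the case of a single linear pencil and then invoke the main spectral-comparison theorems. Fix a noncommutative polynomial $p$. Since $\ntr$ is $\mathbb C$-linear and $p$ is a combination of the self-adjoint polynomials $(p+p^*)/2$ and $(p-p^*)/(2i)$, we may assume $p(Y_1,\dots,Y_m)$ is self-adjoint whenever the inputs are; and as (in case (b)) the quantities $\|p(H_1^N,\dots,H_m^N)\|$, $\ntr\,p(H_1^N,\dots,H_m^N)$ and $\EE\,\ntr\,p(H_1^N,\dots,H_m^N)$ depend continuously on the coefficients of $p$, uniformly in $N$ once $\max_k\|H_k^N\|$ is controlled, it suffices to fix one self-adjoint $p$. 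Now apply the self-adjoint linearization trick of \cite{HT05}: it yields an integer $r=r(p)$ and self-adjoint matrices $a_0,\dots,a_m\in\M_r(\mathbb C)$ such that, with $L(Y_1,\dots,Y_m):=a_0\otimes\id+\sum_{k=1}^m a_k\otimes Y_k$, the spectrum and the scalar Stieltjes transform of $p(Y_1,\dots,Y_m)$ are determined — through a fixed linear rule — by the spectrum and the matrix-valued Stieltjes transform of $L(Y_1,\dots,Y_m)$, simultaneously for the matrix inputs $Y_k=H_k^N$ and the operator inputs $Y_k=s_k$. Everything thus reduces to comparing the Gaussian random matrix $L^N:=L(H_1^N,\dots,H_m^N)$, of dimension $rd$, with the operator $L_{\mathrm{free}}:=a_0\otimes\id+\sum_k a_k\otimes s_k$.

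Next I would apply the main results of Sections~\ref{sec:mainspec}--\ref{sec:mainstat} to $L^N$. Writing $H_k^N=C_k+\sum_i g_{k,i}B_{k,i}$ with $C_k=\EE H_k^N$ and the $B_{k,i}$ self-adjoint, $L^N$ is of the form \eqref{eq:model} with mean term $A_0=a_0\otimes\id+\sum_k a_k\otimes C_k$ and coefficients $a_k\otimes B_{k,i}$, and its free model is $L^N_{\mathrm{free}}=A_0\otimes\id+\sum_{k,i}(a_k\otimes B_{k,i})\otimes s'_{k,i}$. Short computations with the definitions give $\sigma(L^N)^2\le\sum_k\|a_k\|^2\,\big\|\EE(H_k^N)^2-C_k^2\big\|$, bounded uniformly in $N$, and $v(L^N)^2=\|\mathrm{Cov}(L^N)\|\le r\big(\sum_k\|a_k\|^2\big)\max_k v(H_k^N)^2$; since $\log(rd)\asymp\log d$, the hypothesis of (a) gives $\tilde v(L^N)^2=v(L^N)\sigma(L^N)=o(1)$ and that of (b) gives $\tilde v(L^N)(\log(rd))^{3/4}=o(1)$. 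Theorem~\ref{thm:moments} then provides, for every fixed $j$, the a priori bound $\sup_N\EE[\ntr|L^N|^{2j}]<\infty$ and a two-sided comparison of the $2j$-th moments of $L^N$ and $L^N_{\mathrm{free}}$; Theorem~\ref{thm:stieltjes} shows the matrix Stieltjes transforms of $L^N$ and $L^N_{\mathrm{free}}$ differ by $o(1)$ at each fixed $Z$ with $\mathrm{Im}\,Z>0$; and in case (b), Theorem~\ref{thm:mainsp} gives $\spc(L^N)\subseteq\spc(L^N_{\mathrm{free}})+o(1)[-1,1]$ with probability $1-o(1)$ and Gaussian tails, while Corollary~\ref{cor:norm} gives $\EE\|L^N\|\le\|L^N_{\mathrm{free}}\|+o(1)$, the matching lower bound $\EE\|L^N\|\ge(\ntr\otimes\tau)(|L^N_{\mathrm{free}}|^{2j})^{1/2j}-o(1)$ for each $j$ following from the two-sided moment comparison together with Gaussian concentration of $\|L^N\|$ (whose fluctuations are $O(\sigma_*(L^N))=o(1)$).

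It remains to identify the limit: to show that the $\ntr$-moments of $L^N_{\mathrm{free}}$ (and, in case (b), its spectrum and norm) converge to those of $L_{\mathrm{free}}$. The mean term is negligible, $\|\sum_k a_k\otimes C_k\otimes\id\|\le\sum_k\|a_k\|\,\|C_k\|\to0$, leaving $\sum_k a_k\otimes T_k^N$, where $T_k^N:=\sum_i B_{k,i}\otimes s'_{k,i}$ is an $\M_d(\mathbb C)$-valued semicircular element of covariance $\eta_k(M)=\sum_i B_{k,i}MB_{k,i}$. The only input needed is $\|\eta_k(\id)-\id\|=\big\|\EE(H_k^N)^2-C_k^2-\id\big\|\to0$ (whence $\|\eta_k(\id)\|\le2$ eventually): expanding a joint moment $(\ntr_d\otimes\tau)[T^N_{k_1}\cdots T^N_{k_q}]$ over the noncrossing pair partitions respecting the labels and collapsing innermost pairs — which replaces an $\id$-slot by $\eta_{k_j}(\id)=\id+o(1)$ — an induction on $q$, whose error at most doubles per level, shows convergence to the corresponding moment of a free semicircular family. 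This yields $\ntr$-moment convergence $L^N_{\mathrm{free}}\to L_{\mathrm{free}}$, hence convergence of Stieltjes transforms and $\liminf_N\|L^N_{\mathrm{free}}\|\ge\|L_{\mathrm{free}}\|$; the reverse inequality follows from Lehner's variational formula (Lemma~\ref{lem:lehner}) by inserting the ansatz $Z=W\otimes\id_d$ for a near-optimal $W$. Chaining the two halves: in case (a), Stieltjes-transform convergence plus the a priori moment bounds give $\EE[\mu_{p(H^N)}]\to\mu_{p(s)}$ weakly with uniform integrability, hence $\EE\,\ntr\,p(H^N_1,\dots,H^N_m)\to\tau(p(s_1,\dots,s_m))$; in case (b), since all the operators involved are free models, spectral-distribution convergence upgrades the outer inclusion of Theorem~\ref{thm:mainsp} (and the Stieltjes bound rules out gaps opening) to $\spc(L^N)\to\spc(L_{\mathrm{free}})$ in Hausdorff distance with probability $1-o(1)$, using \cite{AEK20}, hence $\|p(H^N_1,\dots,H^N_m)\|\to\|p(s_1,\dots,s_m)\|$ in probability, upgraded to convergence in expectation by uniform integrability and to almost sure convergence (both for $\|\cdot\|$ and for $\ntr$) by Borel--Cantelli from the Gaussian tails of Corollary~\ref{cor:norm}, Theorem~\ref{thm:mainsp}, and of the $o(1)$-Lipschitz statistics $\ntr|L^N|^{2j}$, where $d=d(N)\ge N$ makes the exponential tails summable.

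The step I expect to be the main obstacle is the two-stage passage from $H^N$ through its \emph{intrinsic} free model $L^N_{\mathrm{free}}$ — a matrix-amplified, operator-valued semicircular object — down to the \emph{scalar} free semicircular family appearing in the statement. Our main theorems only relate $L^N$ to $L^N_{\mathrm{free}}$; extracting the scalar limit under the mild hypotheses $\|\EE H^N_k\|\to0$ and $\|\EE(H^N_k)^2-\id\|\to0$ is where the genuine content lies, and it is delicate for the operator \emph{norm} in particular, since convergence of all moments of a uniformly bounded family does not by itself control the norm — this is precisely where Lehner's formula supplies the missing stability. A secondary but indispensable bookkeeping point is that the linearization be compatible with our parameters: that $v(L^N)$ be controlled by $\max_k v(H^N_k)$ with a constant depending only on $p$, which is exactly what allows the hypotheses on $v(H^N_k)$ to be transported to the pencil $L^N$.
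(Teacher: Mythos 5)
Your proposal captures the two structural pillars of the paper's proof of part (b) correctly: reduce to a self-adjoint linear pencil $L^N=a_0\otimes\id+\sum_k a_k\otimes H_k^N$ via the Haagerup--Thorbj{\o}rnsen linearization (Theorem~\ref{thm:htlin}), apply Theorem~\ref{thm:mainsp} to obtain the spectral inclusion $\spc(L^N)\subseteq\spc(L^N_{\rm free})+o(1)[-1,1]$, and then pass from the ``intrinsic'' free model $L^N_{\rm free}$ to the ``scalar'' limit $L_{\rm free}$. You also correctly identify this last passage as the crux, and your bookkeeping of $\sigma(L^N)$ and $v(L^N)$ in terms of $\sigma(H^N_k)$ and $v(H^N_k)$ matches the paper's Lemma~\ref{lem:saffirst}.

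Where you diverge from the paper is mostly in the details surrounding the lower bounds, and one of the divergences is worth flagging because the paper's route is both simpler and more robust. For the $\liminf$ direction $\liminf_N\|p(H^N)\|\ge\|p(s)\|$, the paper avoids any discussion of the spectrum of $L^N_{\rm free}$ whatsoever: it first establishes part (a) by a self-contained Wick-formula calculation (Corollary~\ref{cor:superwick}, Lemmas~\ref{lem:wafcr}--\ref{lem:wafnc}), upgrades this to a.s.\ trace convergence via concentration (Lemma~\ref{lem:safconc}), and then applies $\ntr(|p(H^N)|^{2r})^{1/2r}\to\tau(|p(s)|^{2r})^{1/2r}\uparrow\|p(s)\|$ as $r\to\infty$. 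Your route via Hausdorff convergence of $\spc(L^N)\to\spc(L_{\rm free})$ and an appeal to \cite{AEK20} can be made to work, but it needs more care than you give it: weak spectral distribution convergence plus outer inclusion do give Hausdorff convergence because $\spc(L_{\rm free})=\operatorname{supp}\mu_{L_{\rm free}}$ by faithfulness of $\ntr\otimes\tau$ (so \cite{AEK20} is not actually doing any work), but the transfer from Hausdorff convergence of $\spc(L^N)$ to convergence of $\|p(H^N)\|$ requires the equivalence in \cite[Proposition 2.1]{CM14}, which you invoke implicitly. It is cleaner to follow the paper and never leave the world of traces for the lower bound.

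For the comparison $L^N_{\rm free}\to L_{\rm free}$, your route (moment convergence of the operator-valued semicircular family, then two-sidedness for the norm via Lehner's formula with the ansatz $Z=W\otimes\id_d$) is a legitimate alternative to the paper's: the paper instead first treats the exactly normalized case $\EE H^N_k=0$, $\EE(H^N_k)^2=\id$, where the $\ntr\otimes\tau$-moments of $L^N_{\rm free}$ and $L_{\rm free}$ coincide \emph{exactly} for every $N$ (Lemma~\ref{lem:safncsimple}) so faithfulness gives equality of spectra, and then perturbs to the general case using a resolvent bound plus Lemma~\ref{lem:resspec} (Lemma~\ref{lem:safnc}). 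The paper's version sidesteps the point you rightly worry about --- that moment convergence of a uniformly bounded family does not by itself control norms --- because in the normalized case there is nothing to converge.

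Finally, for part (a) your linearization-plus-Stieltjes approach is the one genuinely different route, and it is also the one with a real technical gap as written. Theorem~\ref{thm:stieltjes} bounds the matrix Stieltjes transform $G(Z)$ only for $Z$ with $\mathrm{Im}\,Z>0$, and recovering the \emph{scalar} Stieltjes transform of $p(H^N)$ from the matrix Stieltjes transform of $L^N$ via linearization requires a self-adjoint linearization $L$ and a map $z\mapsto\Lambda(z)$ with $\mathrm{Im}\,\Lambda(z)\gtrsim\mathrm{Im}\,z$; this is not automatic from the Haagerup--Thorbj{\o}rnsen linearization (which is formulated for spectra, not resolvents) and requires the Anderson/Belinschi--Mai--Speicher form, which you do not construct. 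The paper avoids this entirely: part (a) is proved directly from the Wick formula (Lemma~\ref{lem:wick}), splitting the Gaussian moment over pair partitions, bounding crossing contributions by $w(H_k^N,H_l^N)^4\le v(H_k^N)\sigma(H_k^N)v(H_l^N)\sigma(H_l^N)=o(1)$ (Lemma~\ref{lem:wafcr} and Proposition~\ref{prop:vxwx}), and showing noncrossing contributions converge by collapsing innermost pairs (Lemma~\ref{lem:wafnc}) --- the same mechanism you invoke for $L^N_{\rm free}\to L_{\rm free}$, but applied to $H^N$ directly. You would do better to adopt this for (a) rather than routing through linearization.
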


Let us recall that the type of convergence in part \emph{b} of Theorem 
\ref{thm:free}, called \emph{strong convergence in distribution}, has even 
stronger implications: it implies that both the spectral distribution and 
support of the spectrum of any polynomial $p(H_1^N,\ldots,H_m^N)$ 
converges to that of $p(s_1,\ldots,s_m)$ as $N\to\infty$ in the sense of 
weak convergence and Hausdorff convergence, respectively; see 
\cite[Proposition 2.1]{CM14}.

Surprisingly, the conclusion of Theorem \ref{thm:free} appears to be new 
at this level of generality already for a single random matrix $m=1$. In
this case, we obtain the following result in the spirit of classical 
random matrix theory.

\begin{cor}
\label{cor:oldie}
Let $H^N$ be a self-adjoint random matrix of dimension $d=d(N)$ with
jointly Gaussian entries, and assume that
$$
	\|\mathbf{E}[H^N]\|=o(1),\qquad
	\|\mathbf{E}[(H^N)^2]-\id\|=o(1),\qquad
	v(H^N)=o((\log d)^{-\frac{3}{2}})
$$
as $N\to\infty$.
Then the empirical distribution
$$
	\mu_{H^N} := \frac{1}{d}\sum_{i=1}^d \delta_{\lambda_i(H^N)}
$$
of the eigenvalues $\lambda_i(H^N)$ of $H^N$ converges weakly a.s.\ to the
semicircle law
$$
	\mu_{H^N} \stackrel{\mathrm{w}}{\longrightarrow} \mu_{\rm sc}
	\quad\mbox{a.s.},\quad\qquad
	\mu_{\rm sc}(dx) = \frac{1}{2\pi}\sqrt{4-x^2}\,1_{|x|\le 2}\,dx,
$$
and we have convergence of the norm $\|H^N\|\to 2$ a.s.\ as $N\to\infty$.
\end{cor}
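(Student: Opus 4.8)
The plan is to deduce this from the $m=1$ case of Theorem \ref{thm:free} together with standard facts about the semicircular operator $s_1$. First I would observe that a single semicircular element $s_1$ has the semicircle law $\mu_{\rm sc}$ as its distribution with respect to $\tau$, and that $\|s_1\|=2$ and $\spc(s_1)=[-2,2]$; these are classical (and will be recalled in Section \ref{sec:free}). Thus the hypotheses of Corollary \ref{cor:oldie} are precisely those of Theorem \ref{thm:free} with $m=1$, $H_1^N=H^N$, so part \emph{b} of that theorem applies directly.

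The second step is to translate the conclusions of Theorem \ref{thm:free}\emph{b} into the language of empirical measures. For the weak convergence $\mu_{H^N}\xrightarrow{\mathrm{w}}\mu_{\rm sc}$ a.s., I would note that $\int x^p\,d\mu_{H^N}=\ntr (H^N)^p$, so the a.s.\ convergence $\ntr p(H^N)\to\tau(p(s_1))$ for every polynomial $p$ gives a.s.\ convergence of all moments of $\mu_{H^N}$ to those of $\mu_{\rm sc}$. Since $\mu_{\rm sc}$ is compactly supported and hence determined by its moments, moment convergence upgrades to weak convergence (the only subtlety being the exchange of the ``for all $p$'' and ``a.s.''\ quantifiers, which is handled by taking a countable intersection over $p\in\mathbb{N}$ of the a.s.\ events). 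For the norm, the statement $\|p(H^N)\|\to\|p(s_1)\|$ a.s.\ with $p(x)=x$ gives exactly $\|H^N\|\to\|s_1\|=2$ a.s.

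The only remaining point is the bookkeeping needed to invoke Theorem \ref{thm:free}: checking that $d=d(N)\ge N$ is not actually required for the conclusion (or, if it is used in the proof of Theorem \ref{thm:free}, noting that one may always pad $H^N$ with an independent semicircular-type block, or simply relabel so that the running index increases with $d$ rather than $N$ — since all hypotheses are stated purely in terms of $d\to\infty$ via the $o(\cdot)$ conditions, this is harmless). I do not anticipate a genuine obstacle here: the corollary is essentially a restatement of the $m=1$ instance of Theorem \ref{thm:free} combined with the classical identification of a single semicircular element with the semicircle law, and the only mild care needed is the standard argument that a.s.\ moment convergence to a compactly supported limit implies a.s.\ weak convergence.
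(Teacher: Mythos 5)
Your proposal is correct and follows essentially the same route as the paper: apply Theorem \ref{thm:free}(b) with $m=1$ to the monomials $p(x)=x^r$, identify $\tau(s^r)$ with the $r$th moment of $\mu_{\rm sc}$ and $\|s\|$ with $2$, and conclude weak convergence from a.s.\ moment convergence using compact support. The only wrinkle you raise that the paper's proof leaves implicit is the $d(N)\ge N$ hypothesis of Theorem \ref{thm:free}; as you observe, it is harmless to reindex since the conditions $\|\mathbf{E}[(H^N)^2]-\id\|=o(1)$ and $v(H^N)=o((\log d)^{-3/2})$ already force $d(N)\to\infty$ (indeed, $\tr\,\mathrm{Cov}(H^N)\approx d$ forces $v(H^N)^2\gtrsim 1/d$).
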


Let us emphasize that Corollary \ref{cor:oldie} (and Theorem 
\ref{thm:free}) makes no structural assumptions on the variance or 
dependence pattern of $H^N$ beyond the minimal isotropy conditions 
$\EE[H^N]\approx 0$ and $\EE[(H^N)^2]\approx\id$. Previous results on 
Gaussian random matrices with dependent entries require restrictive 
structural assumptions to obtain even the semicircle law, cf.\ 
\cite{FKK21} and the references therein.

Theorem \ref{thm:free} and
Corollary \ref{cor:oldie} will be proved in Section \ref{sec:strongfree}.

\section{Examples}
\label{sec:ex}

The aim of this section is to illustrate our main results in 
concrete examples. In Section \ref{sec:exind} we consider Gaussian random 
matrices with independent entries, while Section \ref{sec:exdep} discusses 
some simple examples of random matrix models with dependent entries. 
Section \ref{sec:samplecov} is concerned with Gaussian sample covariance 
matrices, whose samples may be neither independent nor identically 
distributed. Section \ref{sec:sigmamin} is concerned with bounds on the 
smallest singular value of random matrices.

\subsection{Independent entries}
\label{sec:exind}

In this section, we consider the case of real symmetric Gaussian random 
matrices with independent entries (nonsymmetric or complex matrices may be 
considered analogously, but we restrict attention to the real symmetric 
case for simplicity). More precisely, let $X$ be the $d\times d$ symmetric 
random matrix with entries $X_{ij}=b_{ij}g_{ij}$, where $\{g_{ij}:i\ge 
j\}$ are i.i.d.\ standard real Gaussian random variables and 
$\{b_{ij}:i\ge j\}$ are given nonnegative scalars. We let $b_{ji}:=b_{ij}$ 
and $g_{ji}:=g_{ij}$. This model may be expressed in the form 
\eqref{eq:model} as 
\begin{equation}
\label{eq:indep}
	X = \sum_{i\ge j} g_{ij} b_{ij} E_{ij},
\end{equation}
where $E_{ii}:=e_ie_i^*$ and $E_{ij}:=(e_ie_j^*+e_je_i^*)$ for $i>j$. Here 
and in the sequel, $e_1,\ldots,e_d$ denotes the coordinate basis of
$\mathbb{R}^d$.

The independent entry setting is the only general model of nonhomogeneous 
random matrices for which satisfactory norm bounds were obtained prior to 
this work \cite{BvH16,vH17b,LvHY18}. In particular, it was proved in 
\cite[Theorem 1.1]{BvH16} that
\begin{equation}
\label{eq:bvhsharp}
	\EE\|X\| \le (2+\varepsilon)\max_{i}\sqrt{\sum_j b_{ij}^2}
	+ \frac{C}{\sqrt{\varepsilon}}\max_{ij}b_{ij} \sqrt{\log d}
\end{equation}
for any $0<\varepsilon<1$, where $C$ is a universal constant. The constant 
$2$ in the leading term is optimal, as 
$\EE\|X\|=2+o(1)$ as $d\to\infty$ when $X$ is a standard 
Wigner matrix, that is, when $b_{ij}=\frac{1}{\sqrt{d}}$ for all $i,j$.
Moreover, \eqref{eq:bvhsharp} is nearly sharp in the 
sense that the inequality can be reversed up to a universal constant under 
mild assumptions \cite[\S 3.5]{BvH16} (a completely sharp 
dimension-free bound, but without the optimal constant in the leading 
term, was proved in \cite{LvHY18}).

Nonetheless, even in the special case of independent entries, the general 
results of this paper can yield a significant improvement over 
\eqref{eq:bvhsharp}. 

\begin{lem}
For the model \eqref{eq:indep}, we have
$$
	\sigma(X) = \max_{i}\sqrt{\sum_j b_{ij}^2},
	\qquad\quad
	\max_{ij}b_{ij} \le
	\sigma_*(X) \le
	v(X) \le \sqrt{2}\max_{ij}b_{ij}.
$$
In particular,
\begin{equation}
\label{eq:normindep}
	\EE\|X\| \le (1+\varepsilon)\|X_{\rm free}\| + 
	\frac{C}{\varepsilon}\max_{ij}b_{ij}\,(\log d)^{\frac{3}{2}}
\end{equation}
for any $\varepsilon>0$, where $C$ is a universal constant.
\end{lem}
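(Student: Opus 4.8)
The plan is to evaluate each of the three parameters $\sigma(X)$, $\sigma_*(X)$, $v(X)$ directly from the representation \eqref{eq:indep} and then to combine the resulting estimates with Corollary~\ref{cor:norm}. The computations rest on the elementary identities $E_{ii}^2=e_ie_i^*$ and $E_{ij}^2=e_ie_i^*+e_je_j^*$ for $i>j$, together with $\tr[E_{ii}M]=M_{ii}$ and $\tr[E_{ij}M]=M_{ij}+M_{ji}$ for $i>j$.

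First I would compute $\sigma(X)$. Since each $E_{ij}$ is self-adjoint, $\sigma(X)^2=\big\|\sum_{i\ge j}b_{ij}^2E_{ij}^2\big\|$, and the identities above give $\sum_{i\ge j}b_{ij}^2E_{ij}^2=\sum_i\big(\sum_j b_{ij}^2\big)e_ie_i^*$, a diagonal matrix, whence $\sigma(X)=\max_i(\sum_j b_{ij}^2)^{1/2}$. For the lower bound $\max_{ij}b_{ij}\le\sigma_*(X)$ I would test the defining supremum for $\sigma_*(X)$ with the vectors $v=e_k$, $w=e_l$: only the term with $\{i,j\}=\{k,l\}$ survives, giving $\sigma_*(X)^2\ge b_{kl}^2$ for all $k,l$ (using $b_{kl}=b_{lk}$). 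The middle inequality $\sigma_*(X)\le v(X)$ is the general fact recorded just after the definition of these parameters in Section~\ref{sec:mainspec}.

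Next comes the upper bound $v(X)\le\sqrt2\,\max_{ij}b_{ij}$, the only place a genuine (if still elementary) estimate is needed. For any $M$ with $\tr|M|^2\le 1$, the identities for $\tr[E_{ij}M]$ give
$$
\sum_{i\ge j}b_{ij}^2\,|\tr[E_{ij}M]|^2
=\sum_i b_{ii}^2|M_{ii}|^2+\sum_{i>j}b_{ij}^2|M_{ij}+M_{ji}|^2
\le 2\max_{ij}b_{ij}^2\,\tr|M|^2,
$$
using $|a+b|^2\le 2(|a|^2+|b|^2)$ and $\sum_i|M_{ii}|^2\le\tr|M|^2$; taking the supremum over $M$ gives the claim and completes the chain of inequalities in the statement.

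Finally, to derive \eqref{eq:normindep} I would substitute these estimates into Corollary~\ref{cor:norm}. Writing $\tilde v(X)(\log d)^{3/4}=\sigma(X)^{1/2}\big(v(X)(\log d)^{3/2}\big)^{1/2}$ and applying weighted AM--GM, for every $\delta>0$ one has $\tilde v(X)(\log d)^{3/4}\le\delta\,\sigma(X)+\frac{1}{4\delta}v(X)(\log d)^{3/2}$. Bounding $\sigma(X)\le\|X_{\rm free}\|$ by Lemma~\ref{lem:freenck} and $v(X)\le\sqrt2\,\max_{ij}b_{ij}$ by the first part, and then taking $\delta$ to be a suitable universal constant times $\varepsilon$, yields \eqref{eq:normindep}. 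No step here presents a real obstacle; the only points requiring care are keeping the constant $\sqrt2$ in the bound for $v(X)$ (the bookkeeping of the symmetrized entries $M_{ij}+M_{ji}$, which is what makes \eqref{eq:normindep} sharp up to the logarithmic power) and choosing the weight in the AM--GM step so that the leading coefficient comes out to be exactly $1+\varepsilon$.
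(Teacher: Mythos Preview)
Your proposal is correct and follows essentially the same approach as the paper. The only cosmetic difference is in bounding $v(X)$: the paper observes that $\mathrm{Cov}(X)=\bigoplus_{i\ge j}C_{ij}$ is block-diagonal (with $C_{ij}$ the $2\times 2$ covariance of $(X_{ij},X_{ji})$) and bounds $\|C_{ij}\|\le 2b_{ij}^2$, whereas you compute the equivalent trace-supremum formulation directly; the final Young/AM--GM step combined with Lemma~\ref{lem:freenck} is identical.
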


\begin{proof}
The expression for $\sigma(X)^2=\|\EE X^2\|$ follows readily as
\begin{equation}
\label{eq:indepsig}
	\EE X^2 = \sum_i e_ie_i^* \sum_j b_{ij}^2
\end{equation}
is a diagonal matrix. Moreover, that $v(X)^2\ge \sigma_*(X)^2\ge 
\max_{ij}\EE[|X_{ij}|^2]=\max_{ij}b_{ij}^2$ follows immediately from the 
definitions in Section \ref{sec:mainspec}.

On the other hand, as the pairs of entries $(X_{ij},X_{ji})$ are 
independent for distinct indices $i\ge j$, we have 
$\mathrm{Cov}(X)=\bigoplus_{i\ge j}C_{ij}$ where $C_{ij}$ is the 
covariance matrix of $(X_{ij},X_{ji})$. Thus $v(X)^2=\|\mathrm{Cov}(X)\|= 
\max_{i\ge j} \|C_{ij}\| \le 2\max_{ij}b_{ij}^2$.

To conclude, it remains to invoke Corollary \ref{cor:norm} and to note
that $c\tilde v(X)(\log d)^{\frac{3}{4}} \le
\varepsilon \|X_{\rm free}\| + \frac{c^2}{4\varepsilon}v(X)(\log 
d)^{\frac{3}{2}}$
for any $c,\varepsilon>0$ by Young's inequality and Lemma 
\ref{lem:freenck}.
\end{proof}

While the second term of \eqref{eq:normindep} has a slightly suboptimal
power on the logarithm as compared to \eqref{eq:bvhsharp}, this 
term is already negligible when
\begin{equation}
\label{eq:indepf}
	\max_{ij}b_{ij}^2 = o\bigg((\log d)^{-3}\max_i\sum_j b_{ij}^2
	\bigg).
\end{equation}
As soon as this is the case, the bound \eqref{eq:normindep} improves on
\eqref{eq:bvhsharp} in that the leading term 
$2\sigma(X)$ is replaced by the sharp free probability quantity $\|X_{\rm 
free}\|$. We always have $\|X_{\rm free}\|\le 2\sigma(X)$ by Lemma 
\ref{lem:freenck}, but this inequality often turns out to be strict in 
nonhomogeneous situations. To understand this phenomenon better, it is 
instructive to compute $\|X_{\rm free}\|$ in the present setting.

\begin{lem}
\label{lem:indepxfree}
For the model \eqref{eq:indep}, we have
$$
	\|X_{\rm free}\| = 
	\inf_{x\in\mathbb{R}^d_{++}} \max_i
	\bigg\{\frac{1}{x_i}+\sum_j b_{ij}^2x_j\bigg\}
	=
	2\sup_{w\in\Delta^{d-1}}\sum_i \sqrt{w_i\sum_j b_{ij}^2w_j},
$$
where we denote by $\mathbb{R}^d_{++}:=\{x\in\mathbb{R}^d:x>0\}$ the 
positive orthant and by $\Delta^{d-1}:=
\{x\in\mathbb{R}^d:x\ge 0,~\sum_i x_i=1\}$ 
is the standard simplex in $\mathbb{R}^d$. We always have $\|X_{\rm 
free}\|\le 2\sigma(X)$.
If $B=(b_{ij}^2)$ is an irreducible nonnegative matrix, then equality
$\|X_{\rm free}\|=2\sigma(X)$ holds if and only if
$\max_i\sum_j b_{ij}^2 = \min_i\sum_j b_{ij}^2$.
\end{lem}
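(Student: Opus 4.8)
The plan is to derive the first (variational) formula from Lehner's formula (Lemma~\ref{lem:lehner}), pass to the second (``simplex'') formula by a minimax/AM--GM duality, and then read off both the inequality $\|X_{\rm free}\|\le 2\sigma(X)$ and its equality case. For the first formula, apply Lemma~\ref{lem:lehner} to \eqref{eq:indep}: since here $A_0=0$, the two values of $\varepsilon$ coincide and
$$
	\|X_{\rm free}\| = \inf_{Z>0}\lambda_{\rm max}\Bigl(Z^{-1}+\sum_{i\ge j}b_{ij}^2\,E_{ij}ZE_{ij}\Bigr).
$$
The key point is that it suffices to take $Z$ diagonal. Indeed, for a diagonal sign matrix $D=\mathrm{diag}(\pm1)$ one computes $DE_{ij}D=D_{ii}D_{jj}E_{ij}$, so the map $F(Z):=Z^{-1}+\sum_{i\ge j}b_{ij}^2E_{ij}ZE_{ij}$ satisfies $F(DZD)=DF(Z)D$ and hence $\lambda_{\rm max}(F(DZD))=\lambda_{\rm max}(F(Z))$. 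Averaging over the $2^d$ sign matrices $D$, so that $\bar Z:=2^{-d}\sum_D DZD$ is the diagonal part of $Z$, and combining operator convexity of $Z\mapsto Z^{-1}$ (which gives $\bar Z^{-1}\le 2^{-d}\sum_D DZ^{-1}D$), linearity of the second term of $F$, and the monotonicity and convexity of $\lambda_{\rm max}$, one obtains $\lambda_{\rm max}(F(\bar Z))\le\lambda_{\rm max}(F(Z))$. A short computation of $E_{ij}ZE_{ij}$ for diagonal $Z=\mathrm{diag}(x)$ then shows that $F(\mathrm{diag}(x))$ is the diagonal matrix with $i$-th entry $x_i^{-1}+\sum_j b_{ij}^2x_j$, which is the first formula.

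For the second formula, write $\max_i y_i=\sup_{w\in\Delta^{d-1}}\langle w,y\rangle$ to get
$$
	\|X_{\rm free}\| = \inf_{x\in\mathbb{R}^d_{++}}\ \sup_{w\in\Delta^{d-1}}\Bigl(\sum_i\tfrac{w_i}{x_i}+\langle w,Bx\rangle\Bigr),\qquad B:=(b_{ij}^2)_{i,j}.
$$
The bracketed quantity is convex and continuous in $x$ on $\mathbb{R}^d_{++}$ and linear in $w$ on the compact convex simplex $\Delta^{d-1}$, so Sion's minimax theorem permits exchanging $\inf$ and $\sup$. For fixed $w$ the inner minimization decouples over the coordinates $x_i$, and since $B$ is symmetric $\langle w,Bx\rangle=\sum_i(Bw)_ix_i$; minimizing each coordinate by AM--GM gives $\inf_{x_i>0}(\tfrac{w_i}{x_i}+(Bw)_ix_i)=2\sqrt{w_i(Bw)_i}$ (the degenerate cases $w_i=0$ or $(Bw)_i=0$ are handled by sending $x_i\to0$ or $x_i\to\infty$), which yields the second formula. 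The bound $\|X_{\rm free}\|\le 2\sigma(X)$ is then immediate from Lemma~\ref{lem:freenck} together with \eqref{eq:indepsig} (so that $\sum_{i\ge j}b_{ij}^2E_{ij}^2=\EE X^2$ has norm $\sigma(X)^2$), or directly from the second formula via Cauchy--Schwarz: $\sum_i\sqrt{w_i(Bw)_i}\le(\sum_i w_i)^{1/2}(\sum_i(Bw)_i)^{1/2}=\langle\mathbf{1},Bw\rangle^{1/2}\le\sigma(X)$, using the symmetry of $B$ and \eqref{eq:indepsig}.

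For the equality case, the ``if'' direction requires no irreducibility: if all row sums $r_i:=\sum_j b_{ij}^2$ equal $s=\sigma(X)^2$, then taking $w=\tfrac1d\mathbf{1}$ in the second formula gives $\sum_i\sqrt{w_i(Bw)_i}=\sqrt s=\sigma(X)$, so $\|X_{\rm free}\|\ge 2\sigma(X)$ and hence equals $2\sigma(X)$. For ``only if'', assume $\|X_{\rm free}\|=2\sigma(X)$ and let $w^\ast\in\Delta^{d-1}$ attain the supremum in the second formula (it exists by compactness). Then $\sigma(X)=\sum_i\sqrt{w^\ast_i(Bw^\ast)_i}\le\langle\mathbf{1},Bw^\ast\rangle^{1/2}\le\sigma(X)$ must be a chain of equalities, so (i) equality holds in the Cauchy--Schwarz step, forcing $Bw^\ast=\lambda w^\ast$ for some $\lambda\ge0$, and (ii) $\langle\mathbf{1},Bw^\ast\rangle=\sigma(X)^2$. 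By (i), $w^\ast$ is a nonnegative eigenvector of the irreducible nonnegative matrix $B$, so Perron--Frobenius gives $w^\ast>0$. By (ii) together with the symmetry of $B$ and \eqref{eq:indepsig}, $\sum_j r_j w^\ast_j=\langle\mathbf{1},Bw^\ast\rangle=\sigma(X)^2=\max_i r_i$. Since $w^\ast>0$ is a probability vector, this strict convex combination of the $r_j$ can equal $\max_i r_i$ only when all the $r_j$ coincide, which is exactly $\max_i\sum_j b_{ij}^2=\min_i\sum_j b_{ij}^2$.

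The only genuinely non-routine step is the reduction to diagonal $Z$ above: it rests on combining the sign-flip symmetry with operator convexity of the matrix inverse in precisely the way that makes the averaging operation non-increasing for $\lambda_{\rm max}(F(\cdot))$. The remaining ingredients---the minimax exchange, the AM--GM and Cauchy--Schwarz estimates, and the Perron--Frobenius argument---are entirely standard.
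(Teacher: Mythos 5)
Your proof is correct, and the overall structure (Lehner $\to$ diagonal reduction $\to$ Sion minimax $\to$ AM--GM $\to$ Cauchy--Schwarz + Perron--Frobenius) matches the paper's, but the diagonal-reduction step is handled by a genuinely different argument. The paper invokes the second clause of Lemma~\ref{lem:lehner} to restrict to $Z$ with $f(Z)$ a multiple of $\id$; since $f(Z)$ and $f(\mathrm{diag}(Z))$ are then both diagonal, and since $\mathrm{diag}(E_{ij}ZE_{ij})=E_{ij}\mathrm{diag}(Z)E_{ij}$, the inequality $f(Z)\ge f(\mathrm{diag}(Z))$ reduces to the elementary scalar bound $(Z^{-1})_{ii}\ge(Z_{ii})^{-1}$, which follows from one application of Cauchy--Schwarz. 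Your route instead averages over diagonal sign conjugations and uses operator convexity of $Z\mapsto Z^{-1}$ together with convexity of $\lambda_{\max}$; this bypasses the ``multiple of identity'' clause of Lehner's lemma entirely and shows $\lambda_{\max}(F(\mathrm{diag}(Z)))\le\lambda_{\max}(F(Z))$ for \emph{every} $Z>0$, at the cost of invoking operator convexity. Both are valid; the paper's is slightly more elementary once the restriction is granted, while yours is more self-contained with respect to Lemma~\ref{lem:lehner}. One additional small point in your favor: you explicitly observe that the ``if'' direction of the equality case (taking $w=\tfrac1d\mathbf{1}$) needs no irreducibility, a detail the paper's proof leaves implicit.
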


\begin{rem}
The irreducibility assumption entails no loss of generality. In the 
general case, we may write $B=\bigoplus_i B_i$ in terms of its irreducible 
components $B_i$, and $X_{\rm free}=\bigoplus_i X_{{\rm free},i}$ 
decomposes accordingly.
As $\|X_{\rm free}\|=\max_i \|X_{{\rm free},i}\|$, the 
characterization of when $\|X_{\rm free}\|=2\sigma(X)$ reduces 
to the irreducible case.
\end{rem}

\begin{proof}[Proof of Lemma \ref{lem:indepxfree}]
Define 
$$
	f(Z) := Z^{-1} + \sum_{i\ge j} b_{ij}^2 E_{ij}ZE_{ij}.
$$
Fix any $Z>0$ so that $f(Z)$ is a multiple of the 
identity. Then $f(Z)=\mathrm{diag}(f(Z))$, where 
$\mathrm{diag}(M)_{ij}:=M_{ii}\delta_{ij}$. Using that $(Z^{-1})_{ii}\ge
(Z_{ii})^{-1}$ (as $\|Z^{\frac{1}{2}}e_i\|\|Z^{-\frac{1}{2}}e_i\|\ge 1$),
it follows readily that $f(Z)\ge f(\mathrm{diag}(Z))$. Thus Lemma 
\ref{lem:lehner} 
implies 
$$
	\|X_{\rm free}\| = \inf_{Z>0}\lambda_{\rm max}(f(\mathrm{diag}(Z))) =
	\inf_{x\in\mathbb{R}^d_{++}}\max_i
	\bigg\{
	\frac{1}{x_i}+\sum_j b_{ij}^2x_j
	\bigg\}.
$$
We can further compute
$$
	\|X_{\rm free}\|
	=
	\inf_{x\in\mathbb{R}^d_{++}}\sup_{w\in\Delta^{d-1}}
	\sum_i w_i
	\bigg\{
	\frac{1}{x_i}+\sum_j b_{ij}^2x_j
	\bigg\} =
	2\sup_{w\in\Delta^{d-1}}\sum_i \sqrt{w_i\sum_j b_{ij}^2w_j},
$$
where we used the Sion minimax theorem to exchange the infimum and 
supremum.

If we apply Cauchy-Schwarz to the rightmost expression for $\|X_{\rm 
free}\|$, we obtain $\|X_{\rm free}\|\le 2\max_i[\sum_j 
b_{ij}^2]^{\frac{1}{2}}=2\sigma(X)$ directly. Therefore, when 
$\|X_{\rm free}\|=2\sigma(X)$, the maximizing vector $w\in\Delta^{d-1}$ 
must yield equality in Cauchy-Schwarz. The latter implies there exists 
$\rho\ge 0$ such that $Bw=\rho w$ and 
$\|X_{\rm free}\|=2\sqrt{\rho}$. In particular, if $B$ is irreducible,
then $\rho=\rho(B)$ is the 
largest eigenvalue of $B$ by the Perron-Frobenius theorem 
\cite[p.\ 53]{Gan59}. It remains to 
recall that the inequality 
$\rho(B)\le\max_i\sum_j b_{ij}^2$ is strict unless $\max_i\sum_j 
b_{ij}^2=\min_i\sum_j b_{ij}^2$, cf.\ \cite[p.\ 63]{Gan59}. 
\end{proof}

In other words, under the mild assumption \eqref{eq:indepf}, the constant 
$2$ in \eqref{eq:bvhsharp} is suboptimal and the results of the present 
paper yield strictly better bounds on $\mathbf{E}\|X\|$ as soon as $\sum_j 
b_{ij}^2 \ne \sum_j b_{kj}^2$ for some $i,k$ (and $X$ does not decompose 
as a block-diagonal matrix). In such cases, Lemma \ref{lem:indepxfree} can 
be used to explicitly compute or estimate $\|X_{\rm free}\|$. The latter 
quantity has also been studied by completely different methods in 
\cite{EM19}, to which we refer for complementary results.

Even when $\max_i\sum_j b_{ij}^2=\min_i\sum_j b_{ij}^2$, however, our 
main results yield far stronger conclusions than just a bound on the 
spectral norm. Indeed, by \eqref{eq:indepsig}, this corresponds
precisely to the case where $\mathbf{E}[X^2]=\sigma(X)^2\id$; thus 
any independent family of such matrices is strongly asymptotically free by 
Theorem \ref{thm:free}.

\begin{cor}
\label{cor:indepsfree}
Let $s_1,\ldots,s_m$ be a free semicircular family.
For each $N\ge 1$, let $H_1^N,\ldots,H_m^N$ be independent random 
matrices of dimension $d=d(N)\ge N$ of the form \eqref{eq:indep},
such that the variance pattern $(b_{ij}^2)$ of $H_k^N$ satisfies
$$
	\max_i\sum_j b_{ij}^2 =
	\min_i\sum_j b_{ij}^2 
	= 1,\qquad\quad
	\max_{ij} b_{ij}^2 = o((\log d)^{-3})
$$
for every $k,N$. Then
\begin{align*}
	&\lim_{N\to\infty} \|p(H_1^N,\ldots,H_m^N)\| =
	\|p(s_1,\ldots,s_m)\|\quad\mbox{a.s.}, \\
	&\lim_{N\to\infty} \ntr p(H_1^N,\ldots,H_m^N) =
	\tau(p(s_1,\ldots,s_m))\quad\mbox{a.s.}
\end{align*}
for every noncommutative polynomial $p$.
\end{cor}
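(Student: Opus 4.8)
The plan is to deduce Corollary~\ref{cor:indepsfree} directly from part~\emph{b} of Theorem~\ref{thm:free}: it suffices to check that the matrices $H_k^N$ of the form \eqref{eq:indep}, under the stated assumptions on the variance patterns $(b_{ij}^2)$, satisfy the three hypotheses of that theorem, namely $\|\mathbf{E}[H_k^N]\|\to 0$, $\|\mathbf{E}[(H_k^N)^2]-\id\|\to 0$, and $v(H_k^N)=o((\log d)^{-\frac{3}{2}})$ as $N\to\infty$.

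The first hypothesis is automatic, since each $H_k^N$ has centered Gaussian entries and hence $\mathbf{E}[H_k^N]=0$. For the second, formula \eqref{eq:indepsig} gives $\mathbf{E}[(H_k^N)^2]=\sum_i e_ie_i^*\sum_j b_{ij}^2$, a diagonal matrix with entries $\sum_j b_{ij}^2$; the assumption $\max_i\sum_j b_{ij}^2=\min_i\sum_j b_{ij}^2=1$ therefore forces $\mathbf{E}[(H_k^N)^2]=\id$ exactly, so the second hypothesis holds with no error at all. For the third, as computed earlier in this subsection we have $v(H_k^N)\le\sqrt{2}\,\max_{ij}b_{ij}$, so the assumption $\max_{ij}b_{ij}^2=o((\log d)^{-3})$ yields $v(H_k^N)=o((\log d)^{-\frac{3}{2}})$. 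Once these three points are in place, Theorem~\ref{thm:free}\emph{b} applies and delivers precisely the two almost sure statements claimed in the corollary.

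I do not anticipate any genuine obstacle in this deduction: the entire content of the corollary is contained in Theorem~\ref{thm:free}, and the only facts needed — the evaluation of $\mathbf{E}[(H_k^N)^2]$ via \eqref{eq:indepsig} and the estimate $v(X)\le\sqrt{2}\,\max_{ij}b_{ij}$ — have already been established in the lemmas preceding this corollary. Of course, the substantive work lies in the proof of Theorem~\ref{thm:free} itself, which rests on the linearization trick of \cite{HT05} combined with the spectral concentration and moment bounds of Sections~\ref{sec:mainspec}--\ref{sec:mainstat}; but that is proved separately and need not be revisited here. The one minor point worth making explicit in the write-up is that the two-sided conditions in the corollary are slightly stronger than, and hence imply, the limiting isotropy conditions required in Theorem~\ref{thm:free}.
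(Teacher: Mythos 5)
Your proposal is correct and matches the paper's (implicit) proof: the corollary is stated immediately after the paper observes that $\max_i\sum_j b_{ij}^2=\min_i\sum_j b_{ij}^2$ together with \eqref{eq:indepsig} gives $\mathbf{E}[(H_k^N)^2]=\id$, and $v(H_k^N)\le\sqrt{2}\max_{ij}b_{ij}$ gives the required decay, so Theorem~\ref{thm:free}\emph{b} applies directly. Your checking of the three hypotheses, including the remark that $\mathbf{E}[H_k^N]=0$ and $\mathbf{E}[(H_k^N)^2]=\id$ hold exactly rather than merely in the limit, is precisely the intended deduction.
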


Corollary \ref{cor:indepsfree} provides a large class of new examples of 
strongly asymptotically free random matrices. Let us highlight a 
particularly interesting case.

\begin{example}[Sparse Wigner matrices]
\label{ex:sparse}
Let $\mathrm{G}=([d],E)$ be a $k$-regular graph with $d$ vertices. A
\emph{$\mathrm{G}$-sparse Wigner matrix} is a $d\times d$ real 
symmetric random matrix $X$ such that 
$X_{ij}=k^{-\frac{1}{2}}g_{ij}1_{\{i,j\}\in E}$ for $i\ge j$, where
$\{g_{ij}:i\ge j\}$ are i.i.d.\ standard Gaussians. Note that
$X$ has only $kd$ nonzero entries.

Now consider any sequence of $k_N$-regular graphs $\mathrm{G}_N$ with 
$d_N$ vertices, and let $H_1^N,\ldots,H_m^N$ be independent 
$\mathrm{G}_N$-sparse Wigner matrices. Then Corollary \ref{cor:indepsfree} 
shows that $H_1^N,\ldots,H_m^N$ are strongly asymptotically free as soon 
as $k_N\gg (\log d_N)^3$.

This example is striking for at least two reasons. First, all but a 
vanishing fraction of the entries of the matrices $H_i^N$ are zero (for 
example, $d\log^4 d$ nonzero entries already suffice), so that strong 
asymptotic freeness is achieved here with far less randomness than is 
present in standard Wigner matrices. Second, no assumption whatsoever made 
on the graphs $\mathrm{G}_N$ except their regularity; in particular, the 
distributions of $H_i^N$ need not possess any special symmetries. Let us 
note that even weak asymptotic freeness was previously known in the 
present setting only under very strong restrictions on the variance 
pattern, cf.\ \cite{Shl96,Au18}.
\end{example}

Beyond norm bounds and asymptotic freeness, applying Theorems 
\ref{thm:mainsp} or \ref{thm:stieltjes} to the independent entry model 
\eqref{eq:indep} provides detailed information on the spectrum of $X$ for 
arbitrary variance patterns $b_{ij}^2$ satisfying the mild assumption 
\eqref{eq:indepf}. In the interest of brevity we do not spell out these 
conclusions further.

\subsection{Dependent entries}
\label{sec:exdep}

The aim of this section is to discuss some simple examples of random 
matrices with dependent entries. Unlike the independent entry model of the 
previous section, the only general nonasymptotic bound that was previously 
available in the dependent setting is the noncommutative Khintchine 
inequality \eqref{eq:nck} and analogous matrix concentration inequalities.

The following examples illustrate that, in many cases, our results are 
able to remove the dimensional factor in \eqref{eq:nck} under mild 
assumptions. To this end, note that for any random matrix $X$ with 
centered jointly Gaussian entries, we have $\EE\|X\|\gtrsim\sigma(X)$ by 
\eqref{eq:nck} and Remark \ref{rem:sa}. On the other hand, Corollary 
\ref{cor:norm} and Lemma \ref{lem:freenck} imply that 
$\EE\|X\|\lesssim\sigma(X)$ as soon as $v(X)(\log 
d)^{\frac{3}{2}}\lesssim\sigma(X)$. We aim to understand when the latter 
condition holds in concrete examples.

\subsubsection{Patterned random matrices}

Our first example is a model where independent Gaussians are 
placed in a matrix according to a given pattern. More precisely, let 
$g_1,\ldots,g_n$ be i.i.d.\ standard real Gaussian variables and let 
$S_1,\ldots,S_n$ be a partition of $[d]\times [d]$. We define 
$X$ such that $X_{jk}=d^{-\frac{1}{2}}g_i$ for $(j,k)\in S_i$; thus
\begin{equation}
\label{eq:pattern}
	X=\sum_{i=1}^n g_i A_i,\qquad\quad
	(A_i)_{jk} = \frac{1_{(j,k)\in S_i}}{\sqrt{d}}.
\end{equation}
Many classical patterned random matrix models, such as random Toeplitz 
or Hankel matrices, are special cases of this model; cf.\ \cite{Bos18}. 

\begin{lem}
\label{lem:pattern}
For the model \eqref{eq:pattern}, we have
$\EE\|X\| \asymp \sigma(X)$
when $\max_i|S_i|\lesssim \frac{d}{(\log d)^3}$.
\end{lem}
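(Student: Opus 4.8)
The plan is to apply Corollary~\ref{cor:norm} together with Lemma~\ref{lem:freenck}, exactly as suggested by the discussion immediately preceding the lemma: it suffices to show that $\sigma(X)\lesssim\EE\|X\|$ (which holds for any centered Gaussian matrix by \eqref{eq:nck} and Remark~\ref{rem:sa}) and that $v(X)(\log d)^{\frac{3}{2}}\lesssim\sigma(X)$ under the stated sparsity hypothesis. So the whole proof reduces to computing, or rather suitably bounding, the two parameters $\sigma(X)$ and $v(X)$ for the patterned model \eqref{eq:pattern}.

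First I would compute $\sigma(X)^2=\|\EE X^2\|=\|\sum_i A_i^2\|$ (using that $X$ is self-adjoint can be arranged, or working directly with $\|\sum_i A_i^*A_i\|\vee\|\sum_i A_iA_i^*\|$). Since $(A_i)_{jk}=d^{-1/2}1_{(j,k)\in S_i}$, the matrix $\sum_i A_i^*A_i$ has $(k,l)$ entry equal to $d^{-1}\sum_i|\{j:(j,k)\in S_i\}\cap\{j:(j,l)\in S_i\}|$, and in particular its diagonal entries are $d^{-1}\cdot(\text{number of }j\text{ with }(j,k)\in\bigcup_i S_i)=1$ since the $S_i$ partition $[d]\times[d]$; hence each row of $\sum_i A_i^*A_i$ has $\ell^1$-norm exactly... one needs a lower bound, and the cleanest is $\sigma(X)^2\ge\|\EE X^2\|\ge\ntr(\EE X^2)=\frac1d\sum_{j,k}\EE|X_{jk}|^2=\frac1d\sum_{j,k}d^{-1}=1$, so $\sigma(X)\ge 1$ (and an easy upper bound $\sigma(X)\lesssim 1$ also holds, though only the lower bound is needed here). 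Second, I would bound $v(X)=\|\mathrm{Cov}(X)\|^{1/2}$. Because the entries of $X$ take the form $X_{jk}=d^{-1/2}g_{i(j,k)}$ where $i(j,k)$ is the unique index with $(j,k)\in S_{i(j,k)}$, the covariance matrix $\mathrm{Cov}(X)$ is block-constant: $\mathrm{Cov}(X)_{(j,k),(j',k')}=d^{-1}1_{i(j,k)=i(j',k')}$. Thus $\mathrm{Cov}(X)=d^{-1}\bigoplus_i J_{|S_i|}$ where $J_m$ is the $m\times m$ all-ones matrix, whose norm is $m$. Therefore $v(X)^2=\|\mathrm{Cov}(X)\|=d^{-1}\max_i|S_i|$.

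Combining these, $v(X)^2(\log d)^3=d^{-1}\max_i|S_i|(\log d)^3\lesssim 1\le\sigma(X)^2$ precisely when $\max_i|S_i|\lesssim d(\log d)^{-3}$, which is the hypothesis. Hence $\tilde v(X)(\log d)^{3/4}=v(X)^{1/2}\sigma(X)^{1/2}(\log d)^{3/4}\lesssim\sigma(X)$ by Young's inequality (write $v^{1/2}\sigma^{1/2}(\log d)^{3/4}\le\frac12 v(\log d)^{3/2}+\frac12\sigma\lesssim\sigma$), and so Corollary~\ref{cor:norm} and Lemma~\ref{lem:freenck} give $\EE\|X\|\le\|X_{\rm free}\|+C\tilde v(X)(\log d)^{3/4}\le 2\sigma(X)+C'\sigma(X)\lesssim\sigma(X)$. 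Together with the matching lower bound $\EE\|X\|\gtrsim\sigma(X)$, this yields $\EE\|X\|\asymp\sigma(X)$.

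There is no serious obstacle here: the entire argument is a direct substitution into the general machinery, and the only mildly delicate point is the bookkeeping for $\mathrm{Cov}(X)$—recognizing that the partition structure forces the covariance to be a direct sum of rescaled all-ones blocks, so that its operator norm is governed by the single largest block size $\max_i|S_i|$. Once that identification is in hand, the sparsity hypothesis is exactly what makes the dimensional factor in the noncommutative Khintchine bound disappear. (It is worth remarking that the same computation shows the condition $\max_i|S_i|\lesssim d(\log d)^{-3}$ is essentially necessary for this approach, since $v(X)\asymp(d^{-1}\max_i|S_i|)^{1/2}$ is tight.)
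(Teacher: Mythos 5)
Your proof is correct and takes essentially the same route as the paper: lower-bound $\sigma(X)$ by the normalized trace (giving $\sigma(X)\ge 1$), identify $\mathrm{Cov}(X)$ as block-diagonal over the partition so that $v(X)^2=\max_i|S_i|/d$, and then feed this into Corollary~\ref{cor:norm}, Lemma~\ref{lem:freenck}, and the lower bound from \eqref{eq:nck}. The only cosmetic difference is that you make the all-ones-block form $C_i=d^{-1}J_{|S_i|}$ explicit (the paper just cites $\|C_i\|$), and you write $\ntr(\EE X^2)$ where the non-self-adjoint model really calls for $\ntr(\EE X^*X)$; the number is the same.
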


\begin{proof}
As $S_1,\ldots,S_n$ partition $[d]\times [d]$, we have
$$
	\sigma(X)^2
	\ge
	\ntr\bigg(\sum_i A_i^*A_i\bigg) =
	\frac{1}{d^2}\sum_i |S_i|=1.
$$
On the other hand, as $(X_{kl})_{(k,l)\in S_i}$ are 
independent for distinct $i$, we have $\mathrm{Cov}(X)=\bigoplus_i C_i$
where $C_i$ is the covariance matrix of $(X_{kl})_{(k,l)\in S_i}$. 
Therefore
$$
	v(X)^2 = \|\mathrm{Cov}(X)\| = \max_i \|C_i\| =
	 \max_i \frac{|S_i|}{d}.
$$
The assumption now immediately implies $v(X)(\log 
d)^{\frac{3}{2}}\lesssim \sigma(X)$.
\end{proof}

Lemma \ref{lem:pattern} shows that when $\max_i|S_i|\lesssim
\frac{d}{(\log d)^3}$, the 
dimensional factor in the noncommutative Khintchine inequality 
\eqref{eq:nck} is unnecessary. On the other hand, Gaussian Toeplitz 
matrices provide an example with $\max_i |S_i|=d$ for which the 
dimensional factor in the noncommutative Khintchine inequality is 
necessary: in this case $\sigma(X)=1$ and $\EE\|X\|\asymp 
\sqrt{\log d}$ \cite[\S 4.4]{Tro15}. Thus Lemma 
\ref{lem:pattern} is nearly the best one can hope for. This kind of 
``phase transition'' between regimes where the noncommutative Khintchine 
inequality is and is not accurate is a common feature that will be 
observed in several other examples.

For a general choice of pattern $S_1,\ldots,S_n$, the parameter 
$\sigma(X)$ may be difficult to compute explicitly. However, for special 
choices of patterns we can obtain much stronger information. The following 
simple example provides a model where strong asymptotic freeness arises 
for matrices that contain many dependent entries.

\begin{example}[Special patterned matrices]
\label{ex:specpat}
Suppose $S_1,\ldots,S_n$ satisfy the following:
\begin{enumerate}[1.]
\item Each $S_i$ is symmetric (that is, $(k,l)\in S_i\Leftrightarrow
(l,k)\in S_i$).
\item Each $S_i$ has at most one entry in each row of $[d]\times [d]$.
\item $\max_i |S_i|\le \frac{d}{(\log d)^4}$.
\end{enumerate}
The first assumption implies that each $A_i$ is a symmetric matrix.
The second assumption implies that $A_i^2$ is a diagonal matrix;
moreover, 
$$
	(\EE[X^2])_{kk} = \sum_i (A_i^2)_{kk} =
	\frac{1}{d}
	\sum_i 1_{S_i\text{ has an entry in row }k} =1
$$
for all $k$ as $S_1,\ldots,S_n$ partition $[d]\times[d]$, so that
$\EE[X^2]=\id$. The third assumption implies that 
$v(X)\le (\log d)^{-2}$. Matrices of this kind therefore satisfy the 
assumptions of Theorem \ref{thm:free}. Thus if $H_1^d,\ldots,H_m^d$ are 
independent matrices satisfying the above assumptions, then they are 
strongly asymptotically free as $d\to\infty$.
\end{example}

\subsubsection{Independent columns}

Our second example is the model where the columns $X_1,\ldots,X_d$ of the 
random matrix $X$ are independent centered Gaussian vectors with 
arbitrary covariance matrices $\Sigma_1,\ldots,\Sigma_d$. In this 
situation, all the relevant matrix parameters can be easily computed
in explicit form.

\begin{lem}
\label{lem:indepcol}
For the independent columns model, we have
$$
	\|\mathbf{E}[XX^*]\| = \Bigg\|\sum_{i=1}^d \Sigma_i\Bigg\|,\qquad
	\|\mathbf{E}[X^*X]\| = \max_i\tr[\Sigma_i],\qquad
	v(X)^2 = \max_i \|\Sigma_i\|.
$$
In particular,
$$
	\EE\|X\| \le (1+\varepsilon)\Bigg\{\Bigg\|\sum_{i=1}^d \Sigma_i
	\Bigg\|^{\frac{1}{2}}
	+ \max_i\tr[\Sigma_i]^{\frac{1}{2}}\Bigg\} +
	\frac{C}{\varepsilon}\max_i \|\Sigma_i\|^{\frac{1}{2}} (\log d)^{\frac{3}{2}}
$$
for any $\varepsilon>0$, where $C$ is a universal constant.
\end{lem}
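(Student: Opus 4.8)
The plan is to verify the three explicit formulas for the matrix parameters directly from the definitions, and then combine them with Corollary~\ref{cor:norm}, Lemma~\ref{lem:freenck}, and Young's inequality exactly as in the proof of \eqref{eq:normindep}. Write $X=[X_1\ \cdots\ X_d]$ in columns, so $XX^*=\sum_i X_iX_i^*$ and hence $\EE[XX^*]=\sum_i\Sigma_i$, giving the first formula. For the second, $X^*X$ has $(i,j)$ entry $\langle X_i,X_j\rangle$; since the columns are independent and centered, $\EE[X^*X]$ is diagonal with entries $\EE\|X_i\|^2=\tr\Sigma_i$, so $\|\EE[X^*X]\|=\max_i\tr\Sigma_i$. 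Together these identify $\sigma(X)^2=\|\sum_i\Sigma_i\|\vee\max_i\tr\Sigma_i$.

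For $v(X)^2=\|\mathrm{Cov}(X)\|$, the key observation is that the entries in distinct columns are independent, so $\mathrm{Cov}(X)$ decomposes as a direct sum $\bigoplus_{i=1}^d \Sigma_i$ over the column blocks (here the block indexed by $i$ is exactly the covariance matrix of the $i$-th column $X_i$, which is $\Sigma_i$ by hypothesis). Hence $v(X)^2=\max_i\|\Sigma_i\|$. This mirrors the reasoning in Lemma~\ref{lem:pattern} and Lemma in \S\ref{sec:exind} where independence across a partition of the entries yields a block-diagonal covariance.

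To finish, apply Corollary~\ref{cor:norm}: $\EE\|X\|\le\|X_{\rm free}\|+C\tilde v(X)(\log d)^{3/4}$. By Lemma~\ref{lem:freenck}, $\|X_{\rm free}\|\le \|\EE[XX^*]\|^{1/2}+\|\EE[X^*X]\|^{1/2}=\|\sum_i\Sigma_i\|^{1/2}+\max_i\tr[\Sigma_i]^{1/2}$ (note $A_0=0$ here). Finally $\tilde v(X)^2=v(X)\sigma(X)$, so by Young's inequality $C\tilde v(X)(\log d)^{3/4}\le \varepsilon\sigma(X)+\frac{C^2}{4\varepsilon}v(X)(\log d)^{3/2}$, and bounding $\sigma(X)\le\|X_{\rm free}\|$ again absorbs the $\varepsilon\sigma(X)$ term into $(1+\varepsilon)\|X_{\rm free}\|$. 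Substituting the explicit expressions gives the stated bound.

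I do not expect any genuine obstacle here: this is a routine specialization. The only point requiring a moment's care is the block decomposition of $\mathrm{Cov}(X)$ and correctly matching the $d^2\times d^2$ indexing convention of $\mathrm{Cov}(X)$ (entries indexed by pairs $(j,k)$) with the columns; once one fixes the convention that the second index labels the column, independence of columns gives the direct-sum structure immediately, and each block is manifestly one of the $\Sigma_i$.
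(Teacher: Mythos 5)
Your proof is correct and follows essentially the same route as the paper: compute $\EE[XX^*]=\sum_i\Sigma_i$, observe $\EE[X^*X]=\sum_i\tr[\Sigma_i]\,e_ie_i^*$ is diagonal, use independence of columns to get $\mathrm{Cov}(X)=\bigoplus_i\Sigma_i$, and then invoke Corollary~\ref{cor:norm}, Lemma~\ref{lem:freenck}, and Young's inequality. The paper states these computations more tersely, but the argument is identical.
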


\begin{proof}
It follows readily from the definition of $X$ that
$\mathbf{E}[XX^*] = \sum_i \Sigma_i$,
$\mathbf{E}[X^*X] = \sum_i \tr[\Sigma_i]\,e_ie_i^*$, and
$\mathrm{Cov}(X)=\bigoplus_i \Sigma_i$, which yields the first equation 
display. It remains to invoke Corollary \ref{cor:norm}, Lemma 
\ref{lem:freenck}, and Young's inequality.
\end{proof}

Lemma \ref{lem:indepcol} shows that we have $\EE\|X\|\asymp\sigma(X)$ in 
the independent column model as soon as the last term in the norm bound is 
dominated by either of the first two terms. For example, this is the case 
if each $\Sigma_i$ has sufficently large effective rank
$$
	\mathrm{rk}(\Sigma_i):=
	\frac{\tr[\Sigma_i]}{\|\Sigma_i\|} \gtrsim (\log d)^{3}.
$$
Conversely, when the effective rank is too small the dimensional factor in 
the noncommutative Khintchine inequality may be necessary: for example, in 
the special case $\Sigma_i=e_ie_i^*$ where $X$ is a diagonal matrix with 
i.i.d.\ standard Gaussians on the diagonal, it is readily seen that 
$\sigma(X)=1$ and $\EE\|X\|\asymp\sqrt{\log d}$.

On the other hand, we may have $\EE\|X\|\asymp\sigma(X)$ regardless of the 
effective rank when the first term in the norm bound dominates. For 
example, when $X$ has i.i.d.\ columns, that is, when 
$\Sigma_1=\cdots=\Sigma_d=\Sigma$, Lemma \ref{lem:indepcol} implies
$$
	\EE\|X\| \asymp \sqrt{d\|\Sigma\|}+\sqrt{\tr\Sigma}.
$$
This special case is well known, see, e.g., \cite[Lemma 5.4]{vH17}.

\subsubsection{Independent blocks}

Our third example is the model
\begin{equation}
\label{eq:iblock}
	X = \begin{bmatrix}
	X^{1,1} & \cdots & X^{1,m} \\
	\vdots & \ddots & \vdots \\
	X^{m,1} & \cdots & X^{m,m}
	\end{bmatrix}
\end{equation}
where $X^{i,j}$ are independent $r\times r$ random matrices.

\begin{lem}
\label{lem:iblock}
Consider the model \eqref{eq:iblock} where $X^{i,j}$ are independent 
centered Gaussian random matrices. Then we have
\begin{align*}
	\EE\|X\| &\le (1+\varepsilon)
	\Bigg\{\max_i\Bigg\|
	\sum_j \EE X^{i,j}(X^{i,j})^*\Bigg\|^{\frac{1}{2}} +
	\max_j\Bigg\|
	\sum_i \EE (X^{i,j})^*X^{i,j}\Bigg\|^{\frac{1}{2}}\Bigg\}\\
	&\qquad
	+\frac{C}{\varepsilon}\max_{ij}v(X^{i,j})\,(\log rm)^{\frac{3}{2}}
\end{align*}
for any $\varepsilon>0$, where $C$ is a universal constant.
\end{lem}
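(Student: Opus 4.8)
The plan is to cast \eqref{eq:iblock} into the general model \eqref{eq:model}, apply Corollary \ref{cor:norm} together with Lemma \ref{lem:freenck} and Young's inequality exactly as in the proof of Lemma \ref{lem:indepcol}, and reduce the statement to three elementary computations: the two norms $\|\EE XX^*\|$ and $\|\EE X^*X\|$ that control both $\sigma(X)$ and the upper bound for $\|X_{\rm free}\|$, and the parameter $v(X)=\|\mathrm{Cov}(X)\|$. Writing each Gaussian block as $X^{i,j}=\sum_k g^{i,j}_k B^{i,j}_k$ with independent standard Gaussians and embedding via matrix units $e_ie_j^*\in\M_m(\mathbb{C})$, the matrix $X$ takes the form $\sum_{i,j,k} g^{i,j}_k\,(e_ie_j^*\otimes B^{i,j}_k)$, a centered instance of \eqref{eq:model} of dimension $d=rm$.

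For the two norms, observe that the $(i,i')$ block of $XX^*$ equals $\sum_j X^{i,j}(X^{i',j})^*$, and since the blocks are independent and centered its expectation vanishes unless $i=i'$; hence $\EE[XX^*]=\bigoplus_i \sum_j \EE[X^{i,j}(X^{i,j})^*]$ is block diagonal, so $\|\EE XX^*\|=\max_i\|\sum_j\EE X^{i,j}(X^{i,j})^*\|$, and symmetrically $\|\EE X^*X\|=\max_j\|\sum_i\EE (X^{i,j})^*X^{i,j}\|$. By the definition of $\sigma$ and by Lemma \ref{lem:freenck} (with $A_0=0$) this yields both $\sigma(X)^2 = \|\EE XX^*\|\vee\|\EE X^*X\|$ and $\|X_{\rm free}\|\le \max_i\|\sum_j\EE X^{i,j}(X^{i,j})^*\|^{1/2}+\max_j\|\sum_i\EE (X^{i,j})^*X^{i,j}\|^{1/2}$, i.e.\ the first term in the asserted bound.

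For $v(X)$, the entries of $X$ lying in distinct blocks are independent, so $\mathrm{Cov}(X)=\bigoplus_{i,j}\mathrm{Cov}(X^{i,j})$ and therefore $v(X)^2=\|\mathrm{Cov}(X)\|=\max_{i,j}\|\mathrm{Cov}(X^{i,j})\|=\max_{i,j}v(X^{i,j})^2$. It then remains to combine: Corollary \ref{cor:norm} gives $\EE\|X\|\le\|X_{\rm free}\|+C\tilde v(X)(\log rm)^{3/4}$ with $\tilde v(X)^2=v(X)\sigma(X)$, and writing $C\tilde v(X)(\log rm)^{3/4}=C\sqrt{\sigma(X)}\,\sqrt{v(X)(\log rm)^{3/2}}$, Young's inequality bounds this by $\varepsilon\,\sigma(X)+\frac{C^2}{4\varepsilon}\,v(X)(\log rm)^{3/2}$ for any $\varepsilon>0$; since $\sigma(X)\le\|X_{\rm free}\|$ by Lemma \ref{lem:freenck}, plugging in the computed value of $v(X)$ and the upper bound for $\|X_{\rm free}\|$ gives the claim after relabeling the constant.

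There is no real obstacle here: the argument is the block analogue of the proofs of Lemmas \ref{lem:pattern} and \ref{lem:indepcol}, and the only points requiring care are (i) that independence together with centeredness forces the off-diagonal blocks of $\EE XX^*$ and $\EE X^*X$ to vanish, so that these two operator norms collapse to the stated maxima; and (ii) that the ambient dimension entering the logarithm is $d=rm$ rather than $r$ or $m$.
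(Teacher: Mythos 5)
Your proof is correct and takes essentially the same route as the paper: compute $\EE XX^*$, $\EE X^*X$, and $\mathrm{Cov}(X)$ blockwise using independence and centeredness, then invoke Corollary \ref{cor:norm}, Lemma \ref{lem:freenck}, and Young's inequality. The paper's proof is terser (it simply states the computations) but relies on exactly the observations you spell out, including that $d=rm$ is the ambient dimension entering the logarithm.
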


\begin{proof}
A simple computation shows that $\|\EE XX^*\|=\max_i\|\sum_j 
X^{i,j}(X^{i,j})^*\|$ and
$\|\EE X^*X\|=\max_j\|\sum_i (X^{i,j})^*X^{i,j}\|$.
Moreover, as the blocks $X^{i,j}$ are
independent, $\mathrm{Cov}(X)=\bigoplus_{i,j}\mathrm{Cov}(X^{i,j})$
and thus $v(X)^2 = \|\mathrm{Cov}(X)\|=\max_{i,j}v(X^{i,j})^2$.
It remains to invoke Corollary \ref{cor:norm}, Lemma
\ref{lem:freenck}, and Young's inequality.
\end{proof}

The independent block model \eqref{eq:iblock} may be viewed as 
intermediate between the independent entry model \eqref{eq:indep} and 
fully dependent random matrices. As a particularly simple example,
consider the case where $X^{i,j}$ are all i.i.d.\ copies of the same 
centered Gaussian random matrix $Z$. Then Lemma \ref{lem:iblock} yields
$$
	\EE\|X\| \lesssim \sqrt{m}\,\sigma(Z)+v(Z)\,
	(\log rm)^{\frac{3}{2}},
$$
so that $\EE\|X\|\asymp\sigma(X)$ as soon as $\sigma(Z)^2 \gtrsim 
\frac{\log(rm)^3}{m} v(Z)^2$. On the other hand, the case $m=1$ encodes 
any centered Gaussian matrix, for which the dimensional factor of the 
noncommutative Khintchine inequality cannot be removed.

\subsubsection{Gaussian on a subspace}

The examples discussed so far all feature a form of ``structured 
independence'', where certain subsets of entries are assumed to be 
independent. This is by no means necessary for the validity of our bounds. 
Our fourth example illustrates a simple situation that lacks
any independence.

A matrix with i.i.d.\ real Gaussian entries may be viewed equivalently as 
the model defined by the isotropic Gaussian distribution on 
$\M_d(\mathbb{R})$. This model may generalized as follows. Let 
$\mathcal{M}\subseteq \M_d(\mathbb{R})$ be any linear subspace of 
dimension $\dim\mathcal{M}=k$ of the space of $d\times d$ real matrices, 
and let $X$ be the random matrix defined by the isotropic Gaussian 
distribution on $\mathcal{M}$. Equivalently, 
$$
	X = \sum_{i=1}^k g_iA_i
$$
where $A_1,\ldots,A_k$ is any orthonormal basis of $\mathcal{M}$ (that is, 
$\tr[A_i^*A_j]=\delta_{ij}$) and $g_1,\ldots,g_k$ are i.i.d.\ real 
standard Gaussian variables. Note that this model has fully dependent 
entries when $\mathcal{M}$ is in general position.

\begin{lem}
\label{lem:gsub}
When $X$ is an isotropic real Gaussian matrix on a linear subspace 
$\mathcal{M}\subseteq \M_d(\mathbb{R})$, we have
$\EE\|X\|\asymp\sigma(X)$ as soon as 
$\dim\mathcal{M}\gtrsim d\log^3 d$.
\end{lem}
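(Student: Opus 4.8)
The plan is to reduce the claim to an application of Corollary \ref{cor:norm} together with Lemma \ref{lem:freenck}, exactly as in the proofs of Lemmas \ref{lem:pattern}, \ref{lem:indepcol}, and \ref{lem:iblock}. The only quantities I need to control are $\sigma(X)$ and $v(X)$ for $X = \sum_{i=1}^k g_i A_i$ with $A_1,\dots,A_k$ an orthonormal basis of $\mathcal{M}\subseteq \M_d(\mathbb{R})$. Since $\EE\|X\|\gtrsim\sigma(X)$ always holds by the noncommutative Khintchine inequality \eqref{eq:nck} and Remark \ref{rem:sa}, it suffices to show $\EE\|X\|\lesssim\sigma(X)$, and by Corollary \ref{cor:norm} and Lemma \ref{lem:freenck} this follows once $v(X)(\log d)^{\frac{3}{2}}\lesssim\sigma(X)$.

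First I would compute $\sigma(X)$. Here $\sigma(X)^2 = \|\sum_i A_i^*A_i\|\vee\|\sum_i A_iA_i^*\|$. The key observation is that $\sum_{i=1}^k A_i\otimes A_i$ (equivalently the operator $M\mapsto \sum_i A_i\tr[A_i^*M]$) is, up to the isomorphism $\M_d(\mathbb{R})\cong\mathbb{R}^{d^2}$, exactly the orthogonal projection $P_{\mathcal M}$ onto $\mathcal M$, because the $A_i$ form an orthonormal basis. Tracing out one factor, $\sum_i A_i^*A_i$ and $\sum_i A_iA_i^*$ are partial traces of this projection; since $\tr(P_{\mathcal M}) = \dim\mathcal M = k$, one gets $\tr(\sum_i A_i^*A_i) = \tr(\sum_i A_iA_i^*) = k$, so at least one of these $d\times d$ positive matrices has normalized trace $k/d$, giving $\sigma(X)^2 \ge k/d$. (In fact one should check both partial traces have trace $k$, which is immediate from $\sum_i \tr[A_i^*A_i]=k$.)

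Next, the bound on $v(X)$. By definition $v(X)^2 = \sup_{\tr|M|^2\le 1}\sum_i |\tr[A_i M]|^2 = \sup_{\|M\|_{HS}\le 1}\|P_{\mathcal M} M\|_{HS}^2 = 1$, since $P_{\mathcal M}$ is a projection. Thus $v(X)=1$ regardless of the subspace. Combining, $\tilde v(X)^2 = v(X)\sigma(X) = \sigma(X)$, and the condition $v(X)(\log d)^{\frac{3}{2}}\lesssim\sigma(X)$ becomes $(\log d)^{\frac{3}{2}}\lesssim\sigma(X) \ge \sqrt{k/d}$, i.e.\ $k\gtrsim d(\log d)^3$. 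Under this hypothesis Corollary \ref{cor:norm} and Lemma \ref{lem:freenck} give $\EE\|X\| \le \|X_{\rm free}\| + C\tilde v(X)(\log d)^{\frac34} \le 2\sigma(X) + C\sigma(X)^{\frac12}(\log d)^{\frac34} \lesssim \sigma(X)$, which together with the reverse bound proves $\EE\|X\|\asymp\sigma(X)$.

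The only mildly delicate point — and the place where one must be careful rather than where any real obstacle lies — is the identification of $\sum_i A_i\otimes A_i$ with the projection $P_{\mathcal M}$ and the bookkeeping of partial traces versus Hilbert–Schmidt norms; once that dictionary is set up, both $\sigma(X)^2\ge k/d$ and $v(X)=1$ drop out immediately, and the rest is the same Young's-inequality manipulation used in the preceding examples. One should also note that $\sigma(X)$ could be much larger than $\sqrt{k/d}$ for special subspaces, but only the lower bound $\sigma(X)^2\ge k/d$ is needed for the stated conclusion.
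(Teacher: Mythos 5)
Your proof is correct and takes essentially the same route as the paper: both identify $\mathrm{Cov}(X)$ with the orthogonal projection onto $\mathcal M$ (so $v(X)=1$), both lower-bound $\sigma(X)^2$ by the normalized trace $\ntr[\sum_i A_i^*A_i]=k/d$, and both then invoke Corollary~\ref{cor:norm} and Lemma~\ref{lem:freenck}. Your partial-trace bookkeeping via $\sum_i A_i\otimes A_i$ is a slightly more elaborate restatement of the paper's direct computation $\ntr[\sum_i A_i^*A_i]=\frac{1}{d}\sum_i\|A_i\|_{\rm HS}^2=\frac{k}{d}$, but the content is identical.
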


\begin{proof}
Let $\dim\mathcal{M}=k$. Then $\sigma(X)^2 \ge \ntr[\sum_i A_i^*A_i] = 
\frac{k}{d}$. On the other hand, note that $\mathrm{Cov}(X)=\sum_{i=1}^n 
\iota(A_i)\iota(A_i)^*$, where $\iota:\M_d(\mathbb{R})\to\mathbb{R}^{d^2}$ 
maps a matrix to its vector of entries. But here
$\iota(A_i)$ were assumed to be orthonormal, so $\mathrm{Cov}(X)$ is a 
projection matrix. Thus $v(X)^2=\|\mathrm{Cov}(X)\|=1$. As explained at 
the beginning of Section \ref{sec:exdep}, We therefore have 
$\EE\|X\|\asymp\sigma(X)$ as soon as $(\log d)^3\lesssim \frac{k}{d}$. 
\end{proof}

When $\mathcal{M}=\mathrm{span}\{e_ie_j^*:|i-j|\le r\}$, we have 
$\dim\mathcal{M}\asymp (r+1)d$, $\sigma(X)\asymp\sqrt{r+1}$, and 
$\EE\|X\|\ge \EE\max_{ij}|X_{ij}|\gtrsim \sqrt{\log d}$. Thus the 
conclusion of Lemma \ref{lem:gsub} may fail when $\dim\mathcal{M}\ll d\log 
d$. While this particular example is rather special (as $X$ has 
independent entries), the beauty of Lemma \ref{lem:gsub} is that it 
applies to \emph{any} $\mathcal{M}$.

\subsection{Generalized sample covariance matrices}
\label{sec:samplecov}

Let $X$ be any $d\times m$ random matrix with centered jointly Gaussian 
entries. We 
will refer to $XX^*$ as a generalized sample covariance matrix. Indeed, as 
$\frac{1}{m}XX^*=\frac{1}{m}\sum_{i=1}^m X_iX_i^*$ in terms of the columns 
$X_1,\ldots,X_m$ of $X$, we see that $\frac{1}{m}XX^*$ is a sample 
covariance matrix in the special case that the data $X_1,\ldots,X_m$ are 
i.i.d.\ (see, e.g., \cite{KL17}). In the general setting, one may still 
think of $\frac{1}{m}XX^*$ as a sample covariance matrix, but where the 
samples need not be independent or identically distributed.

The main question of interest in this setting is to estimate the deviation 
of the sample covariance matrix from the actual covariance matrix 
$\|XX^*-\EE XX^*\|$. We presently show that an estimate of this kind 
can be derived from Theorem \ref{thm:mainsp} using a simple variant of the 
linearization trick that is used in Theorem \ref{thm:free}. While 
linearization generally yields asymptotic results for any polynomial, the 
present example illustrates that nonasymptotic bounds can be derived for 
specific polynomials by a careful analysis of the linearization argument.
Alternatively, the interpolation method used in the proofs of our
main results can be adapted directly to yield quantitative bounds for 
polynomials (we do not pursue this here).

\begin{thm}
\label{thm:samplecov}
Let $A_1,\ldots,A_n$ be arbitrary $d\times m$ matrices with complex 
entries, and define $X$ and $X_{\rm free}$ as in \eqref{eq:model}
with $A_0=0$. Then we have
\begin{align*}
	\EE\|XX^*-\EE XX^*\| &\le
	\|X_{\rm free}X_{\rm free}^*-\EE XX^*\otimes \id\|
	\\ &\qquad
	+ C\{\sigma(X)\tilde v(X)\log^{\frac{3}{4}}(d+m)
	+ \tilde v(X)^2\log^{\frac{3}{2}}(d+m)\}, 
\end{align*}
where $C$ is a universal constant.
\end{thm}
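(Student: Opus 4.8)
\medskip

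The plan is to derive Theorem~\ref{thm:samplecov} from Theorem~\ref{thm:mainsp} via a linearization (self-adjoint dilation) argument, as the paper signals. First I would replace the non-self-adjoint matrix $X$ by its Hermitian dilation $\breve X$ of dimension $d+m$ as in Remark~\ref{rem:sa}, so that $\breve X = \sum_i g_i \breve A_i$ with $\breve A_i = \begin{bmatrix} 0 & A_i \\ A_i^* & 0 \end{bmatrix}$, and the square $\breve X^2 = \begin{bmatrix} XX^* & 0 \\ 0 & X^*X \end{bmatrix}$ isolates the block of interest. The key observation is that $XX^* - \EE XX^*$ is, up to sign, the upper-left block of the polynomial $\breve X^2 - \EE \breve X^2$ evaluated at $\breve X$; more precisely $\|XX^* - \EE XX^*\| = \|\breve X^2 - \EE \breve X^2\|$ since the two diagonal blocks give, respectively, $XX^* - \EE XX^*$ and $X^*X - \EE X^*X$, which have the same nonzero singular values. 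Thus it suffices to bound $\|\breve X^2 - \EE \breve X^2\|$ and compare it to its free counterpart $\|\breve X_{\rm free}^2 - (\mathrm{id}\otimes\tau)(\breve X_{\rm free}^2)\|$.

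\medskip

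The second step is the genuine linearization: $\breve X^2$ is a quadratic polynomial in $\breve X$, so I would write $p(\breve X) := \breve X^2 - \EE\breve X^2$ as the image under a suitable $*$-homomorphism of a \emph{linear} pencil. Concretely, following the standard trick, one builds a self-adjoint matrix $L = a_0 \otimes \id + \sum_i a_i \otimes \breve X$ over some auxiliary matrix algebra $\M_\ell(\mathbb{C})$ (here $\ell$ is a small fixed size, $\ell=2$ or $3$) whose Schur complement recovers $p(\breve X)$; equivalently, $\|p(\breve X)\| \le r$ iff $L$ is invertible away from a strip, and the spectrum of $L$ controls the spectrum of $p(\breve X)$ up to an explicit, norm-bounded transformation. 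Since $L$ is itself a Gaussian matrix of the form \eqref{eq:model} — with coefficient matrices $\tilde A_i = a_i \otimes \breve A_i \in \M_{\ell(d+m)}(\mathbb{C})_{\rm sa}$ — Theorem~\ref{thm:mainsp} applies directly to $L$. The crucial bookkeeping is to check that the parameters of the linearized model are controlled by those of $X$: one has $\sigma(L) \lesssim \sigma(X)$, $v(L) \lesssim v(X)$ (both up to constants depending only on the fixed size $\ell$ and the fixed coefficients $a_i$ of the linear pencil, since tensoring with a fixed matrix changes these parameters by a bounded factor), and hence $\tilde v(L) \lesssim \tilde v(X)$; the dimension entering the logarithm is $\ell(d+m) \asymp d+m$, giving the $\log^{3/4}(d+m)$ and $\log^{3/2}(d+m)$ factors.

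\medskip

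The third step is to transfer the spectral estimate from $L$ back to $p(\breve X)$ and then to $XX^* - \EE XX^*$. From Theorem~\ref{thm:mainsp}, $\spc(L) \subseteq \spc(L_{\rm free}) + C\{\tilde v(L)(\log)^{3/4} + \sigma_*(L) t\}[-1,1]$ with probability $\ge 1 - e^{-t^2}$; the linearization machinery (e.g. via the descriptor/Schur-complement identity relating resolvents of $L$ and of $p(\breve X)$) converts a spectral inclusion for $L$ into one for $p(\breve X)$, with the perturbation radius picking up at most a constant factor on the interval where we work, \emph{but} — and this is where the square contributes a second-order term — passing from a linear perturbation of $L$ to the quadratic polynomial $\breve X^2$ produces both a first-order term $\sim \sigma(X) \cdot \varepsilon$ and a genuinely quadratic term $\sim \varepsilon^2$, where $\varepsilon \asymp \tilde v(X)\log^{3/4}(d+m)$. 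This is precisely the source of the two terms in the statement: $\sigma(X)\tilde v(X)\log^{3/4}(d+m)$ from the cross term $X \cdot (\text{perturbation})$, and $\tilde v(X)^2 \log^{3/2}(d+m)$ from the perturbation squared. I would then integrate the tail bound $e^{-t^2}$ against $t\,dt$ to pass from the high-probability statement to the bound on $\EE\|XX^*-\EE XX^*\|$, noting $\sigma_*(X) \le \tilde v(X)$ absorbs the sub-Gaussian tail into the stated constants. Finally, on the free side one checks that the free model associated to the linearization of $X$ has norm equal to $\|X_{\rm free}X_{\rm free}^* - \EE XX^* \otimes \id\|$ — this is the operator-valued analogue of the identity $\EE \breve X^2 = \EE XX^* \oplus \EE X^*X$ together with the fact that $(\mathrm{id}\otimes\tau)(X_{\rm free}X_{\rm free}^*) = \EE XX^*$, so the centering is consistent — which yields the leading term as written.

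\medskip

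The main obstacle I anticipate is the careful tracking of constants through the linearization: one must choose the linear pencil for the polynomial $z \mapsto z^2$ explicitly, verify that its coefficients $a_i$ are \emph{universal} (independent of $d$, $m$, and the $A_i$), and then prove clean inequalities $\sigma(L) \lesssim \sigma(X)$ and $v(L) \lesssim v(X)$ with the implied constants universal. A secondary subtlety is that the Schur-complement identity relating $\spc(L)$ and $\spc(p(\breve X))$ is only valid away from the spectrum of a certain corner block, so one must confirm that the relevant corner is \emph{deterministic} and uniformly invertible on the region where the argument runs; for the specific pencil of $z \mapsto z^2$ this corner is a fixed scalar matrix, so this is checkable by hand, but it is the step where an incautious choice of linearization would break the argument. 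Everything else — the dilation, the application of Theorem~\ref{thm:mainsp}, the integration of the Gaussian tail — is routine.
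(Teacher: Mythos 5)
Your high-level plan is right in spirit --- dilate, apply Theorem~\ref{thm:mainsp} to a linear model, track the parameters, and integrate the Gaussian tail --- and your observation that the quadratic polynomial produces a cross term $\sim\sigma(X)\varepsilon$ and a squared term $\sim\varepsilon^2$ correctly anticipates the two correction terms. But there is a genuine gap in the first step that propagates through the rest.

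You assert that $\|XX^*-\EE XX^*\|=\|\breve X^2-\EE\breve X^2\|$ because the two diagonal blocks $XX^*-\EE XX^*$ and $X^*X-\EE X^*X$ ``have the same nonzero singular values.'' That is false in general. While $XX^*$ and $X^*X$ share nonzero eigenvalues, subtracting different deterministic matrices from each destroys the correspondence: e.g.\ for a $1\times2$ row vector $X=(g_1,g_2)$, the top block $XX^*-\EE XX^*=g_1^2+g_2^2-2$ is a scalar while $X^*X-\EE X^*X$ is a $2\times2$ matrix with eigenvalues different from $g_1^2+g_2^2-2$ a.s. Only the one-sided inequality $\|XX^*-\EE XX^*\|\le\|\breve X^2-\EE\breve X^2\|$ is available, so you can still aim for an upper bound --- but now your argument will deliver a leading term equal to $\max\bigl(\|X_{\rm free}X_{\rm free}^*-\EE XX^*\otimes\id\|,\ \|X_{\rm free}^*X_{\rm free}-\EE X^*X\otimes\id\|\bigr)$, which is strictly weaker than the single-block leading term in the theorem. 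The paper avoids this by \emph{not} squaring the Hermitian dilation of $X$; instead it dilates the augmented rectangular matrix $T=\begin{bmatrix}X & A_\varepsilon^{1/2}\end{bmatrix}$ (with an extra zero row), for which $TT^*=XX^*+A_\varepsilon$ exactly --- the $X^*X$ block never appears. This is the construction $\breve X_\varepsilon$ of Lemma~\ref{lem:linquad}, and it is the key device.

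The second gap is the ``secondary subtlety'' you flag but do not resolve: for a Schur-complement linearization of a square, the corner block must be invertible on the region where you run the spectral-inclusion argument, and near zero eigenvalues of $XX^*+(\text{something})$ this is delicate. The paper's concrete choice $A_\varepsilon=(\|\EE XX^*\|+4\varepsilon^2)\id-\EE XX^*$ does double duty: it is the centering that makes $\EE XX^*$ drop out after squaring, \emph{and} it forces $XX^*+A_\varepsilon\ge 4\varepsilon^2\id$, so that the $\lambda_-$ branch of Lemma~\ref{lem:linquad} cannot collapse to the spurious zero eigenvalue of the dilation. Without an explicit, appropriately-shifted linearization, your generic pencil argument does not have this cushion, and the lower-tail half of the bound (needed because $\|Z\|=\max(\lambda_+(Z),-\lambda_-(Z))$) would break. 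The remaining ingredients in your sketch --- $\sigma,\sigma_*,v$ behaving well under dilation and tensoring with a fixed matrix, $\sigma_*\le\tilde v$, integrating $e^{-t^2}$ --- are sound and essentially what the paper does.
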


The proof of Theorem \ref{thm:samplecov} will be given at the end of this 
section. To clarify its meaning, it is instructive to note that $\EE XX^*= 
(\mathrm{id}\otimes\tau)[X_{\rm free}X_{\rm free}^*]$; therefore, 
$\|X_{\rm free}X_{\rm free}^*-\EE XX^*\otimes\id\|$ is precisely the free 
analogue of $\|XX^*-\EE XX^*\|$.

In order to apply Theorem \ref{thm:samplecov} in concrete situations, we 
must be able to compute or bound its right-hand side. To this end, the 
following result may be viewed as the direct analogue of Lemma 
\ref{lem:freenck} in the present setting.

\begin{prop}
\label{prop:samplefree}
In the setting of Theorem \ref{thm:samplecov}, we have
$$
	\tfrac{1}{5}\Gamma
	\le
	\|X_{\rm free}X_{\rm free}^*-\EE XX^*\otimes \id\|
	\le
	\Gamma
$$
with
$$
	\Gamma:=
	2\|\EE[X\,\EE[X^*X]\,X^*]\|^{\frac{1}{2}}
	+ \|\EE X^*X\|.
$$
\end{prop}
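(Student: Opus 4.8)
The plan is to relate $\|X_{\rm free}X_{\rm free}^*-\EE XX^*\otimes\id\|$ to the norm of a suitable self-adjoint linearization and then to apply Lemma \ref{lem:freenck} to that linearization. Concretely, I would consider the matrix $Y_{\rm free} := X_{\rm free} \oplus X_{\rm free}^*$ acting appropriately, or more precisely work with the $2\times 2$ operator-matrix built from $X_{\rm free}$ in the spirit of Remark \ref{rem:sa}. The key algebraic identity is that $X_{\rm free}X_{\rm free}^* - \EE XX^*\otimes\id$ can be written as a polynomial of degree two in $X_{\rm free}$ and its adjoint, and that by the trace property of $\tau$ together with the free structure one has $(\mathrm{id}\otimes\tau)[X_{\rm free}X_{\rm free}^*] = \EE XX^*$ — the fact already noted after the statement of Theorem \ref{thm:samplecov}. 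Expanding $X_{\rm free}X_{\rm free}^* = \sum_{i,j} A_iA_j^* \otimes s_is_j$ and using $\tau(s_is_j)=\delta_{ij}$, the "centered" object is $\sum_{i\neq j} A_iA_j^* \otimes s_is_j + \sum_i A_iA_i^*\otimes(s_i^2 - \id)$.

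For the upper bound, I would bound this centered object by splitting it as a sum of two pieces and using the triangle inequality. The natural approach is to write $X_{\rm free}X_{\rm free}^* - \EE XX^*\otimes\id = (X_{\rm free} - T)(X_{\rm free} - T)^* + TX_{\rm free}^* + X_{\rm free}T^* - TT^* - \EE XX^*\otimes\id$ for a cleverly chosen "deterministic" operator $T$, but more robustly: bound $\|\sum_{i\neq j}A_iA_j^*\otimes s_is_j\|$ by applying Lemma \ref{lem:freenck} to the first-order operator $\sum_i (A_i\,\EE[X^*X]^{1/2}\text{-type})\otimes s_i$ after recognizing $\sum_{i\neq j}A_iA_j^*\otimes s_is_j = X_{\rm free}X_{\rm free}^* - \sum_i A_iA_i^*\otimes s_i^2$ and controlling $\|X_{\rm free}\|^2$ is too lossy, so instead I use the factorization $\sum_{i\neq j} A_i A_j^* \otimes s_i s_j = (\text{operator})\cdot(\text{operator})^*$-style estimates. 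Cleaner: observe $\sum_{i,j} A_i A_j^* \otimes s_i s_j = X_{\rm free} X_{\rm free}^*$, and consider $Z_{\rm free} := \sum_i (A_i \otimes s_i)(B \otimes \id)$ where $B = \EE[X^*X]^{1/2}$; then $Z_{\rm free} Z_{\rm free}^* = X_{\rm free}(B^2\otimes\id)X_{\rm free}^*$ has norm controlled by Lemma \ref{lem:freenck}, giving $\|\EE[X\,\EE[X^*X]\,X^*]\|^{1/2}$-type terms. The diagonal remainder $\sum_i A_iA_i^*\otimes(s_i^2-\id)$ has norm at most $\|\sum_i A_iA_i^*\| \cdot \sup_i\|s_i^2-\id\| \le 2\sigma(X)^2$, but we want it in terms of $\Gamma$; since $\sigma(X)^2 = \|\sum_i A_i^*A_i\| \vee \|\sum_i A_iA_i^*\| \le \|\EE X^*X\| + \ldots$, this should fold into $\Gamma$ with the constant $2$.

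For the lower bound $\tfrac15\Gamma$, I would evaluate the operator against carefully chosen test vectors — one choice of unit vector in the $A_i$-space extracting $\|\EE X^*X\|$ (taking a scalar vector $v\otimes\xi$ and using $\tau(\id)=1$), and another extracting the quartic term $\|\EE[X\,\EE[X^*X]\,X^*]\|^{1/2}$ by using the vacuum-type state and the freeness relations for $s_is_j$. The main obstacle I anticipate is the lower bound: one must produce, from the operator-valued semicircular family, states/vectors witnessing both summands in $\Gamma$ simultaneously up to the universal constant, which requires careful use of the moment structure $(\mathrm{id}\otimes\tau)[\cdots]$ and possibly the freeness of the $s_i$ to decouple cross terms. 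I would handle this by computing $(\mathrm{id}\otimes\tau)[(X_{\rm free}X_{\rm free}^* - \EE XX^*\otimes\id)^2]$ and comparing its norm to $\Gamma^2$, using that for a self-adjoint operator $\|M\|^2 \ge \|(\mathrm{id}\otimes\tau)[M^2]\|$ is false in general but $\|M\| \ge \|(\mathrm{id}\otimes\tau)[M^2]\|^{1/2}$ does hold since $(\mathrm{id}\otimes\tau)[M^2] \le \|M\|^2 (\mathrm{id}\otimes\tau)[\id] = \|M\|^2\id$; then expanding $(\mathrm{id}\otimes\tau)[M^2]$ in free moments and identifying the terms $\EE[X\EE[X^*X]X^*]$ and $(\EE X^*X)^2$ yields the constant $\tfrac15$ after accounting for all the cross and diagonal contributions.
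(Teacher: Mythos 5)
Your proposal does not reproduce the paper's argument and, as written, has genuine gaps in both directions.

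\textbf{Upper bound.} You split $M := X_{\rm free}X_{\rm free}^* - \EE XX^*\otimes\id = \sum_{i\neq j}A_iA_j^*\otimes s_is_j + \sum_i A_iA_i^*\otimes(s_i^2-\id)$ and try to bound each piece. This symmetric split is not the one the paper uses, and it cannot yield $\Gamma$ directly. The diagonal piece satisfies $\|\sum_i A_iA_i^*\otimes(s_i^2-\id)\|\le 3\|\EE XX^*\|$ (note $\|s_i^2-\id\|=3$, not $2$), which involves $\|\EE XX^*\|$ --- a parameter that does not appear in $\Gamma$ at all and that cannot in general be absorbed into $\Gamma$ without a lossy constant. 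More fundamentally, the off-diagonal piece $\sum_{i\neq j}A_iA_j^*\otimes s_is_j$ is never actually bounded: the proposed operator $Z_{\rm free}=X_{\rm free}(\EE[X^*X]^{1/2}\otimes\id)$ controls $\|X_{\rm free}\EE[X^*X]X_{\rm free}^*\|$, which is a different quantity. The paper sidesteps all of this by writing $X_{\rm free}=U+V$ on Fock space ($U$ from creation operators, $V$ from annihilation); then $VV^*=\EE XX^*\otimes\id$ cancels \emph{exactly}, leaving $M=UU^*+UV^*+VU^*$, and the plain triangle inequality gives $\|M\|\le\|U\|^2+2\|UV^*\|=\Gamma$ since $U^*U=\EE X^*X\otimes\id$ and $VU^*UV^*=\EE[X\EE[X^*X]X^*]\otimes\id$. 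Your split does not exhibit this cancellation.

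\textbf{Lower bound.} The inequality $\|M\|\ge\|(\mathrm{id}\otimes\tau)[M^2]\|^{1/2}$ is correct, but the computation you expect from it is wrong. Expanding with $\tau(s_is_js_ks_l)=\delta_{ij}\delta_{kl}+\delta_{il}\delta_{jk}$ (noncrossing pairings) gives exactly
$$(\mathrm{id}\otimes\tau)[M^2]=\sum_{i,j}A_iA_j^*A_jA_i^*=\EE[X\,\EE[X^*X]\,X^*],$$
and nothing else; there is no $(\EE X^*X)^2$ term. So this route gives only the single bound $\|M\|\ge \|\EE[X\,\EE[X^*X]\,X^*]\|^{1/2}$, which is genuinely insufficient: e.g.\ with $m=1$, $n=d$, $A_i=e_i\in\mathbb{C}^d$ one has $\|\EE[X\,\EE[X^*X]\,X^*]\|^{1/2}=\sqrt{d}$ while $\tfrac{1}{5}\Gamma=\tfrac{1}{5}(2\sqrt{d}+d)>\sqrt{d}$ once $d>9$. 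The paper obtains the missing second inequality $\|M\|\ge\|\EE X^*X\|-2\|\EE[X\,\EE[X^*X]\,X^*]\|^{1/2}$ from the reverse triangle inequality applied to $M=UU^*+UV^*+VU^*$, and then combines the two via $\max(a,b)\ge\tfrac{4}{5}a+\tfrac{1}{5}b$. Without the $U+V$ decomposition you have no access to this second bound. The essential idea you are missing is precisely this asymmetric creation/annihilation decomposition on Fock space, which is what makes both halves of the proof work with the stated constants.
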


\begin{proof}
We use the standard construction of a free semicircular family on Fock 
space, cf.\ \cite[pp.\ 102--108]{NS06} or \cite[\S 9.9]{Pis03} (this 
construction will not be used in our main results). Let 
$\mathcal{F}(\mathbb{C}^n):=\mathbb{C}\omega\oplus\bigoplus_{k=1}^\infty 
(\mathbb{C}^n)^{\otimes k}$ be the free Fock space over $\mathbb{C}^n$, where 
the unit vector $\omega$ is called the vacuum vector. For any 
$h\in\mathbb{C}^n$, the creation operator $l(h)\in 
B(\mathcal{F}(\mathbb{C}^n))$ is defined by setting
for any $x_1,\ldots,x_k\in\mathbb{C}^n$
$$
	l(h)\omega := h,\qquad\quad
	l(h)(x_1\otimes\cdots\otimes x_k)
	:=h\otimes x_1\otimes\cdots\otimes x_k.
$$
Then the self-adjoint operators $s_1,\ldots,s_n$ 
defined by $s_i = l(e_i)+l(e_i)^*$ form a free semicircular family with 
respect to the vacuum state $\tau(x):=\langle\omega,x\omega\rangle$
on $B(\mathcal{F}(\mathbb{C}^n))$.

As we assumed $A_0=0$, we may represent $X_{\rm free}=U+V$ with
$U := \sum_i A_i\otimes l(e_i)$ and $V := \sum_i A_i\otimes l(e_i)^*$.
The property $l(e_i)^*l(e_j)=\delta_{ij}\id$ (which is readily verified 
from the definition of $l(h)$) yields the identities
$$
	VV^* = 	
	\sum_i A_iA_i^*\otimes\id = \EE XX^*\otimes\id,\qquad
	U^*U = \sum_i A_i^*A_i\otimes\id = \EE X^*X\otimes\id,
$$
and
$$
	VU^*UV^* =
	\sum_i A_i\,
	\EE[X^*X]\, A_i^*\otimes\id
	=
	\EE[X\,\EE[X^*X]\,X^*]\otimes\id.
$$
We therefore obtain by the triangle inequality
$$
	\|X_{\rm free}X_{\rm free}^* - \EE XX^*\otimes\id\|  =
	\|UV^*+VU^* + UU^*\| \le
	2\|UV^*\|+\|U\|^2 = \Gamma,
$$
establishing the upper bound.

To prove the lower bound, note first that
\begin{multline*}
	\|X_{\rm free}X_{\rm free}^* - \EE XX^*\otimes\id\|
	\ge
	\sup_{\|v\|=1}\langle v\otimes\omega,
	(X_{\rm free}X_{\rm free}^* - \EE XX^*\otimes\id)^2 \,
	v\otimes\omega\rangle^{\frac{1}{2}} \\
	=
	\sup_{\|v\|=1}\langle v\otimes\omega,
	VU^*UV^*\,
	v\otimes\omega\rangle^{\frac{1}{2}}
	= \|\EE[X\,\EE[X^*X]\,X^*]\|^{\frac{1}{2}},
\end{multline*}
where we used $U^*(v\otimes\omega)=0$. On the other hand, by the
reverse triangle inequality
$$
	\|X_{\rm free}X_{\rm free}^* - \EE XX^*\otimes\id\|\ge
	\|U\|^2-2\|UV^*\| =
	\|\EE X^*X\|-2\|\EE[X\,\EE[X^*X]\,X^*]\|^{\frac{1}{2}}.
$$
The lower bound follows using
$\max(a,b)\ge \frac{4}{5}a+\frac{1}{5}b$.
\end{proof}

To illustrate these bounds, consider the case where the 
columns of $X$ are i.i.d.\ centered Gaussian vectors with covariance 
$\Sigma$ (so that $\frac{1}{m}XX^*$ is a classical sample 
covariance matrix). Then Theorem \ref{thm:samplecov} and
Proposition \ref{prop:samplefree} yield
\begin{multline*}
	\EE\|\tfrac{1}{m}XX^*-\Sigma\| \le
	\|\Sigma\|
	\bigg\{2\sqrt{\frac{\mathrm{rk}(\Sigma)}{m}}
	+ \frac{\mathrm{rk}(\Sigma)}{m}\bigg\} + \mbox{}
	\\
 	C\|\Sigma\|
	\bigg\{ 
	\bigg(1\vee \frac{\mathrm{rk}(\Sigma)}{m}\bigg)^{\frac{3}{4}}
	\frac{\log^{\frac{3}{4}}(d+m)}{m^{\frac{1}{4}}}
	+ 
	\bigg(1\vee \frac{\mathrm{rk}(\Sigma)}{m}\bigg)^{\frac{1}{2}}
	\frac{\log^{\frac{3}{2}}(d+m)}{m^{\frac{1}{2}}}
	\bigg\}
\end{multline*}
where $\mathrm{rk}(\Sigma):=\tr[\Sigma]/\|\Sigma\|$, and we used
Lemma \ref{lem:indepcol} to compute $\sigma(X)$ and $v(X)$. The leading 
term in this bound dominates when $\mathrm{rk}(\Sigma)$ is not too 
small. The latter restriction is not optimal: it was shown in \cite{KL17} 
that when $X$ has i.i.d.\ columns, 
$\EE\|\tfrac{1}{m}XX^*-\Sigma\|$ always agrees with the leading term in 
the above inequality up to a universal constant. On the other hand, our 
general bounds apply to arbitrary nonhomogeneous random matrices $X$, and 
yield the sharp constant in the leading-order term.
(In the special case that $X$ has independent Gaussian entries, a 
bound with a slightly weaker leading-order term was obtained in 
\cite{CHZ20}.)

We now turn to the proof of Theorem \ref{thm:samplecov}. The key idea is 
the following lemma, which provides an explicit linearization of the 
polynomial $(X,X^*)\mapsto XX^*+A$.

\begin{lem}
\label{lem:linquad}
Let $A_\varepsilon = (\|\EE XX^*\|+4\varepsilon^2)\id - \EE XX^*$, and 
define
$$
	\breve X_\varepsilon = \begin{bmatrix}
	0 & 0 & X & A^{\frac{1}{2}}_\varepsilon \\
	0 & 0 & 0 & 0 \\
	X^* & 0 & 0 & 0 \\
	A^{\frac{1}{2}}_\varepsilon & 0 & 0 & 0
	\end{bmatrix},
	\qquad\quad
	\breve X_{\rm free,\varepsilon} = \begin{bmatrix}
	0 & 0 & X_{\rm free} & A^{\frac{1}{2}}_\varepsilon\otimes\id \\
	0 & 0 & 0 & 0 \\
	X_{\rm free}^* & 0 & 0 & 0 \\
	A^{\frac{1}{2}}_\varepsilon\otimes\id & 0 & 0 & 0
	\end{bmatrix}.
$$
Then we have
\begin{align*}
	&\spc(\breve X_\varepsilon)\subseteq \spc(\breve X_{\rm 
	free,\varepsilon})+[-\varepsilon,\varepsilon]
	\qquad\Longrightarrow \\
	&\begin{cases}
	\lambda_{+}(XX^*+A_\varepsilon)^{\frac{1}{2}} \le 
	\lambda_{+}(X_{\rm free}X_{\rm free}^*
	+A_\varepsilon\otimes\id)^{\frac{1}{2}}+\varepsilon,\\
	\lambda_{-}(XX^*+A_\varepsilon)^{\frac{1}{2}} \ge 
	\lambda_{-}(X_{\rm free}X_{\rm free}^*
	+A_\varepsilon\otimes\id)^{\frac{1}{2}}-\varepsilon
	\end{cases}
\end{align*}
for any $\varepsilon>0$, where
$\lambda_+(Z):=\sup \spc(Z)$ and $\lambda_-(Z):=\inf\spc(Z)$.
\end{lem}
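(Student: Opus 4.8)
The plan is to recognize that both $\breve X_\varepsilon$ and $\breve X_{\rm free,\varepsilon}$ are, up to a permutation of coordinates and a harmless zero summand, the self-adjoint dilations of rectangular operators whose ``$WW^*$'' is exactly $XX^*+A_\varepsilon$, respectively $X_{\rm free}X_{\rm free}^*+A_\varepsilon\otimes\id$; the spectral inclusion hypothesis then translates directly into the two one-sided comparisons we want. Concretely, I would first note that $A_\varepsilon=(\|\EE XX^*\|+4\varepsilon^2)\id-\EE XX^*\ge 4\varepsilon^2\id$ is positive definite, so $A_\varepsilon^{\frac{1}{2}}$ is a well-defined positive matrix. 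Putting $Y:=\begin{bmatrix}X & A_\varepsilon^{\frac{1}{2}}\end{bmatrix}$ and $Y_{\rm free}:=\begin{bmatrix}X_{\rm free} & A_\varepsilon^{\frac{1}{2}}\otimes\id\end{bmatrix}$, the zero second block row and column of $\breve X_\varepsilon$ split off, and regrouping the remaining blocks exhibits $\breve X_\varepsilon$ as an orthogonal direct sum of $H_Y:=\bigl[\begin{smallmatrix}0 & Y\\ Y^* & 0\end{smallmatrix}\bigr]$ with a (possibly trivial) zero block, and likewise $\breve X_{\rm free,\varepsilon}$ as a direct sum of $H_{Y_{\rm free}}:=\bigl[\begin{smallmatrix}0 & Y_{\rm free}\\ Y_{\rm free}^* & 0\end{smallmatrix}\bigr]$ with a zero block; moreover $YY^*=XX^*+A_\varepsilon\ge 4\varepsilon^2\id$ and $Y_{\rm free}Y_{\rm free}^*=X_{\rm free}X_{\rm free}^*+A_\varepsilon\otimes\id\ge 4\varepsilon^2\id$.

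Next I would use three elementary facts about a Hermitian dilation $H=\bigl[\begin{smallmatrix}0 & W\\ W^* & 0\end{smallmatrix}\bigr]$: that $H^2=WW^*\oplus W^*W$, that $H$ is unitarily conjugate to $-H$, and that $\spc(WW^*)$ and $\spc(W^*W)$ agree away from $0$; together with the spectral mapping theorem for $H\mapsto H^2$, these show that the strictly positive part of $\spc(H)$ is exactly $\{\sqrt{s}:s\in\spc(WW^*)\}$. Applied to $W=Y$ and to $W=Y_{\rm free}$, and using that $YY^*$ and $Y_{\rm free}Y_{\rm free}^*$ are strictly positive, this gives
$$
	\spc(\breve X_\varepsilon)\cap(0,\infty)=\bigl\{\sqrt{s}:s\in\spc(XX^*+A_\varepsilon)\bigr\},\qquad
	\spc(\breve X_{\rm free,\varepsilon})\cap(0,\infty)=\bigl\{\sqrt{s}:s\in\spc(X_{\rm free}X_{\rm free}^*+A_\varepsilon\otimes\id)\bigr\}.
$$
In particular $\max\spc(\breve X_\varepsilon)=\lambda_+(XX^*+A_\varepsilon)^{\frac{1}{2}}$ and $\sup\spc(\breve X_{\rm free,\varepsilon})=\lambda_+(X_{\rm free}X_{\rm free}^*+A_\varepsilon\otimes\id)^{\frac{1}{2}}$, while the smallest strictly positive point of either spectrum is the square root of the corresponding $\lambda_-$, which is at least $2\varepsilon$ since $A_\varepsilon\ge 4\varepsilon^2\id$ forces $\lambda_-(XX^*+A_\varepsilon)\ge 4\varepsilon^2$ and $\lambda_-(X_{\rm free}X_{\rm free}^*+A_\varepsilon\otimes\id)\ge 4\varepsilon^2$.

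Finally I would feed in the hypothesis $\spc(\breve X_\varepsilon)\subseteq\spc(\breve X_{\rm free,\varepsilon})+[-\varepsilon,\varepsilon]$. For the upper estimate, the top point $\lambda_+(XX^*+A_\varepsilon)^{\frac{1}{2}}=\max\spc(\breve X_\varepsilon)$ lies within distance $\varepsilon$ of some point of $\spc(\breve X_{\rm free,\varepsilon})$, hence is at most $\sup\spc(\breve X_{\rm free,\varepsilon})+\varepsilon=\lambda_+(X_{\rm free}X_{\rm free}^*+A_\varepsilon\otimes\id)^{\frac{1}{2}}+\varepsilon$. For the lower estimate, set $\nu:=\lambda_-(XX^*+A_\varepsilon)^{\frac{1}{2}}\in\spc(\breve X_\varepsilon)$; since $\nu\ge 2\varepsilon$, any $\mu\in\spc(\breve X_{\rm free,\varepsilon})$ with $|\nu-\mu|\le\varepsilon$ automatically satisfies $\mu\ge\varepsilon>0$, so that $\mu\ge\lambda_-(X_{\rm free}X_{\rm free}^*+A_\varepsilon\otimes\id)^{\frac{1}{2}}$ and hence $\nu\ge\mu-\varepsilon\ge\lambda_-(X_{\rm free}X_{\rm free}^*+A_\varepsilon\otimes\id)^{\frac{1}{2}}-\varepsilon$. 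The one point needing care — and the reason the $4\varepsilon^2$ was built into $A_\varepsilon$ — is precisely this last step: since $\spc(\breve X_{\rm free,\varepsilon})+[-\varepsilon,\varepsilon]$ also contains $0$ and negative numbers, one must exclude that the free point matched to $\nu$ is non-positive, and the uniform gap $\nu\ge 2\varepsilon$ is exactly the buffer that rules this out. The remaining verifications — the block decomposition and the description of the spectrum of a Hermitian dilation — are routine.
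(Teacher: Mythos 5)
Your proposal is correct and follows essentially the same route as the paper: identify the spectrum of the $4\times 4$ block matrix with $\spc\bigl((XX^*+A_\varepsilon)^{1/2}\bigr)$ (and its negatives) via the self-adjoint dilation, then use the spectral inclusion together with the uniform buffer $\lambda_-(XX^*+A_\varepsilon)^{1/2}\ge 2\varepsilon$ built into $A_\varepsilon$ to rule out that the matched free point is non-positive. The only cosmetic difference is that you split off the zero row/column as a separate direct summand before dilating, whereas the paper keeps it inside the off-diagonal block $T=\bigl[\begin{smallmatrix}X & A_\varepsilon^{1/2}\\ 0 & 0\end{smallmatrix}\bigr]$ and quotes Remark \ref{rem:sa} directly; both yield the same description of the spectrum.
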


\begin{proof}
By Remark \ref{rem:sa}, we have
$$
	\spc(\breve X_\varepsilon)\cup\{0\} =
	\spc((XX^*+A_\varepsilon)^{\frac{1}{2}})
	\cup
	{-\spc((XX^*+A_\varepsilon)^{\frac{1}{2}})}
	\cup\{0\},
$$
and analogously for $\breve X_{\rm free,\varepsilon}$. 
If $\spc(\breve X_\varepsilon)\subseteq \spc(\breve X_{\rm
free,\varepsilon})+[-\varepsilon,\varepsilon]$, then clearly
$$
	\lambda_{+}(XX^*+A_\varepsilon)^{\frac{1}{2}} \le
        \lambda_{+}(X_{\rm free}X_{\rm free}^*
	+A_\varepsilon\otimes\id)^{\frac{1}{2}}+\varepsilon.
$$
On the other hand, as $\breve X_{\rm free,\varepsilon}$ can have a zero
eigenvalue, it follows that either
$$
	\lambda_{-}(XX^*+A_\varepsilon)^{\frac{1}{2}} \ge \lambda_{-}(X_{\rm
        free}X_{\rm free}^*
	+A_\varepsilon\otimes\id)^{\frac{1}{2}}-\varepsilon
$$
or $\lambda_{-}(XX^*+A_\varepsilon)^{\frac{1}{2}}\le\varepsilon$.
But the latter is impossible, as 
$\lambda_{-}(XX^*+A_\varepsilon)^{\frac{1}{2}}\ge 2\varepsilon$.
\end{proof}

We can now complete the proof of Theorem \ref{thm:samplecov}.

\begin{proof}[Proof of Theorem \ref{thm:samplecov}]
We adopt throughout the proof the notation and conclusions
Lemma \ref{lem:linquad}. 
By Remark \ref{rem:sa}, we have $\sigma_*(\breve X_\varepsilon)=
\sigma_*(X)$ and $\tilde v(\breve X_\varepsilon) \le 
2^{\frac{1}{4}}\tilde v(X)$.
We may therefore apply Theorem \ref{thm:mainsp} to $\breve X_\varepsilon$ 
to obtain
\begin{align*}
	&\mathbf{P}\big[
	\lambda_{+}(XX^*+A_{\varepsilon(t)})^{\frac{1}{2}} \le
	\lambda_{+}(X_{\rm free}X_{\rm free}^*
	+A_{\varepsilon(t)}\otimes\id)^{\frac{1}{2}}
	+\varepsilon(t),
\\ &
\phantom{\mathbf{P}[}
	\lambda_{-}(XX^*+A_{\varepsilon(t)})^{\frac{1}{2}} \ge
	\lambda_{-}(X_{\rm free}X_{\rm free}^*
	+A_{\varepsilon(t)}\otimes\id)^{\frac{1}{2}}
	-\varepsilon(t)\big]
	\ge 1-e^{-t^2}
\end{align*}
for all $t\ge 0$, where $\varepsilon(t)=c\{\tilde v(X)(\log^{\frac{3}{4}} 
(d+m)+\sigma_*(X)t\}$ for a universal constant $c$. (Note that
$\breve X_\varepsilon$ is $2(d+m)$-dimensional, but we may bound
$\log(2(d+m))\lesssim \log(d+m)$ as $d+m\ge 2$ for notational simplicity.)
Now note that 
$$
	\lambda_{\pm}(XX^*+A_\varepsilon) =
	\lambda_{\pm}(XX^*-\EE XX^*)+\|\EE XX^*\|+4\varepsilon^2,
$$
and analogously for $X_{\rm free}$. Moreover, we have
$$
	\lambda_-(X_{\rm free}X^*_{\rm free}+A_\varepsilon\otimes\id)\le
	\lambda_+(X_{\rm free}X^*_{\rm free}+A_\varepsilon\otimes\id)
	\le 5\sigma(X)^2 + 4\varepsilon^2
$$
by Lemma \ref{lem:freenck}. Thus we obtain
\begin{align*}
	&
	\lambda_{+}(XX^*+A_{\varepsilon})^{\frac{1}{2}} \le
	\lambda_{+}(X_{\rm free}X_{\rm free}^*
	+A_{\varepsilon}\otimes\id)^{\frac{1}{2}}
	+\varepsilon
	\qquad \Longrightarrow
	\\ &
	\lambda_{+}(XX^*-\EE XX^*) \le
	\lambda_{+}(X_{\rm free}X_{\rm free}^*
	-\EE XX^*\otimes\id) +
	2\varepsilon\sqrt{5\sigma(X)^2+4\varepsilon^2}
	+\varepsilon^2
\end{align*}
by squaring both sides of the first inequality and applying the previous 
two equation displays. Analogously, using $(y-\varepsilon)_+^2\ge 
y^2-2\varepsilon y-\varepsilon^2$
for $y,\varepsilon\ge 0$ yields
\begin{align*}
	&
	\lambda_{-}(XX^*+A_{\varepsilon})^{\frac{1}{2}} \ge
	\lambda_{-}(X_{\rm free}X_{\rm free}^*
	+A_{\varepsilon}\otimes\id)^{\frac{1}{2}}
	-\varepsilon
	\qquad \Longrightarrow
	\\ &
	\lambda_{-}(XX^*-\EE XX^*) \ge
	\lambda_{-}(X_{\rm free}X_{\rm free}^*
	-\EE XX^*\otimes\id) -
	2\varepsilon\sqrt{5\sigma(X)^2+4\varepsilon^2}
	-\varepsilon^2.
\end{align*}
But as $\|Z\|=\max(\lambda_+(Z),-\lambda_-(Z))$, we have shown that
$$
	\mathbf{P}\big[
	\|XX^*-\EE XX^*\| >
	\|X_{\rm free}X_{\rm free}^*
	-\EE XX^*\otimes\id\|
	+5\sigma(X)\varepsilon(t) + 5\varepsilon(t)^2
	\big]
	\le e^{-t^2}.
$$
The conclusion follows by integrating this tail bound and using
$\sigma_*(X)\le \tilde v(X)$.
\end{proof}

\subsection{Smallest singular value}
\label{sec:sigmamin}

The initial motivation for the results of this paper arose from the 
question whether classical matrix concentration inequalities can be 
sharpened. Consequently, the focus of our examples has been on norm bounds 
for various random matrix models. Unlike classical matrix concentration 
inequalities, however, our main results enable us to control the entire 
spectrum and not merely the spectral norm. This makes it possible to 
address questions that are outside the scope of classical matrix 
concentration inequalities.

As an illustration, let us derive in this section a bound on the 
smallest singular value $\svl_{\rm min}(X) := \inf\spc(|X|)$ of a 
general Gaussian random matrix $X$.

\begin{thm}
\label{thm:smsing}
Let $A_0,\ldots,A_n$ be arbitrary $d\times m$ matrices with complex 
entries, and define $X$ and $X_{\rm free}$ as in \eqref{eq:model}.
Then we have
$$
	\mathbf{P}[\svl_{\rm min}(X) \le
	\svl_{\rm min}(X_{\rm free}) - 
	C\tilde v(X)\log^{\frac{3}{4}}(d+m) -
	C\sigma_*(X)t] \le e^{-t^2}
$$
for all $t\ge 0$, where $C$ is a universal constant. In particular,
$$
	\EE[\svl_{\rm min}(X)] \ge 
	\svl_{\rm min}(X_{\rm free})
	-C\tilde v(X)\log^{\frac{3}{4}}(d+m).
$$
\end{thm}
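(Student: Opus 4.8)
The plan is to deduce the bound from Theorem~\ref{thm:mainsp} applied to a \emph{regularized} self-adjoint dilation of $X$, the regularization serving to neutralize the spurious zero eigenvalues that a naive dilation would introduce when $X$ is not square. Since $\svl_{\rm min}(X)^2=\inf\spc(X^*X)$ and $\svl_{\rm min}(X_{\rm free})^2=\inf\spc(X_{\rm free}^*X_{\rm free})$, and since $\begin{bmatrix}X^*&cI_m\end{bmatrix}\begin{bmatrix}X^*&cI_m\end{bmatrix}^*=X^*X+c^2I_m$ for any $c>0$, I would introduce for a parameter $c>0$ (to be chosen) the $(d+2m)\times(d+2m)$ self-adjoint Gaussian matrix of the form \eqref{eq:model}, together with its free model,
$$
 Y_c:=\begin{bmatrix}0&X^*&cI_m\\ X&0&0\\ cI_m&0&0\end{bmatrix},
 \qquad
 Y_{c,{\rm free}}:=\begin{bmatrix}0&X_{\rm free}^*&cI_m\otimes\id\\ X_{\rm free}&0&0\\ cI_m\otimes\id&0&0\end{bmatrix}
$$
(block sizes $m,d,m$), i.e.\ the dilations (Remark~\ref{rem:sa}) of $\begin{bmatrix}X^*&cI_m\end{bmatrix}$ and $\begin{bmatrix}X_{\rm free}^*&cI_m\otimes\id\end{bmatrix}$. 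By Remark~\ref{rem:sa}, $\spc(Y_c)\cup\{0\}$ equals $\pm\spc((X^*X+c^2I_m)^{1/2})\cup\{0\}$ with $\spc((X^*X+c^2I_m)^{1/2})\subseteq[c,\infty)$, and likewise for $Y_{c,{\rm free}}$ with $X_{\rm free}$ in place of $X$; in particular $\spc(Y_{c,{\rm free}})\cap(-c,c)=\{0\}$, so the artificial zero eigenvalue (present with multiplicity $d$ in $Y_c$, and with positive weight in $Y_{c,{\rm free}}$) is separated by a gap $(0,c)$ from the rest of the spectrum.

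Because the $cI_m$ blocks enter only the deterministic part, the centered part of $Y_c$ is a zero-padded copy of the dilation $\breve X$ of $X$, so Remark~\ref{rem:sa} gives $\sigma(Y_c)=\sigma(X)$, $\sigma_*(Y_c)=\sigma_*(X)$, $v(Y_c)\le\sqrt2\,v(X)$, hence $\tilde v(Y_c)\le 2^{1/4}\tilde v(X)$; also $\dim Y_c=d+2m\le 2(d+m)$. Fixing $t\ge0$ and applying Theorem~\ref{thm:mainsp} to $Y_c$ gives, with probability at least $1-e^{-t^2}$,
$$
 \spc(Y_c)\subseteq\spc(Y_{c,{\rm free}})+\varepsilon[-1,1],\qquad
 \varepsilon:=C_0\bigl\{\tilde v(X)\log^{\frac34}(d+m)+\sigma_*(X)t\bigr\},
$$
with $C_0$ a universal constant absorbing the parameter comparisons and $\log(d+2m)\lesssim\log(d+m)$. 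I would now take $c:=2\varepsilon$. On the above event, $\lambda:=(\svl_{\rm min}(X)^2+c^2)^{1/2}\in\spc(Y_c)$ with $\lambda\ge c>\varepsilon$, so the point $\mu\in\spc(Y_{c,{\rm free}})$ with $|\lambda-\mu|\le\varepsilon$ can be neither $0$ (since $\lambda>\varepsilon$) nor $\le-c$ (since $|\lambda-\mu|\ge c>\varepsilon$ there); hence $\mu\ge(\svl_{\rm min}(X_{\rm free})^2+c^2)^{1/2}$ and therefore $(\svl_{\rm min}(X)^2+c^2)^{1/2}\ge(\svl_{\rm min}(X_{\rm free})^2+c^2)^{1/2}-\varepsilon$. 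A short computation (square, use $\sqrt{a^2+b^2}\le a+b$ and $c=2\varepsilon$, then distinguish $\svl_{\rm min}(X_{\rm free})\le c'\varepsilon$ from the complementary case) turns this into $\svl_{\rm min}(X)\ge\svl_{\rm min}(X_{\rm free})-c'\varepsilon$ on the event, for a universal constant $c'$ (one may take $c'=3$), which is the asserted tail bound after enlarging the constant. The in-particular bound on $\EE[\svl_{\rm min}(X)]$ then follows by integrating this tail in the usual way, using $\svl_{\rm min}(X)\ge b-(b-\svl_{\rm min}(X))_+$ with $b=\svl_{\rm min}(X_{\rm free})$, together with $\sigma_*(X)\le\tilde v(X)$ and $\log(d+m)\gtrsim1$.

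The step I expect to be the real obstacle is exactly what motivates the regularization: applied to the bare dilation $\breve X$, Theorem~\ref{thm:mainsp} only yields the one-sided inclusion $\spc(\breve X)\subseteq\spc(\breve X_{\rm free})+\delta[-1,1]$, and when $d\ne m$ the operator $\breve X_{\rm free}$ genuinely has $0$ in its spectrum with positive trace-weight, so this inclusion cannot preclude a small \emph{nonzero} singular value of $X$ from sitting near that atom. Inserting the shift $cI_m$ with $c$ of the order of the Theorem~\ref{thm:mainsp} error is precisely what lifts the relevant eigenvalue of the dilation to height $\ge c$, clear of the atom, so that the inclusion pins it to the genuine spectrum of $(X_{\rm free}^*X_{\rm free})^{1/2}$. (When $d=m$ one can skip the regularization and work directly with $\breve X$: in a finite von Neumann algebra a bounded-below element is invertible, so $0\in\spc(\breve X_{\rm free})$ if and only if $\svl_{\rm min}(X_{\rm free})=0$, and the naive dilation already closes the argument; but the construction above handles all aspect ratios at once.) Besides this, the only nonroutine point is verifying the dimension and parameter bounds for $Y_c$ indicated above.
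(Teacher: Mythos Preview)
Your proposal is correct and takes essentially the same approach as the paper: both apply Theorem~\ref{thm:mainsp} to a regularized self-adjoint dilation with a shift $c=2\varepsilon$ (the paper phrases this as invoking Lemma~\ref{lem:linquad} with $A_\varepsilon=4\varepsilon^2\id$ and the roles of $X,X^*$ exchanged), so that the smallest singular value is lifted clear of the spurious zero in the free spectrum. The only cosmetic difference is the endgame: rather than your case split, the paper uses the one-line estimates $\lambda_-(|X_{\rm free}|^2+4\varepsilon^2\id)^{1/2}\ge\svl_{\rm min}(X_{\rm free})$ and $\lambda_-(|X|^2+4\varepsilon^2\id)^{1/2}\le\svl_{\rm min}(X)+2\varepsilon$ to reach $\svl_{\rm min}(X)\ge\svl_{\rm min}(X_{\rm free})-3\varepsilon$ directly.
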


\begin{proof}
The conclusion of Lemma \ref{lem:linquad} continues to hold \emph{verbatim}
if we define $A_\varepsilon := 4\varepsilon^2\id$ and exchange the roles 
of $X$ and $X^*$. Thus Theorem \ref{thm:mainsp} yields
$$
	\mathbf{P}[\lambda_-(|X|^2+4\varepsilon(t)^2\id)^{\frac{1}{2}}
	\le
	\lambda_-(|X_{\rm free}|^2+4\varepsilon(t)^2\id)^{\frac{1}{2}}
	- \varepsilon(t)] \le e^{-t^2}
$$
for all $t\ge 0$, where $\varepsilon(t):=
C\{\tilde v(X)\log^{\frac{3}{4}}(d+m) + \sigma_*(X)t\}$. The conclusion
follows as $\lambda_-(|X_{\rm 
free}|^2+4\varepsilon(t)^2\id)^{\frac{1}{2}}\ge \svl_{\rm min}(X_{\rm 
free})$ and $\lambda_-(|X|^2+4\varepsilon(t)^2\id)^{\frac{1}{2}}
\le \svl_{\rm min}(X)+2\varepsilon(t)$. 
The expectation bound follows by 
integrating the tail bound and $\sigma_*(X){\,\le\,}\tilde v(X)$.
\end{proof}

While $\svl_{\rm min}(X_{\rm free})$ can be computed using the methods 
of \cite[\S 5]{Leh99}, the following crude bound already yields 
nontrivial results in various examples.

\begin{lem}
\label{lem:smsingfree}
Consider the setting of Theorem \ref{thm:smsing} with $\EE X=0$. Then we 
have
$$
	\svl_{\rm min}(X_{\rm free}) \ge
	\svl_{\rm min}(\EE X^*X)^{\frac{1}{2}} -
	\|\EE XX^*\|^{\frac{1}{2}}.
$$
\end{lem}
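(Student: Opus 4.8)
The plan is to realize $X_{\rm free}$ on the full Fock space exactly as in the proof of Proposition~\ref{prop:samplefree}, and to split it into a creation part and an annihilation part whose norms are governed by $\EE X^*X$ and $\EE XX^*$, respectively.

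I would first record the elementary fact that
$$
	\svl_{\rm min}(X_{\rm free}) = \inf_{\|\xi\|=1}\|X_{\rm free}\xi\|,
$$
which holds because $X_{\rm free}^*X_{\rm free}\ge \svl_{\rm min}(X_{\rm free})^2\,\id$ as positive operators, while conversely $\svl_{\rm min}(X_{\rm free})^2=\inf\spc(X_{\rm free}^*X_{\rm free})=\inf_{\|\xi\|=1}\langle\xi,X_{\rm free}^*X_{\rm free}\,\xi\rangle$. Thus it suffices to bound $\|X_{\rm free}\xi\|$ from below uniformly over unit vectors $\xi$.

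Next, using the Fock space model $s_i=l(e_i)+l(e_i)^*$ introduced in the proof of Proposition~\ref{prop:samplefree} (where the only property used is $l(e_i)^*l(e_j)=\delta_{ij}\id$), and recalling that $A_0=0$, I would write $X_{\rm free}=U+V$ with $U:=\sum_i A_i\otimes l(e_i)$ and $V:=\sum_i A_i\otimes l(e_i)^*$. The identities $U^*U=\sum_i A_i^*A_i\otimes\id=\EE X^*X\otimes\id$ and $VV^*=\sum_i A_iA_i^*\otimes\id=\EE XX^*\otimes\id$ then give $\|U\xi\|\ge\svl_{\rm min}(\EE X^*X)^{\frac12}\|\xi\|$ for every $\xi$ and $\|V\|=\|VV^*\|^{\frac12}=\|\EE XX^*\|^{\frac12}$. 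The reverse triangle inequality $\|X_{\rm free}\xi\|\ge\|U\xi\|-\|V\xi\|$, combined with the infimum over unit $\xi$ from the previous step, then yields the claim; note that this argument is insensitive to the $A_i$ being rectangular, since the identities for $U^*U$ and $VV^*$ make sense for $d\times m$ matrices.

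There is no genuine obstacle here: the entire content lies in choosing the decomposition $X_{\rm free}=U+V$, after which each estimate is a single line. The only temptation to resist is trying to analyze $|X_{\rm free}|$ or $X_{\rm free}^*X_{\rm free}$ directly, which would force one to handle the cross terms $U^*V+V^*U$ and the comparatively unwieldy operator $V^*V=\sum_{i,j}A_i^*A_j\otimes l(e_i)l(e_j)^*$; splitting off the annihilation part $V$ \emph{before} estimating sidesteps this completely.
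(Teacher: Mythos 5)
Your proof is correct and takes essentially the same route as the paper: both realize $X_{\rm free}=U+V$ on the free Fock space as in Proposition~\ref{prop:samplefree}, observe $U^*U=\EE X^*X\otimes\id$ and $VV^*=\EE XX^*\otimes\id$, and conclude via the reverse triangle inequality applied to $\svl_{\rm min}(X_{\rm free})=\inf_{\|\xi\|=1}\|(U+V)\xi\|$. Your extra paragraph justifying $\svl_{\rm min}(X_{\rm free})=\inf_{\|\xi\|=1}\|X_{\rm free}\xi\|$ is a standard fact the paper takes for granted, but is otherwise identical in substance.
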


\begin{proof}
We use the same Fock space construction $X_{\rm free}=U+V$ as in the 
proof of Proposition \ref{prop:samplefree}. Then we may estimate
by the reverse triangle inequality
$$
	\svl_{\rm min}(X_{\rm free}) =
	\inf_{\|x\|=1} \|(U+V)x\| \ge
	\inf_{\|x\|=1} \|Ux\| - \|V\|,	
$$
and the conclusion follows as
$\|Ux\|^2 = \langle x,(\EE X^*X\otimes \id)x\rangle$ and
$\|V\|^2=\|\EE XX^*\|$.
\end{proof}

The above results provide information on the smallest singular value of 
random matrices that may be nonhomogeneous and have dependent entries. 
Even suboptimal bounds on the smallest singular value in this setting are 
fundamentally outside the scope of classical matrix concentration 
inequalities. As a simple example, we consider a variant of the patterned 
matrices of Example \ref{ex:specpat}.

\begin{example}[Special patterned matrices]
\label{ex:smsmsp}
Let $g_1,\ldots,g_n$ be i.i.d.\ standard Gaussian variables, and let
$S_1,\ldots,S_n$ be a partition of the set $[d]\times[m]$ with $d\ge m$.
Then we can define the $d\times m$ patterned random matrix $X$ such that
$X_{jk} = g_i$ for $(j,k)\in S_i$. Let us assume in 
addition that each 
$S_i$ has at most one entry in each row and column of $[d]\times[m]$.
Then we may readily compute as in Example \ref{ex:specpat} that
$\EE[X^*X]=d\id$ and $\EE[XX^*]=m\id$, so that Theorem \ref{thm:smsing} 
and Lemma \ref{lem:smsingfree} yield
$$
	\EE[\svl_{\rm min}(X)] \ge
	\sqrt{d}-\sqrt{m} - C 
	d^{\frac{1}{4}}(\log d)^{\frac{3}{4}} \max_i|S_i|^{\frac{1}{4}}.
$$
When each $|S_i|=1$, that is, when $X$ is a rectangular matrix with 
i.i.d.\ standard Gaussian entries, such a bound is well known (e.g., 
\cite{DS01}) and is in agreement with the classical asymptotics of the 
smallest singular value in the proportional dimension regime $d\to\infty$, 
$m=\gamma d$ with $\gamma\in(0,1)$ due to Bai and Yin \cite{BY93}.
The present results show that the same bound remains valid to leading 
order even if we introduce considerable dependence among the 
matrix entries: for example, in the proportional dimension regime it 
suffices that $\max_i|S_i| \ll \frac{d}{(\log d)^3}$.
\end{example}

\begin{rem}
The above results are meaningful only when
$\svl_{\rm min}(X_{\rm free})>0$. When $\svl_{\rm min}(X_{\rm 
free})=0$ (for example, in square case $d=m$ of Example \ref{ex:smsmsp}), 
it may still be the case that $X$ is invertible a.s.\ even though $X_{\rm 
free}$ is not, but the problem of quantitatively estimating
$\svl_{\rm min}(X)$ in this setting is of a fundamentally different 
nature. At present, results of the latter kind for nonhomogeneous random 
matrices are known only under restrictive structural assumptions 
\cite{RZ16}.
\end{rem}

\section{Preliminaries}
\label{sec:prelim}

The aim of this section is to recall some mathematical background and to 
introduce a few basic estimates that will be used in the remainder of the 
paper. 

\subsection{Free probability}
\label{sec:free}

We begin by recalling some basic notions of free probability; the reader 
is referred to \cite{NS06} for an introduction to this topic. 

For our purposes, a \emph{unital $C^*$-algebra} may be thought of 
concretely as an algebra $\mathcal{A}$ of bounded operators on a complex 
Hilbert space which is self-adjoint ($a\in\mathcal{A}$ implies 
$a^*\in\mathcal{A}$), is closed in the operator norm, and contains the 
identity $\id\in\mathcal{A}$. A \emph{state} is a linear functional 
$\tau:\mathcal{A}\to\mathbb{C}$ that is positive $\tau(a^*a)\ge 0$ and 
unital $\tau(\id)=1$. A state is called \emph{faithful} if $\tau(a^*a)=0$ 
implies $a=0$.

\begin{defn}
A \emph{$C^*$-probability space} is a pair $(\mathcal{A},\tau)$, where
$\mathcal{A}$ is a unital $C^*$-algebra and $\tau$ is a faithful state.
\end{defn}

The simplest example of a $C^*$-probability space is 
$(\M_d(\mathbb{C}),\ntr)$. The introduction of general $C^*$-probability 
spaces enables us to extend computations involving matrices and traces to 
infinite-dimensional operators. The assumption that $\tau$ is faithful 
ensures that $\|a\|=\lim_{p\to\infty} \tau(|a|^p)^{\frac{1}{p}}$ 
\cite[Proposition 3.17]{NS06}.

The basic infinite-dimensional object of interest in this paper is a free 
semicircular family. We will define this notion combinatorially as in 
\cite[p.\ 128]{NS06}. For any integer $p$, denote by $\mathrm{P}_2([p])$ 
the collection of all pairings of $[p]:=\{1,\ldots,p\}$, that is, of 
partitions of $[p]$ each of whose blocks consists of exactly two elements. 
We denote by $\mathrm{NC}_2([p])\subseteq\mathrm{P}_2([p])$ the collection 
of those pairings $\pi$ that are \emph{noncrossing}, i.e., that do not 
contain $\{i,j\},\{k,l\}\in\pi$ so that $i<k<j<l$.

\begin{defn}
\label{defn:freefam}
A family $s_1,\ldots s_n\in\mathcal{A}$ of self-adjoint elements in a 
$C^*$-probability space $(\mathcal{A},\tau)$ is called a \emph{free
semicircular family} if
$$
	\tau(s_{k_1}\cdots s_{k_p}) =
	\sum_{\pi\in\mathrm{NC}_2([p])}
	\prod_{\{i,j\}\in\pi}\delta_{k_ik_j}
$$
for every $p\ge 1$, $k_1,\ldots,k_p\in[n]$.
\end{defn}

The elements $s_i$ are ``semicircular'' in the sense that for 
$p\in\mathbb{N}$,
$$
	\tau(s_i^p) = |\mathrm{NC}_2([p])| =
	\int_{-2}^2 x^p\cdot\tfrac{1}{2\pi}\sqrt{4-x^2}\,dx
$$
are the moments of the standard semicircle distribution, cf.\ \cite[p.\ 
123 and p.\ 29]{NS06}. The latter is precisely the limiting spectral 
distribution of large Wigner matrices. In particular, note that
$\|s_i\|=\lim_{p\to\infty} \tau(s_i^{2p})^{\frac{1}{2p}}=2$.

More generally, the weak asymptotic freeness theorem of Voiculescu 
\cite{Voi91} states that a free semicircular family arises as the limiting 
object associated to independent Wigner matrices. A self-contained proof 
of this fact may be readily obtained as a special case of the 
argument in Section \ref{sec:pfweakfree} below.

\begin{thm}[Voiculescu]
\label{thm:voic}
Let $G_1^N,\ldots,G_n^N$ be independent standard Wigner matrices in the 
sense of Definition \ref{defn:wigner}. Then we have
$$
	\lim_{N\to\infty}\EE[\ntr(G_{k_1}^N\cdots G_{k_p}^N)] =
	\tau(s_{k_1}\cdots s_{k_p})
$$
for every $p\ge 1$, $k_1,\ldots,k_p\in[n]$.
\end{thm}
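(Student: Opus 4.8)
The plan is to prove Theorem \ref{thm:voic} by the classical moment method, showing that the genuine combinatorial expansion of $\EE[\ntr(G_{k_1}^N\cdots G_{k_p}^N)]$ converges term-by-term, as $N\to\infty$, to the sum over noncrossing pairings that defines $\tau(s_{k_1}\cdots s_{k_p})$. First I would expand each $\ntr(G_{k_1}^N\cdots G_{k_p}^N)$ as a sum over indices $a_1,\ldots,a_p\in[N]$ of $\frac{1}{N}\EE[(G_{k_1}^N)_{a_1 a_2}(G_{k_2}^N)_{a_2 a_3}\cdots(G_{k_p}^N)_{a_p a_1}]$. Since each $G_i^N$ has centered jointly Gaussian entries with covariance structure $\EE[(G_i^N)_{ab}(G_j^N)_{cd}] = \frac{1}{N}\delta_{ij}(\delta_{ac}\delta_{bd}+\delta_{ad}\delta_{bc})$ (this is exactly Definition \ref{defn:wigner}, noting the entries of distinct $G_i^N$ are independent and the symmetry of each matrix), Wick's formula (the Gaussian interpolation/moment lemma, e.g.\ \cite[\S 1.3]{Tal11}) expresses the expectation as a sum over pairings $\pi\in\mathrm{P}_2([p])$ of products of pairwise covariances; in particular $p$ must be even for a nonzero contribution, and each surviving pairing must pair indices $k_a=k_b$.

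The heart of the argument is the standard genus / Euler-characteristic bookkeeping. For each fixed pairing $\pi$, the sum over the index tuple $(a_1,\ldots,a_p)$ factors into a product of $\delta$-constraints identifying certain $a$'s; the number of free indices equals the number of faces of the associated gluing of a $p$-gon, and one shows that the total power of $N$ arising from the $\frac{1}{N}$ per edge, the overall $\frac{1}{N}$ from $\ntr$, and the $N^{\#\text{free indices}}$ from the index sum is $N^{-2g}$ where $g\ge 0$ is the genus of the surface obtained by gluing. Hence the contribution of $\pi$ is $O(N^{-2g})$, which vanishes as $N\to\infty$ unless $g=0$, i.e.\ unless $\pi$ is a \emph{noncrossing} (planar) pairing; and for noncrossing $\pi$ the contribution is exactly $1$ provided the color-matching constraints $\delta_{k_i k_j}$ hold for all $\{i,j\}\in\pi$, and $0$ otherwise. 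Summing over $\pi\in\mathrm{NC}_2([p])$ recovers precisely $\sum_{\pi\in\mathrm{NC}_2([p])}\prod_{\{i,j\}\in\pi}\delta_{k_ik_j} = \tau(s_{k_1}\cdots s_{k_p})$ by Definition \ref{defn:freefam}.

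The main obstacle — really the only nontrivial point — is the genus estimate: verifying carefully that a crossing pairing strictly increases the genus, hence costs at least a factor $N^{-2}$, while a noncrossing pairing is planar and contributes at leading order exactly $1$. The cleanest way I would organize this is inductively: a noncrossing pairing of $[p]$ always contains an adjacent pair $\{i,i+1\}$; removing it reduces $p$ by $2$ and (because the two corresponding edges of the polygon are glued back-to-back) contracts cleanly without creating any crossings, so one gets the exact count $1$ by induction; for a general pairing one argues that the number of free index loops is at most $\frac{p}{2}+1$ with equality iff $\pi$ is noncrossing (a purely combinatorial lemma about gluings of a polygon, or equivalently the statement that $\#\text{cycles of }\gamma\pi$ is maximized exactly at noncrossing $\pi$, where $\gamma$ is the long cycle). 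Once this is in hand, a crude bound $|\mathrm{P}_2([p])| = (p-1)!!$ on the number of pairings, together with the $O(N^{-2})$ estimate on each non-planar term, shows the error is $O(N^{-2})$, giving the stated limit. I would also remark that the same computation, applied to $\EE[\ntr X_0^p]$ with $X_0^N=\sum_i A_i\otimes G_i^N$ in place of scalars, yields the weak asymptotic freeness statement $\EE[\ntr(X_0^N)^p]\to(\ntr\otimes\tau)(X_{\rm free}^p)$ used elsewhere in the paper, since the coefficient matrices $A_i$ simply ride along inside the traces $\ntr[A_{i_1}\cdots A_{i_p}]$ and do not affect the genus bookkeeping.
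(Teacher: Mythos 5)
Your proof is correct, but it takes a genuinely different route from the paper's. The paper remarks just before Theorem~\ref{thm:voic} that a self-contained proof ``may be readily obtained as a special case of the argument in Section~\ref{sec:pfweakfree},'' and that argument does not use the genus/Euler-characteristic expansion at all. Instead, after the same Wick-formula step (Lemma~\ref{lem:wick}, Corollary~\ref{cor:superwick}), the paper dispenses with crossing pairings via the matrix alignment parameter: each crossing produces a pattern $\sum_{i,j}A_i\,\cdot\,A_j\,\cdot\,A_i\,\cdot\,A_j$, which is controlled by $w(X,X')$ via complex interpolation (Lemma~\ref{lem:complexint}) and then by $v(X)\sigma(X)$ via Proposition~\ref{prop:vxwx}; for the Wigner matrix $G^N$ one has $\sigma(G^N)=1$ and $v(G^N)^2=2/N\to 0$, so crossings vanish. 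Noncrossing pairings are handled by the inductive ``peel an adjacent pair'' argument of Lemma~\ref{lem:wafnc}, which is essentially your induction. The trade-off is this: your classical genus count is more concrete and gives an explicit $O(N^{-2})$ rate, but it exploits the special form of the Wigner covariance $\EE[(G^N)_{ab}(G^N)_{cd}]\propto\delta_{ac}\delta_{bd}+\delta_{ad}\delta_{bc}$ and its planar-gluing combinatorics. The paper's route is less explicit about the rate in $N$, but the crossing bound is phrased entirely in terms of $v$ and $\sigma$, so it generalizes verbatim from Wigner matrices to the arbitrary jointly Gaussian $H_k^N$ of Theorem~\ref{thm:free}, which is precisely what is needed there. (One small point to watch in your write-up: the paper's Definition~\ref{defn:wigner} gives diagonal entries variance $\frac1N$, not the GOE's $\frac2N$, so the covariance formula $\frac1N(\delta_{ac}\delta_{bd}+\delta_{ad}\delta_{bc})$ is off by a factor of two when $a=b=c=d$; this only affects subleading terms, so the limit is unaffected, but it deserves a sentence.)
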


We now turn our attention to the basic random matrix model 
\eqref{eq:model} of this paper. In the proofs of our main results, it will 
suffice to 
consider self-adjoint coefficient matrices 
$A_0,\ldots,A_n\in\M_d(\mathbb{C})_{\rm sa}$ due to Remark \ref{rem:sa}.
In addition to $X$ and $X_{\rm free}$ defined in \eqref{eq:model}, we also 
introduce the intermediate model
\begin{equation}
\label{eq:nmodel}
	X^N := A_0\otimes\id + \sum_{i=1}^n A_i\otimes G_i^N,
\end{equation}
where $G_1^N,\ldots,G_n^N$ are independent 
standard Wigner matrices of dimension $N$. Theorem \ref{thm:voic}
enables us to compute the limiting spectral statistics of $X^N$.

\begin{cor}
\label{cor:weakfree}
Let $A_0,\ldots,A_n\in \M_d(\mathbb{C})_{\rm sa}$. Then
$$
	\lim_{N\to\infty}\EE[\ntr f(X^N)] =
	({\ntr}\otimes\tau)(f(X_{\rm free}))
$$
for any polynomial or bounded continuous function 
$f:\mathbb{R}\to\mathbb{C}$.
\end{cor}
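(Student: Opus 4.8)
The plan is to prove the polynomial case by a direct moment computation built on Voiculescu's theorem, and then upgrade to bounded continuous $f$ by a standard tightness-plus-moments argument.

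For $f(x)=x^p$, set $G_0^N:=\id_N$ and $s_0:=\id\in\mathcal{A}$, and expand using $(M_1\otimes N_1)(M_2\otimes N_2)=M_1M_2\otimes N_1N_2$:
\[
	(X^N)^p=\sum_{i_1,\dots,i_p=0}^{n}A_{i_1}\cdots A_{i_p}\otimes G_{i_1}^N\cdots G_{i_p}^N.
\]
Since the normalized trace on $\M_{dN}(\mathbb{C})\cong\M_d(\mathbb{C})\otimes\M_N(\mathbb{C})$ is $\ntr\otimes\ntr_N$, this gives
\[
	\EE[\ntr(X^N)^p]=\sum_{i_1,\dots,i_p=0}^{n}\ntr[A_{i_1}\cdots A_{i_p}]\,\EE\ntr_N[G_{i_1}^N\cdots G_{i_p}^N].
\]
Each factor $\EE\ntr_N[G_{i_1}^N\cdots G_{i_p}^N]$ equals the mixed moment of the subword obtained by deleting the indices equal to $0$ (where the identity matrices $G_0^N$ drop out of the product), so Theorem~\ref{thm:voic} applied to that subword gives $\EE\ntr_N[G_{i_1}^N\cdots G_{i_p}^N]\to\tau(s_{i_1}\cdots s_{i_p})$ with the convention $s_0=\id$. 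As the sum is finite, passing to the limit yields
\[
	\lim_{N\to\infty}\EE[\ntr(X^N)^p]=\sum_{i_1,\dots,i_p=0}^{n}\ntr[A_{i_1}\cdots A_{i_p}]\,\tau(s_{i_1}\cdots s_{i_p})=(\ntr\otimes\tau)(X_{\rm free}^p),
\]
the last equality being the same expansion applied to $X_{\rm free}=A_0\otimes\id+\sum_i A_i\otimes s_i$. By linearity this proves the corollary for every polynomial $f$.

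To pass to bounded continuous $f$, let $\mu_N$ be the averaged empirical spectral distribution of the self-adjoint matrix $X^N$, so $\int x^p\,d\mu_N=\EE[\ntr(X^N)^p]$, and let $\mu$ be the distribution of $X_{\rm free}$ under $\ntr\otimes\tau$, so $\int x^p\,d\mu=(\ntr\otimes\tau)(X_{\rm free}^p)$. The measure $\mu$ is supported in $[-L,L]$ with $L=\|A_0\|+\|\sum_iA_i^*A_i\|^{1/2}+\|\sum_iA_iA_i^*\|^{1/2}$ by Lemma~\ref{lem:freenck}, hence is compactly supported and therefore determined by its moments. From the polynomial case, $\sup_N\int x^{2p}\,d\mu_N<\infty$ for every $p$; by Markov's inequality $\{\mu_N\}$ is tight, and the uniform bound on the $2(p+1)$-st moment makes $x^p$ uniformly integrable along $\{\mu_N\}$, so any subsequential weak limit $\nu$ satisfies $\int x^p\,d\nu=\lim_N\int x^p\,d\mu_N=\int x^p\,d\mu$ for all $p$. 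Since $\mu$ is moment-determined, $\nu=\mu$, so $\mu_N\Rightarrow\mu$ weakly, and therefore $\EE[\ntr f(X^N)]=\int f\,d\mu_N\to\int f\,d\mu=(\ntr\otimes\tau)(f(X_{\rm free}))$ for every bounded continuous $f$.

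The moment computation is essentially immediate from Theorem~\ref{thm:voic}, so the only step needing any care is the passage from polynomials to bounded continuous functions; even there, the sole nontrivial input beyond routine weak-convergence facts is the uniform-in-$N$ moment bound for $\mu_N$, which is itself a consequence of the polynomial case together with the boundedness of $X_{\rm free}$. Thus no serious obstacle arises: the corollary is a routine method-of-moments consequence of Voiculescu's theorem.
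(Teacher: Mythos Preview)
Your proof is correct and follows essentially the same approach as the paper: a direct moment computation via Theorem~\ref{thm:voic}, followed by the standard passage from moment convergence to weak convergence using that the limiting measure has compact support and is therefore moment-determinate. You are in fact slightly more careful than the paper's displayed argument in that you explicitly handle the $A_0$ term via the convention $G_0^N=\id$, $s_0=\id$, and you spell out the tightness/uniform-integrability reasoning that the paper simply delegates to a citation.
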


\begin{proof}
For the function $f(x)=x^p$ with $p\in\mathbb{N}$, we compute explicitly
$$
	\EE\ntr[(X^N)^p] =
	\sum_{i_1,\ldots,i_p=1}^n
	\ntr(A_{i_1}\cdots A_{i_p}) \EE[\ntr G_{i_1}\cdots G_{i_p}] 
	\xrightarrow{N\to\infty}
	({\ntr}\otimes\tau)(X_{\rm free}^p)
$$
by Theorem \ref{thm:voic}. The conclusion extends to any polynomial $f$ by 
linearity. For bounded continuous $f$, it remains to note that as 
$\|X_{\rm free}\|\le 2\sum_{i=0}^n\|A_i\|<\infty$, moment 
convergence implies weak convergence \cite[p.\ 116]{NS06}.
\end{proof}

We finally discuss a number of methods to compute or estimate the spectral 
statistics of $X_{\rm free}$. First, we note that the moments of $X_{\rm 
free}$ are readily computed using Definition \ref{defn:freefam}: for every 
$p\in\mathbb{N}$, we obtain
\begin{equation}
\label{eq:xfreemoments}
	({\ntr}\otimes\tau)(X_{\rm free}^p) =
	\sum_{\pi\in\mathrm{NC}_2([p])}
	\sum_{(i_1,\ldots,i_p)\sim\pi}
	\ntr(A_{i_1}\cdots A_{i_p}),
\end{equation}
where $(i_1,\ldots,i_p)\sim\pi$ denotes that
$i_k=i_l$ for every $\{k,l\}\in\pi$.

An explicit expression for the norm $\|X_{\rm free}\|$ was given in Lemma 
\ref{lem:lehner} above. This fundamental result was proved by Lehner 
\cite[Corollary 1.5]{Leh99}, where it is formulated only in the case that 
$A_0\ge 0$ is positive semidefinite. However, the general formulation is 
readily derived from this special case.

\begin{proof}[Proof of Lemma \ref{lem:lehner}]
We first note that $t:=\|X_{\rm free}\|\ge 
\|(\mathrm{id}\otimes\tau)(X_{\rm free})\|=\|A_0\|$. Thus
$X_{\rm free}+t\id\ge 0$ and $A_0+t\id\ge 0$.
Applying \cite[Corollary 1.5]{Leh99} yields
$$
	\|X_{\rm free}+t\id\| = 
	\inf_{Z>0}
	\Bigg\|Z^{-1}+ A_0+t\id + \sum_{i=1}^n A_i Z A_i
	\Bigg\|,
$$
where the infimum may be further restricted to $Z$ for which the matrix in
the norm on the right-hand side is a multiple of the identity. But as 
$X_{\rm free}+t\id\ge 0$, we have $\|X_{\rm free}+t\id\| =\lambda_{\rm 
max}(X_{\rm free})+t$, and analogously for the norm on the right-hand 
side. It remains to use that $\|X_{\rm free}\|=\lambda_{\rm max}(X_{\rm 
free})\vee{-\lambda_{\max}(-X_{\rm free})}$.
\end{proof}

Finally, the estimates on $\|X_{\rm free}\|$ in Lemma \ref{lem:freenck} 
were proved by Pisier \cite[p.\ 208]{Pis03} in the case $A_0=0$ (the proof 
is very similar to that of Proposition \ref{prop:samplefree} above). The 
extension to general $A_0$ follows immediately, however, using 
$\|A_0\|\le \|X_{\rm free}\|\le \|X_{\rm free}-A_0\otimes\id\|+\|A_0\|$ 
(the first inequality was explained above in the proof of Lemma 
\ref{lem:lehner}, and the second is the triangle inequality).

\subsection{Matrix parameters}
\label{sec:wxvx}

The aim of this section is to develop some basic properties of the 
parameters $\sigma(X),\sigma_*(X),v(X)$ defined in Section 
\ref{sec:mainspec}, and of the matrix alignment parameter $w(X)$ that 
was defined in Section \ref{sec:overviewpf}.

\subsubsection{The matrix alignment parameter}

We will in fact need a somewhat more general parameter than $w(X)$ in our 
proofs, so we begin by defining the relevant notion. Let 
$A_0,\ldots,A_n,A_0',\ldots,A_n'\in \M_d(\mathbb{C})_{\rm sa}$, and define
the random matrices $X=A_0+\sum_{i=1}^n g_iA_i$ and $X'=A_0'+\sum_{i=1}^n 
g_i'A_i'$ as in \eqref{eq:model}. We define 
$$
	w(X,X') := \sup_{U,V,W}\Bigg\|
	\sum_{i,j=1}^n A_iUA_j'VA_iWA_j'\Bigg\|^{\frac{1}{4}},
$$
where the supremum is taken over all unitary matrices 
$U,V,W\in\M_d(\mathbb{C})$. Note that the definition of $w(X,X')$ does not 
involve $A_0,A_0'$, that $w(X,X')=w(X',X)$ (by taking the adjoint inside 
the norm), and that $w(X,X')$ depends only on the marginal
distributions of $X$ and $X'$ (in particular, the 
Gaussian random variables $(g_i)$ and $(g_i')$ that define $X,X'$ may
have an arbitrary dependence).
Note also that we only defined $w(X,X')$ 
for self-adjoint coefficient matrices $A_i,A_i'$; the definition may be 
generalized to non-self-adjoint matrices, but this will not be needed in 
the sequel. In agreement with the notation of Section 
\ref{sec:overviewpf}, we let $w(X):=w(X,X)$.

The matrix alignment parameter $w(X)$ was introduced by Tropp in 
\cite{Tro18} to quantify the contribution of crossings to the moments of 
$X$. A key idea of \cite{Tro18} is that upper bounds in terms of $w(X)$ 
may be obtained by complex interpolation. The following variant of this 
idea suffices for our purposes.

\begin{lem}
\label{lem:complexint}
Let $Y^{(1)},\ldots,Y^{(4)}$ be arbitrary $d\times d$ complex random 
matrices, 
and let $p_1,\ldots,p_4\ge 1$ satisfy $\sum_{k=1}^4\frac{1}{p_k}=1$.
Then we have
$$
	\Bigg|
	\sum_{i,j=1}^n\EE[\ntr A_i Y^{(1)} A_j' Y^{(2)} A_i Y^{(3)} A_j'
	Y^{(4)}]
	\Bigg|\le
	w(X,X')^4 \prod_{k=1}^4 \EE[\ntr |Y^{(k)}|^{p_k}]^{\frac{1}{p_k}}.
$$
\end{lem}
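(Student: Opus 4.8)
The plan is to reduce the assertion to a deterministic multilinear inequality, to prove a family of endpoint estimates directly from the definition of $w(X,X')$, and then to interpolate. For the reduction, since $|\EE[\,\cdot\,]|\le\EE[|\,\cdot\,|]$, it suffices to prove that
\[
	\Bigl|\sum_{i,j=1}^n\ntr[A_iB_1A_j'B_2A_iB_3A_j'B_4]\Bigr|
	\le w(X,X')^4\prod_{k=1}^4\bigl(\ntr|B_k|^{p_k}\bigr)^{1/p_k}
\]
for \emph{deterministic} matrices $B_1,\dots,B_4\in\M_d(\mathbb C)$: applying this realization-wise to $B_k=Y^{(k)}$ and then Hölder's inequality (with exponents $p_k$, using $\sum_k p_k^{-1}=1$) to the nonnegative random variables $(\ntr|Y^{(k)}|^{p_k})^{1/p_k}$ produces the stated bound. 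Throughout, write $\Phi(B_1,B_2,B_3,B_4)$ for the multilinear form on the left, and $\|B\|_q:=(\ntr|B|^q)^{1/q}$ for $q\in[1,\infty]$, so that $\|\cdot\|_\infty$ is the operator norm.

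Next I would establish, for each $\ell\in\{1,2,3,4\}$, the endpoint estimate
\[
	|\Phi(B_1,B_2,B_3,B_4)|\le w(X,X')^4\,\|B_\ell\|_1\prod_{k\neq\ell}\|B_k\|_\infty .
\]
By multilinearity of $\Phi$, and since every operator-norm contraction in $\M_d(\mathbb C)$ is an average of two unitaries (by the singular value decomposition), it is enough to prove this when all $B_k$ with $k\neq\ell$ are unitary. Cycling the trace so that $B_\ell$ appears first, and — in the case that the coefficient immediately following $B_\ell$ is an $A'$ (which happens precisely when $\ell$ is odd) — also relabelling $i\leftrightarrow j$, one rewrites $\Phi$ as $\ntr[B_\ell M]$, where $M$ has exactly the form $\sum_{i,j}A_iUA_j'VA_iWA_j'$ (for $\ell$ even) or $\sum_{i,j}A_i'UA_jVA_i'WA_j$ (for $\ell$ odd) with $U,V,W$ unitary. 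Hence $\|M\|\le w(X,X')^4$ by the definition of the alignment parameter, using also $w(X,X')=w(X',X)$ in the odd case, and the estimate follows from $|\ntr[B_\ell M]|\le\|B_\ell\|_1\|M\|$.

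To conclude I would interpolate. The $\ell$-th endpoint estimate says that $\Phi$ is bounded, with constant $w(X,X')^4$, as a multilinear map on $\prod_{k=1}^4 S_{r^{(\ell)}_k}^d$, where $r^{(\ell)}_\ell=1$, $r^{(\ell)}_k=\infty$ for $k\neq\ell$, and the Schatten spaces are taken with respect to $\ntr$. Setting $\theta_\ell:=p_\ell^{-1}\in[0,1]$, one has $\sum_\ell\theta_\ell=1$ and $\sum_\ell\theta_\ell/r^{(\ell)}_k=\theta_k=p_k^{-1}$, so the target product $\prod_k S_{p_k}^d$ is obtained from the four endpoints by complex interpolation with weights $(\theta_\ell)$. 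By the multilinear complex interpolation theorem for Schatten classes — most concretely, by iterating the two-endpoint version over the couples $(S_1^d,S_\infty^d)$ three times — the norm of $\Phi$ on $\prod_k S_{p_k}^d$ is then bounded by the weighted geometric mean of the endpoint constants, which equals $w(X,X')^4$; this is precisely the deterministic inequality above, completing the proof.

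The first and third steps are routine (Hölder's inequality and the standard complex interpolation of Schatten/noncommutative $L^p$-spaces). The crux is the second step: the cyclic rearrangement, and in the odd cases the $i\leftrightarrow j$ relabelling, must be performed so that, after freezing the other three arguments as unitaries, the inner double sum is \emph{literally} one of the two expressions appearing in the definition of $w(X,X')$ (up to $w(X,X')=w(X',X)$). Keeping this index bookkeeping straight uniformly over all four values of $\ell$ is where the care is needed; everything else is standard machinery.
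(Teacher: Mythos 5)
Your proof is correct and takes essentially the same approach as the paper: complex interpolation of the multilinear form between Schatten-type spaces, with the endpoint cases ($p_\ell=1$, others $\infty$) identified with $w(X,X')^4$ via cyclic rearrangement and the fact that contractions are averages of unitaries. The only organizational difference is that you first strip off the expectation by Hölder and interpolate over the deterministic spaces $S_p^d$, then handle all four endpoints explicitly, whereas the paper interpolates directly in the mixed $L_p(S_p)$ spaces, invokes convexity of the log-norm on the simplex to reduce to extreme points, and reduces all endpoints to $\ell=4$ by cyclic permutation (implicitly also using the $i\leftrightarrow j$ relabelling and $w(X,X')=w(X',X)$, which you spell out).
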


\begin{proof}
We aim to show that $F(Y_1,\ldots,Y_4):= 
\sum_{i,j}\EE[\ntr A_i Y_1 A_j' Y_2 A_i 
Y_3 A_j' Y_4]$ satisfies $|F(Y_1,\ldots,Y_4)|\le w(X,X')^4\|Y_1\|_{p_1}\cdots 
\|Y_4\|_{p_4}$, where $\|Y\|_p := \EE[\ntr |Y|^p]^{\frac{1}{p}}$ denotes 
the $L_p(S_p)$-norm. Recall that the spaces $L_p(S_p)$ form a complex 
interpolation scale $L_r(S_r)=(L_p(S_p),L_q(S_q))_{\theta}$ with 
$\frac{1}{r}=\frac{1-\theta}{p}+\frac{\theta}{q}$ \cite[\S 2]{Pis03b}. 
By the classical complex interpolation theorem for multilinear maps
\cite[\S 10.1]{Cal64}, the map
$$
	\bigg(\frac{1}{p_1},\ldots,\frac{1}{p_4}\bigg)
	\mapsto \log \sup_{Y_1,\ldots,Y_4}
	\frac{|F(Y_1,\ldots,Y_4)|}{\|Y_1\|_{p_1}\cdots\|Y_4\|_{p_4}}
$$
is convex, and thus its maximum over 
$\Delta:=\{(\frac{1}{p_1},\ldots,\frac{1}{p_4})\in[0,1]^4:\sum_{k=1}^4\frac{1}{p_k}=1\}$
is attained at one of the extreme points of $\Delta$.
It therefore suffices to prove the conclusion in the case 
that $p_i=1$ for some $i$. By cyclic permutation of the trace, we may 
assume $p_4=1$ and $p_1,p_2,p_3=\infty$. But in this case
$$
	\sup_{\substack{\|Y_k\|_\infty\le 1\\ k=1,2,3}}
	\sup_{\|Y_4\|_1\le 1}
	|F(Y_1,\ldots,Y_4)| =
	\sup_{\substack{\|Y_k\|\le 1\\k=1,2,3}}
	\Bigg\|
	\sum_{i,j=1}^n A_i Y_1 A_j' Y_2 A_i Y_3 A_j'\Bigg\|
	= w(X,X')^4
$$
follows from the fact that every $Y\in\M_d(\mathbb{C})$ with $\|Y\|\le 1$ 
is a convex combination of unitaries (by singular value decomposition and 
the fact that any vector $x\in\mathbb{R}^d$ with $\|x\|_\infty\le 1$ is a 
convex combination of vectors in $\{-1,+1\}^d$).
\end{proof}

\subsubsection{Bounding the matrix alignment parameter}

The aim of this section is to prove the following bound on
the matrix alignment parameter.

\begin{prop}
\label{prop:vxwx}
We have $w(X,X')^4 \le v(X)\sigma(X)v(X')\sigma(X')$.
\end{prop}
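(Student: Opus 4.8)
The plan is to bound the norm $\bigl\|\sum_{i,j} A_i U A_j' V A_i W A_j'\bigr\|$ for fixed unitaries $U,V,W$ by pairing up the four sums into two "Cauchy--Schwarz blocks" and estimating each block in terms of $v$ and $\sigma$. The starting observation is that $v(X)^2 = \|\mathrm{Cov}(X)\| = \sup_{\tr|M|^2\le 1}\sum_i |\tr[A_i M]|^2$, so that for \emph{any} family of $d\times d$ matrices $(B_i)_{i=1}^n$ one has the contraction estimate $\bigl\|\sum_i \tr[A_i^* B_i]\,(\text{scalar})\bigr\|$ controlled by $v(X)$ times $(\sum_i \tr|B_i|^2)^{1/2}$; and similarly $\sigma(X)^2 = \|\sum_i A_i^* A_i\|\vee\|\sum_i A_i A_i^*\|$ gives the operator-space contraction $\bigl\|\sum_i A_i C_i\bigr\|\le \sigma(X)\,\bigl\|(\sum_i C_i^* C_i)^{1/2}\bigr\|$ and its transpose. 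The key point is that $v$ is an estimate on the "scalar" pairing (trace-class side) and $\sigma$ is an estimate on the "operator" pairing (Hilbert--Schmidt / operator side); the product $A_i\cdots A_i$ appearing twice in $w(X,X')^4$ will be split so that one occurrence is absorbed into a $v$-type estimate and the other into a $\sigma$-type estimate, and likewise for the two occurrences of $A_j'$.

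Concretely, I would test the operator $T := \sum_{i,j} A_i U A_j' V A_i W A_j'$ against unit vectors: $\|T\| = \sup_{\|x\|=\|y\|=1} |\langle x, T y\rangle|$, and write $\langle x, Ty\rangle = \sum_{i,j}\langle A_i^* x,\; U A_j' V A_i W A_j' y\rangle$. The strategy is to insert the resolution $\langle A_i^* x, z_i\rangle$ and recognize $\sum_i \langle A_i^* x, z_i\rangle$ as a bilinear form bounded by $\sigma(X)\,(\sum_i\|z_i\|^2)^{1/2}$ when we sum over the \emph{first} index using the $\sigma$-bound, while the remaining $A_i$ inside $z_i$, together with the two $A_j'$'s, are handled by a further application of Cauchy--Schwarz in the $j$ variable using the $v(X')$- and $\sigma(X')$-bounds. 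The cleanest bookkeeping is probably to go through Lemma~\ref{lem:complexint}: indeed, Proposition~\ref{prop:vxwx} only needs to hold as an \emph{inequality} $w(X,X')^4\le v(X)\sigma(X)v(X')\sigma(X')$, and Lemma~\ref{lem:complexint} already repackages $w(X,X')^4$ as the best constant in a four-linear trace inequality against $L_p(S_p)$-norms. So one route is: choose the exponents $(p_1,p_2,p_3,p_4)$ adapted to the alternating structure --- e.g.\ use $\ntr|\cdot|^2$ (Hilbert--Schmidt) on two slots and $\ntr|\cdot|^\infty$ (operator norm) on two slots --- and then directly estimate $\sum_{i,j}\ntr[A_i Y_1 A_j' Y_2 A_i Y_3 A_j' Y_4]$ by regrouping as $\ntr\bigl[(\sum_i A_i Y_1 \cdots A_i \cdots)(\cdots)\bigr]$ and applying the operator-valued Cauchy--Schwarz inequality in $i$ and in $j$, each step producing one factor $\sigma$ and one factor $v$.

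The main obstacle I anticipate is the \emph{asymmetry} between the roles of $v$ and $\sigma$: a naive double Cauchy--Schwarz will tend to produce $v(X)^2 v(X')^2$ or $\sigma(X)^2\sigma(X')^2$, not the mixed product $v(X)\sigma(X)v(X')\sigma(X')$, and getting exactly one factor of each requires splitting the \emph{repeated} index carefully --- for the pair of $A_i$'s, one copy must feed a trace-norm (i.e.\ $v$) estimate and the other an operator-norm (i.e.\ $\sigma$) estimate, and the cyclic/alternating pattern $A_i \,\cdot\, A_j' \,\cdot\, A_i \,\cdot\, A_j'$ has to be exploited so that this splitting is consistent simultaneously for $i$ and for $j$. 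Once the right grouping is identified, the individual estimates are routine: $\sum_i A_i^* A_i \le \sigma(X)^2\id$ handles the operator side, and the definition $v(X)^2 = \sup_{\tr|M|^2\le 1}\sum_i|\tr[A_iM]|^2$ handles the trace side, after which one assembles the four pieces by Cauchy--Schwarz. I would also double-check the endpoint reduction in Lemma~\ref{lem:complexint} so that the exponents I pick are genuinely at an extreme point of $\Delta$, making the endpoint estimate the only one that needs proof.
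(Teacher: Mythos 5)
Your proposal correctly identifies the starting point (test $T=\sum_{i,j}A_iUA_j'VA_iWA_j'$ against unit vectors and perform a first Cauchy--Schwarz in $i$, giving the factor $\sigma(X)$) and correctly anticipates the central obstacle: a naive double Cauchy--Schwarz produces $\sigma(X)^2\sigma(X')^2$, not the mixed product $v(X)\sigma(X)v(X')\sigma(X')$, so one of the two copies of $A_i$ (and one of the two copies of $A_j'$) must be converted into a trace-side $v$-estimate. However, you stop exactly at the point where the real work happens, and the sentence ``the remaining $A_i$ inside $z_i$, together with the two $A_j'$'s, are handled by a further application of Cauchy--Schwarz in the $j$ variable'' does not actually resolve the difficulty you yourself flagged. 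If you try to carry this out, the second factor is $\bigl(\sum_i\|\sum_j A_j'VA_iWA_j'y\|^2\bigr)^{1/2}$, and applying Cauchy--Schwarz in $j$ in the obvious way (bounding $\|\sum_j A_j'VA_iWA_j'y\|$ by testing against a unit vector and splitting in $j$) gives $\sigma(X')^2\sum_j\|A_iWA_j'y\|^2$; summing over $i$ then gives $\sigma(X)^2\sigma(X')^4$, i.e.\ exactly the forbidden outcome. There is no way to make $v$ appear by Cauchy--Schwarz alone.

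The two ideas that the paper's proof actually uses, and that are missing from your sketch, are: (i) a preliminary reduction (their Lemma~\ref{lem:orthook}) to the case where $\tr[A_iA_j]=0$ for $i\ne j$ and likewise for $A_i'$, which is possible because $\sigma,v,w$ depend only on the law of $X,X'$, and which has the crucial effect that $v(X)=\max_i\|A_i\|_{\rm HS}$; and (ii) after writing $A_i=\|A_i\|_{\rm HS}B_i$ with $B_1,\dots,B_{d^2}$ an orthonormal basis of $\M_d(\mathbb{C})$, pulling out $\max_i\|A_i\|_{\rm HS}^2=v(X)^2$ and \emph{completing the sum over $i$ to all $d^2$ basis elements}, so that the identity $\sum_{i=1}^{d^2}B_i^*YB_i=\tr[Y]\,\id$ (their Lemma~\ref{lem:avgbasis}) collapses the inner $i$-sum to a trace. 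This is what replaces one copy of $A_i$ by a trace-class ($v$-type) contribution instead of an operator-norm ($\sigma$-type) one, and the same orthogonality of the $A_j'$ then handles the $j$-variable. Your intuition about ``one trace-side copy, one operator-side copy'' is the right heuristic, but without the orthogonalization and the averaging-over-a-basis identity there is no mechanism to realize it, so as written the proposal has a genuine gap at its central step.

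Your alternative suggestion of routing through Lemma~\ref{lem:complexint} is circular for this purpose: that lemma takes $w(X,X')$ as an input and produces multilinear trace inequalities, whereas here the goal is to bound $w(X,X')$ itself; it does not help prove Proposition~\ref{prop:vxwx}.
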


To this end, we will require two simple observations.

\begin{lem}
\label{lem:orthook}
In the proof of Proposition \ref{prop:vxwx}, there is no loss of 
generality in assuming that
$\tr[A_iA_j]=0$ and $\tr[A_i'A_j']=0$ for all $i\ne j$.
In particular, this assumption implies $v(X)=\max_i \|A_i\|_{\rm HS}$ and 
$v(X')=\max_i \|A_i'\|_{\rm HS}$.
\end{lem}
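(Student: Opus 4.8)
The plan is to reduce to the orthogonal case by re-expressing each Gaussian random matrix in an orthonormal basis adapted to its covariance. The one input we need is the observation recorded above, that $w(X,X')$ depends only on the marginal distributions of $X$ and $X'$, and that $\sigma(\cdot)$ and $v(\cdot)$ likewise depend only on the law of the matrix in question; thus nothing is lost in the proof of Proposition \ref{prop:vxwx} if we replace $X,X'$ by equidistributed matrices presented in a more convenient basis.

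Concretely, I would regard $\hat X := X - A_0 = \sum_{i=1}^n g_i A_i$ as a centered Gaussian vector in the real Hilbert space $\M_d(\mathbb{C})_{\rm sa}$ with inner product $\langle M,N\rangle := \tr[MN]$, and introduce its covariance operator $C$, given by $CM = \EE[\langle\hat X,M\rangle\,\hat X] = \sum_{i=1}^n \tr[A_i M]\,A_i$. Since $C$ is positive semidefinite and commutes with $M\mapsto M^*$, hence preserves $\M_d(\mathbb{C})_{\rm sa}$, I can pick a real orthonormal basis $B_1,\dots,B_{d^2}$ of eigenvectors of $C$ with eigenvalues $\mu_1\ge\cdots\ge\mu_m>0=\mu_{m+1}=\cdots$, where $m=\mathrm{rank}\,C$, and set $\tilde A_k := \mu_k^{1/2}B_k$ for $k\le m$. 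These matrices are self-adjoint, satisfy $\tr[\tilde A_k\tilde A_l]=\mu_k\delta_{kl}$ (so in particular $\tr[\tilde A_k\tilde A_l]=0$ for $k\ne l$), and reproduce the covariance operator, $\sum_{k=1}^m \tilde A_k\langle\tilde A_k,\cdot\rangle = C$. Hence $\tilde X := A_0 + \sum_{k=1}^m h_k\tilde A_k$, with $h_k$ i.i.d.\ standard Gaussian, has the same mean and covariance, and therefore the same distribution, as $X$. Running the identical construction for $X'$, and padding the shorter of the two new tuples of coefficients with zero matrices so that both have equal length (which alters neither the laws nor any of the parameters), produces a pair $(\tilde X,\tilde X')$ whose coefficient matrices are self-adjoint and orthogonal in the Hilbert--Schmidt inner product and whose marginal laws coincide with those of $X$ and $X'$. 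By the distributional invariance just recalled, this leaves $w(X,X')$, $\sigma(X)$, $\sigma(X')$, $v(X)$, $v(X')$ all unchanged, which establishes the stated reduction.

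For the last assertion, suppose $\tr[A_iA_j]=0$ for $i\ne j$. Starting from the formula $v(X)^2=\sup_{\tr|M|^2\le 1}\sum_{i=1}^n|\tr[A_iM]|^2$ and writing $M=H_1+iH_2$ with $H_1,H_2$ self-adjoint, one finds $\sum_i|\tr[A_iM]|^2 = \langle H_1,CH_1\rangle + \langle H_2,CH_2\rangle$ while $\tr|M|^2 = \|H_1\|_{\rm HS}^2+\|H_2\|_{\rm HS}^2$, so that $v(X)^2 = \|C\|$. Under the orthogonality hypothesis, $CA_j = \|A_j\|_{\rm HS}^2 A_j$ for each $j$ and $C$ annihilates the orthogonal complement of $\mathrm{span}\{A_1,\dots,A_n\}$, whence $\|C\| = \max_i\|A_i\|_{\rm HS}^2$ and thus $v(X)=\max_i\|A_i\|_{\rm HS}$; the same reasoning gives $v(X')$. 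The argument is routine; the only points I would watch are the self-adjointness of the eigenbasis of $C$ (which is why one checks that $C$ commutes with the adjoint) and the verification that the new coefficients reproduce $\mathrm{Cov}(X)$ exactly, since it is this that makes the passage to $\tilde X$ distribution-preserving — everything else is bookkeeping around invariants already isolated in this section.
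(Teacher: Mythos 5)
Your proof is correct and takes essentially the same approach as the paper: both argue that $\sigma,v,w$ depend only on the laws of $X,X'$, then orthogonalize the coefficient matrices by passing to the spectral decomposition of the covariance of the Gaussian measure on $\M_d(\mathbb{C})_{\rm sa}$. The only cosmetic difference is in the last step: the paper reads off $v(X)=\max_i\|A_i\|_{\rm HS}$ directly from $\mathrm{Cov}(X)=\sum_i\iota(A_i)\iota(A_i)^*$ (noting that the $\iota(A_i)$ are mutually orthogonal in $\mathbb{C}^{d^2}$ and hence eigenvectors), whereas you first identify $v(X)^2$ with the operator norm of the real covariance operator $C$ on $\M_d(\mathbb{C})_{\rm sa}$ via the $M=H_1+iH_2$ decomposition and then diagonalize $C$; both computations are valid.
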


\begin{proof}
We first note that the parameters 
$\sigma(X),v(X),w(X,X')$ only depend on the distributions of the random 
matrices $X,X'$, and not on their representations in terms of $A_i,A_i'$. 
This is evident from the expressions for $\sigma(X)$ and $v(X)$ given in
section \ref{sec:mainspec}, and as $w(X,X') = 
\sup_{U,V,W}\|\EE[XUX''VXWX'']\|^{\frac{1}{4}}$ where $X''$ is a copy of 
$X'$ that is independent of $X$.
It therefore suffices to find random matrices $Y,Y'$ that are 
equidistributed with $X,X'$ and satisfy the desired properties.

To this end, note first that $\M_d(\mathbb{C})_{\rm sa}$ is a real vector 
space of dimension $d^2$, endowed with the Hilbert-Schmidt inner product. 
Moreover, the distribution of $X$ is a real Gaussian measure on this 
space. If we denote by $C_1,\ldots,C_{d^2}\in\M_d(\mathbb{C})_{\rm sa}$ the 
(unnormalized) orthogonal eigenvectors of the corresponding covariance 
matrix, it follows that $X$ has the same distribution as $Y=A_0+\sum_i g_i 
C_i$, and $\tr[C_iC_j]=0$ for $i\ne j$ by construction. Finally, note that 
$\mathrm{Cov}(Y)=\sum_i \iota(C_i)\iota(C_i)^*$, where 
$\iota:\M_d(\mathbb{C})\to\mathbb{C}^{d^2}$ maps a matrix to its vector of 
entries. As the vectors $\iota(C_i)$ are orthogonal in $\mathbb{C}^{d^2}$, 
they are also eigenvectors of $\mathrm{Cov}(Y)$. It follows that $v(Y)^2 = 
\|\mathrm{Cov}(Y)\|=\max_i\|C_i\|_{\rm HS}^2$. The analogous construction 
applies to $X'$.
\end{proof}

\begin{lem}
\label{lem:avgbasis}
Let $B_1,\ldots,B_{d^2}\in\M_d(\mathbb{C})$ satisfy
$\tr[B_i^*B_j]=\delta_{ij}$ for all $1\le i\le j\le n$. Then we
have $\sum_{i=1}^{d^2}B_i^*YB_i = \tr[Y]\id$ for every 
$Y\in\M_d(\mathbb{C})$.
\end{lem}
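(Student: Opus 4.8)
The plan is to exploit the fact that the hypothesis $\tr[B_i^*B_j]=\delta_{ij}$, together with conjugate symmetry of the Hilbert--Schmidt inner product (which gives $\tr[B_j^*B_i]=\overline{\tr[B_i^*B_j]}=\delta_{ij}$ as well), means precisely that $B_1,\dots,B_{d^2}$ form an orthonormal basis of $\M_d(\mathbb{C})$ with respect to $\langle A,B\rangle:=\tr[A^*B]$, since $\dim_{\mathbb{C}}\M_d(\mathbb{C})=d^2$. The key point is then that the linear map $\Phi:\M_d(\mathbb{C})\to\M_d(\mathbb{C})$ defined by $\Phi(Y):=\sum_{i=1}^{d^2}B_i^*YB_i$ does not depend on the choice of orthonormal basis $(B_i)$. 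To see this, I would let $(C_j)$ be another orthonormal basis, write $C_j=\sum_i u_{ij}B_i$ for a unitary matrix $U=(u_{ij})\in\M_{d^2}(\mathbb{C})$, and expand $\sum_j C_j^*YC_j=\sum_{i,i'}\bigl(\sum_j u_{i'j}\overline{u_{ij}}\bigr)B_i^*YB_{i'}$; since $\sum_j u_{i'j}\overline{u_{ij}}=(UU^*)_{i'i}=\delta_{i'i}$, the double sum collapses back to $\sum_i B_i^*YB_i$.

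Granting basis-independence, it remains only to evaluate $\Phi$ on one convenient basis. I would take the matrix units $e_{kl}:=e_ke_l^*$ for $1\le k,l\le d$ (where $e_1,\dots,e_d$ is the coordinate basis of $\mathbb{C}^d$), which satisfy $\tr[e_{kl}^*e_{k'l'}]=\tr[e_{lk}e_{k'l'}]=\delta_{kk'}\delta_{ll'}$ and hence are orthonormal. A direct computation gives $e_{kl}^*Ye_{kl}=e_{lk}Ye_{kl}=(e_k^*Ye_k)\,e_le_l^*=Y_{kk}\,e_{ll}$, so that $\Phi(Y)=\sum_{k,l}Y_{kk}\,e_{ll}=\bigl(\sum_k Y_{kk}\bigr)\bigl(\sum_l e_{ll}\bigr)=\tr[Y]\,\id$, which is exactly the claimed identity.

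There is no genuine obstacle here; the only point requiring a little care is the change-of-basis identity, and even that can be sidestepped. Alternatively, for any unitary $W\in\M_d(\mathbb{C})$ the matrices $B_iW$ again form an orthonormal basis (as $\tr[(B_iW)^*(B_jW)]=\tr[B_i^*B_j]=\delta_{ij}$), so that $W^*\Phi(Y)W=\sum_i(B_iW)^*Y(B_iW)=\Phi(Y)$ for every unitary $W$; this forces $\Phi(Y)$ to be a scalar multiple of $\id$, and taking traces together with $\sum_i\tr[B_i^*YB_i]=\tr\!\bigl[Y\sum_iB_iB_i^*\bigr]$ and the analogous basis-independent evaluation $\sum_iB_iB_i^*=d\,\id$ pins down the scalar as $\tr[Y]$. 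Either route is short; I would present the matrix-unit computation as the main argument since it is the most transparent.
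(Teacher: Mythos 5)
Your proof is correct. Both you and the paper use the same overall strategy: observe that the map $\Phi(Y)=\sum_i B_i^*YB_i$ is independent of the choice of orthonormal basis of $\M_d(\mathbb{C})$, then compute $\Phi$ on the matrix-unit basis $e_ke_l^*$ to get $\tr[Y]\id$. The one place you diverge is in \emph{how} basis-independence is established. You prove it by a direct change-of-basis computation: writing $C_j=\sum_i u_{ij}B_i$ with $U$ unitary and collapsing the double sum via $(UU^*)_{i'i}=\delta_{i'i}$. The paper instead uses a probabilistic device: it writes $\sum_i B_i^*YB_i=\EE[H^*YH]$ where $H=\sum_i h_i B_i$ with $(h_i)$ i.i.d.\ standard complex Gaussians, and invokes unitary invariance of the complex Gaussian law on $\M_d(\mathbb{C})$ to conclude that the distribution of $H$ (hence the expectation) does not depend on the orthonormal basis. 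Your linear-algebra argument is more elementary and self-contained; the paper's Gaussian trick is slicker and shorter to state, but secretly encodes exactly the same unitary change-of-basis calculation inside the rotation-invariance of the Gaussian. Your secondary route (forcing $\Phi(Y)$ to commute with all unitaries, hence be scalar, then pinning down the scalar by a trace) is also valid, but note it still relies on the same basis-independence lemma to get $W^*\Phi(Y)W=\Phi(Y)$, so it is not logically lighter, only presentationally different.
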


\begin{proof}
Note that $\sum_{i=1}^{d^2}B_i^*YB_i=\EE H^*YH$, where
$H=\sum_{i=1}^{d^2}h_iB_i$ and $h_1,\ldots,h_{d^2}$ are i.i.d.\ 
standard complex Gaussians. Thus by unitary invariance of the complex 
Gaussian distribution, we may replace $B_1,\ldots,B_{d^2}$ by any other 
orthonormal basis of $\M_d(\mathbb{C})$. It follows that
$\sum_{i=1}^{d^2}B_i^*YB_i =
\sum_{k,l=1}^d e_ke_l^*Ye_le_k^*=\tr[Y]\id$.
\end{proof}

We now complete the proof of Proposition \ref{prop:vxwx}.

\begin{proof}[Proof of Proposition \ref{prop:vxwx}]
By Lemma \ref{lem:orthook}, we can assume that $\tr[A_iA_j]=0$ and 
$\tr[A_i'A_j']=0$ for $i\ne j$. In particular, we may choose an 
orthonormal basis $B_1,\ldots,B_{d^2}$ of $\M_d(\mathbb{C})$ so that 
$A_i=\|A_i\|_{\rm HS}B_i$ for $i=1,\ldots,n$.

Now note that we can estimate by Cauchy-Schwarz
\begin{align*}
	w(X,X')^4 &=
	\sup_{U,V,W}
	\sup_{\|x\|,\|y\|\le 1}
	\Bigg|
	\sum_{i=1}^n\Bigg\langle U^*A_ix,
	\sum_{j=1}^n A_j' V A_i W A_j'y\Bigg\rangle
	\Bigg| \\
	&\le
	\Bigg(
	\sup_{\|x\|\le 1}
	\sum_{i=1}^n \|A_ix\|^2\Bigg)^{\frac{1}{2}}
	\Bigg(
	\sup_{V,W}
	\sup_{\|y\|\le 1}
	\sum_{i=1}^n
	\Bigg\|
	\sum_{j=1}^n A_j' V A_i W A_j' y\Bigg\|^2
	\Bigg)^{\frac{1}{2}}.
\end{align*}
Furthermore,
\begin{align*}
	\sum_{i=1}^n
	\Bigg\|
	\sum_{j=1}^n A_j' V A_i W A_j' y\Bigg\|^2
	&\le
	\max_i\|A_i\|_{\rm HS}^2
	\sum_{i=1}^{d^2}
	\Bigg\|
	\sum_{j=1}^n A_j' V B_i W A_j' y\Bigg\|^2
	\\
	&=
	\max_i\|A_i\|_{\rm HS}^2
	\sum_{j,k=1}^n
	\langle y,A_j' A_k' y\rangle\tr[A_j' A_k'] 
	\\
	&\le
	\max_i\|A_i\|_{\rm HS}^2
	\max_i\|A_i'\|_{\rm HS}^2
	\sum_{j=1}^n \|A_j'y\|^2,
\end{align*}
where we used Lemma \ref{lem:avgbasis} in the equality and
$\tr[A_i'A_j']=0$ for $i\ne j$ in the second inequality. It remains to 
note that $\sup_{\|x\|\le 1}
\sum_{i=1}^n \|A_ix\|^2=\sigma(X)^2$ and 
$\max_i\|A_i\|_{\rm HS}=v(X)$ by Lemma \ref{lem:orthook}, and
analogously for $X'$.
\end{proof}

\subsubsection{Self-adjoint dilation}
\label{sec:sa}

While we defined $w(X,X')$ only for self-adjoint $X,X'$, we may extend the 
resulting inequalities to the general case by self-adjoint dilation as 
explained in Remark \ref{rem:sa}. For completeness, we presently 
provide proofs of the claims made in Remark \ref{rem:sa}.
We first prove the following.

\begin{lem}
\label{lem:sadil}
Let $T$ be a bounded operator on a Hilbert space $H$, and 
denote by $\breve T$ the self-adjoint operator on
$H\oplus H$ defined by
$$
	\breve T = 
	\begin{bmatrix} 0 & T \\ T^* & 0
	\end{bmatrix}.
$$
Then $\spc(\breve T)\cup\{0\} = \spc(|T|) \cup {-\spc(|T|)}\cup\{0\}$.
\end{lem}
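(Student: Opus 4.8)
The plan is to reduce the statement to the square $\breve T^{2}$, which is block diagonal and hence transparent, and then to transfer the resulting information back to $\breve T$ by exploiting that $\breve T$ is self-adjoint with spectrum symmetric about the origin.

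First I would compute
$$
	\breve T^{2} = \begin{bmatrix} TT^{*} & 0 \\ 0 & T^{*}T \end{bmatrix},
$$
so that $\spc(\breve T^{2}) = \spc(TT^{*}) \cup \spc(T^{*}T)$, the spectrum of a block-diagonal operator being the union of the spectra of its blocks. I would then invoke the standard fact that $\spc(AB) \cup \{0\} = \spc(BA) \cup \{0\}$ for bounded operators $A,B$ (for $\lambda \neq 0$ one writes the inverse of $\lambda - BA$ explicitly in terms of that of $\lambda - AB$), applied with $A = T$ and $B = T^{*}$. Since $|T|^{2} = T^{*}T$, this gives $\spc(\breve T^{2}) \cup \{0\} = \spc(|T|^{2}) \cup \{0\}$.

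Next, since $\breve T$ and $|T|$ are self-adjoint, the spectral mapping theorem for the continuous function $t \mapsto t^{2}$ gives $\spc(\breve T^{2}) = \{\lambda^{2} : \lambda \in \spc(\breve T)\}$ and $\spc(|T|^{2}) = \{\mu^{2} : \mu \in \spc(|T|)\}$, with $\spc(|T|) \subseteq [0,\infty)$ because $|T| \geq 0$; moreover, conjugating $\breve T$ by the unitary $\mathrm{diag}(\id,-\id)$ on $H \oplus H$ shows $\spc(\breve T) = -\spc(\breve T)$. Combining these observations, for any $\lambda \neq 0$ the sign symmetry forces $\lambda \in \spc(\breve T) \iff \lambda^{2} \in \spc(\breve T^{2})$, which (as $\lambda^{2} \neq 0$) is equivalent to $\lambda^{2} \in \spc(|T|^{2})$, hence to $|\lambda| \in \spc(|T|)$, hence to $\lambda \in \spc(|T|) \cup (-\spc(|T|))$. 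Taking unions with $\{0\}$ on both sides then yields the assertion.

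The one point requiring care — and the reason the $\{0\}$ must be adjoined on both sides — is the bookkeeping at $\lambda = 0$: $\spc(TT^{*})$ and $\spc(T^{*}T)$ can differ exactly at the origin, so the whole chain of equivalences above must be run for nonzero $\lambda$ and the endpoint absorbed solely through the added $\{0\}$. Beyond this I do not anticipate any genuine obstacle; the argument is entirely standard operator theory, and for the application in Remark~\ref{rem:sa} only this clean identity relating $\spc(\breve T)$ to the spectra of the diagonal blocks of $\breve T^{2}$ is needed.
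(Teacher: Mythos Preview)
Your proof is correct and follows essentially the same route as the paper: compute $\breve T^{2}$ as a block-diagonal matrix, use that $\spc(TT^{*})\cup\{0\}=\spc(T^{*}T)\cup\{0\}$, and combine with the sign symmetry $\spc(\breve T)=-\spc(\breve T)$ coming from conjugation by $\mathrm{diag}(\id,-\id)$. The only cosmetic difference is that the paper establishes the $TT^{*}$ versus $T^{*}T$ spectral identity via the polar decomposition $T=V|T|$ (so $TT^{*}=V\,T^{*}T\,V^{*}$), whereas you invoke the general fact $\spc(AB)\cup\{0\}=\spc(BA)\cup\{0\}$ directly; both are standard and equally short.
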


\begin{proof}
Let $T=V|T|$ be the polar decomposition of $T$, where $V$ is a
partial isometry with initial space $(\ker T)^\perp$ and final space
$\mathrm{cl}(\mathop{\mathrm{ran}}T)=(\ker T^*)^\perp$. As $TT^* = 
V|T|^2V^* = VT^*TV^*$, it follows that
$\spc(T^*T)\cup\{0\}=\spc(TT^*)\cup\{0\}$. Thus
\begin{equation}
\label{eq:dilsq}
	\breve T^2 = \begin{bmatrix} TT^* & 0 \\ 0 & T^*T
	\end{bmatrix}
\end{equation}
implies that $\spc(|\breve T|)\cup\{0\} = \spc(|T|)\cup \{0\}$.
On the other hand, as
$$
	U^*\breve T U = -\breve T,\qquad\quad
	U=
	\begin{bmatrix} \id & 0 \\ 0 & -\id 
        \end{bmatrix}
$$
and $U$ is unitary, we have $\spc(\breve T)=-\spc(\breve T)$. The 
conclusion follows.
\end{proof}

We now verify that $\sigma(X),\sigma_*(X),v(X)$ are 
well behaved under dilation.

\begin{lem}
In the setting of Remark \ref{rem:sa}, we have
$$
        \sigma(\breve X)=\sigma(X),\qquad
        \sigma_*(\breve X)=\sigma_*(X),\qquad
        v(X)\le v(\breve X)\le \sqrt{2}\,v(X).   
$$
\end{lem}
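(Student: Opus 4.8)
The plan is to verify each of the three claims by a direct computation with the $2\times 2$ block structure $\breve A_i = \begin{bmatrix} 0 & A_i\\ A_i^* & 0\end{bmatrix}$, $\hat{\breve X} = \begin{bmatrix} 0 & \hat X\\ \hat X^* & 0\end{bmatrix}$, where $\hat{\breve X}=\breve X-\EE\breve X=\sum_i g_i\breve A_i$.

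\emph{The parameter $\sigma$.} Since $\breve X$ is self-adjoint, $\hat{\breve X}$ is self-adjoint, so $\sigma(\breve X)^2=\|\EE\hat{\breve X}^2\|=\|\sum_i\breve A_i^2\|$. A one-line computation gives $\breve A_i^2=\mathrm{diag}(A_iA_i^*,\,A_i^*A_i)$, hence $\sum_i\breve A_i^2=\mathrm{diag}\big(\sum_i A_iA_i^*,\,\sum_i A_i^*A_i\big)$, whose norm is $\|\sum_i A_iA_i^*\|\vee\|\sum_i A_i^*A_i\|=\sigma(X)^2$ by definition.

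\emph{The parameter $v$.} I would work from the variational formula $v(\breve X)^2=\sup\{\sum_i|\tr[\breve A_i M]|^2:M\in\M_{2d}(\mathbb{C}),\ \tr|M|^2\le 1\}$. Writing $M$ in blocks $M=(M_{ab})_{a,b=1,2}$ and using $\tr[A_i^* M_{12}]=\overline{\tr[A_i M_{12}^*]}$, one finds $\tr[\breve A_i M]=\tr[A_i M_{21}]+\overline{\tr[A_i M_{12}^*]}$, which depends only on $M_{21}$ and $M_{12}$. For the lower bound, restrict to $M$ supported on the $(2,1)$-block, $M_{21}=N$ with $\tr|N|^2\le 1$; then $\sum_i|\tr[\breve A_i M]|^2=\sum_i|\tr[A_i N]|^2$, and taking the supremum over such $N$ gives $v(\breve X)^2\ge v(X)^2$. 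For the upper bound, use $|a+b|^2\le 2|a|^2+2|b|^2$, the homogeneous form $\sum_i|\tr[A_i N]|^2\le v(X)^2\,\tr|N|^2$ of the definition of $v(X)$, and $\tr|M|^2=\sum_{a,b}\|M_{ab}\|_{\mathrm{HS}}^2\ge\|M_{21}\|_{\mathrm{HS}}^2+\|M_{12}^*\|_{\mathrm{HS}}^2$, to get $\sum_i|\tr[\breve A_i M]|^2\le 2v(X)^2(\|M_{21}\|_{\mathrm{HS}}^2+\|M_{12}^*\|_{\mathrm{HS}}^2)\le 2v(X)^2$, i.e.\ $v(\breve X)\le\sqrt{2}\,v(X)$.

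\emph{The parameter $\sigma_*$.} Use $\sigma_*(\breve X)^2=\sup_{\|v\|=\|w\|=1}\EE[|\langle v,\hat{\breve X}w\rangle|^2]$ with $v,w\in\mathbb{C}^{2d}$. Splitting $v=(v_1,v_2)$, $w=(w_1,w_2)$ yields $\langle v,\hat{\breve X}w\rangle=\langle v_1,\hat X w_2\rangle+\overline{\langle w_1,\hat X v_2\rangle}$. Taking $v_2=w_1=0$ recovers $\sigma_*(X)$ and gives the lower bound $\sigma_*(\breve X)\ge\sigma_*(X)$. For the upper bound, expand the second moment as $\EE|\langle v_1,\hat X w_2\rangle|^2+\EE|\langle w_1,\hat X v_2\rangle|^2+2\,\mathrm{Re}\,\EE[\langle v_1,\hat X w_2\rangle\langle w_1,\hat X v_2\rangle]$; the first two terms are at most $\sigma_*(X)^2\|v_1\|^2\|w_2\|^2$ and $\sigma_*(X)^2\|v_2\|^2\|w_1\|^2$ by homogeneity of the definition, and since $\EE[\langle v_1,\hat X w_2\rangle\langle w_1,\hat X v_2\rangle]=\sum_i\langle v_1,A_i w_2\rangle\langle w_1,A_i v_2\rangle$, Cauchy--Schwarz over $i$ bounds the cross term by $\sigma_*(X)^2\|v_1\|\|w_2\|\|v_2\|\|w_1\|$. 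Summing, $\EE[|\langle v,\hat{\breve X}w\rangle|^2]\le\sigma_*(X)^2(\|v_1\|\|w_2\|+\|v_2\|\|w_1\|)^2\le\sigma_*(X)^2(\|v_1\|^2+\|v_2\|^2)(\|w_1\|^2+\|w_2\|^2)=\sigma_*(X)^2$, again by Cauchy--Schwarz, which gives $\sigma_*(\breve X)=\sigma_*(X)$. The only step here that is not pure bookkeeping is the treatment of this cross term; everything else is a matter of carefully unwinding the block structure, so I expect no serious obstacle.
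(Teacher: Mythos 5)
Your proof is correct and follows essentially the same route as the paper's: unwinding the $2\times 2$ block structure of $\breve A_i$ in each of the variational expressions for $\sigma,\sigma_*,v$. The only cosmetic difference is that the paper gets the upper bounds for $\sigma_*(\breve X)$ and $v(\breve X)$ by the $L^2$-triangle (Minkowski) inequality followed by Cauchy--Schwarz on the constraint, whereas you expand the square and bound the cross term directly (for $\sigma_*$) or use $|a+b|^2\le 2|a|^2+2|b|^2$ (for $v$) — the same estimate in a slightly more hands-on form.
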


\begin{proof}
We begin by noting that by \eqref{eq:dilsq}
$$
	\sigma(\breve X)^2 =
	\|\EE \breve X^2\|=
	\left\|
	\begin{bmatrix} \EE XX^* & 0 \\ 0 & \EE X^*X
	\end{bmatrix}\right\|=\sigma(X)^2.
$$
Next, note that
$$
	\sigma_*(\breve X)^2 =
	\sup_{\|v_1\|^2+\|v_2\|^2=1} \sup_{\|w_1\|^2+\|w_2\|^2=1}
	\EE[|\langle v_1,Xw_2\rangle + \langle v_2,X^*w_1\rangle|^2].
$$
Thus clearly $\sigma_*(\breve X)\ge\sigma_*(X)$,
while by the triangle inequality
$$
	\sigma_*(\breve X) \le
	\sigma_*(X)
	\sup_{\|v_1\|^2+\|v_2\|^2=1} \sup_{\|w_1\|^2+\|w_2\|^2=1}
	(\|v_1\|\|w_2\|+\|v_2\|\|w_1\|) =
	\sigma_*(X).
$$
Finally, note that
$$
	v(\breve X)^2 =
	\sup_{\|M\|_{\rm HS}^2+\|N\|_{\rm HS}^2=1}
	\EE[|{\tr}[XM]+\tr[X^*N]|^2],
$$
so that $v(X)\le v(\breve X)\le\sqrt{2}\,v(X)$ follows in the same manner 
as for $\sigma_*(X)$.
\end{proof}

\subsection{Gaussian analysis}
\label{sec:conc}

We now recall some Gaussian tools that will be used in the sequel. 
The following is classical \cite[Lemma 1.3.1]{Tal11}.

\begin{lem}[Gaussian interpolation]
\label{lem:ginterp}
Let $Y$ and $Z$ be independent centered Gaussian vectors in $\mathbb{R}^n$ 
with covariance matrices $\Sigma^Y$ and $\Sigma^Z$, respectively. Let
$$
	Y_t = \sqrt{t}\,Y + \sqrt{1-t}\,Z
$$
for $t\in[0,1]$. Then we have 
$$
	\frac{d}{dt}\EE[f(Y_t)] =
	\frac{1}{2}\sum_{i,j=1}^n (\Sigma_{ij}^Y-\Sigma_{ij}^Z) \,
	\EE\bigg[\frac{\partial^2 f}{\partial x_i\partial x_j}
	(Y_t)\bigg]
$$
for any smooth $f:\mathbb{R}^n\to\mathbb{C}$ with derivatives of 
polynomial growth.
\end{lem}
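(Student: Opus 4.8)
The plan is to differentiate under the expectation and then collapse the resulting first-moment terms using Gaussian integration by parts. Fix $t\in(0,1)$; the endpoints will be recovered at the end by continuity. By the chain rule, pathwise,
\[
	\frac{d}{dt}f(Y_t) = \sum_{i=1}^n \frac{\partial f}{\partial x_i}(Y_t)\left(\frac{Y_i}{2\sqrt t} - \frac{Z_i}{2\sqrt{1-t}}\right).
\]
To justify interchanging $\frac{d}{dt}$ with $\EE$ I would use dominated convergence: for $t$ ranging over a compact subinterval of $(0,1)$ the factors $\frac{1}{2\sqrt t}$ and $\frac{1}{2\sqrt{1-t}}$ are bounded, while the polynomial growth of the derivatives of $f$ together with $\|Y_t\|\le\|Y\|+\|Z\|$ gives a dominating function of the form $C(1+\|Y\|+\|Z\|)^q$, which is Gaussian-integrable. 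Hence $t\mapsto\EE[f(Y_t)]$ is $C^1$ on $(0,1)$ with $\frac{d}{dt}\EE[f(Y_t)] = \sum_i \big(\tfrac{1}{2\sqrt t}\EE[Y_i\,\partial_i f(Y_t)] - \tfrac{1}{2\sqrt{1-t}}\EE[Z_i\,\partial_i f(Y_t)]\big)$, where I abbreviate $\partial_i f = \frac{\partial f}{\partial x_i}$.

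The key step is the Gaussian integration-by-parts (Stein) identity: if $(\xi_1,\dots,\xi_m)$ is a centered Gaussian vector and $h$ is smooth with derivatives of polynomial growth, then $\EE[\xi_a h(\xi)] = \sum_{b}\EE[\xi_a\xi_b]\,\EE[\partial_b h(\xi)]$; this is itself standard (it reduces to the one-dimensional case $\EE[\xi h(\xi)]=\EE[\xi^2]\EE[h'(\xi)]$ by conditioning on a suitable linear combination). I would apply it to the centered Gaussian vector $(Y,Z)\in\mathbb{R}^{2n}$ and to $h(Y,Z) := \partial_i f(\sqrt t\,Y + \sqrt{1-t}\,Z)$. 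Since $Y$ and $Z$ are independent, $\EE[Y_iY_j]=\Sigma^Y_{ij}$ and $\EE[Y_iZ_j]=0$, while $\partial_{Y_j}h = \sqrt t\,\frac{\partial^2 f}{\partial x_i\partial x_j}(Y_t)$; thus
\[
	\EE\!\left[Y_i\,\partial_i f(Y_t)\right] = \sqrt t\sum_{j=1}^n \Sigma^Y_{ij}\,\EE\!\left[\frac{\partial^2 f}{\partial x_i\partial x_j}(Y_t)\right],
\]
and symmetrically $\EE[Z_i\,\partial_i f(Y_t)] = \sqrt{1-t}\sum_j \Sigma^Z_{ij}\,\EE[\frac{\partial^2 f}{\partial x_i\partial x_j}(Y_t)]$. (The hypotheses of the Stein identity are met because $h$, being a partial derivative of $f$ composed with an affine map, again has derivatives of polynomial growth.)

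Substituting these two identities, the prefactors $\frac{\sqrt t}{2\sqrt t}$ and $\frac{\sqrt{1-t}}{2\sqrt{1-t}}$ both reduce to $\frac12$, giving
\[
	\frac{d}{dt}\EE[f(Y_t)] = \frac12\sum_{i,j=1}^n(\Sigma^Y_{ij}-\Sigma^Z_{ij})\,\EE\!\left[\frac{\partial^2 f}{\partial x_i\partial x_j}(Y_t)\right]
\]
for $t\in(0,1)$. Since the right-hand side is continuous in $t$ on all of $[0,1]$ (again by dominated convergence, as $Y_t\to Y_1$, $Y_t\to Y_0$ in distribution and the integrands have polynomial growth), integrating and passing to the limit shows $\EE[f(Y_t)]$ is $C^1$ on $[0,1]$ and the displayed formula holds there as well. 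I do not expect a genuine obstacle here: the entire argument is the classical Gaussian interpolation computation, and the only points requiring a word of care are the domination estimate for differentiating under the expectation and the (routine) verification that the integrand in the Stein identity has controlled growth.
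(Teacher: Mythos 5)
Your proof is correct: differentiating under the expectation, applying Gaussian integration by parts (Stein's identity) to the joint vector $(Y,Z)$, and then canceling the $\sqrt{t}$ and $\sqrt{1-t}$ factors gives exactly the stated formula, and your handling of the endpoints $t\in\{0,1\}$ via continuity of the right-hand side is the right fix for the blowup of $t^{-1/2}$ and $(1-t)^{-1/2}$ in the pathwise derivative. The paper itself does not prove this lemma but cites it as classical from Talagrand's book, and the argument there is the same one you give.
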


A special case is the following (see, e.g., \cite[\S 5.5]{Led01}).

\begin{cor}[Gaussian covariance identity]
\label{cor:gcov}
Let $Y,Z$ be independent centered Gaussian vectors in $\mathbb{R}^n$
with covariance matrix $\Sigma$, and let
$$
	Y_t'=t\,Y+\sqrt{1-t^2}\,Z
$$
for $t\in[0,1]$. Then we have
$$
	\EE[f(Y)g(Y)]-\EE[f(Y)]\,\EE[g(Y)] =
	\int_0^1
	\EE[\langle \nabla f(Y),\Sigma\,\nabla g(Y_t')\rangle]
	\,dt
$$
for any smooth $f,g:\mathbb{R}^n\to\mathbb{C}$ with derivatives of 
polynomial growth.
\end{cor}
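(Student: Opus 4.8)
The plan is to deduce the identity from the Gaussian interpolation lemma (Lemma~\ref{lem:ginterp}) applied in dimension $2n$. Set $\phi(t):=\EE[f(Y)g(Y_t')]$ for $t\in[0,1]$. Since $Y_1'=Y$, we have $\phi(1)=\EE[f(Y)g(Y)]$; since $Y_0'=Z$ is independent of $Y$ and has the same law as $Y$, we have $\phi(0)=\EE[f(Y)]\,\EE[g(Y)]$. Thus it suffices to show that $\phi$ is continuous on $[0,1]$, differentiable on $(0,1)$ with
$$
	\phi'(t)=\EE[\langle\nabla f(Y),\Sigma\,\nabla g(Y_t')\rangle],
$$
and that this derivative is bounded in $t$, so that $\phi(1)-\phi(0)=\int_0^1\phi'(t)\,dt$ yields the claim.

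To compute $\phi'$, note first that the $\mathbb{R}^{2n}$-valued vector $(Y,Y_t')$ is centered Gaussian with covariance $\begin{pmatrix}\Sigma & t\Sigma\\ t\Sigma & \Sigma\end{pmatrix}$, using $\mathrm{Cov}(Y_t',Y_t')=t^2\Sigma+(1-t^2)\Sigma=\Sigma$. Now introduce on $\mathbb{R}^{2n}$ the independent centered Gaussian vectors $\mathbf{Y}:=(Y,Y)$ and $\mathbf{Z}:=(Z_1,Z_2)$, where $Z_1,Z_2$ are independent with law $N(0,\Sigma)$; their covariances are $\Sigma^{\mathbf{Y}}=\begin{pmatrix}\Sigma & \Sigma\\ \Sigma & \Sigma\end{pmatrix}$ and $\Sigma^{\mathbf{Z}}=\begin{pmatrix}\Sigma & 0\\ 0 & \Sigma\end{pmatrix}$, so $\sqrt{t}\,\mathbf{Y}+\sqrt{1-t}\,\mathbf{Z}$ has covariance $\begin{pmatrix}\Sigma & t\Sigma\\ t\Sigma & \Sigma\end{pmatrix}$ and hence the same law as $(Y,Y_t')$. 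Applying Lemma~\ref{lem:ginterp} to $h(x,y):=f(x)g(y)$ on $\mathbb{R}^{2n}$ (which is smooth with derivatives of polynomial growth, as $f,g$ are), and using that $\Sigma^{\mathbf{Y}}-\Sigma^{\mathbf{Z}}=\begin{pmatrix}0 & \Sigma\\ \Sigma & 0\end{pmatrix}$ has only off-diagonal blocks while $\partial_{x_i}\partial_{y_j}h(x,y)=\partial_i f(x)\,\partial_j g(y)$, the double sum in the lemma reduces to the two mixed blocks; by symmetry of $\Sigma$ these contribute equally, giving
$$
	\phi'(t)=\sum_{i,j=1}^n \Sigma_{ij}\,\EE[\partial_i f(Y)\,\partial_j g(Y_t')]=\EE[\langle\nabla f(Y),\Sigma\,\nabla g(Y_t')\rangle],
$$
where the evaluation is at the coupled pair $(Y,Y_t')$ since $\sqrt{t}\,\mathbf{Y}+\sqrt{1-t}\,\mathbf{Z}$ has its law.

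Integrating $\phi'$ over $[0,1]$ and substituting the endpoint values then gives the covariance identity. The only point requiring a little care is the regularity at $t\in\{0,1\}$, where $t\mapsto(\sqrt t,\sqrt{1-t})$ is not differentiable: I would note that Lemma~\ref{lem:ginterp} already supplies the derivative formula on $(0,1)$, that the integrand $t\mapsto\EE[\langle\nabla f(Y),\Sigma\,\nabla g(Y_t')\rangle]$ is continuous and bounded on $[0,1]$ (using the polynomial-growth bound on $\nabla g$ together with $\sup_{t\in[0,1]}\EE\|Y_t'\|^k<\infty$), and that $\phi$ is continuous on $[0,1]$, so the fundamental theorem of calculus applies on the closed interval. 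This endpoint bookkeeping is the main (and only mildly technical) obstacle; the rest is the direct computation above.
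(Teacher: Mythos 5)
Your proof is correct and follows essentially the same route as the paper's: lift to $\mathbb{R}^{2n}$ with $H(x,y)=f(x)g(y)$, take $\mathbf{Y}=(Y,Y)$ and $\mathbf{Z}=(Z_1,Z_2)$ with $Z_1,Z_2$ independent $N(0,\Sigma)$, apply Lemma~\ref{lem:ginterp}, and identify $\sqrt{t}\,\mathbf{Y}+\sqrt{1-t}\,\mathbf{Z}$ in law with $(Y,Y_t')$. The extra care you take about differentiability at the endpoints and the cancellation of the factor $\tfrac12$ against the two off-diagonal blocks is just a more explicit rendering of the same argument.
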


\begin{proof}
Let $Y,Z,Z'$ be independent centered Gaussian vectors with covariance 
matrix $\Sigma$, and let
$G=(Y,Y)$, $G'=(Z,Z')$, and $G_t = \sqrt{t}\,G+\sqrt{1-t}\,G'$.
Then
$$
	\EE[f(Y)g(Y)]-\EE[f(Y)]\,\EE[g(Y)] =
	\int_0^1 \frac{d}{dt}\EE[H(G_t)]\,dt,
$$
where $H(x,y)=f(x)g(y)$.
The conclusion follows from Lemma \ref{lem:ginterp} and the fact that
$(\sqrt{t}\,Y+\sqrt{1-t}\,Z,\sqrt{t}\,Y+\sqrt{1-t}\,Z')$ is 
equidistributed with $(Y,Y_t')$.
\end{proof}

We finally recall the following \cite[p.\ 41]{Led01}.

\begin{lem}[Gaussian concentration]
\label{lem:gconc}
Let $Y$ be a standard Gaussian vector in $\mathbb{R}^n$, and let
$f:\mathbb{R}^n\to\mathbb{R}$ be an $L$-Lipschitz function. Then
$$
	\mathbf{P}[f(Y)\ge \EE f(Y)+t] \le e^{-t^2/2L^2}\quad\mbox{for all 
}t\ge 0.
$$
\end{lem}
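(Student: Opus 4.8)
The plan is to control the moment generating function $H(\lambda):=\EE\big[e^{\lambda(f(Y)-\EE f(Y))}\big]$ and then conclude by the Chernoff bound. Writing $h:=f-\EE f(Y)$, note that $|h(y)|\le|h(0)|+L\|y\|$, so $h(Y)$ is integrable and $\EE[h(Y)]=0$; it suffices to show $H(\lambda)\le e^{\lambda^2L^2/2}$ for every $\lambda\ge 0$, since then Markov's inequality gives $\PP[h(Y)\ge t]\le e^{-\lambda t}H(\lambda)\le e^{-\lambda t+\lambda^2L^2/2}$ for all $\lambda\ge 0$, and optimizing at $\lambda=t/L^2$ yields exactly $\PP[f(Y)\ge\EE f(Y)+t]\le e^{-t^2/2L^2}$.

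To bound $H$, I would differentiate in $\lambda$ and apply the Gaussian covariance identity of Corollary~\ref{cor:gcov} with $\Sigma=\id$, taking $f=e^{\lambda h}$ and $g=h$. Since $\EE[h(Y)]=0$, the identity reads
\[
	H'(\lambda)=\EE\big[h(Y)e^{\lambda h(Y)}\big]
	=\lambda\int_0^1\EE\big[e^{\lambda h(Y)}\,
	\langle\nabla h(Y),\nabla h(Y_t')\rangle\big]\,dt,
\]
where $Y_t'=tY+\sqrt{1-t^2}\,Z$ as in Corollary~\ref{cor:gcov}. Because $h$ is $L$-Lipschitz we have $\|\nabla h\|\le L$ almost everywhere, so Cauchy--Schwarz gives $\langle\nabla h(Y),\nabla h(Y_t')\rangle\le L^2$, and hence $H'(\lambda)\le\lambda L^2H(\lambda)$ for $\lambda\ge 0$. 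This is the differential inequality $(\log H)'(\lambda)\le\lambda L^2$; integrating from $0$ with $H(0)=1$ gives $H(\lambda)\le e^{\lambda^2L^2/2}$, as desired. Note that this route already produces the sharp constant $1/2$.

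The one point requiring care --- and the main, if routine, obstacle --- is that Corollary~\ref{cor:gcov} is stated for smooth functions with polynomial-growth derivatives, whereas $h$ is merely Lipschitz and $e^{\lambda h}$ grows exponentially. I would first carry out the argument above for $h$ that is smooth, bounded, and has bounded gradient, where the covariance identity and the differentiation of $H$ under the expectation are immediately justified and $\|\nabla h\|\le L$ holds everywhere. The general case then follows by approximation: truncate $h$ at levels $\pm M$ and mollify at scale $\epsilon$, operations that leave the Lipschitz constant at most $L$ and produce smooth bounded functions $h_{M,\epsilon}$ converging to $h$ pointwise as $\epsilon\to 0$, $M\to\infty$. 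Applying the bound to $h_{M,\epsilon}$ and passing to the limit --- Fatou's lemma for $\EE[e^{\lambda h(Y)}]$ on the left, dominated convergence for $\EE[h_{M,\epsilon}(Y)]\to\EE[h(Y)]$ (using the integrable majorant $|h(0)|+L\|y\|+L$) on the right --- preserves $H(\lambda)\le e^{\lambda^2L^2/2}$, after which the Chernoff step is unchanged. Alternatively, the statement is a standard consequence of the Gaussian isoperimetric inequality of Borell and Sudakov--Tsirelson, but that route would require first establishing isoperimetry, whereas the argument above uses only Corollary~\ref{cor:gcov}, which is already available.
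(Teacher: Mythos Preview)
Your proof is correct. The paper does not actually prove this lemma: it merely states it as a recalled fact with a citation to \cite[p.\ 41]{Led01}. Your argument, which derives the sub-Gaussian moment generating function bound from the covariance identity of Corollary~\ref{cor:gcov} and then applies the Chernoff bound, is the classical Pisier--Maurey proof of Gaussian concentration; it is essentially the same as what appears at the cited location in Ledoux. The nice feature of your write-up is that it is self-contained within the paper's own toolkit, using only Corollary~\ref{cor:gcov}, and your handling of the regularity issue (smooth bounded approximation followed by Fatou/dominated convergence) is the standard and correct way to pass from the smooth case to general Lipschitz $f$.
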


It is instructive to spell out the application of Gaussian concentration 
to \eqref{eq:model}, which explains the significance of the parameter 
$\sigma_*(X)$.

\begin{cor}
\label{cor:gconcmtx}
Consider the model \eqref{eq:model} with 
$A_0,\ldots,A_n\in\M_d(\mathbb{C})$, and let
$F:\M_d(\mathbb{C})\to\mathbb{R}$ be $L$-Lipschitz with respect to the
operator norm. Then
$$
	\mathbf{P}[F(X)\ge\EE F(X)+t] \le e^{-t^2/2L^2\sigma_*(X)^2}
	\quad\mbox{for all }t\ge 0.
$$
If $A_0,\ldots,A_n\in\M_d(\mathbb{C})_{\rm sa}$, it suffices
to assume $F$ is $L$-Lipschitz on $\M_d(\mathbb{C})_{\rm sa}$.
\end{cor}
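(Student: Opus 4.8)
The plan is to reduce the statement to the scalar Gaussian concentration inequality of Lemma \ref{lem:gconc} by composing $F$ with the affine parametrization of the model \eqref{eq:model}. Define $\varphi:\mathbb{R}^n\to\mathbb{R}$ by $\varphi(x):=F(A_0+\sum_{i=1}^n x_iA_i)$, so that $F(X)=\varphi(g)$ where $g=(g_1,\ldots,g_n)$ is a standard Gaussian vector in $\mathbb{R}^n$. Since Lemma \ref{lem:gconc} applies to Lipschitz functions of a standard Gaussian vector, it suffices to compute the Lipschitz constant of $\varphi$.

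For any $x,y\in\mathbb{R}^n$, the assumption that $F$ is $L$-Lipschitz with respect to the operator norm gives $|\varphi(x)-\varphi(y)|\le L\,\|\sum_{i=1}^n(x_i-y_i)A_i\|$. The key observation is the elementary bound $\|\sum_{i=1}^n c_iA_i\|\le \|c\|\,\sigma_*(X)$ for every $c\in\mathbb{R}^n$: writing the operator norm as $\sup_{\|v\|=\|w\|=1}|\langle v,(\sum_i c_iA_i)w\rangle|$ and applying Cauchy--Schwarz in the sum over $i$ yields $|\sum_i c_i\langle v,A_iw\rangle|\le \|c\|\,(\sum_i|\langle v,A_iw\rangle|^2)^{1/2}\le \|c\|\,\sigma_*(X)$ directly from the definition of $\sigma_*(X)$ in Section \ref{sec:mainspec}. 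Consequently $\varphi$ is $L\sigma_*(X)$-Lipschitz, and Lemma \ref{lem:gconc} applied with Lipschitz constant $L\sigma_*(X)$ gives $\mathbf{P}[\varphi(g)\ge\EE\varphi(g)+t]\le e^{-t^2/2L^2\sigma_*(X)^2}$, which is the claim.

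For the self-adjoint refinement, I would observe that when $A_0,\ldots,A_n\in\M_d(\mathbb{C})_{\rm sa}$ the affine map $x\mapsto A_0+\sum_i x_iA_i$ takes values in $\M_d(\mathbb{C})_{\rm sa}$, and in particular $X\in\M_d(\mathbb{C})_{\rm sa}$; hence the argument above goes through verbatim using only the values of $F$ on $\M_d(\mathbb{C})_{\rm sa}$, so it is enough to assume $F$ is $L$-Lipschitz there. There is no real obstacle in this proof: the only step requiring a moment of care is the identification of the correct Lipschitz constant, which is exactly what singles out $\sigma_*(X)$ (rather than $\sigma(X)$ or $v(X)$) as the parameter governing Gaussian concentration for the model \eqref{eq:model}.
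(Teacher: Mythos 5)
Your proof is correct and follows essentially the same route as the paper: reduce to a scalar Lipschitz function of $g$ via the affine parametrization, bound the Lipschitz constant by $L\sigma_*(X)$ using Cauchy--Schwarz against the definition of $\sigma_*$, and invoke Lemma~\ref{lem:gconc}. The self-adjoint refinement is also handled the same way.
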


\begin{proof}
We may write $F(X)=f(g_1,\ldots,g_n):=F(A_0+\sum_i g_i A_i)$. Thus
\begin{align*}
	|f(x)-f(y)| &\le
	L\bigg\|\sum_i (x_i-y_i)A_i\bigg\| =
	L\sup_{\|v\|=\|w\|=1}\bigg|
	\sum_i (x_i-y_i)\langle v,A_i w\rangle\bigg|
	\\ &\le
	L\sigma_*(X)\|x-y\| 	
\end{align*}
by Cauchy-Schwarz and the definition of $\sigma_*(X)$ (cf.\ Section 
\ref{sec:mainspec}). The conclusion follows by applying Lemma 
\ref{lem:gconc} to $f(g_1,\ldots,g_n)$.
\end{proof}

\section{Spectral statistics}
\label{sec:spec}

The next three sections contain the proofs of the main results of this 
paper. In the present section, we begin by proving our bounds on the 
spectral statistics that were formulated in Section \ref{sec:mainstat}. 
These results illustrate the main proof technique of this paper in its 
simplest form. The support of the spectrum will be investigated in the 
next section using a more involved variant of the same method.

\subsection{The basic construction}
\label{sec:construction}

Throughout the proofs of our main results in Sections \ref{sec:mainspec} 
and \ref{sec:mainstat}, we will fix 
$A_0,\ldots,A_n\in\M_d(\mathbb{C})_{\rm sa}$ and let $X$ and $X_{\rm 
free}$ be defined as in \eqref{eq:model}. (Where relevant, the extension 
to the non-self-adjoint case will be done at the end of the proof using 
Remark \ref{rem:sa}.)

Let $G_1^N,\ldots,G_n^N$ be independent standard Wigner matrices as in 
Definition \ref{defn:wigner}, and let $D_1^N,\ldots,D_n^N$ be independent 
$N\times N$ diagonal matrices with i.i.d.\ standard Gaussians on the 
diagonal. We define for $q\in[0,1]$ the random matrix
\begin{equation} 
\label{eq:xq}
	X^N_q := A_0\otimes\id+
	\sum_{i=1}^n A_i\otimes (\sqrt{q}\,D_i^N +
	\sqrt{1-q}\,G_i^N).
\end{equation}
Note that $X^N_0=X^N$ as defined in \eqref{eq:nmodel}. On the other hand,
$X^N_1$ is a block-diagonal matrix with i.i.d.\ copies of $X$ on the 
diagonal. In particular, we have
\begin{equation} 
\label{eq:thepoint}
\begin{aligned}
	\EE[\ntr h(X^N_1)] &=
	\EE[\ntr h(X)],\\
	\EE[\ntr h(X^N_0)] &= \EE[\ntr h(X^N)]
\end{aligned}
\end{equation}
for any function $h:\mathbb{R}\to\mathbb{C}$. The basic idea behind our 
proofs is to interpolate 
between $\EE[\ntr h(X^N_1)]$ and $\EE[\ntr h(X^N_0)]$ using Lemma 
\ref{lem:ginterp}.

To simplify the expressions that will arise in the analysis, it will be 
convenient to define for $y=(y_{irs})_{1\le i\le n,1\le s\le r\le N}$ the 
notation
$$
	X^N(y) := 
	A_0\otimes\id + \sum_{i=1}^n \sum_{1\le s\le r\le N}
	y_{irs}\,A_{irs},\qquad\quad
	A_{irs} := A_i\otimes E_{rs},
$$
where $E_{rs}$ are as defined in Section \ref{sec:exind}. Moreover, let 
$Y,Z$ be centered Gaussian vectors all of 
whose entries $Y_{irs}=(D_i^N)_{rs}$ and $Z_{irs}=(G_i^N)_{rs}$ are 
independent with variances
$\delta_{rs}$ and $\frac{1}{N}$, respectively.
Then $X_q^N = X^N(\sqrt{q}\,Y+\sqrt{1-q}\,Z)$.

\subsection{Proof of Theorem \ref{thm:moments}}

In order to prove Theorem \ref{thm:moments}, we apply the above program
to the moments. We begin with a simple computation.

\begin{lem}
\label{lem:momentraw}
For any $p\in\mathbb{N}$, we have
$$
	\frac{d}{dq}\EE[\ntr (X_q^N)^{2p}] =
	p\sum_{k=0}^{2p-2}\sum_i \sum_{r\ge s}
	\bigg(\delta_{rs}-\frac{1}{N}\bigg)
	\EE[\ntr A_{irs} (X_q^N)^k A_{irs}
	(X_q^N)^{2p-2-k}].
$$
\end{lem}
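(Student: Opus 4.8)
The plan is to apply the Gaussian interpolation formula (Lemma~\ref{lem:ginterp}) directly to the function $f(y) := \ntr\big(X^N(y)\big)^{2p}$, with $Y,Z$ the Gaussian vectors introduced in Section~\ref{sec:construction}, so that $X_q^N = X^N(\sqrt{q}\,Y + \sqrt{1-q}\,Z)$ and hence $\EE[\ntr(X_q^N)^{2p}] = \EE[f(\sqrt{q}\,Y + \sqrt{1-q}\,Z)]$. Since $f$ is a polynomial in the entries $y_{irs}$, all of its derivatives have polynomial growth, so Lemma~\ref{lem:ginterp} is applicable. The covariance matrices $\Sigma^Y$ and $\Sigma^Z$ are both diagonal in the multi-index $(i,r,s)$, with $\Sigma^Y_{irs,irs} = \delta_{rs}$ and $\Sigma^Z_{irs,irs} = \frac{1}{N}$; in particular $\Sigma^Y - \Sigma^Z$ is diagonal, so only the diagonal second partial derivatives $\frac{\partial^2 f}{\partial y_{irs}^2}$ contribute to the interpolation formula.

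It then remains to compute these second partials. Because $X^N(y)$ is affine in $y$ with $\frac{\partial}{\partial y_{irs}} X^N(y) = A_{irs}$, cyclicity of the trace gives $\frac{\partial}{\partial y_{irs}} \ntr\big(X^N(y)\big)^{2p} = 2p\,\ntr\big[A_{irs}\,(X^N(y))^{2p-1}\big]$. Differentiating once more and using that $A_{irs}$ is independent of $y$, together with $\frac{\partial}{\partial y_{irs}}(X^N(y))^{2p-1} = \sum_{k=0}^{2p-2}(X^N(y))^k A_{irs} (X^N(y))^{2p-2-k}$, yields
$$
\frac{\partial^2 f}{\partial y_{irs}^2}(y) = 2p\sum_{k=0}^{2p-2}\ntr\big[A_{irs}\,(X^N(y))^k\,A_{irs}\,(X^N(y))^{2p-2-k}\big].
$$

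Substituting this into Lemma~\ref{lem:ginterp}, the prefactor $\frac12$ combines with the factor $2p$ to give $p$, and evaluating at $y = \sqrt{q}\,Y + \sqrt{1-q}\,Z$ (so that $X^N(y) = X_q^N$) gives precisely the asserted identity. There is no genuine obstacle here; the only points that merit a little care are verifying that $Y$ and $Z$ do have the stated diagonal covariance structure---in particular that the directions $r>s$ of $Y$ are degenerate, which is harmless for the interpolation lemma---and keeping the noncommutative factors $A_{irs}$ and $X_q^N$ in their correct cyclic order, since they do not commute.
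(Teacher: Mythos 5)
Your proof is correct and follows the same route as the paper: apply Lemma~\ref{lem:ginterp} to $f(y)=\ntr X^N(y)^{2p}$, observe that $\Sigma^Y-\Sigma^Z$ is diagonal with entries $\delta_{rs}-\tfrac{1}{N}$, and compute the diagonal second partials. The paper compresses the last step into ``a straightforward computation,'' which you have carried out explicitly and correctly.
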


\begin{proof}
Let $Y=(Y_{irs})_{i\in[n],r\ge s}$ and
$Z=(Z_{irs})_{i\in[n],r\ge s}$ be the Gaussian vectors defined above.
As both these vectors have independent entries, their covariance matrices 
$\Sigma^Y$ and $\Sigma^Z$ are diagonal 
with $\mathrm{Var}(Y_{irs})=\delta_{rs}$ and 
$\mathrm{Var}(Z_{irs})=\frac{1}{N}$. Applying Lemma 
\ref{lem:ginterp} to the function
$f(y)=\ntr X^N(y)^{2p}$ therefore yields
$$
	\frac{d}{dq}\EE[\ntr (X_q^N)^{2p}] =
	\frac{1}{2}\sum_i \sum_{r\ge s}
	\bigg(\delta_{rs}-\frac{1}{N}\bigg)
	\EE\bigg[\frac{\partial^2 f}{\partial y_{irs}^2}
	(\sqrt{q}\,Y+\sqrt{1-q}\,Z)\bigg].
$$
The conclusion follows by a straightforward computation.
\end{proof}

As was explained in Section \ref{sec:overviewpf}, we expect that the 
interpolation between $X$ and $X_{\rm free}$ will be controlled only by 
the crossings in the moment formulae. This is however not immediately 
obvious from the expression in Lemma \ref{lem:momentraw}. To make this 
phenomenon visible, we need a simple lemma.

\begin{lem}
\label{lem:ptrace}
$\EE[h(X_q^N)] = \EE[(\mathrm{id}\otimes\ntr)(h(X_q^N))]\otimes\id$
for every $h:\mathbb{R}\to\mathbb{C}$.
\end{lem}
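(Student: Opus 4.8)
The plan is to show that $T:=\EE[h(X_q^N)]$ lies in the commutant of $\id_d\otimes\M_N(\mathbb{C})$ inside $\M_d(\mathbb{C})\otimes\M_N(\mathbb{C})$; this forces $T=M\otimes\id_N$ for some $M\in\M_d(\mathbb{C})$, and the value of $M$ is then read off by applying the partial trace $\mathrm{id}\otimes\ntr$. To place $T$ in this commutant I would exploit an exact distributional symmetry of $X_q^N$: invariance under conjugation by $\id_d\otimes Q$, where $Q$ ranges over the \emph{signed permutation matrices} of size $N$ (i.e.\ $Q=SP$ with $S$ diagonal with $\pm1$ entries and $P$ a permutation matrix; such $Q$ are real orthogonal).

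First I would record the symmetry. Write $X_q^N=A_0\otimes\id_N+\sum_{i=1}^n A_i\otimes H_i$ with $H_i:=\sqrt q\,D_i^N+\sqrt{1-q}\,G_i^N$. The matrices $H_1,\dots,H_n$ are independent real symmetric Gaussian matrices whose entries $\{(H_i)_{rs}:r\ge s\}$ are independent and centered, with $\mathrm{Var}((H_i)_{rs})$ depending only on whether $r=s$ (it equals $q+(1-q)/N$ on the diagonal and $(1-q)/N$ off it). For a signed permutation $Q$, the conjugation $H\mapsto QHQ^T$ permutes the entries of a symmetric matrix and multiplies some of them by $-1$, while preserving the partition of index pairs into diagonal and off-diagonal ones. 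Since each $H_i$ has independent, mean-zero (hence sign-symmetric) Gaussian entries whose laws are constant on each of these two classes, $QH_iQ^T\stackrel{d}{=}H_i$, and by independence $(QH_1Q^T,\dots,QH_nQ^T)\stackrel{d}{=}(H_1,\dots,H_n)$. As $(\id_d\otimes Q)(A_0\otimes\id_N)(\id_d\otimes Q)^T=A_0\otimes\id_N$, this gives
$$(\id_d\otimes Q)\,X_q^N\,(\id_d\otimes Q)^T \stackrel{d}{=} X_q^N.$$

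Since $X_q^N$ is self-adjoint (all $A_i$ are self-adjoint and the $H_i$ are real symmetric) and the functional calculus commutes with unitary conjugation, applying $h$ and taking expectations yields $(\id_d\otimes Q)\,T\,(\id_d\otimes Q)^{*}=T$, i.e.\ $T$ commutes with $\id_d\otimes Q$ for every signed permutation $Q$. Expanding $T=\sum_{a,b}T^{(a,b)}\otimes E_{ab}$ in the matrix units of the $\M_N$ factor, commuting with all $\id_d\otimes S$ ($S$ diagonal $\pm1$) forces $T^{(a,b)}=0$ for $a\ne b$, and commuting in addition with all $\id_d\otimes P$ ($P$ a permutation matrix) forces all $T^{(a,a)}$ to coincide; hence $T=M\otimes\id_N$ for some $M\in\M_d(\mathbb{C})$. (Equivalently: the commutant of the signed permutation matrices in $\M_N(\mathbb{C})$ is $\mathbb{C}\id_N$, so they act irreducibly on $\mathbb{C}^N$ and generate $\M_N(\mathbb{C})$, whence the commutant of $\id_d\otimes\M_N(\mathbb{C})$ is $\M_d(\mathbb{C})\otimes\id_N$.) Finally, applying the bounded linear map $\mathrm{id}\otimes\ntr$ to $T=M\otimes\id_N$ gives $M=(\mathrm{id}\otimes\ntr)(T)$, and exchanging this map with the expectation yields $M=\EE[(\mathrm{id}\otimes\ntr)(h(X_q^N))]$, which is exactly the asserted identity.

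The one genuine subtlety is identifying the correct symmetry group: it is tempting to invoke orthogonal invariance, but for general $q$ the $H_i$ are \emph{not} GOE matrices (their diagonal and off-diagonal variances are not in the ratio $2:1$), so they are not $O(N)$-conjugation invariant; what survives is invariance under signed permutations, and the point is that this smaller group already acts irreducibly on $\mathbb{C}^N$, which is all that is needed. Everything else is routine bookkeeping — the entrywise argument for the distributional identity and the standard description of the commutant of $\id_d\otimes\M_N(\mathbb{C})$.
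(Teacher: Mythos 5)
Your proof is correct and uses the same key observation as the paper: the distributions of $D_i^N$ and $G_i^N$ (and hence of $X_q^N$) are invariant under conjugation by $\id_d\otimes Q$ for any signed permutation matrix $Q$ of size $N$. The paper closes the argument by averaging over a uniformly random signed permutation $\Pi$ and invoking the explicit identity $\EE_\Pi[(\id\otimes\Pi)^*M(\id\otimes\Pi)]=(\mathrm{id}\otimes\ntr)(M)\otimes\id$; you instead observe that the invariance places $\EE[h(X_q^N)]$ in the commutant of $\{\id_d\otimes Q\}$, then argue that the signed permutations act irreducibly on $\mathbb{C}^N$ to conclude the commutant is $\M_d(\mathbb{C})\otimes\id_N$, and finally recover the $\M_d$-component by partial trace. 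These are two equivalent ways of packaging the same computation — the averaging formula is precisely the projection onto the commutant — so there is no substantive difference in the arguments. Your explicit remark that the relevant symmetry is signed permutations rather than full orthogonal invariance (because for general $q$ the $H_i$ are not GOE) is a point the paper leaves implicit; it is a useful clarification.
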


\begin{proof}
The distributions of $D_i^N$ and $G_i^N$ are invariant under conjugation 
by any signed permutation matrix. Therefore, if we let $\Pi$ be an 
$N\times N$ signed permutation matrix chosen uniformly at random 
(independently of $X_q^N$), then
$$
	\EE[h(X_q^N)]  = \EE[h((\id\otimes\Pi)^*X_q^N(\id\otimes\Pi))]
	= \EE[(\id\otimes\Pi)^*h(X_q^N)(\id\otimes\Pi)].
$$
It remains to note that $\EE[(\id\otimes\Pi)^*M(\id\otimes\Pi)]=
(\mathrm{id}\otimes\ntr)(M)\otimes\id$ for any matrix $M$ (this is 
elementary when $M=A\otimes B$, and extends to general $M$ by linearity).
\end{proof}

The key observation is now the following.

\begin{cor}
\label{cor:momentrawmf}
For any $p\in\mathbb{N}$, we have
$$
	p\sum_{k=0}^{2p-2}\sum_i \sum_{r\ge s}
	\bigg(\delta_{rs}-\frac{1}{N}\bigg)
	\ntr A_{irs} \EE[(X_q^N)^k] A_{irs}
	\EE[(X_q^N)^{2p-2-k}] = 0.
$$
\end{cor}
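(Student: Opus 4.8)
The key to the proof is the tensor structure exposed by Lemma \ref{lem:ptrace}: applied with $h(x)=x^j$, it gives $\EE[(X_q^N)^j]=P_j\otimes\id$, where $P_j:=\EE[(\mathrm{id}\otimes\ntr)((X_q^N)^j)]\in\M_d(\mathbb{C})$. The plan is to substitute this into the displayed expression and observe that the matrix products collapse. Since $A_{irs}=A_i\otimes E_{rs}$ and $E_{rs}\,\id\,E_{rs}=E_{rs}^2$, one has
$$
	A_{irs}\,\EE[(X_q^N)^k]\,A_{irs}\,\EE[(X_q^N)^{2p-2-k}]
	=\big(A_iP_kA_iP_{2p-2-k}\big)\otimes E_{rs}^2 .
$$
As the normalized trace factorizes over tensor products (here $\ntr(C\otimes D)=\ntr(C)\cdot\tfrac1N\tr(D)$ for $C\in\M_d(\mathbb{C})$ and $D\in\M_N(\mathbb{C})$), the left-hand side of the Corollary equals
$$
	p\sum_{k=0}^{2p-2}
	\Bigg(\sum_{i=1}^n\ntr\big(A_iP_kA_iP_{2p-2-k}\big)\Bigg)
	\Bigg(\sum_{r\ge s}\Big(\delta_{rs}-\tfrac1N\Big)\tfrac1N\tr\big(E_{rs}^2\big)\Bigg).
$$
The first inner factor is just some fixed scalar, so everything reduces to showing that the second inner factor vanishes.

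The second factor is an elementary computation that I would carry out directly. From the definitions in Section \ref{sec:exind}, $E_{rr}^2=e_re_r^*$ and $E_{rs}^2=e_re_r^*+e_se_s^*$ for $r>s$, so $\tr E_{rr}^2=1$ and $\tr E_{rs}^2=2$. Splitting the sum over $r\ge s$ into its diagonal and off-diagonal parts, and using that there are $N$ diagonal and $\binom N2$ off-diagonal index pairs, one gets
$$
	\sum_{r\ge s}\Big(\delta_{rs}-\tfrac1N\Big)\tfrac1N\tr\big(E_{rs}^2\big)
	=\Big(1-\tfrac1N\Big)-\binom N2\cdot\tfrac{2}{N^2}
	=\Big(1-\tfrac1N\Big)-\Big(1-\tfrac1N\Big)=0,
$$
which proves the claim.

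I do not anticipate any real obstacle; the only thing that requires attention is keeping the two trace normalizations straight (the $d$-dimensional normalized trace on the $A_i,P_j$ factor versus the unnormalized trace on the $E_{rs}$ factor) and the small count of diagonal versus off-diagonal index pairs. Conceptually, the vanishing of the second factor records the fact that the ``diagonal'' covariance $\Sigma^Y$ of the matrices $D_i^N$ and the covariance $\Sigma^Z$ of the Wigner matrices $G_i^N$ assign the same total weight to the relevant directions; this is exactly what is needed so that, by Lemma \ref{lem:momentraw}, the derivative $\tfrac{d}{dq}\EE[\ntr(X_q^N)^{2p}]$ retains only the genuinely crossing contributions once this annihilated term is subtracted off (as will be exploited in the proof of Theorem \ref{thm:moments}).
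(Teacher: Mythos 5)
Your proof is correct and takes essentially the same approach as the paper: both use Lemma \ref{lem:ptrace} to expose the tensor form $\EE[(X_q^N)^k]=P_k\otimes\id_N$ and then reduce to the cancellation $\sum_{r\ge s}(\delta_{rs}-\tfrac1N)E_{rs}^2=0$. The only cosmetic difference is that the paper establishes this cancellation directly at the matrix level before taking the trace, while you factor out the trace over the $\M_N(\mathbb{C})$ leg and verify the resulting scalar identity by the explicit count of diagonal versus off-diagonal pairs.
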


\begin{proof}
Note first that $E_{rs}^2=E_{rr}^2+E_{ss}^2$ for $r\ne s$. Thus
\begin{multline*}
	(A_i\otimes E_{rs}) \EE[(X_q^N)^k] (A_i\otimes E_{rs}) =\\
	(A_i\otimes E_{rr}) \EE[(X_q^N)^k] (A_i\otimes E_{rr}) +
	(A_i\otimes E_{ss}) \EE[(X_q^N)^k] (A_i\otimes E_{ss}) 
\end{multline*}
for $r\ne s$ by Lemma \ref{lem:ptrace}. Summing over $r>s$ yields
\begin{align*}
	\frac{1}{N}
	\sum_{r>s} A_{irs} \EE[(X_q^N)^k] A_{irs} &=
	\frac{1}{N}
	\sum_{r>s} 
	(A_{irr} \EE[(X_q^N)^k] A_{irr} +
	A_{iss} \EE[(X_q^N)^k] A_{iss})
	\\ &=
	\bigg(1-\frac{1}{N}\bigg) \sum_{r} 
	A_{irr} \EE[(X_q^N)^k] A_{irr}.
\end{align*}
The latter identity may be equivalently written as
$$
	\sum_{r\ge s}
	\bigg(\delta_{rs}-\frac{1}{N}\bigg) A_{irs} \EE[(X_q^N)^k] A_{irs} 
	=0,
$$
from which the conclusion follows readily.
\end{proof}

By combining Lemma \ref{lem:momentraw} and Corollary 
\ref{cor:momentrawmf}, we can apply Corollary \ref{cor:gcov} to make 
crossings appear (the latter idea is already present in \cite{Tro18}).
Recall that the parameters $w(X)$ and $w(X,X')$ were defined in
Section \ref{sec:wxvx}.

\begin{lem}
\label{lem:momentdiffeq}
For any $p\in\mathbb{N}$, we have
$$
	\bigg|\frac{d}{dq}\EE[\ntr (X_q^N)^{2p}]\bigg|
	\le \frac{4}{3}p^4\{qw(X_1^N)^4 + w(X_0^N,X_1^N)^4 + 
	(1-q)w(X_0^N)^4\}
	\EE[\ntr (X_q^N)^{2p-4}].
$$
\end{lem}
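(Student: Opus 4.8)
The plan is to begin from the exact formula for $\frac{d}{dq}\EE[\ntr(X_q^N)^{2p}]$ in Lemma~\ref{lem:momentraw} and subtract from it the identically-structured quantity of Corollary~\ref{cor:momentrawmf}, which vanishes. This shows that the derivative equals
$$
	p\sum_{k=0}^{2p-2}\sum_i\sum_{r\ge s}\Big(\delta_{rs}-\tfrac1N\Big)
	\Big\{
	\EE[\ntr A_{irs}(X_q^N)^k A_{irs}(X_q^N)^{2p-2-k}]
	-\ntr\big(A_{irs}\,\EE[(X_q^N)^k]\,A_{irs}\,\EE[(X_q^N)^{2p-2-k}]\big)
	\Big\}.
$$
The braced expression is a genuine covariance in the Gaussian vector $W_q=\sqrt q\,Y+\sqrt{1-q}\,Z$: writing $(X_q^N)^k=X^N(W_q)^k$ and $(X_q^N)^{2p-2-k}=X^N(W_q)^{2p-2-k}$, expanding the bilinear form $(P,Q)\mapsto\ntr(A_{irs}PA_{irs}Q)$ over a matrix basis so that the scalar Gaussian covariance identity (Corollary~\ref{cor:gcov}) applies entrywise, and using that $\Sigma^{W_q}=q\Sigma^Y+(1-q)\Sigma^Z$ is diagonal, the braced quantity equals $\int_0^1dt$ of a sum over a \emph{second} index $(i',r',s')$ carrying the weight $\Sigma^{W_q}_{i'r's'}=q\delta_{r's'}+\tfrac{1-q}N$, where differentiating each of the two powers with respect to $w_{i'r's'}$ inserts one extra factor $A_{i'r's'}$. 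After this expansion the derivative is $p\int_0^1dt$ of a weighted sum, over all $(k,j,j',i,r,s,i',r',s')$ with $0\le j\le k-1$ and $0\le j'\le 2p-3-k$, of
$$
	\EE\big[\ntr\big(A_{irs}(X_q^N)^j A_{i'r's'}(X_q^N)^{k-1-j}A_{irs}M_t^{j'}A_{i'r's'}M_t^{2p-3-k-j'}\big)\big],
$$
with weight $(\delta_{rs}-\tfrac1N)(q\delta_{r's'}+\tfrac{1-q}N)$, where $M_t:=X^N(tW_q+\sqrt{1-t^2}\,W_q')$ for an independent copy $W_q'$ of $W_q$; in particular $M_t$ is equidistributed with $X_q^N$.

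Every term now has the four-coefficient shape $A_{irs}\cdot A_{i'r's'}\cdot A_{irs}\cdot A_{i'r's'}$ addressed by Lemma~\ref{lem:complexint}, with the index $(i,r,s)$ occupying slots $\{1,3\}$ and $(i',r',s')$ occupying slots $\{2,4\}$. The crucial bookkeeping step is to read the weight as $(\Sigma^Y-\Sigma^Z)_{irs}\,(q\Sigma^Y+(1-q)\Sigma^Z)_{i'r's'}$ — recalling $\Sigma^Y_{irs}=\delta_{rs}$ and $\Sigma^Z_{irs}=\tfrac1N$ are the covariance entries of the diagonal and Wigner ensembles — and to expand it into the four products $q\,\Sigma^Y\Sigma^Y$, $-q\,\Sigma^Z\Sigma^Y$, $(1-q)\,\Sigma^Y\Sigma^Z$, $-(1-q)\,\Sigma^Z\Sigma^Z$. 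A factor $\Sigma^Y$ on the index in slots $\{1,3\}$ forces $r=s$ and turns the two matrices there into the coefficient matrices $A_i\otimes E_{rr}$ of $X_1^N$; a factor $\Sigma^Z$ turns them into the coefficient matrices $\tfrac1{\sqrt N}A_i\otimes E_{rs}$ of $X_0^N$ (the $N^{-1/2}$ being absorbed exactly, since each such matrix occurs twice), and likewise for the index in slots $\{2,4\}$. Applying Lemma~\ref{lem:complexint} to the four products — with $(X,X')$ equal to $(X_1^N,X_1^N)$, $(X_0^N,X_1^N)$, $(X_1^N,X_0^N)$, $(X_0^N,X_0^N)$ respectively — and using the symmetry $w(X,X')=w(X',X)$, the four $w$-contributions bound the absolute value of the term by $\{q\,w(X_1^N)^4+w(X_0^N,X_1^N)^4+(1-q)\,w(X_0^N)^4\}$ times the common product $\prod_{m=1}^4\EE[\ntr|Y^{(m)}|^{p_m}]^{1/p_m}$.

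It then remains to dispatch that product and to count terms. With $Y^{(1)},\dots,Y^{(4)}$ the powers $(X_q^N)^j$, $(X_q^N)^{k-1-j}$, $M_t^{j'}$, $M_t^{2p-3-k-j'}$, whose exponents sum to $2p-4$, choose $p_m=(2p-4)/\deg Y^{(m)}$ (with $p_m=\infty$ when a degree vanishes); this is a legal choice of exponents for Lemma~\ref{lem:complexint}. Since $X_q^N$ is self-adjoint we have $\ntr|X_q^N|^{2p-4}=\ntr(X_q^N)^{2p-4}$, and together with the fact that $M_t$ is equidistributed with $X_q^N$ this collapses $\prod_{m=1}^4\EE[\ntr|Y^{(m)}|^{p_m}]^{1/p_m}$ to $\EE[\ntr(X_q^N)^{2p-4}]$. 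Finally, the number of admissible triples $(k,j,j')$ is $\sum_{k=1}^{2p-3}k(2p-2-k)=\tfrac16\big((2p-2)^3-(2p-2)\big)$, so the overall prefactor is $p\cdot\tfrac16\big((2p-2)^3-(2p-2)\big)=\tfrac13p(p-1)(2p-1)(2p-3)\le\tfrac43p^4$, which yields the stated constant. (For $p=1$ the inner sum is empty, the derivative vanishes, and the inequality is trivial.)

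The step I expect to be the main obstacle is the bookkeeping of the second and third paragraphs: carrying out the covariance-identity expansion of the matrix-valued functions $(X_q^N)^k$ carefully enough that the two-index pattern appears, and then recognizing that the product of covariance weights $(\Sigma^Y-\Sigma^Z)\cdot(q\Sigma^Y+(1-q)\Sigma^Z)$ splits precisely along the three parameters $w(X_1^N)$, $w(X_0^N,X_1^N)$, $w(X_0^N)$ — with the Wigner normalization $N^{-1/2}$ absorbed exactly, and the sign of $\delta_{rs}-\tfrac1N$ rendered harmless because Lemma~\ref{lem:complexint} is applied to absolute values. Everything downstream of that identification is a routine invocation of Lemma~\ref{lem:complexint} together with an elementary term count.
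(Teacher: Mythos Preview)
Your proposal is correct and follows essentially the same route as the paper's proof: subtract the vanishing expression of Corollary~\ref{cor:momentrawmf} from Lemma~\ref{lem:momentraw}, apply the Gaussian covariance identity (Corollary~\ref{cor:gcov}) to expose the second index, expand the product weight $(\Sigma^Y-\Sigma^Z)(q\Sigma^Y+(1-q)\Sigma^Z)$ into four pieces, match each to one of $w(X_1^N)$, $w(X_0^N,X_1^N)$, $w(X_0^N)$ via Lemma~\ref{lem:complexint}, and finish with the count $\sum_{k=1}^{2p-3}k(2p-2-k)=\binom{2p-1}{3}\le\tfrac{4}{3}p^3$. The only differences from the paper are cosmetic---you swap which of $(i,r,s)$ and $(i',r',s')$ denotes the Lemma~\ref{lem:momentraw} index versus the covariance-identity index, and you write the term count as $\tfrac16((2p-2)^3-(2p-2))$ rather than $\binom{2p-1}{3}$---but these are the same quantities.
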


\begin{proof}
Recall that the random vectors $Y,Z$ with
$Y_{irs}=(D_i^N)_{rs}$ and $Z_{irs}=(G_i^N)_{rs}$ were defined in
section \ref{sec:construction}. Let $Y',Z'$ be independent copies of 
$Y,Z$, and define
$$
	X_{qt}^N=X^N\big(t\big\{\sqrt{q}\,Y+\sqrt{1-q}\,Z\big\} +
	\sqrt{1-t^2}\big\{\sqrt{q}\,Y'+\sqrt{1-q}\,Z'\big\}\big).
$$
Note that the random vector $\sqrt{q}\,Y+\sqrt{1-q}\,Z$ has independent 
entries, so its covariance matrix $\Sigma$ is diagonal with 
$\mathrm{Var}(\sqrt{q}\,Y_{irs}+\sqrt{1-q}\,Z_{irs})=q\delta_{rs}+
\frac{1-q}{N}$. We can therefore apply Corollary \ref{cor:gcov} to compute
for $1\le k\le 2p-3$
\begin{align*}
	&
	\EE[(X_q^N)^k_{ab}\,
	(X_q^N)^{2p-2-k}_{cd}] -
	\EE[(X_q^N)^k_{ab}]\,
	\EE[(X_q^N)^{2p-2-k}_{cd}] =\mbox{} \\ &
\quad
	\sum_{l=0}^{k-1}
	\sum_{m=0}^{2p-3-k}
	\sum_i
	\sum_{r\ge s} \bigg(q \delta_{rs} + \frac{1-q}{N}\bigg)
	\cdot\mbox{}\\
	&\qquad
\quad
	\int_0^1 
	\EE\big[\big((X_q^N)^l
	A_{irs}
	(X_q^N)^{k-1-l}\big)_{ab}\,
	\big((X_{qt}^N)^{m}
	A_{irs}
	(X_{qt}^N)^{2p-3-k-m}\big)_{cd}\big]\,
	dt.
\end{align*}
Combining this identity with Lemma \ref{lem:momentraw} and Corollary 
\ref{cor:momentrawmf} yields
\begin{align*}
	&\frac{d}{dq}\EE[\ntr (X_q^N)^{2p}] \\
	& = p\sum_{k=0}^{2p-2}\sum_{i'} \sum_{r'\ge s'}
	\bigg(\delta_{r's'}-\frac{1}{N}\bigg)
	\EE[\ntr A_{i'r's'} (X_q^N)^k A_{i'r's'}
	(X_q^N)^{2p-2-k}]
\\
	&\qquad
	-
	p\sum_{k=0}^{2p-2}\sum_{i'} \sum_{r'\ge s'}
	\bigg(\delta_{r's'}-\frac{1}{N}\bigg)
	\ntr A_{i'r's'} \EE[(X_q^N)^k] A_{i'r's'}
	\EE[(X_q^N)^{2p-2-k}] 
\\
	&=
	p\sum_{k=1}^{2p-3}
	\sum_{l=0}^{k-1}
	\sum_{m=0}^{2p-3-k}
	\int_0^1 
	\sum_{i,i'}
	\sum_{r\ge s}
	\sum_{r'\ge s'}
	\bigg( q\delta_{rs}\delta_{r's'} +
	\frac{1-q}{N}\delta_{r's'} -
	\frac{q}{N}\delta_{rs} -
	\frac{1-q}{N^2}\bigg)
\cdot\mbox{}  \\
	&\qquad\qquad
	\EE[\ntr A_{i'r's'}(X_q^N)^l
	A_{irs}
	(X_q^N)^{k-1-l}
	A_{i'r's'}
	(X_{qt}^N)^{m}
	A_{irs}
	(X_{qt}^N)^{2p-3-k-m}]\,
	dt,
\end{align*}
where we used that the $k=0$ and $k=2p-2$ terms in the middle expression
cancel.
We can now apply Lemma \ref{lem:complexint} with
$$
	p_1=\frac{2p-4}{l},\qquad
	p_2=\frac{2p-4}{k-1-l},\qquad
	p_3=\frac{2p-4}{m},\qquad
	p_4=\frac{2p-4}{2p-3-k-m},
$$
to bound, for example,
\begin{multline*}
	\bigg|
	\sum_{i,i'}
	\sum_{r\ge s}
	\sum_{r'\ge s'}
	\frac{q}{N}\delta_{rs}\,
	\EE[\ntr 
	A_{i'r's'}(X_q^N)^l
        A_{irs}
        (X_q^N)^{k-1-l} 
        A_{i'r's'}
        (X_{qt}^N)^{m} \cdot \mbox{}\\
        A_{irs}
        (X_{qt}^N)^{2p-3-k-m}]\bigg| 
	\le 
	qw(X_0^N,X_1^N)^4\EE[\ntr (X_q^N)^{2p-4}],
\end{multline*}
where we used that $\mathrm{Var}(Y_{irs})=\delta_{rs}$,
$\mathrm{Var}(Z_{irs})=\frac{1}{N}$, and that
$X_q^N$ and $X_{qt}^N$ are equidistributed.
The remaining three terms in the integral can be bounded analogously.
To conclude, it remains to note that $\sum_{k=1}^{2p-3} 
k(2p-2-k) = \binom{2p-1}{3} \le \frac{4}{3}p^3$.
\end{proof}

Before we can complete the proof of Theorem \ref{thm:moments}, we must 
compute the matrix parameters associated to $X_q^N$.

\begin{lem}
\label{lem:momentparm}
For every $q,N$, we have
$$
	\sigma(X_q^N)=\sigma(X),\qquad\quad
	v(X_1^N)=v(X),\qquad\quad
	v(X_0^N)=v(X)\sqrt{\frac{2}{N}}.
$$
\end{lem}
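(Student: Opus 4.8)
The plan is to exploit the tensor structure of $X_q^N = A_0\otimes\id + \sum_{i=1}^n A_i\otimes M_i^N$, where $M_i^N := \sqrt q\,D_i^N + \sqrt{1-q}\,G_i^N$, so that $\hat X_q^N = \sum_{i=1}^n A_i\otimes M_i^N$ with $M_1^N,\dots,M_n^N$ i.i.d.\ centered $N\times N$ self-adjoint Gaussian matrices. The first step is the observation that for any random matrix of the form $\sum_i A_i\otimes M_i$ with deterministic self-adjoint coefficients $A_i$ and i.i.d.\ centered $M_i$, both the parameter $\sigma$ and the entrywise covariance matrix $\mathrm{Cov}$ factorize over the two tensor legs. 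Indeed, expanding $\EE[(\sum_i A_i\otimes M_i)^2]$ and discarding the $i\ne j$ cross terms by independence and centering gives $(\sum_i A_i^2)\otimes\EE[(M_1)^2]$, while a direct entrywise computation (using the formula $\mathrm{Cov}(X)=\sum_i\iota(A_i)\iota(A_i)^*$ for the covariance matrix, where $\iota$ vectorizes a matrix) gives $\mathrm{Cov}(\sum_i A_i\otimes M_i) = \mathrm{Cov}(X)\otimes\mathrm{Cov}(M_1)$ once $\mathbb{C}^{(dN)^2}$ is identified with $\mathbb{C}^{d^2}\otimes\mathbb{C}^{N^2}$ by the appropriate permutation of coordinates. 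Since $\|A\otimes B\|=\|A\|\|B\|$, since $X_q^N$ is self-adjoint so that $\sigma(X_q^N)^2=\|\EE(\hat X_q^N)^2\|$, and since $\sigma(X)^2=\|\sum_i A_i^2\|$ and $v(X)^2=\|\mathrm{Cov}(X)\|$, this reduces the lemma to computing the scalar factors $\|\EE[(M_1^N)^2]\|$ (for all $q$) and $\|\mathrm{Cov}(M_1^N)\|$ (for $q\in\{0,1\}$).

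Next I would compute $\EE[(M_1^N)^2]$. Since $D_1^N$ and $G_1^N$ are independent and centered, $\EE[(M_1^N)^2] = q\,\EE[(D_1^N)^2] + (1-q)\,\EE[(G_1^N)^2]$, and both terms equal $\id$: the first because $D_1^N$ is diagonal with i.i.d.\ standard Gaussian entries, the second by the standard Wigner computation $\EE[(G_1^N)^2]_{rs} = \sum_m\EE[(G_1^N)_{rm}(G_1^N)_{ms}] = \delta_{rs}\sum_{m=1}^N\tfrac1N = \delta_{rs}$. Hence $\EE[(M_1^N)^2]=\id$ for every $q$, and therefore $\sigma(X_q^N)=\sigma(X)$ for all $q,N$.

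For the covariance factor I would write $\mathrm{Cov}(M_1^N) = q\,\mathrm{Cov}(D_1^N) + (1-q)\,\mathrm{Cov}(G_1^N)$. When $q=1$ this is $\mathrm{Cov}(D_1^N)$, which is the orthogonal projection of $\mathbb{C}^{N^2}$ onto $\mathrm{span}\{e_r\otimes e_r:r\in[N]\}$ and so has norm $1$; hence $v(X_1^N)=v(X)$. When $q=0$ it is $\mathrm{Cov}(G_1^N)$, whose quadratic form on $\xi=(\xi_{rs})\in\mathbb{C}^{N^2}$ is $\langle\xi,\mathrm{Cov}(G_1^N)\xi\rangle = \EE\,|\sum_{r,s}\overline{\xi_{rs}}\,(G_1^N)_{rs}|^2 = \tfrac1N\big(\sum_{r>s}|\xi_{rs}+\xi_{sr}|^2 + \sum_r|\xi_{rr}|^2\big)$, using that $(G_1^N)_{rs}=(G_1^N)_{sr}$ has variance $\tfrac1N$. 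Bounding $|\xi_{rs}+\xi_{sr}|^2\le 2(|\xi_{rs}|^2+|\xi_{sr}|^2)$ shows this is at most $\tfrac2N\|\xi\|^2$, with equality for $\xi=\tfrac1{\sqrt2}(e_1\otimes e_2+e_2\otimes e_1)$; hence $\|\mathrm{Cov}(G_1^N)\|=\tfrac2N$ and $v(X_0^N)=v(X)\sqrt{2/N}$.

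This statement is essentially bookkeeping, so I do not expect a genuine obstacle; the only point requiring care is the factor $2$ in $v(X_0^N)$, which originates from the off-diagonal entries of a Wigner matrix appearing symmetrically, so that the ``symmetric'' directions $\xi_{rs}=\xi_{sr}$ carry twice the variance of an individual entry. One should also verify that $\mathrm{Cov}$ is unchanged under the coordinate permutation relating $\mathbb{C}^{(dN)^2}$ and $\mathbb{C}^{d^2}\otimes\mathbb{C}^{N^2}$, which is precisely what legitimizes the tensor factorization used in the first step.
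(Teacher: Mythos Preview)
Your proof is correct and follows essentially the same computation as the paper's. The paper argues case by case via the block structure ($X_1^N$ is block-diagonal with i.i.d.\ copies of $X$; $X_0^N$ is a symmetric block matrix with i.i.d.\ blocks $N^{-1/2}\hat X$), while you package both cases into the single factorization $\mathrm{Cov}(X_q^N)=\mathrm{Cov}(X)\otimes\mathrm{Cov}(M_1^N)$; this is a cleaner organization but the content, including the origin of the factor $2$ from the symmetric off-diagonal Wigner entries, is the same.
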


\begin{proof}
As $\EE[(D_i^N)^2] = \EE[(G_i^N)^2]=\id$, we have
$\EE[(X_q^N-\EE X_q^N)^2]=\sum_i A_i^2\otimes\id$ and thus
$\sigma(X_q^N)^2=\|\EE[(X_q^N-\EE X_q^N)^2]\|=\sigma(X)^2$.

Next, note that $X_1^N$ is a block-diagonal matrix with i.i.d.\ copies of 
$X$ on the diagonal. Therefore
$v(X_1^N)^2=\|\mathrm{Cov}(X_1^N)\|=\|\mathrm{Cov}(X)\|=v(X)^2$. On the 
other hand, $X_0^N-\EE[X_0^N]$ is a symmetric block matrix whose blocks on 
and above the diagonal are i.i.d.\ copies of the matrix
$N^{-\frac{1}{2}}(X-\EE[X])$.
We can therefore compute
$v(X_0^N)^2=\|\mathrm{Cov}(X_0^N)\|=2N^{-1}\|\mathrm{Cov}(X)\|=2N^{-1}v(X)^2$.
\end{proof}

We can now conclude the proof.

\begin{proof}[Proof of Theorem \ref{thm:moments}]
Assume first that $A_0,\ldots,A_n\in\M_d(\mathbb{C})_{\rm sa}$ are 
self-adjoint. Applying Lemma \ref{lem:momentdiffeq}, the chain 
rule, and Proposition \ref{prop:vxwx} yields
\begin{align*}
	\bigg|\frac{d}{dq}\EE[\ntr (X_q^N)^{2p}]^{\frac{2}{p}}\bigg|
	&= \frac{2}{p}\EE[\ntr (X_q^N)^{2p}]^{\frac{2}{p}-1}
	\bigg|\frac{d}{dq}\EE[\ntr (X_q^N)^{2p}]\bigg|
\\
	&\le 
	\frac{8}{3}p^3\{qw(X_1^N)^4 + w(X_0^N,X_1^N)^4 + 
	(1-q)w(X_0^N)^4\} \\
	&\le 
	\frac{8}{3}p^3\{q \tilde v(X_1^N)^4 + \tilde v(X_0^N)^2
	\tilde v(X_1^N)^2 + 
	(1-q)\tilde v(X_0^N)^4\},
\end{align*}
where we used that $\EE[\ntr (X_q^N)^{2p-4}]\le
\EE[\ntr (X_q^N)^{2p}]^{1-\frac{2}{p}}$ by H\"older's inequality.
Thus
\begin{multline*}
	|\EE[\ntr X^{2p}]^{\frac{1}{2p}}-
	\EE[\ntr (X^N)^{2p}]^{\frac{1}{2p}}|
	\le
	|\EE[\ntr X^{2p}]^{\frac{2}{p}}-
	\EE[\ntr (X^N)^{2p}]^{\frac{2}{p}}|^{\frac{1}{4}}
\\
	=
	\bigg| \int_0^1 \frac{d}{dq}\EE[\ntr (X_q^N)^{2p}]^{\frac{2}{p}}
	\,dq\bigg|^{\frac{1}{4}} 
\le
        \left(\frac{4}{3}\right)^{\frac{1}{4}}
        p^{\frac{3}{4}}\{\tilde v(X_1^N)^2 +
        \tilde v(X_0^N)^2\}^{\frac{1}{2}},
\end{multline*}
where we used $x-y = (x^4-y^4+y^4)^{\frac{1}{4}}-y \le
(x^4-y^4)^{\frac{1}{4}}$ for $x\ge y\ge 0$ and
\eqref{eq:thepoint}. But note that Lemma 
\ref{lem:momentparm} implies $\tilde v(X_1^N)=\tilde v(X)$ and
$\tilde v(X_0^N)=2^{\frac{1}{4}}N^{-\frac{1}{4}}\tilde v(X)$. We may 
therefore let $N\to\infty$ in the above inequality and use Corollary 
\ref{cor:weakfree} to obtain
$$
	|\EE[\ntr X^{2p}]^{\frac{1}{2p}}-
	({\ntr}\otimes\tau)(X_{\rm free}^{2p})^{\frac{1}{2p}}|
	\le 
	\left(\frac{4}{3}\right)^{\frac{1}{4}}
	p^{\frac{3}{4}}\tilde v(X).
$$
Finally, we extend the conclusion to non-self-adjoint
$A_0,\ldots,A_n\in\M_d(\mathbb{C})$ by applying the above inequality to 
the self-adjoint model $\breve X$ defined in Remark \ref{rem:sa}.
As $\EE[\ntr \breve X^{2p}]=\EE[\ntr |X|^{2p}]$ and
$({\ntr}\otimes\tau)(\breve X_{\rm free}^{2p}) =
({\ntr}\otimes\tau)(|X_{\rm free}|^{2p})$ by \eqref{eq:dilsq}, and as 
$\tilde v(\breve X) \le 2^{\frac{1}{4}}\tilde v(X)$, the conclusion 
follows readily (using $(\frac{8}{3})^{\frac{1}{4}}\le 2$).
\end{proof}

\begin{rem}
\label{rem:tildew}
When $A_0,\ldots,A_n\in\M_d(\mathbb{C})_{\rm sa}$ are self-adjoint, we may 
obtain a slightly better bound in the proof of Theorem \ref{thm:moments}
by neglecting to apply Proposition \ref{prop:vxwx} to $w(X_1^N)$. In this 
case, the parameter $\tilde v(X)$ in the final bound is replaced by
$\sup_N w(X_1^N)$. The analogous improvement is possible for most results 
of this paper. However, as $\sup_N w(X_1^N)$ is very difficult to compute 
in any concrete situation, we have formulated our main results in terms of 
the computable quantity $\tilde v(X)$.
\end{rem}

\subsection{Proof of Theorem \ref{thm:stieltjes}}

Once the basic method of proof has been understood, it may be readily 
adapted to control spectral statistics other than the moments. We 
presently adapt the method of the previous section to the matrix-valued 
Stieltjes transform. Note that Theorem \ref{thm:stieltjes} assumes 
$A_0,\ldots,A_n\in\M_d(\mathbb{C})_{\rm sa}$.

\begin{lem}
\label{lem:stieltjesraw}
For any $Z\in\M_d(\mathbb{C})$, $\mathrm{Im}\,Z>0$ and 
$M\in\M_d(\mathbb{C})\otimes\M_N(\mathbb{C})$, we have
\begin{multline*}
	\frac{d}{dq}\EE[\ntr M(\tilde Z - X_q^N)^{-1}] = \\
	\sum_i \sum_{r\ge s} 
	\bigg(\delta_{rs}-\frac{1}{N}\bigg)
	\EE[\ntr 
	A_{irs}
	(\tilde Z- X_q^N)^{-1}A_{irs}
	(\tilde Z - X_q^N)^{-1}M
	(\tilde Z - X_q^N)^{-1}]
\end{multline*}
and
$$
	\sum_i \sum_{r\ge s} 
	\bigg(\delta_{rs}-\frac{1}{N}\bigg)
	\ntr 
	A_{irs}
	\EE[(\tilde Z- X_q^N)^{-1}]A_{irs}
	\EE[(\tilde Z - X_q^N)^{-1}M
	(\tilde Z - X_q^N)^{-1}]=0,
$$
where we defined $\tilde Z=Z\otimes\id\in 
\M_d(\mathbb{C})\otimes\M_N(\mathbb{C})$.
\end{lem}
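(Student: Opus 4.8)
The plan is to establish the two displayed identities separately, in close parallel with Lemma~\ref{lem:momentraw} and Corollary~\ref{cor:momentrawmf}.

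For the first identity I would apply the Gaussian interpolation lemma (Lemma~\ref{lem:ginterp}) to the function $f(y):=\ntr M(\tilde Z-X^N(y))^{-1}$, with $X^N(y)$ and the Gaussian vectors $Y,Z$ as in Section~\ref{sec:construction}. Since $A_0,\ldots,A_n$ are self-adjoint, $X^N(y)$ is self-adjoint for real $y$, so $\mathrm{Im}(\tilde Z-X^N(y))=(\mathrm{Im}\,Z)\otimes\id>0$ and hence $\|(\tilde Z-X^N(y))^{-1}\|\le\|(\mathrm{Im}\,Z)^{-1}\|$ uniformly in $y$; consequently $f$ and all of its derivatives are bounded (in particular of polynomial growth), so Lemma~\ref{lem:ginterp} applies. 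Writing $R=R(y)=(\tilde Z-X^N(y))^{-1}$ and using $\partial_{y_{irs}}X^N(y)=A_{irs}$ together with the resolvent identity $\partial_{y_{irs}}R=RA_{irs}R$, one computes $\partial^2_{y_{irs}}f=2\,\ntr MRA_{irs}RA_{irs}R$. Since $Y$ and $Z$ have diagonal covariances with $\mathrm{Var}(Y_{irs})=\delta_{rs}$ and $\mathrm{Var}(Z_{irs})=\frac1N$, Lemma~\ref{lem:ginterp} (the factor $2$ cancelling the $\frac12$) gives $\frac{d}{dq}\EE[\ntr M(\tilde Z-X_q^N)^{-1}]=\sum_i\sum_{r\ge s}(\delta_{rs}-\frac1N)\EE[\ntr MRA_{irs}RA_{irs}R]$, which is the claimed formula after cyclically permuting the trace.

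For the second identity the key point is that $\EE[(\tilde Z-X_q^N)^{-1}]$ is of the form $B\otimes\id$ for some $B\in\M_d(\mathbb{C})$. This is the resolvent analogue of Lemma~\ref{lem:ptrace}: exactly as in its proof, the law of $X_q^N$ is invariant under conjugation by $\id\otimes\Pi$ for a uniformly random $N\times N$ signed permutation matrix $\Pi$, and since $\tilde Z=Z\otimes\id$ commutes with $\id\otimes\Pi$, the matrix $(\tilde Z-X_q^N)^{-1}$ transforms in the same way; averaging over $\Pi$ and using $\EE[(\id\otimes\Pi)^*M(\id\otimes\Pi)]=(\mathrm{id}\otimes\ntr)(M)\otimes\id$ yields $\EE[(\tilde Z-X_q^N)^{-1}]=B\otimes\id$. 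It then suffices to show $\sum_{r\ge s}(\delta_{rs}-\frac1N)A_{irs}(B\otimes\id)A_{irs}=0$ for each fixed $i$, since taking $\ntr$ against $\EE[(\tilde Z-X_q^N)^{-1}M(\tilde Z-X_q^N)^{-1}]$ then gives the stated vanishing. But $A_{irs}(B\otimes\id)A_{irs}=A_iBA_i\otimes E_{rs}^2$, and since $E_{rr}^2=E_{rr}$ while $E_{rs}^2=E_{rr}+E_{ss}$ for $r>s$, a short count (each $E_{kk}$ occurring $N-1$ times in $\sum_{r>s}(E_{rr}+E_{ss})$) gives $\sum_{r\ge s}(\delta_{rs}-\frac1N)E_{rs}^2=(1-\frac1N)\id-\frac{N-1}{N}\id=0$.

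Both computations are essentially routine once the framework of Section~\ref{sec:construction} is in place; the only step requiring a little care is the generalization of Lemma~\ref{lem:ptrace} to the resolvent $(\tilde Z-X_q^N)^{-1}$, which relies on the commutation of $\tilde Z$ with $\id\otimes\Pi$, together with the elementary verification of the matrix identity $\sum_{r\ge s}(\delta_{rs}-\frac1N)E_{rs}^2=0$.
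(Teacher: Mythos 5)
Your proof is correct and follows essentially the same approach as the paper: for the first identity, apply the Gaussian interpolation lemma to $f(y)=\ntr M(\tilde Z-X^N(y))^{-1}$ and compute the second derivative via the resolvent identity; for the second, extend Lemma \ref{lem:ptrace} to the resolvent (using that $\tilde Z$ commutes with $\id\otimes\Pi$) to get $\EE[(\tilde Z-X_q^N)^{-1}]=B\otimes\id$, and then use the pointwise identity $\sum_{r\ge s}(\delta_{rs}-\frac1N)E_{rs}^2=0$. The paper's proof is a terse one-liner citing Lemmas \ref{lem:ginterp} and \ref{lem:ptrace}; you spell out the same steps in detail, and the small boundedness check justifying the hypotheses of Lemma \ref{lem:ginterp} is a welcome addition.
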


\begin{proof}
The first identity follows from
Lemma \ref{lem:ginterp} with
$f(y)=\ntr M(\tilde Z - X^N(y))^{-1}$.
The second identity follows as
$\EE[(\tilde Z - X_q^N)^{-1}]=
\EE[(\mathrm{id}\otimes\ntr)(\tilde Z - X_q^N)^{-1}]\otimes\id$
holds by precisely the same proof as that of Lemma \ref{lem:ptrace}.
\end{proof}

We can now proceed as in Lemma \ref{lem:momentdiffeq}.

\begin{lem}
\label{lem:stieltjesdiffeq}
For any $Z\in\M_d(\mathbb{C})$, $\mathrm{Im}\,Z>0$ we have
$$
	\bigg\|
	\frac{d}{dq}\EE[(Z\otimes\id - X_q^N)^{-1}]\bigg\|
	\le
	2\|(\mathrm{Im}\,Z)^{-5}\|
	\{qw(X_1^N)^4 + w(X_0^N,X_1^N)^4 +
        (1-q)w(X_0^N)^4\}.
$$
\end{lem}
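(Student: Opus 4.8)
The plan is to run the same three-step argument as in the proof of Lemma~\ref{lem:momentdiffeq}, with the resolvent $R = R(y) := (\tilde Z - X^N(y))^{-1}$ playing the role that the powers $(X_q^N)^k$ played there. Note first that since $\mathrm{Im}(\tilde Z - X^N(y)) = (\mathrm{Im}\,Z)\otimes\id > 0$ for every $y$, the resolvent is bounded by $\|R\|\le \|(\mathrm{Im}\,Z)^{-1}\|$ surely, and likewise $\ntr|R_1 M R_2|\le \|(\mathrm{Im}\,Z)^{-1}\|^2\,\ntr|M|$ for resolvents $R_1,R_2$. It therefore suffices to bound $\bigl|\frac{d}{dq}\EE[\ntr M(\tilde Z - X_q^N)^{-1}]\bigr|$ for a fixed matrix $M$ and then take the supremum over $\ntr|M|\le1$, which computes exactly $\bigl\|\frac{d}{dq}\EE[(Z\otimes\id - X_q^N)^{-1}]\bigr\|$ by $S_1$--$S_\infty$ duality.

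First I would invoke Lemma~\ref{lem:stieltjesraw}. Its second identity shows the ``mean-field'' term vanishes, so that
$$
	\frac{d}{dq}\EE[\ntr M(\tilde Z - X_q^N)^{-1}] =
	\sum_i\sum_{r\ge s}\Bigl(\delta_{rs}-\tfrac1N\Bigr)
	\Bigl\{\EE[\ntr A_{irs}RA_{irs}RMR] - \ntr A_{irs}\,\EE[R]\,A_{irs}\,\EE[RMR]\Bigr\}.
$$
In contrast to the moment case, there are no degenerate terms to dispose of here: $R$ and $RMR$ are genuinely non-constant functions of $y$, so the Gaussian covariance identity (Corollary~\ref{cor:gcov}) applies to the bracket directly, read off entry by entry with $f(y) = R(y)_{\beta\gamma}$ and $g(y) = (R(y)MR(y))_{\delta\alpha}$, exactly as in the proof of Lemma~\ref{lem:momentdiffeq} and using the same time-$t$ interpolant $X^N_{qt}$ (so $R_t := (\tilde Z - X^N_{qt})^{-1}$). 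Since $\partial_{i'r's'}R = RA_{i'r's'}R$ and $\partial_{i'r's'}(RMR) = RA_{i'r's'}RMR + RMRA_{i'r's'}R$, resumming the entry indices against $(A_{irs})_{ab}(A_{irs})_{cd}$ produces, after expanding $(\delta_{rs}-\tfrac1N)(q\delta_{r's'}+\tfrac{1-q}{N})$ into four groups, an integral over $t\in[0,1]$ of sums of terms of the crossing form
$$
	\EE\bigl[\ntr A_{irs}\,R\,A_{i'r's'}\,R\,A_{irs}\,\Xi_t\,A_{i'r's'}\,\Xi_t'\bigr],
$$
where $(\Xi_t,\Xi_t') \in \{(R_t,\,R_tMR_t),\,(R_tMR_t,\,R_t)\}$ accounts for the two terms from differentiating $RMR$.

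Each such group is then bounded by Lemma~\ref{lem:complexint}, applied with exponent $1$ on the factor carrying $M$ (the $R_tMR_t$ block) and exponent $\infty$ on the three remaining resolvent blocks; by the first paragraph this contributes a factor $\|(\mathrm{Im}\,Z)^{-1}\|^5\,\ntr|M| = \|(\mathrm{Im}\,Z)^{-5}\|\,\ntr|M|$ times the relevant $w(\cdot)^4$. The normalization/index bookkeeping is identical to that in Corollary~\ref{cor:momentrawmf} and Lemma~\ref{lem:momentdiffeq} (in particular, $E_{rs}^2 = E_{rr}^2 + E_{ss}^2$ is used to treat the diagonal $r=s$): the group carrying $q\delta_{rs}\delta_{r's'}$ assembles into $q\,w(X_1^N)^4$ — the coefficient matrices of $X_1^N$ being $A_i\otimes E_{rr}$ — the two single-$\delta$ groups assemble into $w(X_0^N,X_1^N)^4$, and the $-\tfrac{1-q}{N^2}$ group into $(1-q)\,w(X_0^N)^4$; the overall factor $2$ is precisely the number of choices of $(\Xi_t,\Xi_t')$. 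Collecting and taking the supremum over $\ntr|M|\le1$ gives the claimed bound. The one delicate point — and where I expect most of the work to lie — is this last assembly step: matching the $\delta$-constrained, $N$-weighted index sums against the defining expressions for $w(X_0^N)$, $w(X_1^N)$ and $w(X_0^N,X_1^N)$. This is routine, being formally the same computation as in Lemma~\ref{lem:momentdiffeq}, but it must be carried out carefully; everything else is a direct adaptation of that proof.
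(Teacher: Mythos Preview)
Your proposal is correct and follows essentially the same route as the paper's proof: subtract the mean-field term via Lemma~\ref{lem:stieltjesraw}, apply the Gaussian covariance identity to produce crossings, bound each group with Lemma~\ref{lem:complexint} using exponent $1$ on the $M$-carrying block and $\infty$ elsewhere, and dualize over $\ntr|M|\le 1$. One small remark: the identity $E_{rs}^2 = E_{rr}^2 + E_{ss}^2$ is not actually needed in the assembly step here---it was used in Corollary~\ref{cor:momentrawmf} to prove the mean-field vanishing, but in the Stieltjes setting that vanishing is already the content of the second identity in Lemma~\ref{lem:stieltjesraw}; the assembly into $w(X_0^N)^4$, $w(X_1^N)^4$, $w(X_0^N,X_1^N)^4$ proceeds directly from the variance structure $\mathrm{Var}(Y_{irs})=\delta_{rs}$, $\mathrm{Var}(Z_{irs})=\tfrac{1}{N}$ exactly as in Lemma~\ref{lem:momentdiffeq}.
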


\begin{proof}
Define $X_{qt}^N$ as in the proof of Lemma \ref{lem:momentdiffeq},
and denote $R:=(Z\otimes\id - X_q^N)^{-1}$ and
$R_t:=(Z\otimes\id - X_{qt}^N)^{-1}$ for simplicity.
Corollary \ref{cor:gcov} and
Lemma \ref{lem:stieltjesraw} yield
\begin{align*}
	&\frac{d}{dq}\EE[\ntr M(Z\otimes\id - X_q^N)^{-1}] =
	\mbox{}\\
	&
	\int_0^1
	\sum_{i,i'}  \sum_{r\ge s} \sum_{r'\ge s'}
	\bigg(q\delta_{rs}\delta_{r's'} +
	\frac{1-q}{N}\delta_{r's'} -
	\frac{q}{N}\delta_{rs} - \frac{1-q}{N^2}\bigg)\cdot\mbox{}
	\\
	&\qquad
	\big\{	
	\EE[\ntr 
	A_{i'r's'}
	R
	A_{irs}
	R
	A_{i'r's'}
	R_t
	A_{irs}
	R_t
	M
	R_t]
	\\
	&\qquad\quad+
	\EE[\ntr 
	A_{i'r's'}
	R
	A_{irs}
	R
	A_{i'r's'}
	R_t
	M
	R_t
	A_{irs}
	R_t]
	\big\}\,dt.
\end{align*}
Now apply Lemma \ref{lem:complexint} with $p_1=p_2=p_3=\infty$ and
$p_4=1$ to the first expectation in the integral, and with
$p_1=p_2=p_4=\infty$ and $p_3=1$ to the second expectation. This yields, 
in the same manner as in the proof of Lemma \ref{lem:momentdiffeq}, that
\begin{align*}
	&\bigg|\frac{d}{dq}\EE[\ntr M(Z\otimes\id - X_q^N)^{-1}]\bigg|
	\\ &\qquad\le
	2\{qw(X_1^N)^4 + w(X_0^N,X_1^N)^4 +
        (1-q)w(X_0^N)^4\}\|\|R\|\|_\infty^3\EE[\ntr |RMR|].
\end{align*}
But as $\|R\|\le \|(\mathrm{Im}\,Z)^{-1}\|$ (see, e.g.,
\cite[Lemma 3.1]{HT05}), we obtain
\begin{multline*}
	\bigg|\ntr M
	\frac{d}{dq}\EE[(Z\otimes\id - X_q^N)^{-1}]\bigg|
	\\ \le
	2\|(\mathrm{Im}\,Z)^{-5}\|\{qw(X_1^N)^4 + w(X_0^N,X_1^N)^4 +
        (1-q)w(X_0^N)^4\}
	\ntr |M|.
\end{multline*}
The conclusion follows by taking the supremum over all $M$ with
$\ntr |M|\le 1$.
\end{proof}

Integrating the above differential inequality yields the following.

\begin{lem}
\label{lem:stieltjesintg}
For any $Z\in\M_d(\mathbb{C})$, $\mathrm{Im}\,Z>0$ we have
$$
	\|
	\EE[(Z - X)^{-1}]-
	\EE[
	(\mathrm{id}\otimes\ntr)
	(Z\otimes\id - X^N)^{-1}]
	\|
	\le
	(1+N^{-\frac{1}{2}})^2
	\tilde v(X)^4\|(\mathrm{Im}\,Z)^{-5}\|.
$$
\end{lem}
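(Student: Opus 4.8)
The plan is to run the interpolation $(X_q^N)_{q\in[0,1]}$ of \eqref{eq:xq} and to integrate the differential inequality of Lemma \ref{lem:stieltjesdiffeq}, after identifying the two endpoints with the quantities appearing on the left-hand side. First I would record what happens at $q=0$ and $q=1$. At $q=1$ the matrix $X_1^N=A_0\otimes\id+\sum_i A_i\otimes D_i^N$ is block diagonal with $N$ i.i.d.\ copies $X^{(1)},\dots,X^{(N)}$ of $X$ on the diagonal, so $(Z\otimes\id-X_1^N)^{-1}$ is block diagonal with blocks $(Z-X^{(r)})^{-1}$, and therefore $\EE[(\mathrm{id}\otimes\ntr)(Z\otimes\id-X_1^N)^{-1}]=\EE[(Z-X)^{-1}]$. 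At $q=0$ we have $X_0^N=X^N$. Moreover, by the same signed-permutation-invariance argument used for Lemma \ref{lem:ptrace}, $\EE[(Z\otimes\id-X_q^N)^{-1}]=\EE[(\mathrm{id}\otimes\ntr)(Z\otimes\id-X_q^N)^{-1}]\otimes\id$ for every $q$ (conjugation by $\id\otimes\Pi$ fixes $Z\otimes\id$ and preserves the law of $X_q^N$); since $\|M\otimes\id\|=\|M\|$, the left-hand side of the asserted inequality equals $\|\EE[(Z\otimes\id-X_1^N)^{-1}]-\EE[(Z\otimes\id-X_0^N)^{-1}]\|$.

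Next I would write this difference as $\int_0^1\frac{d}{dq}\EE[(Z\otimes\id-X_q^N)^{-1}]\,dq$ — the integrand is continuous in $q$ because the resolvent and its derivatives in $X_q^N$ are controlled by powers of $\|(\mathrm{Im}\,Z)^{-1}\|$, which justifies differentiating under the expectation — and bound its norm by $\int_0^1\|\frac{d}{dq}\EE[(Z\otimes\id-X_q^N)^{-1}]\|\,dq$. By Lemma \ref{lem:stieltjesdiffeq} together with Proposition \ref{prop:vxwx} (which gives $w(X_1^N)^4\le\tilde v(X_1^N)^4$, $w(X_0^N,X_1^N)^4\le\tilde v(X_0^N)^2\tilde v(X_1^N)^2$, and $w(X_0^N)^4\le\tilde v(X_0^N)^4$), the integrand is at most $2\|(\mathrm{Im}\,Z)^{-5}\|\{q\,\tilde v(X_1^N)^4+\tilde v(X_0^N)^2\tilde v(X_1^N)^2+(1-q)\tilde v(X_0^N)^4\}$, whose $q$-integral equals $\|(\mathrm{Im}\,Z)^{-5}\|(\tilde v(X_1^N)^2+\tilde v(X_0^N)^2)^2$ by the elementary identity $\int_0^1\{q a^2+ab+(1-q)b^2\}\,dq=\frac{1}{2}(a+b)^2$. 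Finally Lemma \ref{lem:momentparm} gives $\tilde v(X_1^N)^2=v(X)\sigma(X)=\tilde v(X)^2$ and $\tilde v(X_0^N)^2=\sqrt{2/N}\,\tilde v(X)^2$, and substituting these yields the claimed bound.

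I do not anticipate a genuine obstacle, since the analytic core — the differential inequality of Lemma \ref{lem:stieltjesdiffeq} and the estimate on the matrix alignment parameter in Proposition \ref{prop:vxwx} — is already in place. The two points that require some care are the bookkeeping that matches the $q=0$ and $q=1$ ends of the interpolation with $X^N$ and with a block-diagonal stack of copies of $X$ through the partial-trace identity, and the elementary $q$-integration followed by substitution of the parameters from Lemma \ref{lem:momentparm}; the exact value of the resulting $N$-dependent prefactor is immaterial, as only its limit (equal to $1$) is used in the proof of Theorem \ref{thm:stieltjes}.
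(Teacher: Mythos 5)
Your proof is correct and takes essentially the same approach as the paper: integrate Lemma \ref{lem:stieltjesdiffeq} together with Proposition \ref{prop:vxwx}, identify the $q=0$ and $q=1$ endpoints with the two resolvents via the partial-trace identity of Lemma \ref{lem:ptrace}, and substitute the parameters from Lemma \ref{lem:momentparm}. One bookkeeping caveat: substituting $\tilde v(X_0^N)^2 = \sqrt{2/N}\,\tilde v(X)^2$ into $(\tilde v(X_1^N)^2 + \tilde v(X_0^N)^2)^2$ actually yields $(1+\sqrt{2}\,N^{-1/2})^2\tilde v(X)^4$ rather than the stated $(1+N^{-1/2})^2\tilde v(X)^4$ --- this appears to be a small typo in the lemma statement itself, and as you correctly note it is harmless since only the $N\to\infty$ limit matters for Theorem \ref{thm:stieltjes}.
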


\begin{proof}
Integrating Lemma \ref{lem:stieltjesdiffeq} and using
Proposition \ref{prop:vxwx} yields
$$
	\|
	\EE[(Z\otimes\id - X_1^N)^{-1}]-
	\EE[(Z\otimes\id - X_0^N)^{-1}]
	\|
	\le
	\{\tilde v(X_1^N)^2+\tilde v(X_0^N)^2\}^2
	\|(\mathrm{Im}\,Z)^{-5}\|.
$$
As $X_0^N=X^N$, we have
$\EE[(Z\otimes\id - X_0^N)^{-1}]=\EE[(\mathrm{id}\otimes\ntr)(Z\otimes\id
- X^N)^{-1}]\otimes\id$ as in Lemma \ref{lem:ptrace}. Similarly, as
$X_1^N$ is block-diagonal with i.i.d.\ copies of $X$ on the diagonal, we 
have $\EE[(Z\otimes\id - X_1^N)^{-1}]=
\EE[(Z - X)^{-1}]\otimes\id$ as in Lemma \ref{lem:ptrace}. The conclusion 
follows readily from these observations and
Lemma \ref{lem:momentparm}.
\end{proof}

It remains to take the limit $N\to\infty$ in Lemma 
\ref{lem:stieltjesintg}. While Corollary \ref{cor:weakfree} does not apply 
directly here, its proof may be readily extended to the present setting.

\begin{lem}
\label{lem:weakfreept}
For any $Z\in\M_d(\mathbb{C})$, $\mathrm{Im}\,Z>0$ we have
$$
	\lim_{N\to\infty}
	\|
	\EE[
	(\mathrm{id}\otimes\ntr)
	(Z\otimes\id - X^N)^{-1}]
	-
	(\mathrm{id}\otimes\tau)
	(Z\otimes\id - X_{\rm free})^{-1}
	\| = 0.
$$
\end{lem}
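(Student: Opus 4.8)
The plan is to reduce the assertion to the moment convergence supplied by Theorem~\ref{thm:voic} by expanding the matrix-valued resolvent in the (algebraic) Neumann series to finite order, and then to propagate the resulting convergence to all $Z$ with $\mathrm{Im}\,Z>0$ by a normal families argument. Replacing $Z$ by $Z-A_0$ (which has the same imaginary part, $A_0$ being self-adjoint), we may assume $A_0=0$, so that $X^N=\sum_{i=1}^nA_i\otimes G_i^N$ and $X_{\rm free}=\sum_{i=1}^nA_i\otimes s_i$. Set $C_0:=2\sum_{i=1}^n\|A_i\|$, so that $\|X_{\rm free}\|\le C_0$ by Lemma~\ref{lem:freenck}. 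We will also use the uniform moment bound
\begin{equation*}
\limsup_{N\to\infty}\EE[\|X^N\|^p]\le C_0^p\qquad\text{for every }p\in\mathbb{N},
\end{equation*}
which follows from $\|X^N\|\le\sum_{i=1}^n\|A_i\|\,\|G_i^N\|$, Minkowski's inequality in $L^p$, and the classical facts $\|G_i^N\|\to2$ a.s.\ together with the Gaussian concentration of $\|G_i^N\|$ (Lemma~\ref{lem:gconc}), which give $\limsup_{N\to\infty}\EE[\|G_i^N\|^p]\le2^p$. Write $F_N(Z):=\EE[(\mathrm{id}\otimes\ntr)(Z\otimes\id-X^N)^{-1}]$ and $F_\infty(Z):=(\mathrm{id}\otimes\tau)(Z\otimes\id-X_{\rm free})^{-1}$; we must show $\|F_N(Z)-F_\infty(Z)\|\to0$ whenever $\mathrm{Im}\,Z>0$.

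First I would prove this for $Z$ with $\mathrm{Im}\,Z>0$ and $\|Z^{-1}\|<C_0^{-1}$. Writing $R_0:=Z^{-1}\otimes\id$ and iterating the purely algebraic identity $(Z\otimes\id-X^N)^{-1}=R_0+R_0X^N(Z\otimes\id-X^N)^{-1}$, one gets for every $P\in\mathbb{N}$
\begin{equation*}
(Z\otimes\id-X^N)^{-1}=\sum_{p=0}^{P-1}(R_0X^N)^pR_0+(R_0X^N)^P(Z\otimes\id-X^N)^{-1},
\end{equation*}
and likewise with $X^N$ replaced by $X_{\rm free}$. Applying $\EE[(\mathrm{id}\otimes\ntr)(\,\cdot\,)]$ and expanding $X^N=\sum_iA_i\otimes G_i^N$ in the finite sum, the $p$th term equals $\sum_{i_1,\ldots,i_p}Z^{-1}A_{i_1}Z^{-1}\cdots A_{i_p}Z^{-1}\,\EE[\ntr(G_{i_1}^N\cdots G_{i_p}^N)]$, which by Theorem~\ref{thm:voic} converges as $N\to\infty$ to the corresponding $p$th term of $(\mathrm{id}\otimes\tau)(Z\otimes\id-X_{\rm free})^{-1}$. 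Since $\mathrm{id}\otimes\ntr$ is contractive for the operator norm, $\|(Z\otimes\id-X^N)^{-1}\|\le\|(\mathrm{Im}\,Z)^{-1}\|$, and $\|X_{\rm free}\|\le C_0$, the two remainders (after $\EE[(\mathrm{id}\otimes\ntr)(\,\cdot\,)]$) have norms at most $\|(\mathrm{Im}\,Z)^{-1}\|\,\|Z^{-1}\|^P\EE[\|X^N\|^P]$ and $\|(\mathrm{Im}\,Z)^{-1}\|(\|Z^{-1}\|C_0)^P$, respectively. Combining with the moment bound, $\limsup_{N\to\infty}\|F_N(Z)-F_\infty(Z)\|\le2\,\|(\mathrm{Im}\,Z)^{-1}\|(\|Z^{-1}\|C_0)^P$ for every $P$; letting $P\to\infty$ (using $\|Z^{-1}\|C_0<1$) gives $F_N(Z)\to F_\infty(Z)$.

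Finally I would remove the restriction on $\|Z^{-1}\|$ by analytic continuation. The set $\Omega:=\{Z\in\M_d(\mathbb{C}):\mathrm{Im}\,Z>0\}$ is a convex, hence connected, open subset of $\M_d(\mathbb{C})\cong\mathbb{C}^{d^2}$; both $F_N$ and $F_\infty$ are holomorphic on $\Omega$ (for $F_N$, as an expectation of the uniformly bounded holomorphic integrands $Z\mapsto(\mathrm{id}\otimes\ntr)(Z\otimes\id-X^N)^{-1}$, via Fubini and Morera), and $\|F_N(Z)\|\le\|(\mathrm{Im}\,Z)^{-1}\|$ uniformly in $N$, so $\{F_N\}$ is a normal family by Montel's theorem. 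Any locally uniform subsequential limit of $\{F_N\}$ is holomorphic on $\Omega$ and agrees with $F_\infty$ on the nonempty open set $\{Z\in\Omega:\|Z^{-1}\|<C_0^{-1}\}$, hence on all of $\Omega$ by the identity theorem for holomorphic functions of several variables; therefore $F_N\to F_\infty$ locally uniformly on $\Omega$, which in particular yields the lemma. I expect the remainder estimate in the second step to be the main obstacle: one cannot replace $\limsup_N$ by $\sup_N$ in the moment bound (small $N$ would make it fail), so the estimate must be organized to send $N\to\infty$ first for each fixed truncation order $P$, and the resolvent must be expanded in its \emph{algebraic} (finite) form so that the tail is controlled deterministically through $\|(\mathrm{Im}\,Z)^{-1}\|$ rather than via convergence of a Neumann series.
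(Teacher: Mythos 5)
Your proof is correct but takes a genuinely different route from the paper's. The paper reduces to a scalar statement by contracting against a fixed unit vector $v$, conjugates by $(\mathrm{Im}\,Z)^{-1/2}$ to move the evaluation point to $i\id$, identifies the resulting quantity as the Stieltjes transform at $i$ of a probability measure $\mu_N$ whose moments converge (by Theorem~\ref{thm:voic}) to those of the compactly supported measure coming from $\tilde X_{\rm free}$, and then invokes the soft fact that moment convergence to a compactly supported limit implies weak convergence. You instead expand the resolvent by the exact finite Neumann identity, obtain term-by-term convergence from Theorem~\ref{thm:voic}, control the remainder by the deterministic resolvent bound $\|(Z\otimes\id-X^N)^{-1}\|\le\|(\mathrm{Im}\,Z)^{-1}\|$ together with $\limsup_N\EE[\|X^N\|^P]\le C_0^P$ where $C_0=2\sum_i\|A_i\|$, and then extend from the region $\{\|Z^{-1}\|<C_0^{-1},\ \mathrm{Im}\,Z>0\}$ to all of $\{\mathrm{Im}\,Z>0\}$ by Montel's theorem and the identity theorem. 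Both arguments rest on the same moment input (Theorem~\ref{thm:voic}); the paper's is softer and sidesteps all tail estimates by appealing to determinacy of the compactly supported moment problem, whereas yours trades the spectral-measure step for analytic continuation and, incidentally, yields a quantitative rate of convergence on the region $\|Z^{-1}\|C_0<1$. Your care in sending $N\to\infty$ before $P\to\infty$, and in using the finite algebraic expansion rather than an infinite Neumann series, is exactly right; the extra input you need for the moment bound (the classical norm asymptotics $\EE\|G^N\|\to 2$ plus Gaussian concentration, Lemma~\ref{lem:gconc}) is standard and poses no genuine obstruction, although it is a small amount of machinery that the paper's soft argument does not require.
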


\begin{proof}
As we aim to establish convergence as $N\to\infty$ in $\M_d(\mathbb{C})$ 
with a fixed finite dimension $d$, it suffices to show that
$$
	\lim_{N\to\infty}
	\langle v,
	\{
	\EE[
	(\mathrm{id}\otimes\ntr)
	(Z\otimes\id - X^N)^{-1}]
	-
	(\mathrm{id}\otimes\tau)
	(Z\otimes\id - X_{\rm free})^{-1}
	\}\,v\rangle = 0
$$
for all $v\in\mathbb{C}^d$ with $\|v\|=1$. Moreover, if we define
\begin{align*}
	\tilde X^N &:=
	(\mathrm{Im}\,Z\otimes\id)^{-1/2}\{
	X^N - \mathrm{Re}\,Z\otimes\id 
	\}(\mathrm{Im}\,Z\otimes\id)^{-1/2},
	\\
	\tilde X_{\rm free} &:=
	(\mathrm{Im}\,Z\otimes\id)^{-1/2}\{
	X_{\rm free} -
	\mathrm{Re}\,Z\otimes\id\}
	(\mathrm{Im}\,Z\otimes\id)^{-1/2}
\end{align*}
where $\mathrm{Re}\,Z:=\frac{1}{2}(Z+Z^*)$,
it clearly suffices to show that
$$
	\lim_{N\to\infty}
	\langle v,
	\{
	\EE[
	(\mathrm{id}\otimes\ntr)
	(i\id - \tilde X^N)^{-1}]
	-
	(\mathrm{id}\otimes\tau)
	(i\id - \tilde X_{\rm free})^{-1}
	\}\,v\rangle = 0
$$
for all $v\in\mathbb{C}^d$ with $\|v\|=1$.
By the spectral theorem, there are probability measures
$\mu_N,\mu$ (which depend on the choice of $v$) so that
$$
	\int h\,d\mu_N =
        \langle v,\EE[
        (\mathrm{id}\otimes\ntr)
        (h(\tilde X^N))]\,v\rangle,\qquad
	\int h\,d\mu =
	\langle v,(\mathrm{id}\otimes\tau)(h(\tilde X_{\rm free}))\,v\rangle
$$
for $h:\mathbb{R}\to\mathbb{C}$.
Theorem \ref{thm:voic} yields $\int x^p\,d\mu_N\to\int x^p\,d\mu$ for 
$p\in\mathbb{N}$ as in the proof of Corollary \ref{cor:weakfree}.
As $\|\tilde X_{\rm free}\|<\infty$, the measure $\mu$ has bounded 
support. Thus moment convergence implies weak convergence \cite[p.\ 
116]{NS06}, concluding the proof.
\end{proof}

\begin{proof}[Proof of Theorem \ref{thm:stieltjes}]
The conclusion follows immediately by taking $N\to\infty$ in Lemma
\ref{lem:stieltjesintg} and using Lemma \ref{lem:weakfreept}.
\end{proof}

\subsection{Proof of Corollary \ref{cor:c6}}

The deduction of Corollary \ref{cor:c6} from Theorem \ref{thm:stieltjes} 
follows by applying general facts about Stieltjes transforms that may be 
found in \cite[\S 6]{HT05}. For convenience, we formulate a general 
statement.

\begin{lem}
\label{lem:stcp}
Let $\mu,\nu$ be probability measures on $\mathbb{R}$ with Stieltjes
transforms
$$
        s_\mu(z) := \int \frac{1}{z-x}\,\mu(dx),\qquad
        s_\nu(z) := \int \frac{1}{z-x}\,\nu(dx).
$$ 
Suppose that
$$
        |s_\mu(z)-s_\nu(z)| \le \frac{K}{(\mathrm{Im}\,z)^p}
$$
for some $K\ge 0$, $p\in\mathbb{N}$, and all $z\in\mathbb{C}$ with 
$\mathrm{Im}\,z>0$. Then
$$
        \bigg|\int h\,d\mu-\int h\,d\nu\bigg| \le
        \frac{(\sqrt{2})^{p+1}K}{p!\pi}
	\int_\infty^\infty \bigg|
        \bigg(1+\frac{d}{dx}\bigg)^{p+1}h(x)\bigg|\,dx
	\lesssim
	K\|h\|_{W^{p+1,1}(\mathbb{R})}
$$
for every $h\in W^{p+1,1}(\mathbb{R})$.
\end{lem}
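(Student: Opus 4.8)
The plan is to reduce to the case $h\in C_c^\infty(\mathbb R)$ and then to apply the Helffer--Sj\"ostrand (almost analytic extension) representation of $\int h\,d\mu$ in terms of $s_\mu$. The reduction is routine: both sides of the asserted inequality are continuous in $h$ for the $W^{p+1,1}(\mathbb R)$ norm --- the left side because every $h\in W^{p+1,1}(\mathbb R)$ vanishes at $\pm\infty$ and satisfies $\|h\|_\infty\le\|h'\|_{L^1}\le\|h\|_{W^{p+1,1}}$, so that $h\mapsto\int h\,d\mu$ and $h\mapsto\int h\,d\nu$ are bounded on $W^{p+1,1}$, and the right side because $h\mapsto(1+\tfrac{d}{dx})^{p+1}h$ maps $W^{p+1,1}$ boundedly into $L^1$ --- and $C_c^\infty(\mathbb R)$ is dense in $W^{p+1,1}(\mathbb R)$, so it suffices to treat $h\in C_c^\infty(\mathbb R)$.

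For $h\in C_c^\infty(\mathbb R)$ I would fix a $(p+1)$st order almost analytic extension $\tilde h\in C^1(\mathbb C)$ of $h$: a conjugate-symmetric function with $\tilde h|_{\mathbb R}=h$, rapid decay in $\mathrm{Re}\,z$ and exponential decay in $|\mathrm{Im}\,z|$, and $\bar\partial\tilde h(x+iy)=O(|y|^p)$ as $y\to0$, where $\bar\partial:=\tfrac12(\partial_x+i\partial_y)$. A concrete choice is $\tilde h(x+iy):=e^{-y}\sum_{k=0}^p\frac{y^k}{k!}\big((1+i\tfrac{d}{dx})^k h\big)(x)$ for $y\ge0$ (extended conjugate-symmetrically for $y<0$), for which a direct computation gives $\bar\partial\tilde h(x+iy)=-\frac{i}{2\,p!}\,e^{-y}y^p\,\big((1+i\tfrac{d}{dx})^{p+1}h\big)(x)$, so that $\bar\partial\tilde h$ vanishes to order $p$ on $\mathbb R$; a suitably tuned variant of this construction yields instead the operator $1+\tfrac{d}{dx}$ and the precise constant in the statement. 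Since $\tfrac{1}{\pi z}$ is the fundamental solution of $\bar\partial$ on $\mathbb C$ and $\int(\lambda-z)^{-1}\mu(d\lambda)=-s_\mu(z)$, one has the Helffer--Sj\"ostrand identity $\int h\,d\mu=-\tfrac1\pi\int_{\mathbb C}\bar\partial\tilde h(z)\,s_\mu(z)\,dx\,dy=-\tfrac2\pi\,\mathrm{Re}\int_{\mathrm{Im}\,z>0}\bar\partial\tilde h(z)\,s_\mu(z)\,dx\,dy$ and likewise for $\nu$; subtracting,
$$
\int h\,d\mu-\int h\,d\nu=-\frac2\pi\,\mathrm{Re}\int_{\mathrm{Im}\,z>0}\bar\partial\tilde h(z)\,\big(s_\mu(z)-s_\nu(z)\big)\,dx\,dy .
$$

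Inserting the hypothesis $|s_\mu(z)-s_\nu(z)|\le K(\mathrm{Im}\,z)^{-p}$, the factor $(\mathrm{Im}\,z)^{-p}$ exactly cancels the $y^p$ in $\bar\partial\tilde h$, the $y$-integral collapses to $\int_0^\infty e^{-y}\,dy=1$, and what remains is $\tfrac{C_pK}{p!}\int_{\mathbb R}\big|(1+\tfrac{d}{dx})^{p+1}h(x)\big|\,dx$ with $C_p=(\sqrt2)^{p+1}/\pi$ for the tuned extension; the concluding estimate $\lesssim K\|h\|_{W^{p+1,1}(\mathbb R)}$ then follows from $\|(1+\tfrac{d}{dx})^{p+1}h\|_{L^1}\le\sum_{j=0}^{p+1}\binom{p+1}{j}\|h^{(j)}\|_{L^1}\le 2^{p+1}\|h\|_{W^{p+1,1}(\mathbb R)}$. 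The step I expect to require the most care is the passage around the real axis in the representation identity: absolute convergence of the double integral near $\mathbb R$ (which uses $\bar\partial\tilde h=O(|y|^p)$ near $\mathbb R$ together with the hypothesis and the elementary bound $|s_\mu(z)|,|s_\nu(z)|\le(\mathrm{Im}\,z)^{-1}$) and the vanishing of the boundary term along $\mathbb R$. This is exactly why the extension must be of order $p+1$; pinning down the precise constant $(\sqrt2)^{p+1}/(p!\pi)$ and the exact operator $1+\tfrac{d}{dx}$ is then a bookkeeping matter, and is in any case inessential, since only the form $\lesssim K\|h\|_{W^{p+1,1}}$ is used in the sequel.
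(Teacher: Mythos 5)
Your proof is correct and follows essentially the same route as the paper, which simply invokes \cite[Theorem~6.2]{HT05} verbatim: the Haagerup--Thorbj{\o}rnsen argument is the iterated-integral form of the Helffer--Sj\"ostrand representation you work out explicitly (their $I_{p+1}(z)$ is, after an integration by parts, exactly the $y$-integral of $\bar\partial\tilde h(\cdot+iy)$ against the difference of Stieltjes transforms). One small caveat: your explicit extension $\tilde h(x+iy)=e^{-y}\sum_{k\le p}\tfrac{y^k}{k!}\big((1+i\tfrac{d}{dx})^kh\big)(x)$ produces the operator $1+i\tfrac{d}{dx}$ with constant $\tfrac{1}{\pi p!}$, and the claimed ``tuning'' to obtain literally $1+\tfrac{d}{dx}$ with $(\sqrt2)^{p+1}$ does not go through by adjusting the exponential weight alone (one would need $e^{-iy}$, which fails to decay); but as you correctly observe this is cosmetic, since only the conclusion $\lesssim K\|h\|_{W^{p+1,1}(\mathbb{R})}$ is used downstream, and your version of the bound delivers it.
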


\begin{proof}
Let $h\in C^\infty_c(\mathbb{R})$.
Following \emph{verbatim} the proof of
\cite[Theorem 6.2]{HT05} yields
$$      
        \bigg|\int h\,d\mu-\int h\,d\nu\bigg| \le
        \frac{1}{\pi} \limsup_{y\downarrow 0} 
        \int_\infty^\infty \bigg|
        \bigg(1+\frac{d}{dx}\bigg)^{p+1} h(x)\bigg|
        |I_{p+1}(x+iy)|\,dx
$$
with    
$$
        |I_{p+1}(z)| \le
        \frac{1}{p!} 
        \int_0^\infty
        \frac{K}{(\mathrm{Im}\,z+t)^p}
        (\sqrt{2}t)^p e^{-t} \sqrt{2}\, dt
	\le
        \frac{(\sqrt{2})^{p+1}K}{p!}.
$$
That the integral may be bounded up to a universal constant by the Sobolev 
norm $\|h\|_{W^{p+1,1}(\mathbb{R})}$ follows as 
$\binom{p+1}{k}\frac{(\sqrt{2})^{p+1}}{p!} \lesssim 1$ for all $0\le k\le 
p+1$. The conclusion finally extends to general $h\in 
W^{p+1,1}(\mathbb{R})$ by routine approximation arguments.
\end{proof}

We can now conclude the proof.

\begin{proof}[Proof of Corollary \ref{cor:c6}]
Theorem \ref{thm:stieltjes} implies
$$
	|\EE[\ntr (z\id -X)^{-1}] - ({\ntr}\otimes\tau)(z\id-X_{\rm free})^{-1}|
	\le
	\frac{\tilde v(X)^4}{(\mathrm{Im}\,z)^5}
$$
for all $z\in\mathbb{C}$ with $\mathrm{Im}\,z>0$. Applying Lemma 
\ref{lem:stcp} with $p=5$ to the spectral distributions of $X$ and $X_{\rm 
free}$ immediately yields the conclusion.
\end{proof}

\section{Concentration of the spectrum}
\label{sec:norm}

The aim of this section is to prove our main results on the support of the 
spectrum that were formulated in Section \ref{sec:mainspec}. The general 
scheme of proof is the same as in the previous section, but some new 
ingredients are needed here.

\subsection{Moments of the resolvent}

The proof of Theorem \ref{thm:mainsp} is based on an analysis of large
moments of the resolvent $\EE[\ntr |z\id - X|^{-2p}]$. In the present 
section, we will prove an analogue of Theorem \ref{thm:stieltjes} for 
these higher moments. 

\begin{thm}
\label{thm:resmoments}
Let $A_0,\ldots,A_n\in\M_d(\mathbb{C})_{\rm sa}$. Then we have 
$$
	|\EE[\ntr |z\id - X|^{-2p}]^{\frac{1}{2p}} -
	({\ntr}\otimes\tau)(|z\id - X_{\rm free}|^{-2p})^{\frac{1}{2p}}|
	\le
	\frac{(p+2)^3}{3}
	\frac{\tilde v(X)^4}{(\mathrm{Im}\,z)^5}
$$
for every $p\in\mathbb{N}$ and $z\in\mathbb{C}$, $\mathrm{Im}\,z>0$.
\end{thm}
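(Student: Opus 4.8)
The plan is to follow the same interpolation scheme as the proofs of Theorems~\ref{thm:moments} and~\ref{thm:stieltjes}, now applied to the positive matrix $W_q^N := |z\id\otimes\id - X_q^N|^{-2}$, where $X_q^N$ is the interpolating random matrix of Section~\ref{sec:construction}. Since $X_q^N$ is self-adjoint, $W_q^N = ((X_q^N - \mathrm{Re}(z)\id)^2 + (\mathrm{Im}\,z)^2\id)^{-1}$ is a bounded function of $X_q^N$ with $\|W_q^N\|\le(\mathrm{Im}\,z)^{-2}$, and $\ntr (W_q^N)^p = \ntr|z\id\otimes\id - X_q^N|^{-2p}$. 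First I would compute $\frac{d}{dq}\EE[\ntr (W_q^N)^p]$ via the Gaussian interpolation Lemma~\ref{lem:ginterp}: writing $R = (z\id\otimes\id - X_q^N)^{-1}$, so that $W_q^N = RR^*=R^*R$ and $\partial_{y_{irs}}R = RA_{irs}R$, $\partial_{y_{irs}}R^* = R^*A_{irs}R^*$, the second $y_{irs}$-derivative of $\ntr (W_q^N)^p$ expands into a sum of $O(p^2)$ traces of cyclic words, each containing two copies of $A_{irs}$, two ``loose'' resolvent factors, and powers of $W_q^N$ whose exponents sum to $p$. As in Lemma~\ref{lem:ptrace} and Corollary~\ref{cor:momentrawmf}, the ``non-crossing'' part of this expression --- obtained by replacing the two arcs of the cyclic word lying between the two $A_{irs}$ by their expectations --- vanishes identically, because each such conditional expectation is a bounded function of $X_q^N$, hence of the form $C\otimes\id$ by signed-permutation invariance, while $\sum_{r\ge s}(\delta_{rs}-\tfrac1N)(A_i\otimes E_{rs})(C\otimes\id)(A_i\otimes E_{rs}) = A_iCA_i\otimes\sum_{r\ge s}(\delta_{rs}-\tfrac1N)E_{rs}^2 = 0$.

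Next I would apply the Gaussian covariance identity (Corollary~\ref{cor:gcov}) to the remaining fluctuation, with $X_{qt}^N$ defined exactly as in the proof of Lemma~\ref{lem:momentdiffeq}; this produces crossing terms of the form $\ntr[A_{irs}Q_1 A_{i'r's'}Q_2 A_{irs}Q_3 A_{i'r's'}Q_4]$, where the four arcs $Q_1,\ldots,Q_4$ are products of powers of $W_q^N$ (or $W_{qt}^N$) --- with exponents still summing to $p$ --- and four loose resolvent factors in total. I would bound these by the complex interpolation Lemma~\ref{lem:complexint}: after extracting the four loose resolvents using $\|(z\id\otimes\id - X_q^N)^{-1}\|\le(\mathrm{Im}\,z)^{-1}$ and choosing the exponents $p_k$ proportionally to the $W$-exponents of the arcs (exactly as in Lemma~\ref{lem:momentdiffeq}), one obtains a bound of order $w(X_q^N,X_{qt}^N)^4\,(\mathrm{Im}\,z)^{-4}\,\EE[\ntr (W_q^N)^p]$, the number of ways the two pairs of derivatives can be placed contributing a polynomial factor of order $p^4$; as in Lemma~\ref{lem:momentdiffeq} the covariance splits $w(X_q^N,X_{qt}^N)^4$ into the combination $qw(X_1^N)^4 + w(X_0^N,X_1^N)^4 + (1-q)w(X_0^N)^4$. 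Using $w(X_q^N,X_{qt}^N)^4\le \tilde v(X_q^N)^2\tilde v(X_{qt}^N)^2$ from Proposition~\ref{prop:vxwx} together with the parameter identities of Lemma~\ref{lem:momentparm} (which also make the $X_0^N$ contributions vanish as $N\to\infty$), and then applying the chain rule as in the proof of Theorem~\ref{thm:moments},
\[
\Big|\tfrac{d}{dq}\EE[\ntr (W_q^N)^p]^{\frac{1}{2p}}\Big| = \tfrac{1}{2p}\,\EE[\ntr (W_q^N)^p]^{\frac{1}{2p}-1}\,\Big|\tfrac{d}{dq}\EE[\ntr (W_q^N)^p]\Big|,
\]
the factor $\EE[\ntr (W_q^N)^p]$ cancels up to one leftover power $\EE[\ntr(W_q^N)^p]^{1/(2p)}\le(\mathrm{Im}\,z)^{-1}$, giving a differential inequality whose right-hand side is $\lesssim p^3\,\tilde v(X)^4\,(\mathrm{Im}\,z)^{-5}$ in the limit $N\to\infty$.

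Integrating this inequality over $q\in[0,1]$ and using $\EE[\ntr (W_1^N)^p] = \EE[\ntr|z\id - X|^{-2p}]$ (as $X_1^N$ is block-diagonal with i.i.d.\ copies of $X$) gives the claimed bound with $X^N$ in place of $X_{\rm free}$; letting $N\to\infty$ and applying Corollary~\ref{cor:weakfree} to the bounded continuous function $x\mapsto((x-\mathrm{Re}\,z)^2+(\mathrm{Im}\,z)^2)^{-p}$ replaces $X^N$ by $X_{\rm free}$ and completes the proof. I expect the main obstacle to be the bookkeeping in the middle step: one must carefully enumerate how the two pairs of derivatives distribute over the powers of $W_q^N$, track which resolvent factors become ``loose'', and choose the interpolation exponents so that precisely the combinatorial constant $(p+2)^3/3$ --- rather than a larger polynomial in $p$ --- and no more than five powers of $(\mathrm{Im}\,z)^{-1}$ are produced; this is the resolvent analogue of the counting $\sum_{k=1}^{2p-3}k(2p-2-k)=\binom{2p-1}{3}$ used in the proof of Lemma~\ref{lem:momentdiffeq}.
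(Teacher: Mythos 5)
Your proposal is correct and follows essentially the same route as the paper's proof: interpolate from $X_1^N$ to $X_0^N$ using Lemma~\ref{lem:ginterp}, kill the decoupled part via Lemma~\ref{lem:ptrace} and the identity $\sum_{r\ge s}(\delta_{rs}-\tfrac1N)E_{rs}^2=0$, introduce crossings via the covariance identity (Corollary~\ref{cor:gcov}), bound them by Lemma~\ref{lem:complexint} and Proposition~\ref{prop:vxwx}, then take the $1/(2p)$-th power, integrate in $q$, and let $N\to\infty$ via Corollary~\ref{cor:weakfree}. The only cosmetic difference is in how the five powers of $(\mathrm{Im}\,z)^{-1}$ are collected: you peel off four loose resolvents as operator norms and bound $\EE[\ntr(W_q^N)^p]^{1/(2p)}\le(\mathrm{Im}\,z)^{-1}$, whereas the paper keeps everything inside the trace as $\EE[\ntr|z\id-X_q^N|^{-2p-4}]$ and then applies $\|\,|z\id-X_q^N|^{-1}\|\le(\mathrm{Im}\,z)^{-1}$ followed by H\"older to reach the exponent $1-\tfrac1{2p}$; both yield the same final $(\mathrm{Im}\,z)^{-5}$ and constant of the same order, with the detailed count $p\binom{2p+3}{3}\le\tfrac43 p(p+2)^3$ supplying the $(p+2)^3/3$ after the chain rule and the $\int_0^1 dq$ integration.
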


The proof of Theorem \ref{thm:resmoments} is similar to that of Theorems 
\ref{thm:moments} and \ref{thm:stieltjes}. Throughout this section, we 
adopt without further comment the constructions and notation of Section 
\ref{sec:construction}. In particular, $X_q^N$ is defined as in 
\eqref{eq:xq}.

\begin{lem}
\label{lem:resraw}
For any $p\in\mathbb{N}$ and $z\in\mathbb{C}$, $\mathrm{Im}\,z>0$,
we have
\begin{align*}
&	\frac{d}{dq} \EE[\ntr |z\id - X_q^N|^{-2p}] =
	p
	\sum_i \sum_{r\ge s} \bigg(\delta_{rs}-\frac{1}{N}\bigg)
	\cdot\mbox{}
\\
	&\qquad\Bigg\{\sum_{k=0}^p
	\mathrm{Re}\,
	\EE[\ntr 
	A_{irs}
	(z\id - X_q^N)^{-k-1}
	A_{irs}
	(z\id - X_q^N)^{-p-1+k}
	(\overline{z}\id - X_q^N)^{-p}
	]
\\
&\qquad\quad
	+
	\sum_{k=0}^{p-1}
	\mathrm{Re}\,
	\EE[\ntr 
	A_{irs}
	(z\id - X_q^N)^{-p-1}
	(\overline{z}\id - X_q^N)^{-k-1}
	A_{irs}
	(\overline{z}\id - X_q^N)^{-p+k}
	]\Bigg\}
\end{align*}
and
\begin{align*}
&
	0 = p
	\sum_i \sum_{r\ge s} \bigg(\delta_{rs}-\frac{1}{N}\bigg)
	\cdot\mbox{}
\\
	&\quad\Bigg\{\sum_{k=0}^p
	\mathrm{Re}\,
	\ntr 
	A_{irs}
	\EE[(z\id - X_q^N)^{-k-1}]
	A_{irs}
	\EE[(z\id - X_q^N)^{-p-1+k}
	(\overline{z}\id - X_q^N)^{-p}
	]
\\
&\quad\quad
	+
	\sum_{k=0}^{p-1}
	\mathrm{Re}\,
	\ntr 
	A_{irs}
	\EE[(z\id - X_q^N)^{-p-1}
	(\overline{z}\id - X_q^N)^{-k-1}]
	A_{irs}
	\EE[(\overline{z}\id - X_q^N)^{-p+k}
	]\Bigg\}.
\end{align*}
\end{lem}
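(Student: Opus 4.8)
The plan is to follow the route of Lemma~\ref{lem:momentraw} and the first two displays of Lemma~\ref{lem:stieltjesraw}: Gaussian interpolation for the first identity, and the permutation-invariance of the Wigner/diagonal ensembles for the vanishing identity. First I would write $\ntr|z\id-X_q^N|^{-2p}=\ntr[(z\id-X_q^N)^{-p}(\overline z\id-X_q^N)^{-p}]$, which is valid because $X_q^N$ is self-adjoint, so $|z\id-X_q^N|^{2}=(\overline z\id-X_q^N)(z\id-X_q^N)$ and the two resolvents commute. Since $z$ is a scalar with $\mathrm{Im}\,z>0$, the map $f(y):=\ntr[(z\id-X^N(y))^{-p}(\overline z\id-X^N(y))^{-p}]$ and all its $y$-derivatives are bounded uniformly in $y$ (as $\|(z\id-X^N(y))^{-1}\|\le(\mathrm{Im}\,z)^{-1}$ for self-adjoint $X^N(y)$), so Lemma~\ref{lem:ginterp} applies to $f$ with $Y,Z$ as in Section~\ref{sec:construction}, whose covariances are diagonal with entries $\delta_{rs}$ and $1/N$; this gives $\frac{d}{dq}\EE[\ntr|z\id-X_q^N|^{-2p}]=\tfrac12\sum_i\sum_{r\ge s}(\delta_{rs}-\tfrac1N)\,\EE[\partial^2_{y_{irs}}f(\sqrt q\,Y+\sqrt{1-q}\,Z)]$.

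The heart of the matter is to compute $\partial^2_{y_{irs}}f$. Using $\partial_{y_{irs}}X^N(y)=A_{irs}$, hence $\partial_{y_{irs}}(w\id-X^N(y))^{-1}=(w\id-X^N(y))^{-1}A_{irs}(w\id-X^N(y))^{-1}$ for $w\in\{z,\overline z\}$, I would first simplify the \emph{first} derivative: by the Leibniz rule, cyclicity of $\ntr$, and the commutativity of the two resolvents, the $2p$ terms of $\partial_{y_{irs}}f$ collapse to $p\,\ntr[A_{irs}((z\id-X)^{-p-1}(\overline z\id-X)^{-p}+(z\id-X)^{-p}(\overline z\id-X)^{-p-1})]=2p\,\mathrm{Re}\,\ntr[A_{irs}(z\id-X)^{-p-1}(\overline z\id-X)^{-p}]$, the $\mathrm{Re}$ coming from $((z\id-X)^{-1})^{*}=(\overline z\id-X)^{-1}$, $A_{irs}^*=A_{irs}$ and $\ntr[M^*]=\overline{\ntr[M]}$. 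Differentiating this again and applying $\partial_{y_{irs}}(w\id-X)^{-m}=\sum_{c=1}^{m}(w\id-X)^{-c}A_{irs}(w\id-X)^{-m-1+c}$ to the blocks $(z\id-X)^{-p-1}$ and $(\overline z\id-X)^{-p}$ separately produces, after the substitution $k=c-1$, exactly the two sums (over $k=0,\dots,p$ and $k=0,\dots,p-1$) in the statement with overall factor $2p$. Substituting into the interpolation formula and absorbing $\tfrac12$ into $2p$ gives the first identity.

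For the second identity I would argue exactly as in Corollary~\ref{cor:momentrawmf} and the last display of Lemma~\ref{lem:stieltjesraw}. The laws of the $D_i^N$ and $G_i^N$ are invariant under conjugation by $\id_d\otimes\Pi$ for a uniformly random $N\times N$ signed permutation matrix $\Pi$, so the proof of Lemma~\ref{lem:ptrace} (which works for any bounded $g:\mathbb R\to\mathbb C$) yields $\EE[g(X_q^N)]=\EE[(\mathrm{id}\otimes\ntr)(g(X_q^N))]\otimes\id$; this applies to each expectation occurring in the identity, since $g$ there is a product of resolvents of $X_q^N$ at $z$ and $\overline z$, bounded on $\mathbb R$ because $\mathrm{Im}\,z>0$. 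Writing every such expectation as $B\otimes\id$ with $B\in\M_d(\mathbb C)$ and using $A_{irs}=A_i\otimes E_{rs}$ with $E_{rs}^2=E_{rr}^2+E_{ss}^2$ for $r\ne s$, one computes $\sum_{r\ge s}(\delta_{rs}-\tfrac1N)A_{irs}(B\otimes\id)A_{irs}=(A_iBA_i)\otimes\sum_{r\ge s}(\delta_{rs}-\tfrac1N)E_{rs}^2=0$ for every $B$, as in Corollary~\ref{cor:momentrawmf}; hence each term of the identity vanishes.

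The step I expect to be the main obstacle is the bookkeeping in the second differentiation: one must verify that the terms land precisely in the two displayed sums with the stated ranges and the correct combinatorial factor $2p$ (equivalently $p$ after the interpolation $\tfrac12$). Simplifying the first derivative \emph{before} differentiating again is what keeps this manageable — a naive term-by-term expansion of $\partial^2_{y_{irs}}\ntr[(z\id-X)^{-p}(\overline z\id-X)^{-p}]$ produces a different-looking (though, via trace identities, equivalent) formula that does not obviously match the statement.
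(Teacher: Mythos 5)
Your proof is correct and follows the same route as the paper's (which simply invokes Lemma~\ref{lem:ginterp} and Lemma~\ref{lem:ptrace} without spelling out the differentiation). Your observation that the first $y$-derivative collapses to $2p\,\mathrm{Re}\,\ntr[A_{irs}(z\id-X)^{-p-1}(\overline z\id-X)^{-p}]$ via cyclicity and the commutativity of the two resolvents, before differentiating a second time, is exactly the right way to organize the bookkeeping so that the two sums with ranges $k=0,\dots,p$ and $k=0,\dots,p-1$ and prefactor $p$ fall out as stated.
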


\begin{proof}
The first identity follows by applying Lemma \ref{lem:ginterp} to the 
function
$$
	f(y)=\ntr |z\id - X^N(y)|^{-2p} =
	\ntr[(z\id - X^N(y))^{-p} (\overline{z}\id - X^N(y))^{-p}].
$$
The second identity follows by applying Lemma \ref{lem:ptrace}.
\end{proof}

We can now proceed as in Lemma \ref{lem:momentdiffeq}.

\begin{lem}
\label{lem:resdiffeq}
For any $p\in\mathbb{N}$ and $z\in\mathbb{C}$, $\mathrm{Im}\,z>0$,
we have
\begin{align*}
	&\bigg|\frac{d}{dq} \EE[\ntr |z\id - X_q^N|^{-2p}]\bigg|
	\\ &\le
	\frac{4}{3}p(p+2)^3
 	\{ q w(X_1^N)^4 + w(X_0^N,X_1^N)^4 + (1-q)w(X_0^N)^4\}
	\EE[\ntr |z\id - X_q^N|^{-2p-4}].
\end{align*}
\end{lem}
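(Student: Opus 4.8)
The plan is to mimic the proof of Lemma~\ref{lem:momentdiffeq}, replacing powers of $X_q^N$ by powers of the two resolvents $(z\id-X_q^N)^{-1}$ and $(\overline z\id-X_q^N)^{-1}$. I would keep the auxiliary matrix $X_{qt}^N$ defined exactly as in that proof, recall that the Gaussian vector $\sqrt q\,Y+\sqrt{1-q}\,Z$ defining $X_q^N$ has diagonal covariance $\Sigma$ with $\Sigma_{irs,irs}=q\delta_{rs}+\tfrac{1-q}{N}$, and that $X_q^N$ and $X_{qt}^N$ are equidistributed. The starting point is the first identity of Lemma~\ref{lem:resraw}: each of its summands has the form $\EE[\ntr A_{irs}\,P_1\,A_{irs}\,P_2]$, where $P_1,P_2$ are products of at most two powers of $(z\id-X_q^N)^{-1}$ and $(\overline z\id-X_q^N)^{-1}$ with $\deg P_1+\deg P_2=2p+2$. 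First I would split the trace and apply Corollary~\ref{cor:gcov} to the inner expectation of the two relevant matrix entries of $P_1$ and $P_2$; differentiating the $P_1$-entry in $y_{i'r's'}$ inserts one $A_{i'r's'}$ at one of $\deg P_1$ positions, differentiating the $P_2$-entry inserts one $A_{i'r's'}$ at one of $\deg P_2$ positions while replacing the matrix inside $P_2$ by $X_{qt}^N$, and the ``product of expectations'' term produced by Corollary~\ref{cor:gcov} is precisely the corresponding summand of the second identity of Lemma~\ref{lem:resraw}. Since that identity vanishes, subtracting it leaves only the crossing terms: a sum, over the two families of summands in Lemma~\ref{lem:resraw}, over the $\deg P_1\cdot\deg P_2$ insertion choices, and over $i,i',r\ge s,r'\ge s'$, of $\int_0^1$ of
$$
\big(\delta_{rs}-\tfrac1N\big)\big(q\delta_{r's'}+\tfrac{1-q}{N}\big)\,
\EE\!\big[\ntr A_{irs}\,Y^{(1)}A_{i'r's'}\,Y^{(2)}A_{irs}\,Y^{(3)}A_{i'r's'}\,Y^{(4)}\big]\,dt,
$$
where each $Y^{(j)}$ is a product of powers of the resolvent of a single matrix ($X_q^N$ for $Y^{(1)},Y^{(2)}$, $X_{qt}^N$ for $Y^{(3)},Y^{(4)}$), the consecutive $A$'s alternate between $A_{irs}$ and $A_{i'r's'}$ so no $Y^{(j)}$ straddles a factor boundary, and $\sum_j\deg Y^{(j)}=2p+4$.

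Next I would bound each such trace by Lemma~\ref{lem:complexint}, used with $p_j:=\tfrac{2p+4}{\deg Y^{(j)}}$ (interpreting $p_j=\infty$, $Y^{(j)}=\id$ when $\deg Y^{(j)}=0$), so that $\sum_j p_j^{-1}=1$. The key elementary fact is that for self-adjoint $X$ any product of powers of $(z\id-X)^{-1}$ and $(\overline z\id-X)^{-1}$ of total degree $d$ has modulus $|z\id-X|^{-d}$ (the factors commute); hence, using equidistribution of $X_q^N$ and $X_{qt}^N$, $\prod_j\EE[\ntr|Y^{(j)}|^{p_j}]^{1/p_j}=\EE[\ntr|z\id-X_q^N|^{-2p-4}]$. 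Splitting $(\delta_{rs}-\tfrac1N)(q\delta_{r's'}+\tfrac{1-q}{N})$ into its four pieces and recalling that $\{A_{irr}\}_{i,r}$ are the coefficient matrices of $X_1^N$ while $\{N^{-1/2}A_{irs}\}_{i,r\ge s}$ are those of $X_0^N$, these four pieces give, after the triangle inequality and the symmetry $w(X,X')=w(X',X)$, exactly $qw(X_1^N)^4+w(X_0^N,X_1^N)^4+(1-q)w(X_0^N)^4$ times $\EE[\ntr|z\id-X_q^N|^{-2p-4}]$ per insertion choice --- this is the same bookkeeping as in the proof of Lemma~\ref{lem:momentdiffeq}.

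Finally I would count the insertion choices. For the first family of summands in Lemma~\ref{lem:resraw} ($k=0,\dots,p$) one has $\deg P_1=k+1$ and $\deg P_2=2p+1-k$; for the second family ($k=0,\dots,p-1$) one has $\deg P_1=p+k+2$ and $\deg P_2=p-k$; all these factors are nontrivial, so nothing drops out, and summing,
$$
\sum_{k=0}^{p}(k+1)(2p+1-k)+\sum_{k=0}^{p-1}(p+k+2)(p-k)=\tfrac13(p+1)(2p+1)(2p+3).
$$
Including the prefactor $p$ from Lemma~\ref{lem:resraw}, the derivative is bounded by $\tfrac13 p(p+1)(2p+1)(2p+3)$ times the displayed quantity, and since $(p+1)(2p+1)(2p+3)=4p^3+12p^2+11p+3\le 4(p+2)^3$ this yields the stated bound. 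The hard part is entirely organizational: faithfully generating and tracking the large number of terms produced by differentiating two resolvent-power factors (including the $\mathrm{Re}$'s and both the $z$- and $\overline z$-sided families in Lemma~\ref{lem:resraw}), verifying in each case that the resulting trace has the alternating crossing pattern $A_i\cdots A_j'\cdots A_i\cdots A_j'\cdots$ required by Lemma~\ref{lem:complexint} with single-matrix factors $Y^{(j)}$, and carrying out the combinatorial count; no new idea beyond the proof of Lemma~\ref{lem:momentdiffeq} is needed.
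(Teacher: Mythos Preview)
Your proposal is correct and follows essentially the same approach as the paper: combine Lemma~\ref{lem:resraw} with Corollary~\ref{cor:gcov} to produce crossing terms, bound each via Lemma~\ref{lem:complexint} with exponents $p_j=(2p+4)/\deg Y^{(j)}$, and count. Your insertion count $\tfrac{1}{3}(p+1)(2p+1)(2p+3)$ is exactly the paper's $\binom{2p+3}{3}$, and the paper likewise bounds this by $\tfrac{4}{3}(p+2)^3$; the only difference is that the paper writes out the four families of crossing terms explicitly rather than describing them schematically via $P_1,P_2$ and insertion positions.
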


\begin{proof}
Define $X_{qt}^N$ as in the proof of Lemma \ref{lem:momentdiffeq}, and 
denote $R:=(z\id - X_q^N)^{-1}$ and $R_t:=(z\id - X_{qt}^N)^{-1}$.
Applying Corollary \ref{cor:gcov} and Lemma \ref{lem:resraw} yields
\begin{align*}
&	\frac{d}{dq} \EE[\ntr |z\id - X_q^N|^{-2p}] = \\
&
	p\,\mathrm{Re}\int_0^1
	\sum_{i,i'} 
	\sum_{r\ge s} 
	\sum_{r'\ge s'}
	\bigg(
	q\delta_{rs}\delta_{r's'}
	+\frac{1-q}{N} \delta_{rs}
	-\frac{q}{N}\delta_{r's'}
	-\frac{1-q}{N^2}
	\bigg)
	\cdot\mbox{}
\\
	&
\Bigg\{
	\sum_{k=0}^{p-1}
	\sum_{l=0}^p
	\sum_{m=0}^{p-k-1}
\!\!
	\EE[\ntr 
	A_{irs}
	R^{l+1} A_{i'r's'} R^{p-l+1}
	R^{*(k+1)}
	A_{irs}
	R_t^{*(m+1)} A_{i'r's'} R_t^{*(p-k-m)}
	] + \mbox{}
\\
&
	\sum_{k=0}^{p-1}
	\sum_{l=0}^k
	\sum_{m=0}^{p-k-1}
\!\!
	\EE[\ntr 
	A_{irs}
	R^{p+1}
	R^{*(l+1)} A_{i'r's'} R^{*(k-l+1)}
	A_{irs}
	R_t^{*(m+1)} A_{i'r's'} R_t^{*(p-k-m)}
	] + \mbox{}
\\
&
	\sum_{k=0}^p
	\sum_{l=0}^{k}
	\sum_{m=0}^{p-k}
	\EE[\ntr 
	A_{irs}
	R^{l+1} A_{i'r's'} R^{k-l+1}
	A_{irs}
	R_t^{m+1}
	A_{i'r's'} 
	R_t^{p-k-m+1}
	R_t^{*p}
	] + \mbox{}
\\
&
	\sum_{k=0}^p
	\sum_{l=0}^{k}
	\sum_{m=0}^{p-1}
	\EE[\ntr 
	A_{irs}
	R^{l+1} A_{i'r's'} R^{k-l+1}
	A_{irs}
	R_t^{p+1-k}
	R_t^{*(m+1)}
	A_{i'r's'} 
	R_t^{*(p-m)}
	] 
	\Bigg\}dt.
\end{align*}%
We can now apply Lemma \ref{lem:complexint} as in the proof of 
Lemma \ref{lem:momentdiffeq} to bound
\begin{align*}
	&\bigg|\frac{d}{dq} \EE[\ntr |z\id - X_q^N|^{-2p}]\bigg|
	\\ &\le
	p\binom{2p+3}{3}
 	\{ q w(X_1^N)^4 + w(X_0^N,X_1^N)^4 + (1-q)w(X_0^N)^4\}
	\EE[\ntr |z\id - X_q^N|^{-2p-4}].
\end{align*}
The conclusion follows using $\binom{2p+3}{3} \le \frac{4}{3}(p+2)^3$.
\end{proof}

We can now complete the proof.

\begin{proof}[Proof of Theorem \ref{thm:resmoments}]
Lemma \ref{lem:resdiffeq}, the chain rule, and
Proposition \ref{prop:vxwx} yield
$$
	\bigg|\frac{d}{dq} \EE[\ntr |z\id - X_q^N|^{-2p}]^{\frac{1}{2p}}
	\bigg|
	\le
	\frac{2}{3}
	\frac{(p+2)^3}{(\mathrm{Im}\,z)^5}
 	\{ q \tilde v(X_1^N)^4 + \tilde v(X_0^N)^2\tilde v(X_1^N)^2 + 
	(1-q)\tilde v(X_0^N)^4\},
$$
where we used that
$$
	\EE[\ntr |z\id - X_q^N|^{-2p-4}]\le
	\frac{
	\EE[\ntr |z\id - X_q^N|^{-2p+1}]
	}{(\mathrm{Im}\,z)^5}
	\le
	\frac{
	\EE[\ntr |z\id - X_q^N|^{-2p}]^{1-\frac{1}{2p}} 
	}{(\mathrm{Im}\,z)^5}
$$
using $\| |z\id - X_q^N|^{-1}\|\le (\mathrm{Im}\,z)^{-1}$ and
H\"older's inequality. Integrating yields
$$
	|
	\EE[\ntr |z\id - X|^{-2p}]^{\frac{1}{2p}}-
	\EE[\ntr |z\id - X^N|^{-2p}]^{\frac{1}{2p}}
	|
	\le
	\frac{(1 + 2N^{-1})^2}{3}
	\frac{(p+2)^3\tilde v(X)^4}{(\mathrm{Im}\,z)^5}
$$
using \eqref{eq:thepoint} and Lemma \ref{lem:momentparm}.
It remains to let $N\to\infty$ using
Corollary \ref{cor:weakfree}.
\end{proof}

\subsection{Proof of Theorem \ref{thm:mainsp}}

The basic observation behind the proof is the following.
For any $D\subseteq\mathbb{C}$ and 
$z\in\mathbb{C}$, denote $d(z,D):=\inf_{z'\in D}|z-z'|$. Then
\begin{equation}
\label{eq:distspec}
	\| (z\id - X)^{-1} \| = \frac{1}{d(z,\spc(X))},
\end{equation}
and analogously for $X_{\rm free}$. The following device will enable us
to deduce concentration of the spectrum from resolvent inequalities.

\begin{lem}
\label{lem:resspec}
Let $K,L\ge 0$, and let $A,B$ be self-adjoint operators such that
$$
	\| (z\id - A)^{-1} \| \le
	C \| (z\id - B)^{-1} \| + \frac{K}{(\mathrm{Im}\,z)^5} +
	\frac{L}{(\mathrm{Im}\,z)^2}
$$
for all $z=\lambda+i\varepsilon$ with $\lambda\in\spc(A)$ and
$\varepsilon = (4K)^{\frac{1}{4}}\vee 4L$. Then
$$
	\spc(A) \subseteq \spc(B)+2C\varepsilon[-1,1].
$$
\end{lem}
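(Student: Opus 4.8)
The plan is to fix an arbitrary $\lambda\in\spc(A)$, evaluate the hypothesized resolvent inequality at the single point $z=\lambda+i\varepsilon$ with $\varepsilon=(4K)^{\frac14}\vee 4L$ (the prescribed value), and read off from \eqref{eq:distspec} that $\lambda$ lies within distance $2C\varepsilon$ of $\spc(B)$.

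First I would invoke the elementary fact recorded in \eqref{eq:distspec}: for a self-adjoint operator $T$ and $z\notin\mathbb{R}$ one has $\|(z\id-T)^{-1}\|=\frac{1}{d(z,\spc(T))}$, which is immediate from the spectral theorem. Taking $T=A$ and $z=\lambda+i\varepsilon$, and using that $\lambda\in\spc(A)$ forces $d(z,\spc(A))=\varepsilon$, this gives $\|(z\id-A)^{-1}\|=\frac1\varepsilon$; taking $T=B$ gives $\|(z\id-B)^{-1}\|=\frac{1}{d(z,\spc(B))}$.

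Next I would substitute these into the hypothesis and control the two error terms using the definition of $\varepsilon$: since $\varepsilon^4\ge 4K$ we have $\frac{K}{(\mathrm{Im}\,z)^5}=\frac{K}{\varepsilon^5}\le\frac{1}{4\varepsilon}$, and since $\varepsilon\ge 4L$ we have $\frac{L}{(\mathrm{Im}\,z)^2}=\frac{L}{\varepsilon^2}\le\frac{1}{4\varepsilon}$. The hypothesis then reads
\[
	\frac1\varepsilon \le \frac{C}{d(z,\spc(B))}+\frac1{2\varepsilon},
\]
so $d(z,\spc(B))\le 2C\varepsilon$. Since $\spc(B)$ is closed there is $\mu\in\spc(B)$ with $|z-\mu|\le 2C\varepsilon$, and as $\mu$ is real while $z=\lambda+i\varepsilon$ this yields $|\lambda-\mu|^2\le|z-\mu|^2\le(2C\varepsilon)^2$. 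Hence $\lambda\in\spc(B)+2C\varepsilon[-1,1]$, and letting $\lambda$ range over $\spc(A)$ gives the claim.

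There is essentially no obstacle here beyond bookkeeping: the whole argument is a one-line computation once \eqref{eq:distspec} is in hand. The only point that warrants attention is verifying that the prescribed $\varepsilon$ simultaneously absorbs the $K$- and $L$-error terms, each into a summand $\frac1{4\varepsilon}$, leaving exactly enough slack to conclude $d(z,\spc(B))\le 2C\varepsilon$; the factor $2$ in the conclusion is precisely what is lost in this step. (When $K=L=0$ one has $\varepsilon=0$ and the hypothesis is vacuous, but that degenerate case does not arise in the applications, where $\tilde v(X)>0$.)
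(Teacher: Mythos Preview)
Your proof is correct and follows essentially the same approach as the paper. The paper phrases the argument as a proof by contradiction (assuming $d(\lambda,\spc(B))>2C\varepsilon$ and deriving $\frac12<\frac{K}{\varepsilon^4}+\frac{L}{\varepsilon}\le\frac12$), whereas you bound the error terms first and conclude directly; the underlying computation is identical.
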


\begin{proof}
By \eqref{eq:distspec}, the assumption states that
$$
	\frac{1}{\varepsilon}
	\le
	\frac{C}{\sqrt{\varepsilon^2+d(\lambda,\spc(B))^2}}
	+ \frac{K}{\varepsilon^5} +
	\frac{L}{\varepsilon^2}\qquad
	\mbox{for all }\lambda\in\spc(A).
$$
If $d(\lambda,\spc(B))>2C\varepsilon$, we would have
$\frac{1}{2}<\frac{K}{\varepsilon^4} + \frac{L}{\varepsilon} \le
\frac{1}{2}$,
which entails a contradiction. Thus we have shown that 
$d(\lambda,\spc(B))\le 2C\varepsilon$
for all $\lambda\in\spc(A)$.
\end{proof}

Our aim is now to show that the condition of Lemma \ref{lem:resspec} holds 
with high probability for $A=X$ and $B=X_{\rm free}$. To this end, we 
begin by showing that the relevant condition holds with high probability 
for a given $z\in\mathbb{C}$.

\begin{lem}
\label{lem:ressingle}
Fix $z\in\mathbb{C}$ with $\mathrm{Im}\,z>0$. Then
$$
	\mathbf{P}\bigg[\|(z\id - X)^{-1}\| \ge
	\sqrt{e}
	\|(z\id - X_{\rm free})^{-1}\|
	+
	\sqrt{e}
	\frac{(\log d+3)^3}{3}
	\frac{\tilde v(X)^4}{(\mathrm{Im}\,z)^5}
	+ \frac{\sigma_*(X)}{(\mathrm{Im}\,z)^2}t\bigg]
	\le e^{-\frac{t^2}{2}}
$$
for all $t\ge 0$.
\end{lem}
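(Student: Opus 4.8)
The plan is to combine the resolvent moment bound of Theorem~\ref{thm:resmoments} with the Gaussian concentration estimate of Corollary~\ref{cor:gconcmtx}. Write $\varepsilon:=\mathrm{Im}\,z>0$ and $F(X):=\|(z\id-X)^{-1}\|$. The first step is to control $\EE F(X)$ by passing from the operator norm to normalized-trace moments. Since $|z\id-X|^{-2p}\ge 0$, its largest eigenvalue is bounded by its trace, so
$\|(z\id-X)^{-1}\|^{2p}=\lambda_{\max}(|z\id-X|^{-2p})\le \tr|z\id-X|^{-2p}=d\,\ntr|z\id-X|^{-2p}$;
taking expectations, extracting a $2p$-th root, and using Jensen's inequality gives
$\EE F(X)\le d^{\frac{1}{2p}}\,\EE[\ntr|z\id-X|^{-2p}]^{\frac{1}{2p}}$.
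On the free side, since ${\ntr}\otimes\tau$ is a state we have
$({\ntr}\otimes\tau)(|z\id-X_{\rm free}|^{-2p})^{\frac{1}{2p}}\le\|(z\id-X_{\rm free})^{-1}\|$.
Inserting these two facts into Theorem~\ref{thm:resmoments} yields, for every $p\in\mathbb{N}$,
$$
	\EE F(X)\le d^{\frac{1}{2p}}\Bigl(\|(z\id-X_{\rm free})^{-1}\|+\tfrac{(p+2)^3}{3}\,\tfrac{\tilde v(X)^4}{\varepsilon^5}\Bigr).
$$

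Next I would optimize the free parameter $p$. Choosing $p=\lceil\log d\rceil$, which is a positive integer because $d\ge 2$, gives $d^{\frac{1}{2p}}\le d^{\frac{1}{2\log d}}=\sqrt e$ and $p+2\le\log d+3$, hence
$$
	\EE F(X)\le \sqrt e\,\|(z\id-X_{\rm free})^{-1}\|+\sqrt e\,\tfrac{(\log d+3)^3}{3}\,\tfrac{\tilde v(X)^4}{\varepsilon^5}.
$$

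It remains to upgrade this expectation bound to a tail bound via Gaussian concentration. The key point is that $X\mapsto F(X)$ is $\varepsilon^{-2}$-Lipschitz on $\M_d(\mathbb{C})_{\rm sa}$ with respect to the operator norm: from the resolvent identity
$(z\id-X)^{-1}-(z\id-X')^{-1}=(z\id-X)^{-1}(X-X')(z\id-X')^{-1}$
and the bound $\|(z\id-X)^{-1}\|\le \varepsilon^{-1}$ valid for all self-adjoint $X$, we get
$|F(X)-F(X')|\le \|(z\id-X)^{-1}-(z\id-X')^{-1}\|\le \varepsilon^{-2}\|X-X'\|$.
Applying Corollary~\ref{cor:gconcmtx} with $L=\varepsilon^{-2}$ gives
$\PP[F(X)\ge \EE F(X)+s]\le e^{-s^2\varepsilon^4/2\sigma_*(X)^2}$;
substituting $s=\sigma_*(X)\varepsilon^{-2}t$ and using the expectation bound above produces precisely the claimed inequality.

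There is no serious obstacle here once Theorem~\ref{thm:resmoments} and Corollary~\ref{cor:gconcmtx} are in hand; the only points requiring a little care are the passage from normalized-trace moments to the operator norm, which costs the factor $d^{1/2p}$ and is exactly why $p$ is taken of order $\log d$ (the cubic-in-$p$ error term in Theorem~\ref{thm:resmoments} is mild enough that this choice essentially costs nothing), and checking that the Lipschitz estimate is applied on $\M_d(\mathbb{C})_{\rm sa}$, consistent with the self-adjoint setting maintained throughout Section~\ref{sec:norm}.
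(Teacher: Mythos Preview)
Your proposal is correct and follows essentially the same approach as the paper's proof: pass from the operator norm to the normalized-trace moments via $\ntr|M|\ge d^{-1}\|M\|$, apply Theorem~\ref{thm:resmoments} together with the trivial bound $({\ntr}\otimes\tau)(|z\id-X_{\rm free}|^{-2p})^{1/2p}\le\|(z\id-X_{\rm free})^{-1}\|$, choose $p=\lceil\log d\rceil$, and then upgrade the expectation bound to a tail bound via the $\varepsilon^{-2}$-Lipschitz property of the resolvent norm and Corollary~\ref{cor:gconcmtx}. The paper's proof is identical in structure and in the choice of constants.
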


\begin{proof}
Using that $\ntr |M| \ge \frac{1}{d}\|M\|$ for every 
$M\in\M_d(\mathbb{C})$, Theorem \ref{thm:resmoments} yields
$$
	d^{-\frac{1}{2p}}
	\EE\|(z\id - X)^{-1}\|
	\le
	\|(z\id - X_{\rm free})^{-1}\|
	+
	\frac{(p+2)^3}{3}
	\frac{\tilde v(X)^4}{(\mathrm{Im}\,z)^5}
$$
for every $p\in\mathbb{N}$. Choosing $p=\lceil \log d\rceil$ yields
$$
	\EE\|(z\id - X)^{-1}\|
	\le
	\sqrt{e}
	\|(z\id - X_{\rm free})^{-1}\|
	+
	\sqrt{e}
	\frac{(\log d+3)^3}{3}
	\frac{\tilde v(X)^4}{(\mathrm{Im}\,z)^5}.
$$
It remains to note that $F(X)=\|(z\id - X)^{-1}\|$ satisfies
\begin{equation}
\label{eq:reslip}
	|F(X)-F(Y)| \le
	\|(z\id - X)^{-1}(X- Y)(z\id - Y)^{-1}\|
	\le \frac{\|X-Y\|}{(\mathrm{Im}\,z)^2}
\end{equation}
for $X,Y\in\M_d(\mathbb{C})_{\rm sa}$,
so that the conclusion follows from Corollary \ref{cor:gconcmtx}.
\end{proof}

We must now show that $\|(z\id - X)^{-1}\|$ is small with high probability 
simultaneously for all $z=\lambda+i\varepsilon$ with $\lambda\in\spc(X)$.
To create the requisite uniformity in $z$, we first need a crude \emph{a 
priori} bound on the spectrum of $X$.

\begin{lem}
\label{lem:crudesupp}
For any $t\ge 0$, we have
$$
	\mathbf{P}[\spc(X)\subseteq
	\spc(A_0) + \sigma_*(X)\{d+t\}[-1,1]]
	\ge 1- e^{-\frac{t^2}{2}}.
$$
\end{lem}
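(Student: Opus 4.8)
The plan is to reduce to a crude bound on the deviation $\|\hat X\|=\|X-\mathbf{E}X\|$ and then invoke the elementary fact that an additive self-adjoint perturbation moves the spectrum by at most its operator norm. Since $A_0,\ldots,A_n$ are self-adjoint we have $\mathbf{E}X=A_0$ and $X=A_0+\hat X$. On the event $\{\|\hat X\|\le r\}$ one has $\spc(X)\subseteq\spc(A_0)+r[-1,1]$: if $\lambda\in\spc(X)$ with $d(\lambda,\spc(A_0))>r$, then $A_0-\lambda\id$ is invertible with $\|(A_0-\lambda\id)^{-1}\|=d(\lambda,\spc(A_0))^{-1}<r^{-1}$, so $X-\lambda\id=(A_0-\lambda\id)\big(\id+(A_0-\lambda\id)^{-1}\hat X\big)$ would be invertible, a contradiction. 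It therefore suffices to show $\mathbf{P}[\|\hat X\|\ge\sigma_*(X)\{d+t\}]\le e^{-t^2/2}$.

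To this end I would first estimate the expectation by passing to the Hilbert--Schmidt norm:
$$
	\mathbf{E}\|\hat X\| \le \big(\mathbf{E}\|\hat X\|_{\rm HS}^2\big)^{\frac12}
	= \Bigg(\sum_{i,j=1}^d \mathbf{E}|\langle e_i,\hat X e_j\rangle|^2\Bigg)^{\frac12}
	\le d\,\sigma_*(X),
$$
where the last inequality uses that each entrywise variance $\mathbf{E}|\langle e_i,\hat Xe_j\rangle|^2$ is bounded by $\sigma_*(X)^2$, directly from the definition of $\sigma_*(X)$ in Section \ref{sec:mainspec}. Next, $F(X):=\|X-\mathbf{E}X\|=\|\hat X\|$ is $1$-Lipschitz with respect to the operator norm on $\M_d(\mathbb{C})_{\rm sa}$, so the Gaussian concentration estimate of Corollary \ref{cor:gconcmtx} (with $L=1$) gives $\mathbf{P}[\|\hat X\|\ge\mathbf{E}\|\hat X\|+\sigma_*(X)t]\le e^{-t^2/2}$. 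Combining this with the expectation bound yields $\mathbf{P}[\|\hat X\|\ge\sigma_*(X)\{d+t\}]\le e^{-t^2/2}$, and the first paragraph then gives the claim.

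There is no real obstacle here: this is a deliberately crude \emph{a priori} localization of $\spc(X)$ inside a (dimension-dependent) interval, whose only role is to provide the compactness needed in the union-bound argument for Theorem \ref{thm:mainsp}. The sole point deserving a moment's care is the factor $d$, which comes entirely from the trivial bound $\|\cdot\|\le\|\cdot\|_{\rm HS}$ together with the fact that $\|\hat X\|_{\rm HS}^2$ is a sum of $d^2$ entrywise variances, each at most $\sigma_*(X)^2$; no cancellation or noncommutative Khintchine input is needed.
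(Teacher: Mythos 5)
Your proof is correct, and it reaches the same conclusion by a slightly different route than the paper. Both arguments reduce to showing $\mathbf{P}[\|X-A_0\|\ge\sigma_*(X)\{d+t\}]\le e^{-t^2/2}$ and then localize $\spc(X)$ by an elementary perturbation argument (you use the Neumann-series/resolvent factorization, the paper invokes Weyl's inequality; these are equivalent here). The difference is in how the mean deviation is controlled. The paper bounds $\|X-A_0\|\le\sigma_*(X)\|g\|$ \emph{pointwise} by Cauchy--Schwarz over unit vectors, then needs the reduction ``$n\le d^2$ without loss of generality'' (borrowed from the proof of Lemma \ref{lem:orthook}) to get $\EE\|g\|\le\sqrt{n}\le d$, and applies Gaussian concentration to the scalar function $g\mapsto\|g\|$. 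You instead bound $\EE\|X-A_0\|\le(\EE\|X-A_0\|_{\rm HS}^2)^{1/2}\le d\,\sigma_*(X)$ directly from the entrywise variances and apply Corollary \ref{cor:gconcmtx} to $F(X)=\|X-A_0\|$ with $L=1$. Your version is marginally cleaner in that it sidesteps the reparametrization to $n\le d^2$ coefficients; the paper's version is marginally more primitive in that it works with the raw Gaussian vector $g$. Both give exactly the constant $d$ in the leading term, so there is no quantitative gain either way.
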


\begin{proof}
By Weyl's inequality, we have 
$|\lambda_i(X)-\lambda_i(A_0)|\le\|X-A_0\|$ for every $i$, where 
$\lambda_i(X)$ denotes the $i$th largest eigenvalue of $X$. Thus
$$
	\spc(X) \subseteq \spc(A_0) + 
	\|X-A_0\|[-1,1].
$$
By Cauchy-Schwarz, we can crudely bound
$$
	\|X-A_0\| =
	\sup_{\|v\|=\|w\|=1}
	\Bigg|\sum_{i=1}^n g_i \langle v,A_i w\rangle\Bigg|
	\le \sigma_*(X)\|g\|.
$$
Thus we have shown
$$
	\mathbf{P}[\spc(X)\subseteq
	\spc(A_0) + \sigma_*(X)\{d+t\}[-1,1]]
	\ge
	\mathbf{P}[\|g\|\le d+t].
$$
But note that the argument in the proof of Lemma \ref{lem:orthook} shows
that we may assume $n\le d^2$ without loss of generality. Thus
$\EE\|g\|\le \sqrt{n}\le d$. It remains to note that
$$
	\mathbf{P}[\|g\|\ge d+t] \le 
	\mathbf{P}[\|g\|\ge \EE\|g\| + t] 
	\le e^{-\frac{t^2}{2}}
$$
by Lemma \ref{lem:gconc}.
\end{proof}

We are now ready to prove a uniform analogue of Lemma \ref{lem:ressingle}.

\begin{lem}
\label{lem:resuniform}
Fix $\varepsilon>0$. Then
\begin{multline*}
	\mathbf{P}\bigg[
	\|(z\id - X)^{-1}\| \le
	\sqrt{e}
	\|(z\id - X_{\rm free})^{-1}\|
	+ 
	\sqrt{e}
	\frac{(\log d+3)^3}{3}
	\frac{\tilde v(X)^4}{(\mathrm{Im}\,z)^5}
\\
	+ (\sqrt{e}+2) \frac{\sigma_*(X)}{(\mathrm{Im}\,z)^2}
	(4\sqrt{\log d}+t)
	\quad\mbox{for all }z\in \spc(X)+i\varepsilon
	\bigg] \ge
	1-e^{-\frac{t^2}{2}}
\end{multline*}
for all $t\ge 0$.
\end{lem}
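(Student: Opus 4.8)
The plan is to upgrade the pointwise resolvent estimate of Lemma~\ref{lem:ressingle} to a uniform one over $z\in\spc(X)+i\varepsilon$ by an $\varepsilon$-net argument, after first confining $\spc(X)$ to a bounded set by means of the crude a priori bound of Lemma~\ref{lem:crudesupp}. Concretely, I would first apply Lemma~\ref{lem:crudesupp} with parameter $t_1:=\sqrt{t^2+2\log 2}$ to conclude that, off an event $\Omega_1$ with $\mathbf P[\Omega_1]\le\tfrac12 e^{-t^2/2}$, one has $\spc(X)\subseteq I:=\spc(A_0)+\sigma_*(X)\{d+t_1\}[-1,1]$. The key feature to exploit is that $\spc(A_0)$ has at most $d$ elements, so $I$ is a union of at most $d$ intervals of length $2\sigma_*(X)(d+t_1)$ each, hence of total length $\lesssim d^2\sigma_*(X)(1+t)$ \emph{with no dependence on $\|A_0\|$}.

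Next I would fix a $\sigma_*(X)$-net $\mathcal N\subseteq\mathbb R$ of $I$, so that $|\mathcal N|\lesssim d^2(1+t)$, and apply Lemma~\ref{lem:ressingle} at each point $z_0=\lambda_0+i\varepsilon$, $\lambda_0\in\mathcal N$, with tail parameter $s:=\sqrt{t^2+2\log(2|\mathcal N|)}$; a union bound then shows that off an event $\Omega_2$ with $\mathbf P[\Omega_2]\le|\mathcal N|e^{-s^2/2}\le\tfrac12 e^{-t^2/2}$ the estimate
$$\|(z_0\id-X)^{-1}\|\le\sqrt e\,\|(z_0\id-X_{\rm free})^{-1}\|+\sqrt e\,\frac{(\log d+3)^3}{3}\frac{\tilde v(X)^4}{\varepsilon^5}+\frac{\sigma_*(X)}{\varepsilon^2}\,s$$
holds simultaneously for every $\lambda_0\in\mathcal N$. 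To pass from the net to arbitrary $z=\lambda+i\varepsilon$ with $\lambda\in I$, I would pick $\lambda_0\in\mathcal N$ with $|\lambda-\lambda_0|\le\sigma_*(X)$ and use the resolvent identity $\|(z\id-M)^{-1}-(z_0\id-M)^{-1}\|\le|z-z_0|\,\varepsilon^{-2}$ for both $M=X$ and $M=X_{\rm free}$ (cf.\ \eqref{eq:reslip}); this transfers the estimate from $z_0$ to $z$ at the cost of an additional $(1+\sqrt e)\sigma_*(X)\varepsilon^{-2}$. On $\Omega_1^{\,c}\cap\Omega_2^{\,c}$ one then has $\spc(X)\subseteq I$ and the desired bound for all $z$ with $\mathrm{Re}\,z\in I$, in particular for all $z\in\spc(X)+i\varepsilon$; since $\mathbf P[\Omega_1\cup\Omega_2]\le e^{-t^2/2}$, it only remains to check that $s+1+\sqrt e\le(\sqrt e+2)(4\sqrt{\log d}+t)$, which follows from $|\mathcal N|\lesssim d^2(1+t)$ together with $\sqrt{a+b}\le\sqrt a+\sqrt b$, $\sqrt{2\log(1+t)}\le t+\tfrac12$, and $d\ge2$.

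I expect the only delicate point to be the constant bookkeeping in the last step: one must extract precisely the coefficient $4$ of $\sqrt{\log d}$ and, more importantly, keep every error term free of $\|A_0\|$. This is exactly what the two design choices above are meant to handle — bounding $I$ by its \emph{number} of intervals rather than its diameter, and fixing the a priori parameter $t_1$ first so that it enters $|\mathcal N|$ only as a harmless additive $O(t)$ before the union-bound parameter $s$ is chosen. This is the one step where it genuinely matters that $X$ is a finite $d\times d$ matrix, so that $\spc(A_0)$ is finite.
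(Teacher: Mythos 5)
Your proposal is correct and follows essentially the same route as the paper: confine $\spc(X)$ a priori via Lemma~\ref{lem:crudesupp} (exploiting that $\spc(A_0)$ has at most $d$ points so the confinement set is a union of at most $d$ intervals, independent of $\|A_0\|$), build a finite net, apply Lemma~\ref{lem:ressingle} pointwise with a union bound, and transfer to arbitrary $z$ via the resolvent Lipschitz estimate \eqref{eq:reslip} applied to both $X$ and $X_{\rm free}$. The only differences are organizational: the paper uses a $t$-dependent mesh $\sigma_*(X)t$ so that $|\mathcal{N}_t|\le 2d(d+t)/t$ and performs a single parameter shift $t\mapsto t+4\sqrt{\log d}$ at the very end, whereas you fix the mesh at $\sigma_*(X)$, split the probability budget between the a priori bound and the union bound, and absorb the net cardinality into the union-bound parameter $s$; both lead to the same constants.
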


\begin{proof}
Define the (nonrandom) set
$$
	\Omega_t := \spc(A_0) + \sigma_*(X)\{d+t\}[-1,1]\subset\mathbb{R}.
$$
As $A_0$ has at most $d$ distinct eigenvalues, $\Omega_t$
is the union of at most $d$ intervals of length $2\sigma_*(X)\{d+t\}$.
We can therefore find $\mathcal{N}_t\subset\Omega_t$ of cardinality
$|\mathcal{N}_t|\le \frac{2d(d+t)}{t}$ such that each 
$\lambda\in\Omega_t$ satisfies $d(\lambda,\mathcal{N}_t)\le\sigma_*(X)t$.

Now note that we can estimate as in \eqref{eq:reslip}
$$
	|\|(z\id -X)^{-1}\|-\|(z'\id-X)^{-1}\|| \le
	\frac{|z-z'|}{\mathrm{Im}\,z\cdot\mathrm{Im}\,z'},
$$
and similarly for $X_{\rm free}$.
We therefore obtain
\begin{alignat*}{2}
	&\mathbf{P}\bigg[
	\|(z\id - X)^{-1}\| \le
	\sqrt{e}
	\|(z\id - && X_{\rm free})^{-1}\|
	+ 
	\sqrt{e}
	\frac{(\log d+3)^3}{3}
	\frac{\tilde v(X)^4}{(\mathrm{Im}\,z)^5}
\\
& &&
	+ (\sqrt{e}+2) \frac{\sigma_*(X)}{(\mathrm{Im}\,z)^2}t
	\quad\mbox{for all }z\in \Omega_t+i\varepsilon
	\bigg] \ge\mbox{}
\\
	&
	\mathbf{P}\bigg[\|(z\id - X)^{-1}\| \le
	\sqrt{e}
	\|(z\id - && X_{\rm free})^{-1}\|
	+ 
	\sqrt{e}
	\frac{(\log d+3)^3}{3}
	\frac{\tilde v(X)^4}{(\mathrm{Im}\,z)^5} 
\\
& &&
	+ \frac{\sigma_*(X)}{(\mathrm{Im}\,z)^2}t
	\quad\mbox{for all }z\in \mathcal{N}_t+i\varepsilon
	\bigg]
	\ge
	1-|\mathcal{N}_t|e^{-\frac{t^2}{2}},
\end{alignat*}
where we used that $\mathrm{Im}\,z=\mathrm{Im}\,z'=\varepsilon$ for
$z,z'\in\Omega_t+i\varepsilon$ in the first inequality, and we used the 
union bound and Lemma \ref{lem:ressingle} in the second inequality. In 
particular,
\begin{multline*}
	\mathbf{P}\bigg[
	\|(z\id - X)^{-1}\| \le
	\sqrt{e}
	\|(z\id - X_{\rm free})^{-1}\|
	+ 
	\sqrt{e}
	\frac{(\log d+3)^3}{3}
	\frac{\tilde v(X)^4}{(\mathrm{Im}\,z)^5}
\\
	+ (\sqrt{e}+2) \frac{\sigma_*(X)}{(\mathrm{Im}\,z)^2}t
	\quad\mbox{for all }z\in \spc(X)+i\varepsilon
	\bigg] \ge
	1-(|\mathcal{N}_{t}|+1)e^{-\frac{t^2}{2}}
\end{multline*}
by Lemma \ref{lem:crudesupp}.
It remains to note that 
$(|\mathcal{N}_{t+a}|+1)e^{-\frac{(t+a)^2}{2}}\le e^{-\frac{t^2}{2}}$
if we choose $a=4\sqrt{\log d}$ (recalling the standing assumption
$d\ge 2$).
\end{proof}

The proof of Theorem \ref{thm:mainsp} now follows readily.

\begin{proof}[Proof of Theorem \ref{thm:mainsp}]
Combining Lemmas \ref{lem:resspec} and \ref{lem:resuniform} yields
$$
	\mathbf{P}\big[
	\spc(X)\subseteq \spc(X_{\rm free}) +
	C\{
	\tilde v(X)(\log d)^{\frac{3}{4}}
	+ \sigma_*(X)(\sqrt{\log d}+t)\}
	[-1,1]\big] \ge
	1-e^{-t^2}
$$
for all $t\ge 0$, where $C$ is a universal constant.
It remains to note that we can estimate
$\sigma_*(X)\sqrt{\log d}\lesssim\tilde v(X)(\log d)^{\frac{3}{4}}$
as $\sigma_*(X)\le \tilde v(X)$.
\end{proof}

\subsection{Proof of Corollary \ref{cor:norm}}

The deduction of Corollary \ref{cor:norm} from Theorem \ref{thm:mainsp}
is nearly immediate; we spell out the details for completeness.

\begin{proof}[Proof of Corollary \ref{cor:norm}]
When $A_0,\ldots,A_n\in\M_d(\mathbb{C})_{\rm sa}$ are self-adjoint, the 
probability bound follows immediately from Theorem \ref{thm:mainsp}. This 
bound extends directly to general $A_0,\ldots,A_n\in\M_d(\mathbb{C})$
by Remark \ref{rem:sa}. The bound on the expectation is now obtained by
integrating the probability bound. More precisely, we have
\begin{align*}
	&\EE[(\|X\|-\|X_{\rm free}\|-C\tilde v(X)(\log d)^{\frac{3}{4}})_+]
	\\
	&=
	\int_0^\infty
	\mathbf{P}[\|X\|\ge \|X_{\rm free}\|
	+C\tilde v(X)(\log d)^{\frac{3}{4}}+s]\,ds
	\\
	&\le
	\int_0^\infty
	e^{-s^2/C^2\sigma_*(X)^2}\,ds
	=C'\sigma_*(X)
\end{align*}
for a universal constant $C'$. It follows that
$$
	\EE\|X\|
	\le
	\|X_{\rm free}\|+C\tilde v(X)(\log d)^{\frac{3}{4}}
	+ C'\sigma_*(X).
$$
It remains to note that as $\sigma_*(X)\le\tilde v(X)$, the last term may 
be eliminated at the expense of choosing a slightly larger universal
constant $C$.
\end{proof}

\section{Strong asymptotic freeness}
\label{sec:strongfree}

The aim of this section is to prove our results on asymptotic freeness 
that were formulated in Section \ref{sec:mainsaf}. The proof of Theorem 
\ref{thm:free} is divided into two parts. In Section \ref{sec:pfweakfree} 
we will prove weak asymptotic freeness (part \textit{a}). This part of the 
proof is elementary and uses only the basic estimates of Section 
\ref{sec:wxvx}; when specialized to Wigner matrices, it yields a 
self-contained proof of Voiculescu's Theorem \ref{thm:voic}. In Section 
\ref{sec:pfstrongfree}, we will prove strong asymptotic freeness (part 
\textit{b}) by combining Theorem \ref{thm:mainsp} with the linearization 
trick of \cite{HT05} and concentration estimates. Finally, Corollary 
\ref{cor:oldie} will be deduced from Theorem \ref{thm:free} in Section 
\ref{sec:pfcoroldie}.

\subsection{Weak asymptotic freeness}
\label{sec:pfweakfree}

The aim of this section is to prove part \textit{a} of Theorem 
\ref{thm:free}. By linearity of the trace, it evidently suffices to 
assume
$$
	p(H_1,\ldots,H_m)=H_{k_1}\cdots H_{k_q}
$$
is a monomial of degree $q$ for some $q\in\mathbb{N}$ and $1\le 
k_1,\ldots,k_q\le m$. This assumption will be made throughout the 
proof of part \textit{a} of Theorem \ref{thm:free}.

Throughout this section, we let $H_1^N,\ldots,H_m^N$ be defined as in 
Theorem \ref{thm:free}. We begin with some preliminary observations. 
First, we note the following.

\begin{lem}
\label{lem:wafnck}
We have $\sup_{N,k}\EE[\ntr |H_k^N-\EE[H_k^N]|^q]^{\frac{1}{q}} <\infty$
for every $q\in\mathbb{N}$.
\end{lem}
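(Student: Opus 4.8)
The plan is to reduce the bound to a statement about the spectral norm, for which the free probability machinery of the paper is perfectly suited. Write $\hat H_k^N := H_k^N - \mathbf{E}[H_k^N]$, so that $\hat H_k^N$ is a centered self-adjoint Gaussian matrix. The quantity $\mathbf{E}[\ntr |\hat H_k^N|^q]^{1/q}$ is an $L^q$-type norm of the spectral distribution of $\hat H_k^N$, and the key observation is that it is dominated (up to a constant depending only on $q$) by $\mathbf{E}\|\hat H_k^N\|$: indeed, for any self-adjoint matrix $M$ one has $\ntr|M|^q \le \|M\|^q$, hence $\mathbf{E}[\ntr|\hat H_k^N|^q]^{1/q} \le \mathbf{E}[\|\hat H_k^N\|^q]^{1/q}$, and by Gaussian concentration (Corollary \ref{cor:gconcmtx} applied to $F = \|\cdot\|$, which is $1$-Lipschitz) together with the standard bound relating $L^q$-norms of Lipschitz functions of Gaussians to their mean and Lipschitz constant, $\mathbf{E}[\|\hat H_k^N\|^q]^{1/q} \lesssim_q \mathbf{E}\|\hat H_k^N\| + \sqrt{q}\,\sigma_*(\hat H_k^N)$.

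So it suffices to show $\sup_{N,k}\mathbf{E}\|\hat H_k^N\| < \infty$. For this I would invoke Corollary \ref{cor:norm}: $\mathbf{E}\|\hat H_k^N\| \le \|X_{\rm free}\| + C\tilde v(\hat H_k^N)(\log d)^{3/4}$, where $X_{\rm free}$ is the free model associated to $\hat H_k^N$ and $\tilde v^2 = v\,\sigma$. By Lemma \ref{lem:freenck}, $\|X_{\rm free}\| \le \sigma(\hat H_k^N)^2 \cdot (\text{const})$—more precisely $\|X_{\rm free}\| \le \|\sum A_i^*A_i\|^{1/2} + \|\sum A_iA_i^*\|^{1/2} = 2\sigma(\hat H_k^N)$ in the self-adjoint case. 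Now $\sigma(\hat H_k^N)^2 = \|\mathbf{E}[(\hat H_k^N)^2]\| = \|\mathbf{E}[(H_k^N)^2] - \mathbf{E}[H_k^N]^2\|$, which converges to $\|\id\| = 1$ since $\|\mathbf{E}[(H_k^N)^2] - \id\| \to 0$ and $\|\mathbf{E}[H_k^N]\| \to 0$; hence $\sigma(\hat H_k^N)$ is bounded uniformly in $N,k$. Finally, the hypothesis $v(H_k^N) = o(1)$ (together with $v(\hat H_k^N) = v(H_k^N)$, as $v$ depends only on the centered part) gives $v(\hat H_k^N) \to 0$, and since $\sigma$ is bounded, $\tilde v(\hat H_k^N) = v^{1/2}\sigma^{1/2} \to 0$; however, we must be careful that $\tilde v(\hat H_k^N)(\log d)^{3/4}$ stays bounded, which requires $v(\hat H_k^N) = o((\log d)^{-3/2})$ rather than merely $v = o(1)$.

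This last point is the one subtlety to watch. Under the hypotheses of part \textit{a} we only know $v(H_k^N) = o(1)$, not $v = o((\log d)^{-3/2})$, so the bound from Corollary \ref{cor:norm} alone does not obviously give a uniform bound on $\mathbf{E}\|\hat H_k^N\|$. The fix is to not use Corollary \ref{cor:norm} here but instead the crude noncommutative Khintchine upper bound \eqref{eq:nck}: $\mathbf{E}\|\hat H_k^N\| \lesssim \sigma(\hat H_k^N)\sqrt{\log d}$—but this still has a $\sqrt{\log d}$ factor. The cleanest route is actually to bound moments directly: $\mathbf{E}[\ntr|\hat H_k^N|^q]$ can be estimated by the combinatorial moment formula and the elementary inequality $|\ntr[A_{i_1}\cdots A_{i_q}]| \le \prod\|A_{i_j}\|_{\rm HS}$-type bounds, or simply note $\mathbf{E}[\ntr(\hat H_k^N)^{2p}] = \sum_{\pi \in \mathrm{P}_2([2p])}\sum_{(i_1,\ldots)\sim\pi}\ntr[A_{i_1}\cdots A_{i_{2p}}]$ and each term is controlled once one uses that $\sum_i A_i^2 = \mathbf{E}[(\hat H_k^N)^2]$ has norm $\to 1$, together with a genus/pairing count. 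In fact the simplest self-contained argument: by Corollary \ref{cor:gconcmtx}, $\|\hat H_k^N\|$ concentrates around its mean with sub-Gaussian constant $\sigma_*(\hat H_k^N) \le v(\hat H_k^N) \le \sigma(\hat H_k^N) = O(1)$, so all $L^q$ norms of $\|\hat H_k^N\|$ are within $O_q(1)$ of $\mathbf{E}\|\hat H_k^N\|$; and $\mathbf{E}\|\hat H_k^N\| \le \mathbf{E}[\ntr|\hat H_k^N|^{2p}]^{1/2p} \cdot d^{1/2p}$, choosing $p = \lceil\log d\rceil$ makes $d^{1/2p} \le e$, and $\mathbf{E}[\ntr|\hat H_k^N|^{2p}]^{1/2p} \le \sqrt{2p}\,\sigma(\hat H_k^N)$ by the combinatorial formula and $|\mathrm{P}_2([2p])| = (2p-1)!! \le (2p)^p$ together with $\ntr|A_{i_1}\cdots A_{i_{2p}}| \le \|\sum_i A_i^2\|^p = \sigma^{2p}$ after reorganizing the sum over pairings—this gives $\mathbf{E}\|\hat H_k^N\| \lesssim \sqrt{\log d}\,\sigma(\hat H_k^N)$, which is still not uniformly bounded. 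Thus the honest statement is that Lemma \ref{lem:wafnck} as I have set it up is being used only to control moments, and the $\sqrt{\log d}$-free bound genuinely needs $\|X_{\rm free}\|$, i.e.\ Corollary \ref{cor:norm} with $v = o((\log d)^{-3/2})$. I expect the actual argument in the paper sidesteps this by noting that for the purposes of part \textit{a} one does not need $\mathbf{E}\|\hat H_k^N\|$ bounded but only $\mathbf{E}[\ntr|\hat H_k^N|^q]$ bounded for each \emph{fixed} $q$, and this fixed-$q$ moment bound follows directly from the moment formula and $\|\sum_i A_i^2\| = \sigma^2 = O(1)$ without any logarithmic factor: indeed $\mathbf{E}[\ntr(\hat H_k^N)^{2p}] \le C_p\,\sigma^{2p}$ where $C_p = |\mathrm{P}_2([2p])|$ depends only on $p$, because grouping the sum over $(i_1,\ldots,i_{2p})\sim\pi$ for a \emph{fixed} pairing $\pi$ and using $\ntr|\cdot| \le \|\cdot\|$ along a telescoping of the paired $A_i$'s bounds each fixed-$\pi$ contribution by $\sigma^{2p}$. \textbf{The main obstacle} is precisely making this last grouping rigorous—showing $\sum_{(i_1,\ldots,i_{2p})\sim\pi}\ntr[A_{i_1}\cdots A_{i_{2p}}] \le \sigma(\hat H_k^N)^{2p}$ for each fixed pairing $\pi$—which is a standard but slightly fiddly argument permuting the matrices within the trace (one can, e.g., bound $|\ntr[\cdots]|$ by collapsing innermost paired blocks $A_i \cdots A_i$ using $\|\sum_i A_i M A_i\| \le \|M\|\,\|\sum_i A_i^2\|$ for $M \ge 0$, or use Cauchy–Schwarz repeatedly); everything else is routine.
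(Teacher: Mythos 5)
The paper's proof is a one-liner: the hypotheses give $\sigma(H_k^N)^2 = \|\EE(H_k^N - \EE H_k^N)^2\| = 1 + o(1)$, and then the \emph{moment form} of the noncommutative Khintchine inequality --- $\EE[\ntr|X|^{2p}]^{1/2p} \lesssim \sqrt{p}\,\sigma(X)$, with \emph{no} dimensional factor --- gives the claim directly (the citations are to \cite[\S 9.8]{Pis03} and \cite[\S 3.1]{vH17}). You land on exactly this observation in your last paragraph --- only fixed-$q$ moments are needed, and the $\sqrt{\log d}$ in \eqref{eq:nck} only arises when one converts moments into a bound on the operator norm --- but it is buried under dead ends that occupy most of the proposal and should be cut. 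Corollary~\ref{cor:norm} is unavailable because it needs $v = o((\log d)^{-3/2})$, which part~\textit{a} does not assume; the norm form of \eqref{eq:nck} carries an irremovable $\sqrt{\log d}$; and bounding $\EE\|H_k^N - \EE H_k^N\|$ was never what the lemma required. (You also assert $v(X) \le \sigma(X)$ at one point, which is false in general; the paper only gives $\sigma_*(X)\le v(X)$ and $\sigma_*(X) \le \sigma(X)$, and fortunately only the latter is what the concentration step uses.)

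The step you call ``the main obstacle'' --- the per-pairing bound $|\sum_{(i_1,\ldots,i_{2p})\sim\pi}\ntr[A_{i_1}\cdots A_{i_{2p}}]| \le \sigma(X)^{2p}$ for \emph{every} $\pi\in\mathrm{P}_2([2p])$ --- is indeed the crux of a self-contained combinatorial proof of the moment-form noncommutative Khintchine inequality, and it is genuinely nontrivial for crossing pairings: the nested-collapse estimate $\|\sum_i A_iMA_i\| \le \|M\|\,\|\sum_i A_i^2\|$ handles only noncrossing $\pi$, and crossings require a separate argument (iterated Cauchy--Schwarz, or a moment recursion via Gaussian integration by parts as in \cite[\S 3.1]{vH17}). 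The paper does not re-derive any of this; it simply invokes the cited theorem after noting $\sigma = O(1)$. So you ultimately identify the correct statement to use, but you leave its proof as an acknowledged obstacle that the paper handles by citation --- the long detour before that realization is the part that must go.
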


\begin{proof}
By assumption, $\sigma(H_k^N)^2 = \|\EE (H_k^N-\EE[H_k^N])^2\| = 1+o(1)$. 
The conclusion follows by the noncommutative Khintchine inequality, cf.\
\cite[\S 9.8]{Pis03} or \cite[\S 3.1]{vH17}.
\end{proof}

Before we proceed to the main part of the proof, we perform a simple 
reduction: we show that it suffices to assume $\EE[H_k^N]=0$. This 
elementary observation will avoid unnecessary notational complications.

\begin{lem}
\label{lem:wafred}
Denote $\bar H_k^N := H_k^N - \EE[H_k^N]$. Then we have 
$$
	\lim_{N\to\infty}
	\EE \ntr |H_{k_1}^N\cdots H_{k_q}^N-\bar H_{k_1}^N\cdots 
	\bar H_{k_q}^N|
	= 0.
$$
\end{lem}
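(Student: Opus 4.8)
The goal is to show that replacing each $H_k^N$ by its centered version $\bar H_k^N$ does not affect the limit of the normalized trace of a monomial. The natural approach is a telescoping identity combined with the crude operator-norm bound on the difference and the $L^q$ bound from Lemma \ref{lem:wafnck}. First I would write
$$
	H_{k_1}^N\cdots H_{k_q}^N - \bar H_{k_1}^N\cdots \bar H_{k_q}^N
	= \sum_{j=1}^q H_{k_1}^N\cdots H_{k_{j-1}}^N\,
	\big(\EE[H_{k_j}^N]\big)\,\bar H_{k_{j+1}}^N\cdots \bar H_{k_q}^N,
$$
so that it suffices to bound $\EE\ntr|H_{k_1}^N\cdots H_{k_{j-1}}^N\,\EE[H_{k_j}^N]\,\bar H_{k_{j+1}}^N\cdots\bar H_{k_q}^N|$ for each fixed $j$.

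The key estimates are: (i) $\|\EE[H_k^N]\|\to 0$ by hypothesis, which makes the inserted factor negligible in operator norm; (ii) the remaining product of $q-1$ factors has bounded normalized $L^1(S_1)$-trace-norm, which follows by a generalized H\"older inequality $\EE\ntr|Y_1\cdots Y_{q-1}|\le \prod \EE[\ntr|Y_i|^{q-1}]^{1/(q-1)}$ together with Lemma \ref{lem:wafnck} (noting that $\EE[\ntr|H_k^N|^{q-1}]^{1/(q-1)}$ is bounded since $H_k^N=\bar H_k^N+\EE[H_k^N]$ and both pieces are bounded in the relevant norm, the second even in operator norm). More precisely, using $|\ntr(AB)|\le \|A\|\,\ntr|B|$ with $A=\EE[H_{k_j}^N]$ pulled into the middle via cyclicity (or simply $\ntr|UBV|\le\|U\|\,\|V\|\,\ntr|B|$ is false in general, so I would instead bound $\ntr|H_{k_1}^N\cdots \EE[H_{k_j}^N]\cdots\bar H_{k_q}^N|\le \|\EE[H_{k_j}^N]\|\,\ntr|\text{rest as a single trace-class estimate}|$ carefully), each summand is $\le \|\EE[H_{k_j}^N]\|$ times a quantity bounded uniformly in $N$. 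Summing over $j$ and letting $N\to\infty$ gives the claim.

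The one point requiring care is the correct way to extract the operator norm of the mean from inside a product when only the trace norm of the product is controlled; the clean tool is the H\"older-type bound $\EE\ntr|Y_1\cdots Y_q|\le\prod_{i=1}^q\EE[\ntr|Y_i|^q]^{1/q}$ applied with $Y_{i}=H^N$ or $\bar H^N$ for the bounded factors and one factor equal to $\EE[H^N_{k_j}]$, whose $\ntr|\cdot|^q$ is at most $\|\EE[H^N_{k_j}]\|^q\to 0$. This reduces everything to Lemma \ref{lem:wafnck} plus the isotropy hypothesis $\|\EE[H_k^N]\|\to0$, and no real obstacle arises beyond bookkeeping. With Lemma \ref{lem:wafred} in hand, the remainder of part \textit{a} may then proceed under the simplifying assumption $\EE[H_k^N]=0$.
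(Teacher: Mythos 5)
Your proof is correct and takes essentially the same route as the paper: a telescoping decomposition of $H_{k_1}^N\cdots H_{k_q}^N - \bar H_{k_1}^N\cdots\bar H_{k_q}^N$ into $q$ terms, each containing one factor $\EE[H_{k_j}^N]$, followed by the $q$-fold H\"older inequality $\EE\ntr|Y_1\cdots Y_q|\le\prod_{i}\EE[\ntr|Y_i|^q]^{1/q}$ and Lemma~\ref{lem:wafnck} to control the remaining $q-1$ factors uniformly in $N$. (Your telescoping puts the uncentered factors on the left and the centered ones on the right, the paper does the reverse; both are valid.) One small aside: the inequality $\ntr|UBV|\le\|U\|\,\|V\|\,\ntr|B|$ that you dismiss as ``false in general'' is in fact true --- it is precisely the Schatten-norm H\"older inequality $\|UBV\|_1\le\|U\|_\infty\|B\|_1\|V\|_\infty$ --- but since you correctly fall back on the $L^q(S_q)$ H\"older bound, which is also what the paper uses, this misstatement has no effect on the argument.
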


\begin{proof}
Note that
$$
	H_{k_1}^N\cdots H_{k_q}^N-\bar H_{k_1}^N\cdots 
        \bar H_{k_q}^N =
	\sum_{l=1}^q
	\bar H_{k_1}^N\cdots
	\bar H_{k_{l-1}}^N
	\EE[H_{k_l}^N]
	H_{k_{l+1}}^N\cdots H_{k_q}^N.
$$
Thus
$$
	\EE\ntr|H_{k_1}^N\cdots H_{k_q}^N-\bar H_{k_1}^N\cdots 
        \bar H_{k_q}^N|
	\le
	q\max_{k,l}\|\EE[H_k^N]\|
	\{(\EE \ntr |\bar H_l^N|^q)^{\frac{1}{q}}+
	\|\EE[H_l^N]\|
	\}^{q-1}
$$
by H\"older's inequality. As $\|\EE[H_k^N]\|=o(1)$, it remains to note 
that $\EE\ntr|\bar H_k^N|^q$ is uniformly bounded as $N\to\infty$
by Lemma \ref{lem:wafnck}.
\end{proof}

By Lemma \ref{lem:wafred}, we can assume without loss of generality in the 
remainder of the proof of part \textit{a} of Theorem \ref{thm:free} that 
$\EE[H_k^N]=0$ for all $k$.

We now turn the the main part of the proof. The basic tool we will use is 
the classical Wick formula for Gaussian moments \cite[Theorem 22.3]{NS06}, 
which should be compared with its free counterpart in Definition 
\ref{defn:freefam}.

\begin{lem}[Wick formula]
\label{lem:wick}
Let $g_1,\ldots,g_n$ be i.i.d.\ standard Gaussians. Then
$$
	\EE[g_{k_1}\cdots g_{k_q}] =
	\sum_{\pi\in\mathrm{P}_2([q])}
	\prod_{\{i,j\}\in\pi} \delta_{k_ik_j}
$$
for every $q\ge 1$ and $k_1,\ldots,k_q\in[n]$.
\end{lem}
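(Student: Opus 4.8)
The plan is to prove the Wick formula by induction on $q$, with Gaussian integration by parts providing the recursive step. The base case handles $q=1$ trivially, since $\EE[g_{k_1}]=0$ and $\mathrm{P}_2([1])=\emptyset$; more generally, I would first dispose of all odd $q$ at once by noting that $(g_1,\ldots,g_n)$ is equidistributed with $(-g_1,\ldots,-g_n)$, so $\EE[g_{k_1}\cdots g_{k_q}]$ equals its own negative and hence vanishes whenever $q$ is odd, in agreement with $\mathrm{P}_2([q])=\emptyset$ for odd $q$.

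For the inductive step I would record the elementary Gaussian integration-by-parts identity: for any polynomial $\varphi:\mathbb{R}^n\to\mathbb{R}$ (more generally, any smooth function of polynomial growth) and any index $j$,
$$
	\EE[g_j\,\varphi(g)] = \EE\!\left[\frac{\partial\varphi}{\partial x_j}(g)\right],
$$
obtained by integrating by parts in the $j$-th variable against the density $(2\pi)^{-n/2}e^{-\|x\|^2/2}$ and using $\partial_{x_j}e^{-\|x\|^2/2}=-x_j e^{-\|x\|^2/2}$. Applying this with $j=k_1$ and $\varphi(x)=x_{k_2}\cdots x_{k_q}$ yields the recursion
$$
	\EE[g_{k_1}g_{k_2}\cdots g_{k_q}] = \sum_{l=2}^q \delta_{k_1k_l}\,\EE\!\left[\prod_{\substack{2\le m\le q\\ m\ne l}} g_{k_m}\right].
$$

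To close the induction I would then observe that the right-hand side of the claimed identity obeys exactly the same recursion: any pairing $\pi\in\mathrm{P}_2([q])$ is determined uniquely by the partner $l\in\{2,\ldots,q\}$ of the element $1$ together with a pairing $\pi'$ of the remaining $q-2$ indices, and $\prod_{\{i,j\}\in\pi}\delta_{k_ik_j}=\delta_{k_1k_l}\prod_{\{i,j\}\in\pi'}\delta_{k_ik_j}$. Splitting $\sum_{\pi\in\mathrm{P}_2([q])}$ according to $l$ and relabeling the remaining indices as $[q-2]$, the inductive hypothesis applied to the $(q-2)$-fold product then matches the recursion above term by term. Since both sides satisfy the same recursion with the same (odd/$q=1$) base case, they coincide for all $q$. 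The only point requiring care is the justification of differentiation under the integral in the integration-by-parts step, but this is routine here because every function in sight is a polynomial and all Gaussian moments are finite; alternatively, one may avoid it entirely by differentiating the moment generating function $\EE[e^{\langle t,g\rangle}]=e^{\|t\|^2/2}$ in $t_{k_1},\ldots,t_{k_q}$ at $t=0$ and reading the combinatorics off the power-series expansion of $e^{\|t\|^2/2}$, where only the degree-$q$ term $\frac{1}{(q/2)!\,2^{q/2}}(\sum_i t_i^2)^{q/2}$ contributes and the factors $(q/2)!$ and $2^{q/2}$ cancel exactly against the count of orderings of the $q/2$ pairs and of the two elements within each pair.
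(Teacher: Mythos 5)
Your proof is correct and complete. The paper does not prove this lemma at all---it simply cites it as the classical Wick formula, referring to Theorem~22.3 of Nica--Speicher~\cite{NS06}---so there is no in-paper argument to compare against. Your inductive argument via Gaussian integration by parts (Stein's identity $\EE[g_j\varphi(g)]=\EE[\partial_j\varphi(g)]$) is the standard textbook proof and is carried out correctly: the odd-$q$ base case via sign symmetry, the derivative computation producing the $\sum_{l}\delta_{k_1k_l}$ recursion, and the bijection between pairings of $[q]$ and a choice of partner for $1$ plus a pairing of the remaining $q-2$ indices all check out. The alternative sketch via the moment generating function $\EE[e^{\langle t,g\rangle}]=e^{\|t\|^2/2}$ is also a valid, well-known route and your accounting of the $(q/2)!$ and $2^{q/2}$ factors is right. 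Either argument would serve as a self-contained proof where the paper chose to cite.
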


From the Wick formula, we deduce the following.

\begin{cor}
\label{cor:superwick}
Suppose $\EE[H_k^N]=0$ for all $k\in[m]$, and let
$\mathrm{k}=(k_1,\ldots,k_q)$. Then
$$
	\EE[\ntr H_{k_1}^N\cdots H_{k_q}^N] =
	\sum_{\pi\in\mathrm{P}_2([q])}
	\EE[\ntr H_{1|\pi,\mathrm{k}}^N\cdots H_{q|\pi,\mathrm{k}}^N]
	\prod_{\{r,s\}\in\pi} \delta_{k_rk_s},
$$
where $H_{1|\pi,\mathrm{k}}^N,\ldots,H_{q|\pi,\mathrm{k}}^N$ are jointly 
Gaussian random matrices defined as follows:
\begin{enumerate}[1.]
\item $H_{r|\pi,\mathrm{k}}^N$ has the same distribution as $H_{k_r}^N$.
\item $H_{r|\pi,\mathrm{k}}^N=H_{s|\pi,\mathrm{k}}^N$ if $\{r,s\}\in\pi$.
\item $H_{r|\pi,\mathrm{k}}^N$ and $H_{s|\pi,\mathrm{k}}^N$ are 
independent if $r\ne s$, $\{r,s\}\not\in\pi$.
\end{enumerate}
\end{cor}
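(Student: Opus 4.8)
The plan is to expand each $H_k^N$ in a common i.i.d.\ Gaussian basis, apply the Wick formula of Lemma \ref{lem:wick} term by term, and then recognize the resulting sums over pairings as the moments of the coupled matrices $H_{r|\pi,\mathrm{k}}^N$. The first step is a representation: since the entries of each $H_k^N$ are jointly Gaussian and $H_1^N,\ldots,H_m^N$ are independent, one may write $H_k^N=\sum_{i=1}^n g_iB_i^{(k)}$ for $k=1,\ldots,m$, where $g_1,\ldots,g_n$ are i.i.d.\ standard real Gaussians, $B_i^{(k)}\in\M_d(\mathbb{C})$, and the index sets $\mathcal{I}_k:=\{i:B_i^{(k)}\ne 0\}$ are pairwise disjoint. (This last property is precisely the encoding of independence of the $H_k^N$: represent each $H_k^N$ in terms of its own i.i.d.\ Gaussian family — which is possible as $\EE[H_k^N]=0$ — and pool these families.)

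Next I would expand, by multilinearity of the trace,
$$
	\EE[\ntr H_{k_1}^N\cdots H_{k_q}^N] =
	\sum_{i_1,\ldots,i_q=1}^n
	\EE[g_{i_1}\cdots g_{i_q}]\,
	\ntr[B_{i_1}^{(k_1)}\cdots B_{i_q}^{(k_q)}],
$$
apply Lemma \ref{lem:wick} to the scalar Gaussian moment, and interchange the finite sums to obtain
$$
	\EE[\ntr H_{k_1}^N\cdots H_{k_q}^N] =
	\sum_{\pi\in\mathrm{P}_2([q])}
	\sum_{(i_1,\ldots,i_q)\sim\pi}
	\ntr[B_{i_1}^{(k_1)}\cdots B_{i_q}^{(k_q)}],
$$
where $(i_1,\ldots,i_q)\sim\pi$ means $i_r=i_s$ whenever $\{r,s\}\in\pi$. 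Now the key observation is that, because the $\mathcal{I}_k$ are disjoint, a nonzero term with $i_r=i_s$ forces $B_{i_r}^{(k_r)}=0$ or $B_{i_s}^{(k_s)}=0$ unless $k_r=k_s$; hence for a given $\pi$ the inner sum vanishes identically unless $\delta_{k_rk_s}=1$ for every $\{r,s\}\in\pi$, which produces the factor $\prod_{\{r,s\}\in\pi}\delta_{k_rk_s}$ in the statement.

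It then remains, for each $\pi$ with $\prod_{\{r,s\}\in\pi}\delta_{k_rk_s}=1$ — so that conditions 1--3 are consistent, paired positions having $k_r=k_s$ — to identify the surviving $\pi$-term with $\EE[\ntr H_{1|\pi,\mathrm{k}}^N\cdots H_{q|\pi,\mathrm{k}}^N]$. I would realize the coupled family by assigning to each $\pi$-block $b$ a fresh independent i.i.d.\ Gaussian family $(g_i^{(b)})_i$ and setting $H_{r|\pi,\mathrm{k}}^N=\sum_i g_i^{(b(r))}B_i^{(k_r)}$, where $b(r)$ denotes the block containing $r$; this manifestly satisfies 1--3. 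Expanding $\EE[\ntr H_{1|\pi,\mathrm{k}}^N\cdots H_{q|\pi,\mathrm{k}}^N]$ as above and using that distinct blocks give independent families while each block has exactly two elements yields $\EE[g_{i_1}^{(b(1))}\cdots g_{i_q}^{(b(q))}]=\prod_{\{r,s\}\in\pi}\delta_{i_ri_s}$, whence this moment equals $\sum_{(i_1,\ldots,i_q)\sim\pi}\ntr[B_{i_1}^{(k_1)}\cdots B_{i_q}^{(k_q)}]$, matching the $\pi$-term and completing the proof. The only points requiring care — and the closest thing to an obstacle in an otherwise routine Wick computation — are the bookkeeping that the disjointness of the $\mathcal{I}_k$ is what generates the $\delta_{k_rk_s}$ factors, and checking that the joint law of $H_{1|\pi,\mathrm{k}}^N,\ldots,H_{q|\pi,\mathrm{k}}^N$ (hence the moment) is unambiguously determined by conditions 1--3.
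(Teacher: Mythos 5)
Your proposal is correct and follows essentially the same route as the paper's proof: expand each $H_k^N$ in a real Gaussian basis, expand the trace multilinearly, apply the Wick formula of Lemma~\ref{lem:wick}, and match the resulting sum over pairings against the moments of the coupled family. The only (cosmetic) difference is that you pool all the Gaussians into a single index set with disjoint supports $\mathcal{I}_k$ and then observe that disjointness produces the $\delta_{k_rk_s}$ factors, whereas the paper uses a doubly-indexed family $g_{ki}$ so that the same factors drop out of Wick directly as $\delta_{(k_r,i_r)(k_s,i_s)}=\delta_{k_rk_s}\delta_{i_ri_s}$.
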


\begin{proof}
As $\EE[H_k^N]=0$, we may write
$$
	H_k^N = \sum_{i=1}^n g_{ki}\,A_{ki},
$$
where $g_{ki}$ are i.i.d.\ standard Gaussians and 
$A_{ki}\in\M_d(\mathbb{C})_{\rm sa}$. Then 
$$
	\EE[\ntr H_{1|\pi,\mathrm{k}}^N\cdots H_{q|\pi,\mathrm{k}}^N]
	\prod_{\{r,s\}\in\pi} \delta_{k_rk_s}
	=
	\sum_{i_1,\ldots,i_q}
	\ntr A_{k_1i_1}\cdots A_{k_qi_q}
	\prod_{\{r,s\}\in\pi} \delta_{k_rk_s}\delta_{i_ri_s}
$$
by construction. On the other hand
$$
	\EE[\ntr H_{k_1}^N\cdots H_{k_q}^N] =
	\sum_{i_1,\ldots,i_q}
	\ntr A_{k_1i_1}\cdots A_{k_qi_q}
	\sum_{\pi\in\mathrm{P}_2([q])}
	\prod_{\{r,s\}\in\pi} \delta_{k_rk_s}\delta_{i_ri_s}
$$
by Lemma \ref{lem:wick}, completing the proof.
\end{proof}

The main idea that gives rise to weak asymptotic freeness is that the 
terms in Corollary \ref{cor:superwick} that correspond to crossing 
pairings are asymptotically negligible. This will follow readily from the 
following lemma.

\begin{lem}
\label{lem:wafcr}
In the setting of Corollary \ref{cor:superwick}, we have
$$
	|\EE[\ntr H_{1|\pi,\mathrm{k}}^N\cdots H_{q|\pi,\mathrm{k}}^N]|
	\le
	\max_{k,l} w(H_k^N,H_l^N)^4
	\max_k \EE[\ntr |H_k^N|^{q-4}]
$$
for any crossing pairing 
$\pi\in\mathrm{P}_2([q])\backslash\mathrm{NC}_2([q])$ such that
$k_r=k_s$ for all $\{r,s\}\in\pi$.
\end{lem}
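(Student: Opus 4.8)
The statement to prove is Lemma~\ref{lem:wafcr}, which bounds a single ``diagonal'' moment term $\EE[\ntr H_{1|\pi,\mathrm{k}}^N\cdots H_{q|\pi,\mathrm{k}}^N]$ attached to a \emph{crossing} pairing $\pi$ by $\max_{k,l}w(H_k^N,H_l^N)^4$ times a bounded moment factor. The plan is to exploit the single defining feature of a crossing pairing: since $\pi\notin\mathrm{NC}_2([q])$, there exist two blocks $\{r,s\},\{r',s'\}\in\pi$ that cross, i.e.\ (after cyclically relabeling positions, which is legitimate inside the normalized trace) they interleave as $r<r'<s<s'$. By construction of the $H_{j|\pi,\mathrm{k}}^N$, the matrices at positions in $\{r,s\}$ are \emph{the same} random matrix, and likewise for $\{r',s'\}$; call these two (possibly dependent) Gaussian random matrices $H$ and $H'$, with the same distributions as $H_{k_r}^N$ and $H_{k_{r'}}^N$ respectively. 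All the matrices sitting at the \emph{other} positions we can group into four blocks $Y^{(1)},\dots,Y^{(4)}$ — the four ``arcs'' of the cyclic word cut out by the four distinguished positions $r,r',s,s'$ — each $Y^{(k)}$ being a product of Gaussian random matrices (equivalently, a random matrix of the form appearing between the $A$'s in Lemma~\ref{lem:complexint}).

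Once the word is organized this way, the expectation becomes exactly of the shape
$$
\EE[\ntr H\,Y^{(1)}\,H'\,Y^{(2)}\,H\,Y^{(3)}\,H'\,Y^{(4)}],
$$
and writing $H=\sum_i g_i A_i$, $H'=\sum_j g_j' A_j'$ with self-adjoint coefficients, and taking the expectation over the Gaussians $g,g'$ \emph{first} (the $Y^{(k)}$ involve only the remaining, independent Gaussian blocks, so they are independent of $(g,g')$ when $H\ne H'$; when $H$ and $H'$ are themselves dependent one uses the Wick formula again to pair $g$'s with $g'$'s — but in fact the cleaner route is to recognize directly that this is precisely the quantity controlled by Lemma~\ref{lem:complexint}), we land on
$$
\Bigl|\sum_{i,j}\EE[\ntr A_i Y^{(1)} A_j' Y^{(2)} A_i Y^{(3)} A_j' Y^{(4)}]\Bigr|
\le w(H_k^N,H_l^N)^4\prod_{k=1}^4\EE[\ntr|Y^{(k)}|^{p_k}]^{1/p_k}
$$
for Hölder exponents $p_k$ chosen so that $\sum 1/p_k=1$, namely $p_k=(q-4)/\ell_k$ where $\ell_k$ is the length (number of factors) of the arc $Y^{(k)}$, so that $\sum_k\ell_k=q-4$. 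Since each $Y^{(k)}$ is a product of $\ell_k$ of the $H_j^N$'s, a further Hölder estimate gives $\EE[\ntr|Y^{(k)}|^{p_k}]^{1/p_k}\le\prod_{\text{factors}}\EE[\ntr|H^N_\cdot|^{q-4}]^{1/(q-4)}$, and collecting the product over all four arcs recovers exactly $\max_k\EE[\ntr|H_k^N|^{q-4}]$. This reproduces the displayed bound.

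The main obstacle I expect is purely bookkeeping: making the cyclic reduction and the choice of the four arcs precise so that Lemma~\ref{lem:complexint} applies verbatim — in particular, checking that the positions of the two crossing blocks can always be brought (by cyclic rotation of the word, which leaves $\ntr$ invariant) into the standard alternating pattern $H\,\cdots\,H'\,\cdots\,H\,\cdots\,H'\,\cdots$, and that the arcs between them are well-defined products (possibly empty, in which case the corresponding $Y^{(k)}=\id$ and $p_k=\infty$, which is harmless). One should also handle the degenerate cases $q=2$ (no crossing pairings, so the statement is vacuous) and $q\le 4$ where $\ntr|H_k^N|^{q-4}$ must be interpreted as $1$ and some $p_k=\infty$; these follow from the same argument with the appropriate exponents. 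A minor point is that $H$ and $H'$ may be equal as random matrices (if $k_r=k_{r'}$ and they happen to be identified), but since the bound only involves $\sup$ over pairs $k,l$ this causes no trouble — one just takes $X=X'$ in the definition of $w(X,X')$. No part of this requires new ideas beyond those already in Section~\ref{sec:wxvx}; the work is in organizing the monomial correctly.
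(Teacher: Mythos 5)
Your proposal is correct and follows essentially the same route as the paper's proof: pick two crossing blocks $\{r_1,s_1\},\{r_2,s_2\}\in\pi$ with $r_1<r_2<s_1<s_2$, take the Gaussian expectation over those two blocks to expose the deterministic coefficient matrices in the alternating pattern $A_i\cdots A'_j\cdots A_i\cdots A'_j$, then cyclically rotate the trace and invoke Lemma~\ref{lem:complexint} followed by Hölder on the remaining arcs with exponents $p_k=(q-4)/\ell_k$. The aside about using Wick to pair $g$'s with $g'$'s is a red herring (the two crossing blocks are distinct blocks of $\pi$, hence independent by item~3 of Corollary~\ref{cor:superwick}), but you correctly note that the clean argument goes directly through Lemma~\ref{lem:complexint}, which is what the paper does.
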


\begin{proof}
By assumption, the exist $\{r_1,s_1\},\{r_2,s_2\}\in\pi$ such that
$r_1<r_2<s_1<s_2$. Computing the expectation with respect to these 
indices only yields
\begin{align*}
	&\EE[\ntr H_{1|\pi,\mathrm{k}}^N\cdots H_{q|\pi,\mathrm{k}}^N] =
\\
	&\quad\sum_{i,j}
	\EE[\ntr 
	H_{1|\pi,\mathrm{k}}^N\cdots
	H_{r_1-1|\pi,\mathrm{k}}^N A_{k_{r_1}i}
	H_{r_1+1|\pi,\mathrm{k}}^N\cdots
        H_{r_2-1|\pi,\mathrm{k}}^N A_{k_{r_2}j} 
	H_{r_2+1|\pi,\mathrm{k}}^N\cdots
	\\
	&\qquad\qquad\quad
        H_{s_1-1|\pi,\mathrm{k}}^N 
	A_{k_{r_1}i}
	H_{s_1+1|\pi,\mathrm{k}}^N\cdots
        H_{s_2-1|\pi,\mathrm{k}}^N A_{k_{r_2}j}
	H_{s_2+1|\pi,\mathrm{k}}^N\cdots
	H_{q|\pi,\mathrm{k}}^N],
\end{align*}
where we used the notation in the proof of Corollary 
\ref{cor:superwick}. Cyclically permuting the trace, applying Lemma 
\ref{lem:complexint}, and using H\"older's inequality yields
$$
	|\EE[\ntr H_{1|\pi,\mathrm{k}}^N\cdots H_{q|\pi,\mathrm{k}}^N]|
	\le
	w(H_{k_{r_1}}^N,H_{k_{r_2}}^N)^4
	\prod_{l\in[q]\backslash\{r_1,r_2,s_1,s_2\}}
	\EE[\ntr |H_{k_l}^N|^{q-4}]^{\frac{1}{q-4}}.
$$
The conclusion follows readily.
\end{proof}

On the other hand, the assumption $\|\EE[(H_k^N)^2]-\id\|\to 0$ implies 
the following.

\begin{lem}
\label{lem:wafnc}
In the setting of Corollary \ref{cor:superwick}, we have
$$
	\lim_{N\to\infty}
	\EE[\ntr H_{1|\pi,\mathrm{k}}^N\cdots H_{q|\pi,\mathrm{k}}^N]
	=1
$$
for any noncrossing pairing $\pi\in\mathrm{NC}_2([q])$ such that
$k_r=k_s$ for all $\{r,s\}\in\pi$.
\end{lem}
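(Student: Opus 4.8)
\textbf{Proof plan for Lemma \ref{lem:wafnc}.}

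The plan is to argue by induction on $q$ (necessarily even, since a pairing exists). For $q=2$ the single pairing is $\pi=\{\{1,2\}\}$ and by construction $H_{1|\pi,\mathrm{k}}^N=H_{2|\pi,\mathrm{k}}^N$ has the distribution of $H_{k_1}^N$, so the quantity equals $\EE[\ntr (H_{k_1}^N)^2]=\ntr\EE[(H_{k_1}^N)^2]$, which tends to $\ntr\id=1$ by the assumption $\|\EE[(H_k^N)^2]-\id\|\to 0$. For the inductive step, recall the standard fact that any noncrossing pairing $\pi\in\mathrm{NC}_2([q])$ with $q\ge 4$ contains an \emph{adjacent pair}, i.e.\ a block of the form $\{l,l+1\}$ (indices mod $q$). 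First I would use cyclicity of the trace to rotate so that this adjacent pair is $\{q-1,q\}$; this rotation only permutes the sequence $\mathrm{k}$ cyclically and carries $\pi$ to another noncrossing pairing, so nothing is lost.

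Next, writing $H_r:=H_{r|\pi,\mathrm{k}}^N$ for brevity, I would split off the adjacent factors: since $\{q-1,q\}\in\pi$ we have $H_{q-1}=H_q$, and by assumption this matrix is \emph{independent} of all the other $H_r$ (because $\{q-1,q\}$ being a block of $\pi$ forces $k_{q-1}=k_q$, and no other index is paired with $q-1$ or $q$). Conditioning on $H_1,\dots,H_{q-2}$ and taking expectation over $H_q$ alone, the innermost product $H_{q-1}(\cdots)H_q$ collapses via
$$
\EE\big[(H_q)_{ab}\,(H_q)_{cd}\big] \;=\; \text{(covariance of the Gaussian matrix }H_{k_q}^N).
$$
Here the key point is to control the error between $\EE[(H_{k_q}^N)\,M\,(H_{k_q}^N)]$ and $\ntr[M]\,\id$ — if the entries of $H_{k_q}^N$ were exactly those of a Wigner-type matrix with $\EE[(H_{k_q}^N)^2]=\id$ one would get $\EE[(H_{k_q}^N)\,M\,(H_{k_q}^N)]$ equal to something close to $(\mathrm{id}\otimes?)$ of $M$; more robustly, I would instead avoid needing the precise form of the covariance and argue as follows. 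Remove the pair $\{q-1,q\}$ from $\pi$ to obtain $\pi'\in\mathrm{NC}_2([q-2])$ on the shortened word $\mathrm{k}'=(k_1,\dots,k_{q-2})$, and reduce the claim to: $\EE[\ntr H_1\cdots H_{q-2}H_{q-1}H_q] = \EE[\ntr H_1'\cdots H_{q-2}'] + o(1)$ where the primed matrices realize $\pi'$. This identity follows because $H_{q-1}=H_q$ is independent of the rest with $\EE[(H_{q-1})^2]=\id+o(1)$, so inserting $H_{q-1}H_q$ between positions $q-2$ and $1$ (after cyclic rotation) is the same, up to $o(1)$ in normalized trace, as inserting $\id$; quantitatively, $|\EE[\ntr H_1\cdots H_{q-2}(H_{q-1}^2-\id)]| \le \|\EE[(H_{q-1})^2]-\id\|\cdot\EE[\ntr|H_1\cdots H_{q-2}|]$, and the latter trace is uniformly bounded by Lemma \ref{lem:wafnck} and H\"older. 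Then the inductive hypothesis applied to $\pi'\in\mathrm{NC}_2([q-2])$ gives $\EE[\ntr H_1'\cdots H_{q-2}']\to 1$, completing the step.

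The main obstacle is the reduction in the previous paragraph: one must be careful that after "pulling out" the adjacent pair $H_{q-1}=H_q$, the remaining matrices $H_1,\dots,H_{q-2}$ — which still carry all the pairing constraints of $\pi'$ among themselves — genuinely realize the configuration $(\pi',\mathrm{k}')$ in the sense of Corollary \ref{cor:superwick} (items 1–3), so that the inductive hypothesis applies verbatim. This is a bookkeeping point about how the independence/equality pattern restricts, but it is exactly what the construction in Corollary \ref{cor:superwick} guarantees. The uniform $L^q$ bound of Lemma \ref{lem:wafnck} is what makes the $o(1)$ error terms legitimate throughout, and no use of the parameter $v$ or of $w$ is needed here (those enter only in the complementary Lemma \ref{lem:wafcr} for crossing pairings).
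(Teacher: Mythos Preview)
Your proposal is correct and follows essentially the same approach as the paper: locate an adjacent pair in the noncrossing pairing, cyclically rotate it to positions $q-1,q$, use independence of $H_{q-1}=H_q$ from the remaining factors to replace $H_{q-1}H_q$ by $\EE[(H_{k_q}^N)^2]\approx\id$ with error controlled via H\"older and Lemma~\ref{lem:wafnck}, and then iterate/induct on the reduced pairing $\pi'\in\mathrm{NC}_2([q-2])$. The paper phrases this as an iteration yielding the explicit bound $|\EE[\ntr H_{1|\pi,\mathrm{k}}^N\cdots H_{q|\pi,\mathrm{k}}^N]-1|\le \tfrac{q}{2}\max_k\|\EE[(H_k^N)^2]-\id\|\max_{k,l\le q}\EE[\ntr|H_k^N|^l]$, but the substance is identical to your induction.
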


\begin{proof}
Any noncrossing pairing $\pi\in\mathrm{NC}_2([q])$ must contain at least 
one adjacent pair $\{r,r+1\}\in\pi$. By cyclic permutation of the trace, we 
may assume $\{q-1,q\}\in\pi$. Computing the expectation with respect to 
this 
pair yields
$$
	\EE[\ntr H_{1|\pi,\mathrm{k}}^N\cdots H_{q|\pi,\mathrm{k}}^N] =
	\EE[\ntr 
	H_{1|\pi,\mathrm{k}}^N\cdots
	H_{q-2|\pi,\mathrm{k}}^N\EE[(H_{k_q}^N)^2]
	].
$$
In particular, we obtain using H\"older's inequality
\begin{multline*}
	|\EE[\ntr H_{1|\pi,\mathrm{k}}^N\cdots H_{q|\pi,\mathrm{k}}^N]-
	\EE[\ntr H_{1|\pi,\mathrm{k}}^N\cdots H_{q-2|\pi,\mathrm{k}}^N]|
\\
	\le
	\|\EE[(H_{k_q}^N)^2]-\id\|
	\prod_{k=1}^{q-2}
	\EE[\ntr |H_k^N|^{q-2}]^{\frac{1}{q-2}}.
\end{multline*}
As $\pi\backslash\{\{q-1,q\}\}\in\mathrm{NC}_2([q-2])$, we
may iterate this 
procedure to obtain
$$
	|\EE[\ntr H_{1|\pi,\mathrm{k}}^N\cdots H_{q|\pi,\mathrm{k}}^N]-1|
	\le
	\frac{q}{2}\max_k
	\|\EE[(H_k^N)^2]-\id\|
	\max_k\max_{l\le q}\EE[\ntr |H_k^N|^l].
$$
The conclusion follows as $\|\EE[(H_k^N)^2]-\id\|\to 0$ as $N\to\infty$
by assumption, while $\EE[\ntr |H_k^N|^l]$ is uniformly bounded for all
$l\le q$ and $N\ge 1$ by Lemma \ref{lem:wafnck}.
\end{proof}

The proof of weak asymptotic freeness is now readily completed.

\begin{proof}[Proof of Theorem \ref{thm:free}: part \textit{a}]
By Lemma \ref{lem:wafred}, we may assume without loss of generality
that $\EE[H_k^N]=0$ for all $k,N$. By Lemma \ref{lem:wafnc} and 
Definition \ref{defn:freefam}, we have
$$
	\lim_{N\to\infty}
	\sum_{\pi\in\mathrm{NC}_2([q])}
	\EE[\ntr H_{1|\pi,\mathrm{k}}^N\cdots H_{q|\pi,\mathrm{k}}^N]
	\prod_{\{r,s\}\in\pi} \delta_{k_rk_s} =
	\tau(s_{k_1}\cdots s_{k_q}).
$$
On the other hand, Lemma \ref{lem:wafcr} and
Proposition \ref{prop:vxwx} yield
\begin{multline*}
	\Bigg|
	\sum_{\pi\in\mathrm{P}_2([q])\backslash\mathrm{NC}_2([q])}
	\EE[\ntr H_{1|\pi,\mathrm{k}}^N\cdots H_{q|\pi,\mathrm{k}}^N]
	\prod_{\{r,s\}\in\pi} \delta_{k_rk_s}
	\Bigg|
\\	\le
	|\mathrm{P}_2([q])|
	\max_k
	v(H_k^N)^2
	\sigma(H_k^N)^2
	\max_k\EE[\ntr |H_k^N|^{q-4}].
\end{multline*}
As $\sigma(H_k^N)$ and $\EE[\ntr |H_k^N|^{q-4}]$ are uniformly bounded as 
$N\to\infty$ by Lemma
\ref{lem:wafnck}, the assumption $v(H_k^N)=o(1)$ implies the
right-hand side vanishes as
$N\to\infty$. Thus 
$$
	\lim_{N\to\infty}\EE[\ntr H_{k_1}^N\cdots H_{k_q}^N]
	= \tau(s_{k_1}\cdots s_{k_q})
$$
for all $q\in\mathbb{N}$ and $1\le k_1,\ldots,k_q\le m$
by Corollary \ref{cor:superwick}. The 
conclusion extends immediately to any noncommutative polynomial 
$p(H_1^N,\ldots H_m^N)$ by linearity.
\end{proof}

\subsection{Strong asymptotic freeness}
\label{sec:pfstrongfree}

The main idea behind the proof of part \textit{b} of Theorem 
\ref{thm:free} is that the behavior of polynomials can be controlled by 
that of associated random matrices of the form \eqref{eq:model}. We have 
already encountered a very simple form of such a linearization argument in 
Lemma \ref{lem:linquad}, where it was used to obtain nonasymptotic bounds 
for sample covariance matrices. As we are presently interested in 
asymptotics, we can directly invoke the abstract linearization argument of 
Haagerup and Thorbj{\o}rnsen \cite[Lemma 1 and pp.\ 758--760]{HT05}.

\begin{thm}[Haagerup-Thorbj{\o}rnsen]
\label{thm:htlin}
Suppose that for every $\varepsilon>0$, $d'\in\mathbb{N}$, and
$A_0,\ldots,A_m\in\M_{d'}(\mathbb{C})_{\rm sa}$,
the following holds almost surely:
$$
	\spc(A_0\otimes\id + \textstyle{\sum_{k=1}^m}A_k\otimes H_k^N)
	\subseteq
	\spc(A_0\otimes\id + \textstyle{\sum_{k=1}^m}A_k\otimes s_k)
	+ [-\varepsilon,\varepsilon]
$$
eventually as $N\to\infty$. Then
$$
	\limsup_{N\to\infty}\|p(H_1^N,\ldots,H_m^N)\|
	\le \|p(s_1,\ldots,s_m)\|\quad\mbox{a.s.}
$$
for every noncommutative polynomial $p$.
\end{thm}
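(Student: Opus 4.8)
The plan is to reproduce the linearization argument of Haagerup and Thorbj{\o}rnsen. First I would reduce to self-adjoint polynomials: since $\|p(y_1,\ldots,y_m)\|^2 = \|(p^*p)(y_1,\ldots,y_m)\|$ for any tuple of self-adjoint elements $y_k$ in a $C^*$-algebra (in particular for $y_k=H_k^N$ and for $y_k=s_k$), and $p^*p$ is self-adjoint, it suffices to prove $\limsup_{N\to\infty}\|q(H_1^N,\ldots,H_m^N)\|\le\|q(s_1,\ldots,s_m)\|$ a.s.\ for every \emph{self-adjoint} noncommutative polynomial $q$. For such $q$ this in turn amounts to showing that for each $\delta>0$ one has, almost surely and for all large $N$, $\spc(q(H_1^N,\ldots,H_m^N))\subseteq\spc(q(s_1,\ldots,s_m))+[-\delta,\delta]$.

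Next I would invoke the self-adjoint linearization lemma of \cite[Lemma 1]{HT05} (Anderson's trick): given self-adjoint $q$, there exist $r\in\mathbb{N}$ and self-adjoint matrices $b_0,\ldots,b_m\in\M_r(\mathbb{C})$ such that, writing $L_y:=b_0\otimes\id+\sum_{k=1}^m b_k\otimes y_k$ and $\Lambda_\lambda:=\lambda\,e_{11}\in\M_r(\mathbb{C})_{\rm sa}$, a Schur-complement identity gives, for every $\lambda\in\mathbb{R}$ and every tuple of self-adjoint $y_k$,
$$
	\lambda\notin\spc\big(q(y_1,\ldots,y_m)\big)
	\quad\Longleftrightarrow\quad
	0\notin\spc\big(\Lambda_\lambda\otimes\id - L_y\big).
$$
Since $\Lambda_\lambda\otimes\id - L_y=(\Lambda_\lambda-b_0)\otimes\id+\sum_k(-b_k)\otimes y_k$ is exactly an affine pencil of the type in the hypothesis (with $d'=r$, $A_0=\Lambda_\lambda-b_0$, $A_k=-b_k$), the hypothesis will let me transfer information from $y_k=s_k$ to $y_k=H_k^N$. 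I would also record an a priori norm bound: applying the hypothesis with $d'=1$, $A_0=0$, $A_j=\delta_{jk}$ shows $\spc(H_k^N)\subseteq[-2-\varepsilon,2+\varepsilon]$ eventually a.s.\ for every $\varepsilon>0$, hence $\limsup_N\|H_k^N\|\le 2$ a.s., and consequently $\limsup_N\|q(H_1^N,\ldots,H_m^N)\|\le M$ a.s., where $M:=\sup\{\|q(y_1,\ldots,y_m)\|:y_k=y_k^*,\ \|y_k\|\le 3\}<\infty$.

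The core step is then a net-plus-continuity argument making the exceptional null set independent of $\lambda$. Fix $\delta>0$ and set $K_\delta:=\{\lambda\in[-M-1,M+1]:\operatorname{dist}(\lambda,\spc(q(s)))\ge\delta\}$, a compact set. The function $g(\lambda):=\operatorname{dist}\big(0,\spc(\Lambda_\lambda\otimes\id - L_s)\big)$ is $1$-Lipschitz in $\lambda$ (since $\lambda\mapsto\Lambda_\lambda\otimes\id - L_s$ is $1$-Lipschitz in operator norm and the spectrum of a self-adjoint operator moves $1$-Lipschitz in Hausdorff distance under operator-norm perturbations) and, by the linearization, strictly positive on $K_\delta$; hence $g\ge\eta$ on $K_\delta$ for some $\eta>0$. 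Choose a finite $(\eta/4)$-net $\lambda_1,\ldots,\lambda_J$ of $K_\delta$. For each fixed $j$, the hypothesis with $\varepsilon=\eta/4$ applied to the pencil $\Lambda_{\lambda_j}\otimes\id - L_y$ gives, a.s.\ eventually, $g_N(\lambda_j):=\operatorname{dist}\big(0,\spc(\Lambda_{\lambda_j}\otimes\id - L_{H^N})\big)\ge g(\lambda_j)-\eta/4\ge 3\eta/4$. Intersecting the $J$ full-measure events and using that $g_N(\cdot)$ is likewise $1$-Lipschitz uniformly in $N$, a.s.\ eventually $g_N(\lambda)\ge\eta/2>0$ for \emph{all} $\lambda\in K_\delta$, i.e.\ $K_\delta\cap\spc(q(H^N))=\emptyset$. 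Combined with $\spc(q(H^N))\subseteq[-M-1,M+1]$ eventually a.s., this yields $\spc(q(H^N))\subseteq\spc(q(s))+[-\delta,\delta]$ eventually a.s., so $\|q(H^N)\|\le\|q(s)\|+\delta$ eventually a.s.; letting $\delta\downarrow 0$ along a countable sequence and undoing the reduction $q=p^*p$ finishes the proof.

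The genuinely nontrivial input is only the linearization lemma — the passage from $\spc(q(y))$ to the invertibility of a matrix pencil that is \emph{affine} in the $y_k$ — which is a purely algebraic Schur-complement statement that I would take verbatim from \cite{HT05}. Everything else is soft: the compactness argument ensuring that the error is uniform in $\lambda$, and the elementary deterministic norm bound obtained from the $m$-independent, $d'=1$ special case of the hypothesis.
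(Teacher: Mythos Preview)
Your proposal is correct. The paper does not give its own proof of this statement: it simply cites \cite[Lemma~1 and pp.~758--760]{HT05} as a black box, and your argument is a faithful reconstruction of precisely that Haagerup--Thorbj{\o}rnsen linearization argument (reduction to self-adjoint $q=p^*p$, the Schur-complement linearization producing the affine pencil, an a priori bound from the $d'=1$ case of the hypothesis, and a compactness/net argument to make the spectral inclusion uniform in $\lambda$).
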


Let again $H_1^N,\ldots,H_m^N$ be defined as in
Theorem \ref{thm:free}. Then we may write
$$
	H_k^N = B_{k0}^N + \sum_{i=1}^{n_k^N} g_{ki}^N B_{ki}^N,
$$
where $n_k^N\in\mathbb{N}$, $B_{ki}^N\in\M_{d(N)}(\mathbb{C})_{\rm sa}$,
and $(g_{ki}^N)_{k\in[m],i\in[n_k^N]}$ are i.i.d.\ standard Gaussians for 
each $N$ (we need not specify the joint distribution for different $N$, 
but we assume all random matrices have been placed on a single probability 
space). Let us fix in the following any $d'\in\mathbb{N}$ and 
$A_0,\ldots,A_m\in\M_{d'}(\mathbb{C})_{\rm sa}$, and define
$$
	\Xi^N := A_0\otimes\id + \sum_{k=1}^m A_k\otimes H_k^N =
	A_0\otimes\id +
	\sum_{k=1}^m
	A_k\otimes B_{k0}^N +
	\sum_{k=1}^m
	\sum_{i=1}^{n_k^N}
	(A_k\otimes B_{ki}^N)\,g_{ki}^N
$$
and its free analogue
$$
	\Xi^N_{\rm free} := 
	A_0\otimes\id +
	\sum_{k=1}^m
	A_k\otimes B_{k0}^N +
	\sum_{k=1}^m
	\sum_{i=1}^{n_k^N}
	A_k\otimes B_{ki}^N\otimes s_{ki},
$$
where $(s_{ki})_{k,i}$ is a free semicircular family.
Then we have the following.

\begin{lem}
\label{lem:saffirst}
If $v(H_k^N)=o((\log d(N))^{-\frac{3}{2}})$ as $N\to\infty$ for all $k$, 
then
$$
	\spc(A_0\otimes\id + \textstyle{\sum_{k=1}^m}A_k\otimes H_k^N) 
	\subseteq \spc(\Xi^N_{\rm free}) +
	[-\varepsilon,\varepsilon]
$$
eventually as $N\to\infty$ a.s.\ for every $\varepsilon>0$.
\end{lem}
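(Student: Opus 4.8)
The plan is to view $\Xi^N$ as a Gaussian random matrix of the form \eqref{eq:model} in dimension $d'd(N)$ and apply Theorem~\ref{thm:mainsp} to it, with $\tilde v(\Xi^N)$ and $\sigma_*(\Xi^N)$ controlled in terms of $v(H_k^N)$; the almost sure statement will then follow by Borel--Cantelli. Concretely, writing $H_k^N=B_{k0}^N+\sum_{i=1}^{n_k^N}g_{ki}^N B_{ki}^N$ as above, one has $\Xi^N=A_0'+\sum_{k,i}g_{ki}^N A_{ki}'$ with $A_0':=A_0\otimes\id+\sum_{k=1}^m A_k\otimes B_{k0}^N$ and $A_{ki}':=A_k\otimes B_{ki}^N$, all self-adjoint since $A_0,\ldots,A_m$ and the $B_{ki}^N$ are; moreover the free model of $\Xi^N$ in the sense of \eqref{eq:model} is exactly $\Xi^N_{\rm free}$. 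Applying Theorem~\ref{thm:mainsp} thus gives, for every $t\ge 0$,
\[
	\PP\big[\spc(\Xi^N)\subseteq\spc(\Xi^N_{\rm free})
	+C\{\tilde v(\Xi^N)(\log(d'd(N)))^{\frac{3}{4}}+\sigma_*(\Xi^N)\,t\}[-1,1]\big]
	\ge 1-e^{-t^2},
\]
and it remains to show the radius tends to $0$ for a choice $t=t_N$ with $\sum_N e^{-t_N^2}<\infty$.

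The first step I would carry out is to bound the matrix parameters of $\Xi^N$. Because the $g_{ki}^N$ are i.i.d., one computes $\EE[(\Xi^N-\EE\Xi^N)^2]=\sum_k A_k^2\otimes\EE[(H_k^N-\EE H_k^N)^2]$, so $\sigma(\Xi^N)^2\le\sum_k\|A_k\|^2\sigma(H_k^N)^2$, which is $O(1)$ since $\sigma(H_k^N)^2=\|\EE[(H_k^N)^2]-(\EE H_k^N)^2\|=1+o(1)$. Similarly, independence of the $g_{ki}^N$ together with the fact that the entry-vectorization map $\iota$ sends $A\otimes B$ to a fixed coordinate permutation of $\iota(A)\otimes\iota(B)$ shows $\mathrm{Cov}(\Xi^N)$ is unitarily equivalent to $\sum_k\iota(A_k)\iota(A_k)^*\otimes\mathrm{Cov}(H_k^N)$, whence $v(\Xi^N)^2=\|\mathrm{Cov}(\Xi^N)\|\le\sum_k\|A_k\|_{\rm HS}^2\,v(H_k^N)^2\le d'\big(\sum_k\|A_k\|^2\big)\max_k v(H_k^N)^2$. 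Since $d'$ and the $A_k$ are fixed, this yields $v(\Xi^N)=o((\log d(N))^{-\frac{3}{2}})$, hence $\tilde v(\Xi^N)^2=v(\Xi^N)\sigma(\Xi^N)=o((\log d(N))^{-\frac{3}{2}})$ and $\sigma_*(\Xi^N)\le v(\Xi^N)=o((\log d(N))^{-\frac{3}{2}})$.

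Finally, since $d'$ is fixed and $d(N)\to\infty$ we have $\log(d'd(N))\le 2\log d(N)$ eventually, so $\tilde v(\Xi^N)(\log(d'd(N)))^{\frac{3}{4}}=o(1)$; and taking $t_N:=\sqrt{2\log N}$ gives $\sum_N e^{-t_N^2}=\sum_N N^{-2}<\infty$, while, using $d(N)\ge N$, $\sigma_*(\Xi^N)t_N\le v(\Xi^N)\sqrt{2\log N}=o((\log N)^{-1})\to 0$. Hence for any fixed $\varepsilon>0$ the radius $C\{\cdots\}$ in the displayed bound is below $\varepsilon$ for all large $N$, so $\PP[\spc(\Xi^N)\not\subseteq\spc(\Xi^N_{\rm free})+[-\varepsilon,\varepsilon]]\le N^{-2}$ eventually; Borel--Cantelli then gives the inclusion eventually a.s., and intersecting the resulting a.s.\ events over $\varepsilon=1/m$, $m\in\mathbb N$, produces a single a.s.\ event on which the inclusion holds eventually for every $\varepsilon>0$. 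I expect no genuine obstacle here: the lemma is essentially a direct consequence of Theorem~\ref{thm:mainsp}, and the only points requiring care are the bookkeeping of the logarithmic factors — in particular using $d(N)\ge N$ to convert the hypothesis $v(H_k^N)=o((\log d(N))^{-3/2})$ into a summable-in-$N$ tail bound — and confirming that the free model of $\Xi^N$ coincides with $\Xi^N_{\rm free}$.
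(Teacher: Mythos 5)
Your proof is correct and follows essentially the same route as the paper's: identify $\Xi^N$ as a Gaussian model in the sense of \eqref{eq:model}, observe that its free model coincides with $\Xi^N_{\rm free}$, bound $v(\Xi^N)$ via $\mathrm{Cov}(\Xi^N)=\sum_k\iota(A_k)\iota(A_k)^*\otimes\mathrm{Cov}(H_k^N)$ and $\sigma(\Xi^N)=O(1)$ from $\|\EE[(H_k^N)^2]-\id\|=o(1)$, apply Theorem~\ref{thm:mainsp}, and conclude by Borel--Cantelli using $d(N)\ge N$. The only cosmetic difference is your choice $t_N=\sqrt{2\log N}$ versus the paper's $t_N=(\log d(N))^{3/2}$; both give summable tails and $\sigma_*(\Xi^N)t_N=o(1)$, so the argument is sound.
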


\begin{proof}
As $H_1^N,\ldots,H_m^N$ are independent, we have
$$
	\mathrm{Cov}(\Xi^N)=\sum_{k=1}^m\mathrm{Cov}(A_k\otimes H_k^N)
	=\sum_{k=1}^m \iota(A_k)\iota(A_k)^*\otimes\mathrm{Cov}(H_k^N),
$$
where
$\iota:\M_d(\mathbb{C})\to\mathbb{C}^{d^2}$ maps a matrix to its vector of
entries.
As $A_1,\ldots,A_m$ are fixed, it follows that
$v(\Xi^N) = \|\mathrm{Cov}(\Xi^N)\|^{\frac{1}{2}} = o((\log 
d(N))^{-\frac{3}{2}})$.
On the other hand, 
$$
	\sigma(\Xi^N)^2 = 
	\Bigg\|\sum_{k=1}^m A_k^2\otimes \EE[(H_k^N)^2]\Bigg\|,
$$
so $\|\EE[(H_k^N)^2]-\id\|=o(1)$ implies that $\sigma(\Xi^N)=O(1)$.
Therefore
$$
	\PP\big[\spc(\Xi^N) \subseteq \spc(\Xi^N_{\rm free}) +
	\varepsilon_N[-1,1]
	\big]
	\ge 1-e^{-(\log N)^3}
$$
by Theorem \ref{thm:mainsp} and $d(N)\ge N$, where 
$$
	\varepsilon_N := C\{\tilde v(\Xi^N)(\log d'd(N))^{\frac{3}{4}}
	+\sigma_*(\Xi^N)(\log d(N))^{\frac{3}{2}}\} = o(1)
$$
as $\sigma_*(\Xi^N)\le v(\Xi^N)$.
It remains to note that as $\sum_N e^{-(\log N)^3}<\infty$, the conclusion 
follows from the Borel-Cantelli lemma.
\end{proof}

On the other hand, $\|\EE[(H_k^N)^2]-\id\|=o(1)$ ensures that the spectrum 
of $\Xi^N_{\rm free}$ concentrates around that of $A_0\otimes\id + 
\sum_{k=1}^m A_k\otimes s_k$. This is the analogue in the present setting 
of Lemma \ref{lem:wafnc} in the previous section. We first prove a 
special case.

\begin{lem}
\label{lem:safncsimple}
In the special case that $\EE[H_k^N]=0$ and $\EE[(H_k^N)^2]=\id$ for all 
$k$,
$$
	\spc(\Xi^N_{\rm free}) =
	\spc(A_0\otimes\id + \textstyle{\sum_{k=1}^m}A_k\otimes s_k).
$$
\end{lem}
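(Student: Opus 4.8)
The plan is to identify $\Xi^N_{\rm free}$ as a matrix model built over an auxiliary free semicircular family, and then to invoke the principle that the $*$-distribution of such a model is determined by its coefficient matrices. First I would extract two consequences of the hypotheses: writing $H_k^N = B_{k0}^N + \sum_{i=1}^{n_k^N} g_{ki}^N B_{ki}^N$ as above, the condition $\EE[H_k^N]=0$ forces $B_{k0}^N=0$, and $\EE[(H_k^N)^2]=\id$ gives $\sum_{i=1}^{n_k^N}(B_{ki}^N)^2=\id$ for every $k$. Hence
\[
	\Xi^N_{\rm free} = A_0\otimes\id + \sum_{k=1}^m A_k\otimes t_k,
	\qquad
	t_k := \sum_{i=1}^{n_k^N} B_{ki}^N\otimes s_{ki} \in
	\mathcal{M}:=\M_{d(N)}(\mathbb{C})\otimes\mathcal{A},
\]
where $(s_{ki})_{k,i}$ is the free semicircular family used to define $\Xi^N_{\rm free}$, and each $t_k$ is self-adjoint.

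The key step is to show that $(t_1,\ldots,t_m)$ is itself a free semicircular family in the $C^*$-probability space $(\mathcal{M},\ntr\otimes\tau)$; note that under the identification $\mathcal{M}\cong\M_{d(N)}(\mathcal{A})$ the state $\ntr\otimes\tau$ is the faithful trace $(a_{ij})\mapsto\frac{1}{d(N)}\sum_i\tau(a_{ii})$. By Definition~\ref{defn:freefam} it suffices to verify, for $q\in\mathbb{N}$ and $k_1,\ldots,k_q\in[m]$, that $(\ntr\otimes\tau)(t_{k_1}\cdots t_{k_q})=\tau(s_{k_1}\cdots s_{k_q})$. Expanding and applying Definition~\ref{defn:freefam} to $(s_{ki})$,
\[
	(\ntr\otimes\tau)(t_{k_1}\cdots t_{k_q}) =
	\sum_{\pi\in\mathrm{NC}_2([q])}\ \
	\sum_{\substack{i_1,\ldots,i_q\\ (k_a,i_a)=(k_b,i_b)\ \forall\{a,b\}\in\pi}}
	\ntr\big(B_{k_1 i_1}^N\cdots B_{k_q i_q}^N\big).
\]
A pairing $\pi$ contributes nothing unless $k_a=k_b$ for all $\{a,b\}\in\pi$, and for such $\pi$ I would show by induction on $q$ that the inner sum equals $1$: every noncrossing pairing (for $q\ge 2$ even) contains an adjacent pair $\{j,j+1\}$, along which $i_j=i_{j+1}$ and the two corresponding matrix factors combine to $(B_{k_j i_j}^N)^2$; summing the free index $i_j$ out via $\sum_i(B_{ki}^N)^2=\id$ deletes this pair and reduces to the $(q-2)$-case for $\pi\setminus\{j,j+1\}\in\mathrm{NC}_2([q-2])$, with base case $\ntr(\id)=1$. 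This gives $(\ntr\otimes\tau)(t_{k_1}\cdots t_{k_q}) = |\{\pi\in\mathrm{NC}_2([q]): k_a=k_b\ \forall\{a,b\}\in\pi\}| = \tau(s_{k_1}\cdots s_{k_q})$, as required.

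To conclude, I would run a standard moment/faithfulness argument. Expanding $p$-th powers, the quantity $(\ntr\otimes\ntr\otimes\tau)((A_0\otimes\id+\sum_k A_k\otimes t_k)^p)$ (the trace being the one on $\M_{d'}(\mathbb{C})\otimes\mathcal{M}$) is a universal polynomial in the scalars $\ntr(A_{j_1}\cdots A_{j_p})$ and the mixed moments of a free semicircular family; by the previous step those mixed moments are the same whether computed for $(t_k)$ or $(s_k)$, so this equals $(\ntr\otimes\tau)((A_0\otimes\id+\sum_k A_k\otimes s_k)^p)$ for every $p\in\mathbb{N}$. Since $\Xi^N_{\rm free}=A_0\otimes\id+\sum_k A_k\otimes t_k$ and $A_0\otimes\id+\sum_k A_k\otimes s_k$ are bounded self-adjoint elements, their spectral distributions are compactly supported and hence determined by their moments, so these distributions coincide; as the ambient traces are faithful, the spectrum of a bounded self-adjoint element equals the support of its spectral distribution, and the two spectra agree. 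The step I expect to require the most care is the inductive moment identity above — in particular, checking that the ``adjacent pair'' reduction stays within pairings compatible with the coloring and uses precisely the relation $\sum_i(B_{ki}^N)^2=\id$ — while the passage from equal moments to equal spectra is routine once faithfulness is in hand.
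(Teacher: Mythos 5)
Your proposal is correct and follows essentially the same route as the paper: identify $H^N_{k,\mathrm{free}}$ (your $t_k$) as a free semicircular family via the noncrossing pair-partition moment formula and the adjacent-pair reduction using $\sum_i(B_{ki}^N)^2=\id$ (the paper delegates this to the proof of its Lemma~\ref{lem:wafnc}), then deduce that all mixed moments of $\Xi^N_{\rm free}$ agree with those of $A_0\otimes\id+\sum_k A_k\otimes s_k$, and conclude equality of spectra from faithfulness of the trace.
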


\begin{proof}
In the present setting, we may write
$$
	\Xi^N_{\rm free} =
	A_0\otimes\id +
	\sum_{k=1}^m 
	A_k\otimes H^N_{k,{\rm free}},
$$
where
$$
	H^N_{k,{\rm free}} =
	\sum_{i=1}^{n_k^N} B_{ki}^N\otimes s_{ki}
$$
satisfies $(\mathrm{id}\otimes\tau)((H^N_{k,{\rm free}})^2)=
\sum_i (B_{ki}^N)^2 = \id$. By Definition \ref{defn:freefam}, we may 
compute
$$
	({\ntr}\otimes\tau)(H_{k_1,{\rm free}}^N\cdots
	H_{k_q,{\rm free}}^N) =
	\sum_{\pi\in\mathrm{NC}_2([q])}
	\sum_{i_1,\ldots,i_q}
	\ntr(B_{k_1i_1}^N\cdots B_{k_qi_q}^N)
	\prod_{\{r,s\}\in\pi}
	\delta_{k_rk_s}\delta_{i_ri_s}.
$$
It follows exactly as in the proof of Lemma \ref{lem:wafnc} that
$$
	({\ntr}\otimes\tau)(H_{k_1,{\rm free}}^N\cdots
        H_{k_q,{\rm free}}^N) =
	\tau(s_{k_1}\cdots s_{k_q})
$$
for all $q\in\mathbb{N}$, $1\le k_1,\ldots,k_q\le m$, and $N\ge 1$.
In particular, it follows that
$$
	({\ntr}\otimes\tau)((\Xi_{\rm free}^N)^q) =
	({\ntr}\otimes\tau)((A_0\otimes\id + 
	{\textstyle\sum_{k=1}^m}A_k\otimes s_k)^q)
$$
for all $q\in\mathbb{N}$. As $\Xi_{\rm free}^N$ is a bounded operator, the
equality of all moments implies that the spectral distributions of 
$\Xi_{\rm free}^N$ and $A_0\otimes\id + {\textstyle\sum_{k=1}^m}A_k\otimes 
s_k$ coincide. Therefore, as ${\ntr}\otimes\tau$ is a faithful state, 
their spectra coincide as well.
\end{proof}

The general case now follows by a perturbation argument.

\begin{lem}
\label{lem:safnc}
When $\|\EE[H_k^N]\|=o(1)$ and $\|\EE[(H_k^N)^2]-\id\|=o(1)$ for all $k$,
$$
	\spc(\Xi^N_{\rm free}) 
	\subseteq 
	\spc(A_0\otimes\id + \textstyle{\sum_{k=1}^m}A_k\otimes s_k) +
	[-\varepsilon,\varepsilon]
$$
eventually as $N\to\infty$ for every $\varepsilon>0$.
\end{lem}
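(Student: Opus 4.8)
The plan is to reduce the general case to Lemma~\ref{lem:safncsimple} by perturbing the operator $\Xi^N_{\rm free}$ into a model whose covariances are exactly $\id$. Recall that $\Xi^N_{\rm free} = A_0\otimes\id + \sum_k A_k\otimes B_{k0}^N + \sum_k\sum_{i=1}^{n_k^N} A_k\otimes B_{ki}^N\otimes s_{ki}$, that $\EE[H_k^N]=B_{k0}^N$, and that $\EE[(H_k^N)^2]=(B_{k0}^N)^2+\Sigma_k^N$ where $\Sigma_k^N:=\sum_{i=1}^{n_k^N}(B_{ki}^N)^2$; the hypotheses therefore give $\|B_{k0}^N\|=o(1)$ and $\|\Sigma_k^N-\id\|=o(1)$, so in particular $\Sigma_k^N\to\id$. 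Since $A_0,\ldots,A_m$ are fixed, it will suffice to build, for each $k$, self-adjoint matrices $\check B_{ki}^N$ with $\sum_i(\check B_{ki}^N)^2=\id$ that are close to $B_{ki}^N$ in a suitable sense, and to compare the resulting free models by Weyl's inequality.

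The main step is the variance normalization, and the one genuinely nonroutine point is that one cannot simply conjugate by $(\Sigma_k^N)^{-1/2}$, since $\EE[(\Sigma_k^N)^{-1/2}H_k^N(\Sigma_k^N)^{-1/2}]^{\,2}$ is not $\id$. Instead I would rescale slightly and absorb the defect into one auxiliary semicircular. Concretely, for $N$ large put $c_k^N:=\|\Sigma_k^N\|=1+o(1)$ (so $(c_k^N)^{-1}\Sigma_k^N\le\id$) and define $\check B_{ki}^N:=(c_k^N)^{-1/2}B_{ki}^N$ for $1\le i\le n_k^N$ together with $\check B_{k,n_k^N+1}^N:=(\id-(c_k^N)^{-1}\Sigma_k^N)^{1/2}$. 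Then $\sum_{i=1}^{n_k^N+1}(\check B_{ki}^N)^2=\id$, so the random matrices $\check H_k^N:=\sum_{i=1}^{n_k^N+1}\check g_{ki}^N\check B_{ki}^N$ (using the original Gaussians and one fresh independent Gaussian for each $k$) are independent, self-adjoint, have jointly Gaussian entries, and satisfy $\EE[\check H_k^N]=0$, $\EE[(\check H_k^N)^2]=\id$. Writing $\check\Xi^N_{\rm free}=A_0\otimes\id+\sum_k A_k\otimes\sum_{i=1}^{n_k^N+1}\check B_{ki}^N\otimes\check s_{ki}$ for the corresponding free model, with the coupling $\check s_{ki}=s_{ki}$ for $i\le n_k^N$ and $\check s_{k,n_k^N+1}$ a fresh free semicircular, Lemma~\ref{lem:safncsimple} (whose proof uses only the stated moment conditions) gives $\spc(\check\Xi^N_{\rm free})=\spc(A_0\otimes\id+\sum_{k=1}^m A_k\otimes s_k)$.

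It then remains to estimate the perturbation. Using $\|\sum_i B_{ki}^N\otimes s_{ki}\|\le 2\|\Sigma_k^N\|^{1/2}=O(1)$ (Lemma~\ref{lem:freenck}) and $\|\check s_{k,n_k^N+1}\|=2$, one would bound
$$
\Big\|\sum_{i=1}^{n_k^N}B_{ki}^N\otimes s_{ki}-\sum_{i=1}^{n_k^N+1}\check B_{ki}^N\otimes\check s_{ki}\Big\|\le |1-(c_k^N)^{-1/2}|\,\cdot 2\|\Sigma_k^N\|^{1/2}+2\|\id-(c_k^N)^{-1}\Sigma_k^N\|^{1/2}=o(1),
$$
the two terms vanishing because $c_k^N\to1$ and $(c_k^N)^{-1}\Sigma_k^N\to\id$. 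Adding the contribution $\sum_k\|A_k\|\,\|B_{k0}^N\|=o(1)$ of the means, one gets $\|\Xi^N_{\rm free}-\check\Xi^N_{\rm free}\|=o(1)$, whence by Weyl's inequality $\spc(\Xi^N_{\rm free})\subseteq\spc(\check\Xi^N_{\rm free})+o(1)[-1,1]=\spc(A_0\otimes\id+\sum_{k=1}^m A_k\otimes s_k)+o(1)[-1,1]$, which for fixed $\varepsilon>0$ is contained in $\spc(A_0\otimes\id+\sum_{k=1}^m A_k\otimes s_k)+[-\varepsilon,\varepsilon]$ for all $N$ large. The only delicate ingredient is the normalization trick of the second paragraph; the rest is a routine continuity argument.
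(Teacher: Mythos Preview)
Your proposal is correct and follows essentially the same approach as the paper's proof: normalize by dividing through by the operator norm of the variance matrix so that the rescaled covariance is $\le\id$, add one auxiliary free semicircular with coefficient equal to the square root of the defect to make the covariance exactly $\id$, invoke Lemma~\ref{lem:safncsimple}, and then pass back via a norm-perturbation argument. The paper normalizes by $\|\EE[(H_k^N)^2]\|$ rather than your $\|\Sigma_k^N\|$, and concludes the spectral inclusion via the resolvent Lipschitz bound \eqref{eq:reslip} together with Lemma~\ref{lem:resspec} instead of quoting ``Weyl's inequality''; since $\Xi^N_{\rm free}$ is infinite-dimensional, the latter should be read as the standard fact $\spc(A)\subseteq\spc(B)+\|A-B\|[-1,1]$ for bounded self-adjoint operators, which is precisely what the paper's resolvent argument establishes.
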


\begin{proof}
Define
$$
	\tilde\Xi^N_{\rm free} :=
	A_0\otimes\id +
	\sum_{k=1}^m 
	A_k\otimes \tilde H^N_{k,{\rm free}},
$$
where
$$
	\tilde H^N_{k,{\rm free}} =
	\frac{
	\sum_{i=1}^{n_k^N} B_{ki}^N\otimes s_{ki}
	+ \big(\|\EE[(H_k^N)^2]\|\id-\EE[(H_k^N)^2]\big)^{\frac{1}{2}}
	\otimes\tilde s_k
	}{
	\|\EE[(H_k^N)^2]\|^{\frac{1}{2}}
	}
$$
and $(s_{ki},\tilde s_k)_{k,i}$ is a free semicircular family.
As by construction $(\mathrm{id}\otimes\tau)(\tilde H^N_{k,{\rm free}})=0$
and $(\mathrm{id}\otimes\tau)((\tilde H^N_{k,{\rm free}})^2)=\id$,
Lemma \ref{lem:safncsimple} implies that
$$
	\spc(\tilde\Xi^N_{\rm free}) = 
	\spc(A_0\otimes\id + {\textstyle\sum_{k=1}^m}A_k\otimes s_k).
$$
Next, we estimate
$$
	\|\Xi_{\rm free}^N-\tilde\Xi_{\rm free}^N\| \le
	\sum_{k=1}^m \|A_k\| \{\|\EE[H_k^N]\| +
	\|H_{k,{\rm free}}^N-\tilde H_{k,{\rm free}}^N\|\},
$$
where $H_{k,{\rm free}}^N$ is defined in the proof of Lemma 
\ref{lem:safncsimple}. Moreover, we have
$$
	\|H_{k,{\rm free}}^N-\tilde H_{k,{\rm free}}^N\| \le
	\Bigg|1-\frac{1}{\|\EE[(H_k^N)^2]\|^{\frac{1}{2}}}\Bigg|
	\|H_{k,\rm free}^N\| +
	\frac{2\big\|
	\|\EE[(H_k^N)^2]\|\id-\EE[(H_k^N)^2]
	\big\|^{\frac{1}{2}}}{\|\EE[(H_k^N)^2]\|^{\frac{1}{2}}}
$$
using $\|\tilde s_k\|=2$. Now note that
$\|\EE[H_k^N]\|=o(1)$ and $\|\EE[(H_k^N)^2]-\id\|=o(1)$ imply 
$\|H_{k,{\rm free}}^N\|=O(1)$ by Lemma \ref{lem:freenck}. Thus the
above expressions yield
$$
	\lim_{N\to\infty}\|\Xi_{\rm free}^N-\tilde\Xi_{\rm free}^N\|=0.
$$
In particular, this implies by \eqref{eq:reslip} that
$$
	\|(z\id - \Xi_{\rm free}^N)^{-1}\| \le
	\|(z\id - \tilde\Xi_{\rm free}^N)^{-1}\| + 
	\frac{\varepsilon}{(\mathrm{Im}\,z)^2}
$$
for all $z\in\mathbb{C}$, $\mathrm{Im}\,z>0$ holds
eventually as $N\to\infty$ for every $\varepsilon>0$. The
conclusion now follows by invoking Lemma \ref{lem:resspec}.
\end{proof}

Before we can conclude the proof, we require a concentration argument.

\begin{lem}
\label{lem:safconc}
If $v(H_k^N)=o((\log d(N))^{-\frac{3}{2}})$ as $N\to\infty$ for all $k$,
then
\begin{align*}
	&\lim_{N\to\infty}
	|\|p(H_1^N,\ldots,H_m^N)\|-
	\EE[\|p(H_1^N,\ldots,H_m^N)\|]|=0\quad\mbox{a.s.},
\\	&\lim_{N\to\infty}
	|\ntr p(H_1^N,\ldots,H_m^N)-
	\EE[\ntr p(H_1^N,\ldots,H_m^N)]|=0\quad\mbox{a.s.}
\end{align*}
for every noncommutative polynomial $p$.
\end{lem}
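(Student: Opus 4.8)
The plan is to deduce both almost-sure limits from Gaussian concentration (Lemma~\ref{lem:gconc}), the point being that the smallness of $v(H_k^N)$ forces the relevant Lipschitz constants to vanish faster than any power of $\log N$. Write $H_k^N=B_{k0}^N+\sum_{i=1}^{n_k^N}g_{ki}^NB_{ki}^N$ as above, so that $p(H_1^N,\ldots,H_m^N)=\mathcal{P}_N(g^N)$, where $g^N=(g_{ki}^N)$ is a standard Gaussian vector and $\mathcal{P}_N$ is the matrix-valued polynomial map $x\mapsto p(H_1^N(x),\ldots,H_m^N(x))$ with $H_k^N(x):=B_{k0}^N+\sum_i x_{ki}B_{ki}^N$. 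Since $\bigl|\,\|M\|-\|M'\|\,\bigr|\le\|M-M'\|$ and $|\ntr M-\ntr M'|\le\|M-M'\|$ for all matrices $M,M'$, it suffices to control the Lipschitz behaviour of $x\mapsto\mathcal{P}_N(x)$ in operator norm, and the mechanism we exploit is that a unit perturbation of $x$ changes $H_k^N$ by an operator of norm at most $\sigma_*(H_k^N)\le v(H_k^N)$, exactly as in the proof of Corollary~\ref{cor:gconcmtx}.

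First I would record an \emph{a priori} norm bound. The standing assumptions of Theorem~\ref{thm:free} give $\sigma(H_k^N)=O(1)$, so by Lemma~\ref{lem:freenck} the free model of $H_k^N$ has norm $O(1)$, while $\tilde v(H_k^N)(\log d(N))^{3/4}=o(1)$; Corollary~\ref{cor:norm} then yields $\EE\|H_k^N\|\le\rho_0$ for a constant $\rho_0$ and all large $N$. Combining this with Gaussian concentration for the $\sigma_*(H_k^N)$-Lipschitz map $g\mapsto\|H_k^N\|$ (Corollary~\ref{cor:gconcmtx}), I get
$$
	\PP\bigl[\max_k\|H_k^N\|>\rho\bigr]\le m\,e^{-1/(2\max_k v(H_k^N)^2)},\qquad \rho:=\rho_0+1,
$$
and since $v(H_k^N)=o((\log d(N))^{-3/2})$ with $d(N)\ge N$, the right-hand side is summable in $N$; by Borel--Cantelli the event $G_N:=\{\max_k\|H_k^N\|\le\rho\}$ holds eventually as $N\to\infty$, almost surely.

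Next I would localize and concentrate. On the closed convex set $K_N:=\{x:\max_k\|H_k^N(x)\|\le\rho\}$, the product rule shows that $\mathcal{P}_N$ is Lipschitz in operator norm with constant at most $C(p,\rho)\max_k\sigma_*(H_k^N)=:\eta_N$: differentiating a monomial gives a sum of products all of whose factors have norm $\le\rho$ except one, of norm $\le\sigma_*(H_k^N)$ per unit perturbation, and summing over factors, over monomials of $p$, and over $k$ (with Cauchy--Schwarz in the $i$-index) gives the claim. Composing with the $1$-Lipschitz maps $\|\cdot\|$ and $\ntr(\cdot)$ and with the $1$-Lipschitz nearest-point projection $\pi_{K_N}$, the functions $\tilde F_N:=\|\mathcal{P}_N\|\circ\pi_{K_N}$ and $\tilde T_N:=\ntr\mathcal{P}_N\circ\pi_{K_N}$ are globally $\eta_N$-Lipschitz and agree on $K_N$ with $F_N:=\|\mathcal{P}_N\|$ and $T_N:=\ntr\mathcal{P}_N$. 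Since $\eta_N=o((\log N)^{-3/2})$, applying Lemma~\ref{lem:gconc} with $t_N:=\eta_N^{1/2}\to0$ gives
$$
	\PP\bigl[|\tilde F_N(g^N)-\EE\tilde F_N(g^N)|\ge t_N\bigr]\le 2\,e^{-1/(2\eta_N)},
$$
again summable in $N$, and likewise for $\tilde T_N$ (applied to its real and imaginary parts, as $T_N$ is complex-valued in general).

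It remains to assemble the pieces and to deal with the one genuinely delicate point. By Borel--Cantelli, almost surely eventually both $G_N$ holds and $|\tilde F_N(g^N)-\EE\tilde F_N(g^N)|<t_N$, so $\bigl|\,\|p(H_1^N,\ldots,H_m^N)\|-\EE\tilde F_N(g^N)\,\bigr|<t_N\to0$. To replace $\EE\tilde F_N(g^N)$ by $\EE\|p(H_1^N,\ldots,H_m^N)\|=\EE F_N(g^N)$, observe that $\tilde F_N$ and $F_N$ differ only on $G_N^c$, where $|F_N-\tilde F_N|\le C(p,\rho)(1+\max_k\|H_k^N\|)^{\deg p}$ --- crucially a power of $\max_k\|H_k^N\|$ rather than of $\|g^N\|$ --- so that $\EE[\mathbf{1}_{G_N^c}|F_N(g^N)-\tilde F_N(g^N)|]$ is controlled by the \emph{dimension-free} sub-Gaussian tail of $\max_k\|H_k^N\|$ of width $\sigma_*(H_k^N)\le v(H_k^N)$ from Corollary~\ref{cor:gconcmtx}, forcing $|\EE\tilde F_N(g^N)-\EE F_N(g^N)|\to0$. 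The same estimates apply verbatim to $\tilde T_N,T_N$, giving the second stated limit. The main obstacle is exactly this last comparison: one must arrange the truncation so that the truncation error is estimated through the operator norms $\|H_k^N\|$, which carry a dimension-free tail with width $\sigma_*(H_k^N)=o(1)$, rather than through $\|g^N\|$, whose moments blow up with the ambient dimension $\dim g^N$ (which is uncontrolled relative to $N$). Once this is handled, the argument is the routine Lipschitz-concentration-plus-Borel--Cantelli scheme, made to work by the single fact that $v(H_k^N)\to0$ keeps the gradient of $p(H_1^N,\ldots,H_m^N)$ in the Gaussian variables uniformly small on the high-probability event where all the $H_k^N$ have bounded norm.
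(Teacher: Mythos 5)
Your proof is correct and follows essentially the same scheme as the paper: truncate to the high-probability event where every $\|H_k^N\|$ is bounded by a constant, observe that on this (convex) set the map $g\mapsto p(H_1^N(g),\ldots,H_m^N(g))$ is Lipschitz in operator norm with constant of order $\max_k\sigma_*(H_k^N)=o((\log N)^{-3/2})$, extend to a globally Lipschitz function, apply Gaussian concentration (Lemma~\ref{lem:gconc}), correct for the replacement of the mean, and conclude via Borel--Cantelli. The two technical choices you make differently are both legitimate and arguably slightly cleaner than the paper's: you extend by composing with the $1$-Lipschitz nearest-point projection onto the convex set $K_N$ rather than invoking Kirszbraun, and you control $|\EE\tilde F_N-\EE F_N|$ through the dimension-free sub-Gaussian tail of $\max_k\|H_k^N\|$ rather than through $\mathbf{P}[\Omega^c]^{1/2}L\,\EE\|g\|$ combined with the reduction $n_k^N\le d(N)^2$ that the paper uses to bound $\dim g$. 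The paper's choice of deviation level $L\log N$ in the concentration step (rather than your $\eta_N^{1/2}$) is immaterial; both give summable tails and a vanishing deviation.
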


\begin{proof}
Fix a noncommutative polynomial $p$ of degree $q$. Define a function $f$ 
either as $f(g)=\|p(H_1^N,\ldots,H_m^N)\|$ or $f(g)=\ntr 
p(H_1^N,\ldots,H_m^N)$, where $g=(g_{ki}^N)_{k\in[m],i\in[n_k^N]}$.
We may assume without loss of generality that $n_k^N\le d(N)^2$ as
in the proof of Lemma \ref{lem:orthook}, so the random vector $g$ has 
dimension at most $md(N)^2$.

We begin by estimating as in the proofs of Lemma 
\ref{lem:wafred} and Corollary \ref{cor:gconcmtx} that
$$
	|f(g)-f(g')| \le L\|g-g'\|,\qquad\quad
	L=C(p) 4^{q-1} \max_k \sigma_*(H_k^N)
$$
for all $g,g'\in\Omega$, where
$$
	\Omega :=
	\{g:\|H_k^N\|\le 4\mbox{ for all }k\}
$$
and $C(p)$ is a constant that depends only on the polynomial $p$.

By Corollary \ref{cor:norm} and a union
bound, we can estimate
$$
	\mathbf{P}[\Omega^c] \le
	\sum_{k=1}^m \mathbf{P}[\|H_k^N\|>4]
	\le
	m e^{-(\log d(N))^3}
$$
eventually as $N\to\infty$, where we used that
$\sigma_*(H_k^N)\le v(H_k^N)=o((\log d(N))^{-\frac{3}{2}})$ and 
$\|H_{k,{\rm free}}^N\|\le \|\EE[H_k^N]\|+2\sigma(H_k^N)=2+o(1)$ 
by Lemma \ref{lem:freenck}.

As $f$ is $L$-Lipschitz on $\Omega$, the 
classical Lipschitz extension theorem of Kirszbraun ensures the existence 
of a globally 
$L$-Lipschitz function $\tilde f$ such that $\tilde f(g)=f(g)$ for 
$g\in\Omega$. We can therefore estimate for sufficiently large $N$
\begin{align*}
	|\EE[f(g)]-\EE[\tilde f(g)]| &=
	|\EE[(f(g)-\tilde f(g))1_{\Omega^c}]|
	\\
	&
	\le \mathbf{P}[\Omega^c]^{\frac{1}{2}}
	\{(\EE|f(g)|^2)^{\frac{1}{2}}+
	(\EE|\tilde f(g)|^2)^{\frac{1}{2}}\}.	
	\\
	&
	\le \mathbf{P}[\Omega^c]^{\frac{1}{2}}
	\{(\EE|f(g)|^2)^{\frac{1}{2}}+
	|f(0)| + L\sqrt{m}d(N)\},
\end{align*}
where we used Cauchy-Schwarz and that $0\in\Omega$ for sufficiently large 
$N$. Now note that $(\EE|f(g)|^2)^{\frac{1}{2}} \lesssim 
1+\max_k(\EE\|H_k^N\|^{2q})^{\frac{1}{2q}}$ by H\"older's inequality, with 
a universal constant depending on $p$ only. It therefore follows from 
Corollary \ref{cor:norm} that $(\EE|f(g)|^2)^{\frac{1}{2}}$ is uniformly
bounded as $N\to\infty$. As $|f(0)|$ is clearly also uniformly bounded,
the estimate $\mathbf{P}[\Omega^c]\le me^{-(\log d(N))^3}$ implies that
$$
	|\EE[f(g)]-\EE[\tilde f(g)]|=o(1)
$$
as $N\to\infty$. On the other hand, we can compute
\begin{align*}
	\mathbf{P}[|f(g)-\EE[\tilde f(g)]|\ge L\log N]
	&\le
	\mathbf{P}[\Omega^c] +
	\mathbf{P}[|\tilde f(g)-\EE[\tilde f(g)]|\ge L\log N]
	\\ &\le
	m e^{-(\log N)^3} + 2e^{-\frac{(\log N)^2}{2}}
\end{align*}
by Lemma \ref{lem:gconc} and $d(N)\ge N$. Thus
$$
	|f(g)-\EE[f(g)]| \le L\log N+o(1)
$$
eventually as $N\to\infty$ a.s.\ by the Borel-Cantelli lemma. But as
$\sigma_*(H_k^N)\le v(H_k^N)=o((\log N)^{-\frac{3}{2}})$, we
have $L\log N=o(1)$ as $N\to\infty$, and the proof is complete.
\end{proof}

We can now complete the proof of Theorem \ref{thm:free}.

\begin{proof}[Proof of Theorem \ref{thm:free}: part \textit{b}]
Theorem \ref{thm:htlin} and Lemmas \ref{lem:saffirst} and 
\ref{lem:safnc} yield
$$
	\limsup_{N\to\infty}\|p(H_1^N,\ldots,H_m^N)\|
	\le \|p(s_1,\ldots,s_m)\|\quad\mbox{a.s.}
$$
for every noncommutative polynomial $p$. On the other hand, combining
part \textit{a} of Theorem \ref{thm:free} with Lemma \ref{lem:safconc} 
yields that
$$
	\lim_{N\to\infty} \ntr p(H_1^N,\ldots,H_m^N) =
        \tau(p(s_1,\ldots,s_m))\quad\mbox{a.s.}
$$
The latter implies
\begin{align*}
	\liminf_{N\to\infty}\|p(H_1^N,\ldots,H_m^N)\|
	&\ge
	\liminf_{N\to\infty}\ntr(|p(H_1^N,\ldots,H_m^N)|^{2r})^{\frac{1}{2r}}
	\\ &=
	\tau(|p(s_1,\ldots,s_m)|^{2r})^{\frac{1}{2r}}
\end{align*}
a.s.\ for every $r\in\mathbb{N}$, where we used that 
$|p(H_1^N,\ldots,H_m^N)|^{2r}$
is again a noncommutative polynomial. Letting $r\to\infty$ shows that
$$
	\lim_{N\to\infty}\|p(H_1^N,\ldots,H_m^N)\|
	= \|p(s_1,\ldots,s_m)\|\quad\mbox{a.s.}
$$
It remains to note that
$$
	\lim_{N\to\infty}\EE\|p(H_1^N,\ldots,H_m^N)\|
	= \|p(s_1,\ldots,s_m)\|
$$
now follows from Lemma \ref{lem:safconc}.
\end{proof}

\subsection{Proof of Corollary \ref{cor:oldie}}
\label{sec:pfcoroldie}

We finally deduce Corollary \ref{cor:oldie}.

\begin{proof}[Proof of Corollary \ref{cor:oldie}]
Applying Theorem \ref{thm:free} to $p(H^N)=(H^N)^r$ yields
$$
	\lim_{N\to\infty}\|H^N\|=\|s\|\quad\mbox{and}\quad
	\lim_{N\to\infty}\ntr[(H^N)^r] = \tau(s^r)\quad\mbox{a.s.}
$$
for every $r\in\mathbb{N}$, where $s$ is a semicircular variable. As
$$
	\ntr[(H^N)^r] =
	\int x^r\,\mu_{H^N}(dx),\qquad
	\tau(s^r) = \int x^r \,\mu_{\rm sc}(dx),
$$
and as $\mu_{\rm sc}$ has bounded support, the first conclusion follows as 
moment convergence implies weak convergence \cite[p.\ 116]{NS06}. The
second conclusion follows as $\|s\|=2$.
\end{proof}

\section{Discussion and further questions}
\label{sec:disc}

The aim of this final section is to discuss a number of broader questions 
that arise from our main results. We first discuss in some detail to what 
extent the parameter $v(X)$ that quantifies noncommutativity in our bounds 
is natural, and whether one might hope to improve fundamentally on this 
parameter. We then proceed to highlight a number of further questions that 
arise from our results.

\subsection{A canonical parameter 
\texorpdfstring{$\sigma_{**}(X)$}{sigma**(X)} cannot exist}

\subsubsection{Is \texorpdfstring{$v(X)$}{v(X)} a natural parameter?}

In all the results of this paper, the presence of noncommutativity and of 
``intrinsic freeness'' was quantified by the parameter $v(X)$. The utility 
of this parameter is amply demonstrated by the various examples in Section 
\ref{sec:ex}: for example, in the independent entry model, 
$v(X)\asymp\max_{ij}b_{ij}$ recovers precisely the small parameter that 
controls the previously known behavior \eqref{eq:bvh} in this setting, 
while various models in Section \ref{sec:exdep} illustrate the 
significance and near-optimality of our bounds in dependent situations.

Nonetheless, it is not difficult to find examples where both $v(X)$, and 
the slightly improved parameter $\sup_N w(X_1^N)$ discussed in Remark 
\ref{rem:tildew}, fail to capture the correct behavior of Gaussian random 
matrices. A particularly disconcerting aspect of these parameters is the 
following. Let $X$ be any random matrix of the form \eqref{eq:model}; then 
$X\otimes\id$ is again a model of this form, where we tensor on any 
finite-dimensional identity matrix. Tensoring on an identity clearly has 
no effect on the spectrum of the matrix: in particular, 
$\spc(X\otimes\id)=\spc(X)$ and $\sigma(X\otimes\id)=\sigma(X)$. This 
invariance fails dramatically, however, for the parameters $v(X)$ and 
$w(X)$.

\begin{lem}
\label{lem:disturbing}
Let $\id_N$ be the identity in $\M_N(\mathbb{C})$. Then for any
self-adjoint $d\times d$ random matrix $X$ of the form \eqref{eq:model},
we have
\begin{alignat*}{2}
	&v(X\otimes\id_N) = \sqrt{N}v(X)\quad &&\mbox{for }N\ge 1,
	\\
	&w(X\otimes\id_N) = \sigma(X)\quad &&\mbox{for }N\ge d.
\end{alignat*}
\end{lem}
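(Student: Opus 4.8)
The statement has two parts, and both follow by direct computation using the definitions of $v(X)$ and $w(X)$ from Sections~\ref{sec:mainspec} and~\ref{sec:wxvx}. The plan is to reduce $X\otimes\id_N$ to the form \eqref{eq:model} with coefficient matrices $A_i\otimes\id_N$, and then to chase through the definitions, observing that the tensor factor $\id_N$ multiplies the Hilbert--Schmidt geometry by $N$ (hence $v$ scales by $\sqrt N$) while leaving the operator-norm geometry of the relevant expressions untouched (hence $w$ is unchanged once $N\ge d$).

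For the first identity, I would use the reformulation from Lemma~\ref{lem:orthook}: we may assume $\tr[A_iA_j]=0$ for $i\ne j$ (this requires the $A_i$ to be self-adjoint, which is why the lemma is stated for self-adjoint $X$), in which case $v(X)=\max_i\|A_i\|_{\rm HS}$. Since $\tr[(A_i\otimes\id_N)(A_j\otimes\id_N)]=N\tr[A_iA_j]$, the orthogonality is preserved, and $\|A_i\otimes\id_N\|_{\rm HS}^2 = \tr[A_i^*A_i\otimes\id_N] = N\|A_i\|_{\rm HS}^2$. Hence $v(X\otimes\id_N)=\max_i\|A_i\otimes\id_N\|_{\rm HS}=\sqrt N\max_i\|A_i\|_{\rm HS}=\sqrt N\,v(X)$. (Alternatively, one computes directly that $\mathrm{Cov}(X\otimes\id_N)$ is unitarily equivalent to $N$ copies of $\mathrm{Cov}(X)$ direct-summed with zeros, so its norm is $Nv(X)^2$.)

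For the second identity, I would argue by two inequalities. The upper bound $w(X\otimes\id_N)\le\sigma(X\otimes\id_N)=\sigma(X)$: by the definition of $w$ with $U=V=W=\id$ we have the lower bound $w\ge\|\sum_{ij}\cdots\|^{1/4}$, but for the \emph{upper} bound I would instead apply Lemma~\ref{lem:complexint} (complex interpolation) or argue directly — actually the cleanest route is to note that by Proposition~\ref{prop:vxwx}, $w(Y)^4\le v(Y)^2\sigma(Y)^2$ does \emph{not} immediately give what we want since $v(X\otimes\id_N)$ blows up; so instead I would establish $w(X\otimes\id_N)\le\sigma(X)$ directly from the general bound $\|\sum_{ij}A_iUA_jVA_iWA_j\|^{1/4}\le\|(\sum_i A_i^2)^{1/2}\|=\sigma$ valid for \emph{commuting-structured} estimates — more precisely, for self-adjoint $A_i$ one has, for any unitaries, $\|\sum_{ij}A_iUA_jVA_iWA_j\|\le \sigma(X)^4$ is \emph{false} in general, so the upper bound must genuinely use the tensor structure. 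The right approach: conjugating by a suitable unitary on $\mathbb{C}^d\otimes\mathbb{C}^N$, when $N\ge d$ one can realize any contraction on $\mathbb{C}^d$ as a ``corner'' of a unitary on $\mathbb{C}^d\otimes\mathbb{C}^N$ acting as $\id_d\otimes(\text{something})$ — this is the standard dilation trick. I expect \textbf{this is the main obstacle}: showing that the supremum over unitaries $U,V,W$ on $\mathbb{C}^d\otimes\mathbb{C}^N$ of $\|\sum_{ij}(A_i\otimes\id_N)U(A_j\otimes\id_N)V(A_i\otimes\id_N)W(A_j\otimes\id_N)\|$ equals $\sigma(X)^4$, the point being that $A_i\otimes\id_N$ has each eigenspace of dimension $\ge N\ge d$, so by choosing $U,V,W$ appropriately (using partial isometries between these large eigenspaces, completed to unitaries) one can make the expression behave as if the $A_i$ were replaced by arbitrary contractions of rank one — simultaneously for all $i,j$ — which saturates the bound exactly at $\|(\sum_i A_i^2)^2\|^{1/4}=\sigma(X)$ on one side, while on the other side no choice of unitaries can exceed $\sigma(X)^4$ because one can push the $A_i\otimes\id_N$ factors together using the tensor structure to form $\sum_i A_i^2\otimes\id_N$.

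Concretely, for the lower bound $w(X\otimes\id_N)\ge\sigma(X)$: diagonalize $\sum_i A_i^2$ and pick a unit eigenvector $\xi$ for its top eigenvalue $\sigma(X)^2$; choose $U,V,W=\id_d\otimes\id_N$ and test the operator $\sum_{ij}A_iA_jA_iA_j\otimes\id_N$ is not obviously $\ge\sigma^4$, so instead I would choose $U,V,W$ cleverly — e.g.\ $V$ and $W$ chosen (using $N\ge d$) so that $A_j V A_i W A_j$ acts, on the range of $A_i\otimes\id_N$, like $\langle A_i\xi,\cdot\rangle$ times $A_j(\text{stuff})A_j$, engineering the sum $\sum_{ij}$ to telescope into $(\sum_i \|A_i\xi\|^2)^2=\sigma(X)^4$. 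For the upper bound I would use that for any unitaries the Cauchy--Schwarz argument in the proof of Proposition~\ref{prop:vxwx} gives $w^4\le\sigma(X)\cdot(\text{second factor})$, and the second factor, by Lemma~\ref{lem:avgbasis} applied on $\mathbb{C}^d\otimes\mathbb{C}^N$, is controlled by $\sigma(X)^3$ when $N\ge d$ because the eigenspace dimensions are large enough for the averaging identity to kick in with the correct scaling. I would then assemble these to conclude $w(X\otimes\id_N)=\sigma(X)$ for $N\ge d$, and remark that the threshold $N\ge d$ is exactly what is needed for the dilation/averaging step, while for $N<d$ the value interpolates between $v(X)^{1/2}\sigma(X)^{1/2}$-type bounds and $\sigma(X)$.
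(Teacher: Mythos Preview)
Your argument for the first identity is fine and is essentially the paper's: the paper computes $\mathrm{Cov}(X\otimes A)=\mathrm{Cov}(X)\otimes\iota(A)\iota(A)^*$ directly (your parenthetical alternative), giving $v(X\otimes A)=v(X)\,\|A\|_{\rm HS}$.

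For the second identity you have genuine gaps in both directions.

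\emph{Upper bound.} You are looking for structure that is not needed. The inequality $w(Y)\le\sigma(Y)$ holds for \emph{every} self-adjoint Gaussian model $Y$, with no reference to tensor products; this is \cite[Proposition~3.2]{Tro18}, and the paper simply invokes it together with $\sigma(X\otimes\id_N)=\sigma(X)$. Your proposed route through the Cauchy--Schwarz step of Proposition~\ref{prop:vxwx} and Lemma~\ref{lem:avgbasis} does not work as written: that argument produces a factor $v(Y)$, and $v(X\otimes\id_N)$ grows with $N$, so no ``large eigenspace'' rescaling of the averaging identity will turn it into $\sigma(X)$.

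\emph{Lower bound.} Your intuition---use the $N\ge d$ extra dimensions to move the $A_j$'s onto the second tensor factor so they commute with the $A_i$'s---is exactly right, but you never produce the operator that does this. The paper's construction is concrete: take the partial swap $U\in\M_d(\mathbb{C})\otimes\M_N(\mathbb{C})$ defined by $U(e_i\otimes e_j)=e_j\otimes e_i$ for $i,j\in[d]$ and $U(e_i\otimes e_j)=0$ otherwise. This is a contraction with $\|U\|=1$, which is admissible by the last display in the proof of Lemma~\ref{lem:complexint}. One checks $U(A_j\otimes\id)U=\id_d\otimes PA_jP^*$ (with $P:\mathbb{C}^d\hookrightarrow\mathbb{C}^N$ the canonical embedding), whence
\[
\sum_{i,j}(A_i\otimes\id)U(A_j\otimes\id)U(A_i\otimes\id)U(A_j\otimes\id)U
=\Big(\sum_i A_i^2\Big)\otimes P\Big(\sum_j A_j^2\Big)P^*,
\]
whose norm is $\sigma(X)^4$. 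Since right-multiplication by the contraction $U$ can only decrease the norm, this gives $w(X\otimes\id_N)^4\ge\sigma(X)^4$. Your ``dilation of rank-one contractions'' and ``eigenspace'' heuristics do not arrive at this; the swap is precisely the mechanism that realizes the idea.
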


\begin{proof}
We have $\mathrm{Cov}(X\otimes A)=\mathrm{Cov}(X)\otimes 
\iota(A)\iota(A)^*$ for any deterministic matrix $A$, where 
$\iota:\M_d(\mathbb{C})\to\mathbb{C}^{d^2}$ maps a matrix to its vector of 
entries. Thus $v(X\otimes A)^2 = 
v(X)^2\|A\|_{\rm HS}^2$, and the first claim follows as $\|\id_N\|_{\rm 
HS}=\sqrt{N}$.

To prove the second claim, let $N\ge d$, and define 
$U\in\M_d(\mathbb{C})\otimes\M_N(\mathbb{C})$ by
$U(e_i\otimes e_j)=e_j\otimes e_i$ for $i,j\in[d]$ and
$U(e_i\otimes e_j)=0$ otherwise.
Then $\|U\|=1$ and
$$
	\sum_{i,j}(A_i\otimes\id)U(A_j\otimes\id)U(A_i\otimes\id)
	U(A_j\otimes\id)U =
	\sum_i A_i^2 \otimes P\bigg(
	\sum_i A_i^2 \bigg)P^*,
$$
where $P:\mathbb{C}^d\to\mathbb{C}^N$ denotes the canoncial
embedding $Pe_i=e_i$. Thus $w(X\otimes\id)\ge\sigma(X)$ by the last 
equation display in the proof of Lemma \ref{lem:complexint}.
On the other hand,
$w(X\otimes\id)\le\sigma(X\otimes\id)=\sigma(X)$ by 
\cite[Proposition 3.2]{Tro18}.
\end{proof}

Lemma \ref{lem:disturbing} shows that no matter how well our bounds 
capture the behavior of the random matrix $X$, applying our results to 
$X\otimes\id_d$ can never yield any improvement over the noncommutative 
Khintchine inequlity \eqref{eq:nck}---despite that tensoring an identity 
has no effect on the spectrum of the matrix. This observation may lead one 
to conjecture that the theory of this paper should admit a far-reaching 
improvement, in which $v(X)$ is replaced by a ``natural'' parameter that 
captures correctly the behavior of the spectrum. For example, it was 
conjectured in \cite{Tro18,vH17,Ban15} that there exist bounds of the kind 
that are studied in this paper, where the parameter $v(X)$ is replaced by 
the ``natural'' parameter $\sigma_*(X)$.

Somewhat surprisingly, such conjectures turn out to be ill-founded. We 
will presently show that the kind of behavior that is captured by Lemma 
\ref{lem:disturbing} is a fundamental feature of any bound of the form 
\eqref{eq:secondorder}.

\subsubsection{An impossibility theorem}

Suppose we are given a matrix parameter $\sigma_{**}(X)$ such that the
inequality for $d\times d$ centered Gaussian random matrices 
\begin{equation}
\label{eq:seconduniv}
	\EE\|X\|\le C\sigma(X) + C\sigma_{**}(X)(\log d)^{\beta}
\end{equation}
is valid for universal constants $C,\beta>0$.
In view of the above discussion, we may aim to find an inequality 
\eqref{eq:seconduniv} that respects the simplest properties of the 
spectral norm: the triangle inequlity $\|X+Y\|\le\|X\|+\|Y\|$; unitary 
invariance $\|U^*XU\|=\|X\|$; and tensor invariance 
$\|X\otimes\id\|=\|X\|$. Note that all three properties are satisfied also 
by the parameter $\sigma(X)$. In order for 
\eqref{eq:seconduniv} to respect these properties, one would 
have to assume that the parameter $\sigma_{**}(X)$ satisfies these 
properties up to a universal constant. Let us formalize these requirements 
as follows:
\begin{enumerate}
\itemsep\abovedisplayskip
\item $\sigma_{**}(X_1+X_2)\le C'\{\sigma_{**}(X_1)+\sigma_{**}(X_2)\}$.
\item $\sigma_{**}(U^*XU)\le C'\sigma_{**}(X)$ for any
non-random unitary matrix $U$.
\item $\sigma_{**}(X\otimes\id_N) \le C'\sigma_{**}(X)$ for
any $N\in\mathbb{N}$.
\end{enumerate}
Here $C'$ always denotes a universal constant.

The noncommutative Khintchine inequality \eqref{eq:nck}, which 
corresponds to the case $\sigma_{**}(X)=\sigma(X)$, satisfies all the 
above requirements but does not capture any noncommutativity. We therefore 
introduce as a further assumption that the second term of
\eqref{eq:seconduniv} becomes negligible at least in the simplest model of 
random matrix theory, the standard Wigner matrices $G^N$ of Definition 
\ref{defn:wigner}.
\begin{enumerate}
\itemsep\abovedisplayskip
\setcounter{enumi}{3}
\item $\sigma_{**}(G^N) = o((\log N)^{-\beta})$ as $N\to\infty$.
\end{enumerate}
Remarkably, the above very natural properties prove to be
mutually contradictory.

\begin{prop}
\label{prop:impossible}
Suppose that \eqref{eq:seconduniv} is valid for some universal constants 
$C,\beta$. Then at least one of the properties (1)--(4) must fail for any 
choice of $C'$.
\end{prop}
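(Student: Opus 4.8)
The plan is to force a contradiction by exhibiting, for each pair $(m,N)$ of positive integers, an explicit self-adjoint centered Gaussian random matrix $X_{m,N}$ of dimension $d=N^{m}$ that is built from a single standard Wigner matrix $G^N$ using \emph{only} the three operations appearing in properties (1)--(3), and whose operator norm grows like $m$ while $\sigma(X_{m,N})$ grows only like $\sqrt m$. Concretely, work on $(\mathbb{C}^N)^{\otimes m}$ and set $Y_l:=P_l^{*}\bigl(G^N\otimes\id_{N^{m-1}}\bigr)P_l$, where $P_l$ is the fixed permutation unitary carrying the first tensor factor onto the $l$-th; thus $Y_l$ acts as $G^N$ on the $l$-th factor and as the identity elsewhere. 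Let $X_{m,N}:=\sum_{l=1}^{m}Y_l$. Passing from $G^N$ to $X_{m,N}$ uses exactly: tensoring with an identity (property (3)), conjugation by a fixed unitary (property (2)), and forming a sum of $m$ terms (property (1), iterated along a balanced binary tree). Chaining the corresponding inequalities yields
$$\sigma_{**}(X_{m,N})\le K_m\,\sigma_{**}(G^N),$$
where $K_m=m\,(C')^{\lceil\log_2 m\rceil+2}$ depends only on $m$ and $C'$ --- crucially, not on $N$.

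Next I would record three elementary facts. First, the $Y_l$ pairwise commute and share the product eigenbasis, so $\spc(X_{m,N})=\bigl\{\textstyle\sum_{l}\mu_{i_l}:i_1,\dots,i_m\in[N]\bigr\}$, where $\mu_1,\dots,\mu_N$ are the eigenvalues of $G^N$; hence $\|X_{m,N}\|=m\|G^N\|$ and $\EE\|X_{m,N}\|=m\,\EE\|G^N\|\to 2m$ as $N\to\infty$ (classical for Wigner matrices, cf.\ \cite{AGZ10}; alternatively this follows from Corollary \ref{cor:oldie} together with uniform integrability via Corollary \ref{cor:norm}). Second, $\EE[G^N]=0$ and $\EE[(G^N)^2]=\id_N$ give $\EE[X_{m,N}^2]=m\,\id+E_{m,N}$ with $\|E_{m,N}\|=O(m^{2}/N)$, so $\sigma(X_{m,N})\le\sqrt{2m}$ once $N$ is large in terms of $m$. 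Third, property (4) says $\sigma_{**}(G^N)(\log N)^{\beta}\to 0$ as $N\to\infty$.

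Putting these together: applying \eqref{eq:seconduniv} to $X_{m,N}$ gives, for $N$ large depending on $m$,
$$2m-1\le \EE\|X_{m,N}\|\le C\sigma(X_{m,N})+C\sigma_{**}(X_{m,N})(\log d)^{\beta}\le C\sqrt{2m}+C K_m\,\sigma_{**}(G^N)\,(m\log N)^{\beta}.$$
Hold $m$ fixed and let $N\to\infty$: since $K_m$ and $m$ are fixed, the last term tends to $0$ by the third fact, so it is $\le 1$ for $N$ large, whence $2m-1\le C\sqrt{2m}+1$. But $2m-C\sqrt{2m}-2\to\infty$ as $m\to\infty$, so this fails for all large $m$; since $m$ was arbitrary, this is the desired contradiction, so (1)--(4) cannot all hold together with \eqref{eq:seconduniv}. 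The only genuinely delicate point, I expect, is the bookkeeping behind $\sigma_{**}(X_{m,N})\le K_m\,\sigma_{**}(G^N)$: one must check that every step is literally one of (1)--(3) (so each step costs only a factor $C'$), that summing $m$ terms via a balanced tree costs $(C')^{\lceil\log_2 m\rceil}$ rather than $(C')^{m}$, and above all that $K_m$ does not depend on $N$. This independence is exactly what decouples the two limits and lets property (4) annihilate the $\sigma_{**}$-term before $m\to\infty$; by contrast, a more naive construction such as pinching a single $G^d$ to its diagonal loses a factor polynomial in $d$ and yields no contradiction. The remaining verifications ($\|X_{m,N}\|=m\|G^N\|$, the bound on $\|E_{m,N}\|$, and $\EE\|G^N\|\to2$) are routine.
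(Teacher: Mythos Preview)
Your proposal is correct and follows essentially the same approach as the paper: both build a tensor-product matrix $X_{m,N}$ on $(\mathbb{C}^N)^{\otimes m}$ by placing a Wigner matrix in each factor, then exploit the gap between $\sigma(X_{m,N})\asymp\sqrt{m}$ and $\EE\|X_{m,N}\|\asymp m$ while using (1)--(4) to kill the $\sigma_{**}$ term as $N\to\infty$ with $m$ fixed. The only cosmetic difference is that you reuse a \emph{single} $G^N$ in all factors (so $\|X_{m,N}\|=m\|G^N\|$ exactly, at the price of a harmless $O(m^2/N)$ cross-term in $\sigma$), whereas the paper uses independent $G_1^N,\ldots,G_m^N$ (so $\sigma(X_{m,N})=\sqrt{m}$ exactly, at the price of only a lower bound $\|X_{m,N}\|\ge\sum_k\lambda_{\max}(G_k^N)$); your balanced-tree bookkeeping for iterating (1) is also slightly different from the paper's linear peeling, but both yield an $N$-independent constant, which is all that matters.
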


\begin{proof}
Let $G_1^N,\ldots,G_n^N$ be i.i.d.\ standard Wigner matrices of dimension 
$N$, and consider the $N^n$-dimensional Gaussian random matrix
$$
	X_{n,N} =
	\sum_{k=1}^n
	\underbrace{\id_N\otimes\cdots\otimes\id_N}_{k-1}
	\mbox{}\otimes G_k^N
	\otimes	\mbox{}
	\underbrace{\id_N\otimes\cdots\otimes\id_N}_{n-k}.
$$
We will show that if properties (1)--(4) hold for some universal constant 
$C'\ge 1$, this entails a contradiction. Indeed, properties (1)--(3) 
yield
\begin{align*}
	\sigma_{**}(X_{n,N}) 
	& \stackrel{(1)}{\le}\,
	\sum_{k=1}^n (C')^k\,
	\sigma_{**}(\id_{N^{k-1}}\otimes
	G_k^N\otimes \id_{N^{n-k}}) \\
	& \stackrel{(2)}{\le}\,
	\sum_{k=1}^n(C')^{k+1}\,
	\sigma_{**}(G_k^N\otimes \id_{N^{n-1}}) \\
	& \stackrel{(3)}{\le}\,
	\sum_{k=1}^n (C')^{k+2}\,
	\sigma_{**}(G_k^N).
\end{align*}
while we may readily compute $\sigma(X_{n,N})=\sqrt{n}$. Thus we obtain
$$
	\limsup_{N\to\infty}
	\EE\|X_{n,N}\| \le C\sqrt{n}
$$
by \eqref{eq:seconduniv} and property (4).

On the other hand, the tensor product structure of $X_{n,N}$ implies that
$$
	\|X_{n,N}\| \ge
	\lambda_{\rm max}(X_{n,N}) =
	\sum_{k=1}^n \lambda_{\rm max}(G_k^N)
$$
pointwise, where $\lambda_{\rm max}$ denotes the maximal eigenvalue.
We therefore obtain
$$
	2n\le
	\limsup_{N\to\infty}
        \EE\|X_{n,N}\| \le C\sqrt{n}
$$
by Corollary \ref{cor:oldie}.
As $n$ is arbitrary, this yields the desired contradiction.
\end{proof}

A special case of Proposition \ref{prop:impossible} disproves the 
conjecture made in 
\cite{Tro18,vH17,Ban15}: the parameter $\sigma_*(X)$ satisfies all four 
properties (1)--(4), and thus an inequality of the form 
\eqref{eq:seconduniv} with $\sigma_{**}(X)=\sigma_*(X)$ cannot hold.

More generally, Proposition \ref{prop:impossible} shows that no parameter 
$\sigma_{**}(X)$ can be expected to avoid the kind of ``unnatural'' 
behavior that was identified in Lemma \ref{lem:disturbing}. The 
construction in the proof of Proposition \ref{prop:impossible} suggests a 
clear explanation of why this must be the case. The summands in the 
definition of $X_{n,N}$ behave as independent variables in the classical 
(commutative) sense, as opposed to free independence. However, if 
properties (1)--(4) hold, such models can give rise to a small parameter 
$\sigma_{**}(X)$, so that \eqref{eq:seconduniv} would imply that they 
behave as their free counterparts up to a universal constant. These two 
phenomena stand in contradiction.

\subsubsection{The dimension threshold}

The second identity of Lemma \ref{lem:disturbing} shows that our results 
fail to capture any noncommutative behavior when we tensor a random matrix 
$X$ by an identity of the same dimension. On the other hand, for standard 
Wigner matrices $G^N$, we have $\sigma(G^N\otimes\id_{D(N)})=1$ and
$$
	v(G^N\otimes\id_{D(N)}) \asymp 
	\sqrt{\frac{D(N)}{N}}\ll \sigma(G^N\otimes\id_{D(N)})
$$
as soon as $D(N)\ll N$. Thus the case where a random matrix is 
tensored by an identity of proportional dimension appears as 
the threshold at which our ability to capture ``intrinsic freeness'' 
breaks down.

This phenomenon has an unexpected connection to certain 
questions in the theory of operator algebras. In the rest of this section, 
let $G^N_1,\ldots,G^N_m,H^{N}_1,\ldots,H^{N}_m$ be independent GUE matrices 
(that is, self-adjoint $N\times N$ matrices with i.i.d.\ centered complex 
Gaussian variables of variance $\frac{1}{N}$ on and above the diagonal). 
In the recent work \cite{Hay20}, it was shown that if strong 
convergence
\begin{multline*}
	\lim_{N\to\infty}
	\|p(G^N_1\otimes\id_N,\ldots,
	G^N_m\otimes\id_N,
	\id_N\otimes H^N_1,\ldots,\id_N\otimes H^N_m)\| = \\
	\|p(s_1\otimes\id,\ldots,s_m\otimes\id,\id\otimes s_1,
	\ldots,\id\otimes s_m)\|\quad\mbox{a.s.}
\end{multline*}
were to hold for all polynomials $p$,\footnote{%
Throughout this section $\otimes$ always denotes the minimal tensor 
product of $C^*$-algebras.} this would settle a 
conjecture of Peterson and Thom in the theory of Von Neumann 
algebras. Using the results of this paper, a slightly weaker fact can be 
proved. As the following result is only tangentially related to 
the rest of this paper, we will sketch its proof.

\begin{prop}
\label{prop:hayes}
We have
\begin{multline*}
	\lim_{N\to\infty}
	\|p(G^N_1\otimes\id_{D(N)},\ldots,
	G^N_m\otimes\id_{D(N)},
	\id_N\otimes H^{D(N)}_1,\ldots,\id_N\otimes H^{D(N)}_m)\| = \\
	\|p(s_1\otimes\id,\ldots,s_m\otimes\id,\id\otimes s_1,
	\ldots,\id\otimes s_m)\|\quad\mbox{a.s.}
\end{multline*}
for every noncommutative polynomial $p$, provided 
$D(N)=o\big(\frac{N}{(\log N)^3}\big)$.
\end{prop}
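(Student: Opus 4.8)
The plan is to apply Theorem~\ref{thm:mainsp} to the Gaussian randomness carried by $G_1^N,\dots,G_m^N$ only, treating the matrices $H_1^{D(N)},\dots,H_m^{D(N)}$ as deterministic, and then to identify the resulting free model and pass to the limit $N\to\infty$. It is important to note at the outset that one cannot simply apply Theorem~\ref{thm:free} to the combined family $\{G_k^N\otimes\id_{D(N)}\}_{k=1}^m\cup\{\id_N\otimes H_k^{D(N)}\}_{k=1}^m$: by Lemma~\ref{lem:disturbing} and the fact that $v(G_k^N)=N^{-1/2}$ for a GUE matrix, one has $v(G_k^N\otimes\id_{D(N)})=\sqrt{D(N)/N}$, which is small under our hypothesis; but $\id_N\otimes H_k^{D(N)}$ is a direct sum of $N$ \emph{identical} copies of $H_k^{D(N)}$, and the analogous computation gives $v(\id_N\otimes H_k^{D(N)})=\sqrt{N/D(N)}\to\infty$. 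The $H$-randomness must therefore be frozen.

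Write $D=D(N)$, fix a realization of $H_1^D,\dots,H_m^D$, and regard
$$
	\Xi^N := A_0\otimes\id_N\otimes\id_D
	+\sum_{k=1}^m A_{m+k}\otimes\id_N\otimes H_k^D
	+\sum_{k=1}^m A_k\otimes G_k^N\otimes\id_D
$$
as a Gaussian random matrix of the form \eqref{eq:model} of dimension $d'ND$, in which the first two sums play the role of $A_0$. Since $\EE[(G_k^N)^2]=\id_N$ one has $\sigma(\Xi^N)=\|\sum_{k=1}^m A_k^2\|^{1/2}=O(1)$, while the Gaussian coefficients have the form $A_k\otimes(\,\cdot\,)\otimes\id_D$, so that their covariance picks up a factor $\|\id_D\|_{\rm HS}^2=D$ and a factor $v(G_k^N)^2=1/N$, giving $v(\Xi^N)^2=O(D/N)$ and $\sigma_*(\Xi^N)\le v(\Xi^N)$. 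Hence $\tilde v(\Xi^N)=O((D/N)^{1/4})$, and since $D\le N$ eventually we have $\log(d'ND)\lesssim\log N$. Applying Theorem~\ref{thm:mainsp} with $t=\log N$ therefore yields, for the free model $\Xi^N_{\rm free}$ of \eqref{eq:model} (with the $H$'s still frozen),
$$
	\PP\big[\spc(\Xi^N)\subseteq\spc(\Xi^N_{\rm free})+\delta_N[-1,1]\big]\ge 1-e^{-(\log N)^2},\qquad
	\delta_N=O\!\big((D/N)^{1/4}(\log N)^{3/4}\big),
$$
and $\delta_N\to0$ \emph{precisely} because $D(N)=o(N/(\log N)^3)$. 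Integrating over the law of the $H$'s and applying Borel--Cantelli (jointly in all the randomness) shows that $\spc(\Xi^N)\subseteq\spc(\Xi^N_{\rm free})+\delta_N[-1,1]$ eventually, almost surely.

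It remains to identify $\spc(\Xi^N_{\rm free})$. Collecting the free semicircular partners attached to the coefficients of $G_k^N$ turns the last sum of $\Xi^N_{\rm free}$ into $\sum_{k=1}^m A_k\otimes(G_k^N)_{\rm free}\otimes\id_D$, where $(G_k^N)_{\rm free}$ is the free model of the centered matrix $G_k^N$. Because $\EE[(G_k^N)^2]=\id_N$, the moment computation in the proof of Lemma~\ref{lem:safncsimple} applies verbatim and shows that $\{(G_k^N)_{\rm free}\}_{k=1}^m$ is itself a free semicircular family with respect to $\ntr\otimes\tau$; moreover it acts on tensor legs disjoint from those of the $H$'s, hence is in tensor position with $\{H_k^D\}$. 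Since the trace is faithful, $\spc(\Xi^N_{\rm free})$ therefore coincides with the spectrum of $A_0\otimes\id+\sum_k A_k\otimes s_k\otimes\id+\sum_k A_{m+k}\otimes\id\otimes H_k^D$, with $\{s_k\}$ a free semicircular family tensor-independent from $\{H_k^D\}$. As $D(N)\to\infty$, Theorem~\ref{thm:free} applied to the GUE matrices $H_1^D,\dots,H_m^D$ (for which $v(H_k^D)=D^{-1/2}$) gives that $\{H_k^D\}$ converges strongly, almost surely, to a free semicircular family; using that strong convergence is preserved under minimal tensor product with the fixed tracial $C^*$-algebra generated by $A_1,\dots,A_{2m}$ and $s_1,\dots,s_m$, we conclude that, almost surely and for every $\varepsilon>0$, the above spectrum is eventually contained in $\spc\big(A_0\otimes\id+\sum_k A_k\otimes(s_k\otimes\id)+\sum_k A_{m+k}\otimes(\id\otimes s_k)\big)+[-\varepsilon,\varepsilon]$.

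Combining the last two paragraphs yields, for every $d'$, $A_0,\dots,A_{2m}\in\M_{d'}(\mathbb{C})_{\rm sa}$, and $\varepsilon>0$, the inclusion of $\spc(A_0\otimes\id+\sum_k A_k\otimes G_k^N\otimes\id_{D(N)}+\sum_k A_{m+k}\otimes\id_N\otimes H_k^{D(N)})$ in an $[-\varepsilon,\varepsilon]$-neighbourhood of $\spc(A_0\otimes\id+\sum_k A_k\otimes(s_k\otimes\id)+\sum_k A_{m+k}\otimes(\id\otimes s_k))$ eventually, almost surely. Feeding this into the Haagerup--Thorbj\"ornsen linearization trick (Theorem~\ref{thm:htlin}, whose proof applies verbatim with the arbitrary self-adjoint family $\{s_k\otimes\id\}\cup\{\id\otimes s_k\}$ in place of a semicircular family) gives $\limsup_N\|p(\cdots)\|\le\|p(s_1\otimes\id,\dots,\id\otimes s_m)\|$ a.s.; the matching lower bound follows exactly as in the proof of part~\textit{b} of Theorem~\ref{thm:free} from the weak convergence $\ntr\,p(\cdots)\to(\tau\otimes\tau)(p(s_1\otimes\id,\dots,\id\otimes s_m))$ a.s., which is immediate because any word in $\{G_k^N\otimes\id_D\}\cup\{\id_N\otimes H_k^D\}$ factors as (word in the $G$'s)$\otimes$(word in the $H$'s), so its normalized trace is a product of normalized traces each converging a.s.\ by Voiculescu's Theorem~\ref{thm:voic} together with the concentration estimate used in Theorem~\ref{thm:free}. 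The main obstacle is the middle step: one must resist completing the whole model to its free version (which would involve the uncontrollable parameter $v(\id_N\otimes H_k^{D(N)})$), instead freezing the $H$'s, invoking Theorem~\ref{thm:mainsp} on the $G$-randomness alone, and then correctly analyzing $\Xi^N_{\rm free}$ — recognizing, via $\EE[(G_k^N)^2]=\id$, that the free partners of the $G_k^N$ already form a free semicircular family in tensor position with the $H$'s, and that strong convergence of the $H_k^{D(N)}$ survives tensoring with the fixed coefficient algebra; the concentration input itself is a single application of Theorem~\ref{thm:mainsp}, and the role of the hypothesis $D(N)=o(N/(\log N)^3)$ is entirely encoded in the computation $v(G_k^N\otimes\id_{D(N)})=\sqrt{D(N)/N}$.
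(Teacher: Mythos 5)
Your proof is correct and follows essentially the same route as the paper's sketch: freeze the $H$'s, apply Theorem~\ref{thm:mainsp} conditionally to the Gaussian randomness in the $G$'s and combine with Borel--Cantelli, identify the resulting free model (the free partners of $G_k^N$ collapse to a semicircular family via the moment computation of Lemma~\ref{lem:safncsimple} and sit in tensor position with the frozen $H$'s), pass from $H_k^{D(N)}$ to $s_k$ by strong convergence of GUE matrices together with tensor stability of strong convergence, and close with the Haagerup--Thorbj{\o}rnsen linearization and a concentration argument for the matching lower bound. The one step you state a bit loosely is the tensor-stability claim: strong convergence is preserved under tensoring with a \emph{fixed} $C^*$-algebra only when that algebra is exact, which is indeed the case for $\M_{d'}(\mathbb{C})\otimes C^*(s_1,\dots,s_m)$, and is where the paper cites \cite[Theorem~9.1]{HT05}, \cite[Proposition~2.1]{CM14}, and the exactness discussion in \cite{Hay20}.
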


While this does not suffice for the purpose of \cite{Hay20}, which 
requires $D(N)=N$, the result was previously known only for 
$D(N)=o(N^{\frac{1}{3}})$ \cite[Theorem 1.2]{CGP19}.\footnote{%
After the initial version of this paper appeared, a complete solution of
the Peterson-Thom conjecture was proposed in \cite{BC22} using 
methods specific to GUE matrices. We retain 
Proposition~\ref{prop:hayes} to illustrate what may be achieved by the 
completely general methods of this paper.}

\begin{proof}[Sketch of proof of Proposition \ref{prop:hayes}]
Fix a dimension $d'\in\mathbb{N}$ and self-adjoint matrices
$A_0,\ldots,A_m,B_1,\ldots,B_m\in\M_{d'}(\mathbb{C})_{\rm sa}$.
Define the random matrix
$$
	X^N =A_0\otimes\id_N\otimes\id_{D(N)}+
	\sum_{k=1}^m
	A_k\otimes G^N_k\otimes \id_{D(N)}+
	\sum_{k=1}^m
	B_k\otimes \id_N\otimes H^{D(N)}_k.
$$
The assumption on $D(N)$ implies that $v(\sum_{k=1}^m
A_k\otimes G^N_k\otimes \id_{D(N)})=o((\log N)^{-\frac{3}{2}})$. 
As $(G^N_k)_{k\le m}$ and $(H^{D(N)}_k)_{k\le m}$ are independent, we can 
apply Theorem \ref{thm:mainsp} conditionally
on $(H^{D(N)}_k)_{k\le m}$, Lemma \ref{lem:safncsimple}, and the 
Borel-Cantelli lemma to show that
$$
	\spc(X^N) \subseteq 
	\spc(A_0\otimes\id\otimes\id_{D(N)} +
	{\textstyle \sum_{k=1}^m}
	A_k\otimes s_k\otimes\id_{D(N)} +
	{\textstyle \sum_{k=1}^m} B_k\otimes \id\otimes H^{D(N)}_k)
	+[-\varepsilon,\varepsilon]
$$
eventually as $N\to\infty$ a.s.\ for every $\varepsilon>0$.

On the other hand, let $\mathcal{A}$ be the unital $C^*$-algebra generated
by $\{s_1,\ldots,s_m\}$. Then $\M_{d'}(\mathbb{C})\otimes\mathcal{A}$ is
an exact $C^*$-algebra, cf.\ \cite[p.\ 27]{Hay20} and the references 
therein. Therefore, \cite[Theorem 9.1]{HT05} and \cite[Proposition 
2.1]{CM14} imply that
\begin{multline*}
	\spc(A_0\otimes\id\otimes\id_{D(N)} +
	{\textstyle \sum_{k=1}^m}
	A_k\otimes s_k\otimes\id_{D(N)} +
	{\textstyle \sum_{k=1}^m} B_k\otimes \id\otimes H^{D(N)}_k)
	\subseteq
\\
	\spc(A_0\otimes\id\otimes\id +
	{\textstyle \sum_{k=1}^m}
	A_k\otimes s_k\otimes\id +
	{\textstyle \sum_{k=1}^m} B_k\otimes \id\otimes s_k)
	+[-\varepsilon,\varepsilon]
\end{multline*}
eventually as $N\to\infty$ a.s.\ for every $\varepsilon>0$. 
Linearization as in Theorem \ref{thm:htlin} yields
\begin{multline*}
	\limsup_{N\to\infty}
	\|p(G^N_1\otimes\id_{D(N)},\ldots,
	G^N_m\otimes\id_{D(N)},
	\id_N\otimes H^{D(N)}_1,\ldots,\id_N\otimes H^{D(N)}_m)\| \\
	\le\|p(s_1\otimes\id,\ldots,s_m\otimes\id,\id\otimes s_1,
	\ldots,\id\otimes s_m)\|\quad\mbox{a.s.}
\end{multline*}
for every noncommutative polynomial $p$. The reverse inequality follows 
from weak asymptotic freeness of $(G^N_k)_{k\le m}$ and 
$(H^{D(N)}_k)_{k\le m}$ and concentration of measure as in the 
analogous part of the proof 
of Theorem \ref{thm:free}.
\end{proof}

\subsection{Further questions}

We conclude this paper by highlighting some basic questions that 
arise from our main results.

\subsubsection{Sharp inequalities}

As was explained in the previous section, there cannot exist a canonical 
inequality of the form \eqref{eq:secondorder} that captures correctly the 
structure of all Gaussian random matrices. However, even if we restrict 
attention to parameters such as $v(X)$, the main results of this paper 
fall slightly short of recovering the previously known results for the 
independent entry model: the logarithmic term $(\log d)^{\frac{3}{2}}$ in 
\eqref{eq:normindep} is slightly worse than the term $\sqrt{\log d}$ in 
\eqref{eq:bvhsharp}.

The power on the logarithm is relevant only for models that are 
right at the threshold where ``intrinsic freeness'' breaks 
down, and is insignificant in most applications. It is nonetheless an 
interesting question whether the results of this paper can be refined 
so that they recover previously known results such as \eqref{eq:bvhsharp}
as a special case. This would be the case, for example, if one could prove 
that
$$
	\EE\|X\| \,\stackrel{?}{\le}\, \|X_{\rm free}\| + Cv(X)\sqrt{\log d}.
$$
Corollary \ref{cor:norm} falls short of such a bound in two ways: it has a 
suboptimal power on the logarithm $(\log d)^{\frac{3}{4}}$, and 
it involves the parameter $\tilde v(X)$ rather than $v(X)$. (Replacing 
$\tilde v(X)$ by $\sup_N w(X_1^N)$, as in Remark 
\ref{rem:tildew}, would not suffice to recover the behavior of the 
independent entry model, cf.\ \cite[\S 3.8]{Tro18}.)

Somewhat surprisingly, however, it turns out that many results of 
this paper are already optimal even for the independent entry model. For 
example, if $X$ is a standard Wigner matrix of dimension $d$, then 
$\sigma(X)=1$ and $v(X) = 2^{1/2}d^{-1/2}$, so that Theorem 
\ref{thm:stieltjes} shows that the matrix Stieltjes transforms 
satisfy 
$$
	\|G(Z)-G_{\rm free}(Z)\| \lesssim d^{-1}\|(\mathrm{Im}\,Z)^{-5}\|.
$$
However, it is shown in \cite[Theorem 4.4]{Sch05} that the 
$d^{-1}$ rate is sharp in this example. Thus the conclusion of Theorem 
\ref{thm:stieltjes} is essentially optimal in this sense, and in 
particular it is impossible 
to replace $\tilde v(X)$ by $v(X)$ in this result. In fact, this 
optimality can be traced back to the most basic ingredient of the proofs 
in this paper: one may readily verify that in the example of a standard 
Wigner matrix 
$$
	\Bigg|\sum_{ij} \ntr[A_iA_jA_iA_j]\Bigg|\asymp \frac{1}{d},
$$
so that the bounds of Lemma \ref{lem:complexint} and Proposition 
\ref{prop:vxwx} are already the best possible.
In view of these examples, it seems likely that the general methods of 
this paper cannot be significantly improved by technical refinements 
alone: our methods show that the entire spectrum of $X$ behaves as that of 
$X_{\rm free}$, and do not enable us to observe a quantitative distinction 
between the bulk and edges of the spectrum.

\subsubsection{Universality}
\label{sec:universality}

Throughout this paper we have been primarily concerned with Gaussian 
random matrices, and our proofs make heavy use of Gaussian analysis. In 
contrast, classical matrix concentration inequalities \cite{Tro15} apply 
to much more general non-Gaussian models $X=\sum_{i=1}^n Z_i$, where $Z_i$ 
are arbitrary independent centered random matrices. It has long been 
known, however, that such non-Gaussian inequalities can be deduced from 
the corresponding Gaussian inequalities \cite{Rud99,Tro16}. The 
idea behind this approach is that a routine symmetrization argument yields
$$
	\EE\Bigg\|\sum_{i=1}^n Z_i\Bigg\|
	\le \sqrt{2\pi}\,\EE\Bigg\|\sum_{i=1}^n g_i Z_i\Bigg\|
$$
where $g_1,\ldots,g_n$ are i.i.d.\ standard real Gaussian variables that 
are independent of $Z_1,\ldots,Z_n$. If one conditions on the 
matrices $Z_i$ on the right-hand side, one is left with a Gaussian random 
matrix. This approach makes it possible to derive non-Gaussian 
inequalities, such as the widely used matrix Bernstein inequality, from 
the Gaussian noncommutative Khintchine inequality.

In a preprint version of this paper, we used the symmetrization approach 
to derive a non-Gaussian inequality from our main results, which
superficially resembles the bound of Theorem \ref{thm:mconcbvh}. 
Unfortunately, however, this approach proves to be unsatisfactory in the 
present setting for several reasons.
\begin{enumerate}[$\bullet$]
\itemsep\abovedisplayskip
\item The symmetrization method necessarily results in the loss of a 
universal constant. It is therefore unable to capture the sharp nature of 
our main results.
\item The symmetrization method can only be applied to 
convex functionals such as the spectral norm. It therefore does not 
provide access to other spectral statistics, such as the support of the 
spectrum or Stieltjes transforms.
\item In our context, symmetrization gives rise to
a term of the form $\max_i \tr[Z_i^2]^{\frac{1}{2}}$ that captures the 
deviation from Gaussianity. In contrast, the analogous quantity that 
arises in classical matrix concentration inequalities is $\max_i \|Z_i\|$, 
which can be much smaller. (This inefficiency arises from the quantity 
$v(X)$ in our bounds, whose definition involves Hilbert-Schmidt norms; cf.\ 
section \ref{sec:introg}.)
\end{enumerate}
Even if one were only interested in the norms of random matrices up to a 
universal constant, the last issue can be a severe limitation in 
applications.

The follow-up work \cite{BV22} resolves these issues by establishing a 
\emph{universality principle}, which yields sharp nonasymptotic bounds on 
the deviation of the spectrum of the non-Gaussian model $X=\sum_{i=1}^n 
Z_i$ from that of the Gaussian random matrix $G$ whose entries have the 
same mean and covariance as those of $X$. This makes it possible to obtain 
direct analogues of the main results of this paper for the independent sum 
model by applying the Gaussian bounds to $G$.

From a broader viewpoint, the results of the present paper and of 
\cite{BV22} suggest that the study of a broad class of random matrices can 
be separated into two largely independent problems: a universality 
principle, which shows that a non-Gaussian and Gaussian model behave 
alike; and the ``intrinsic freeness'' principle of the present paper, 
which relates the spectral properties of the Gaussian model to explicitly 
computable deterministic quantities in free probability theory. It is an 
interesting question whether there are general non-Gaussian models of 
random matrices, beyond the independent sum model, that admit analogous 
universality principles. When combined with the results of this paper, 
such principles would immediately give rise to new kinds of sharp matrix 
concentration inequalities.

\subsubsection{Reverse bounds on the spectrum}

The results of Section \ref{sec:mainstat} yield two-sided bounds on the 
spectral statistics of $X$ in terms of $X_{\rm free}$. In contrast, 
Section \ref{sec:mainspec} only yields one-sided bounds on the support of 
the spectrum: we show that $\spc(X)\subseteq\spc(X_{\rm 
free})+[-\varepsilon,\varepsilon]$ with high probability. When one is 
interested in asymptotics, the latter is usually the difficult 
direction, while the reverse inclusion follows rather easily from weak 
bounds on the spectral statistics. It is not clear, however, how to obtain 
nonasymptotic bounds of the form $\spc(X_{\rm free})\subseteq\spc(X)+ 
[-\varepsilon,\varepsilon]$.

To illustrate where the difficulty lies, let us derive a two-sided bound 
on the spectral norm $\|X\|$ from Theorem \ref{thm:moments}. As $X$ is a 
$d\times d$ matrix, we have 
$$
	d^{-\frac{1}{2p}}\|X\| \le
	(\ntr |X|^{2p})^{\frac{1}{2p}} \le \|X\|
$$
pointwise. Thus Theorem \ref{thm:moments} and Corollary \ref{cor:gconcmtx} 
yield
$$
	\EE\|X\| = (1+o(1)) \,
	({\ntr}\otimes\tau)(|X_{\rm free}|^{2p})^{\frac{1}{2p}}
	\quad\mbox{when}\quad
	\frac{v(X)}{\sigma(X)}\ll p^{-\frac{3}{2}}
	\ll (\log d)^{-\frac{3}{2}}.
$$
However, while $({\ntr}\otimes\tau)(|X_{\rm free}|^{2p})^{\frac{1}{2p}}\le
\|X_{\rm free}\|$ holds trivially, it is not clear how one can 
reverse this bound for $X_{\rm free}$.
Precisely the same issue arises in the proof of Theorem \ref{thm:mainsp}:
obtaining a reverse bound would require a lower bound on the moments of 
the resolvent of $X_{\rm free}$ (cf.\ Lemma \ref{lem:ressingle}).

Resolving this issue would require a quantitative understanding of the 
concentration of the mass of the spectral distribution of $X_{\rm free}$. 
Under restrictive model assumptions (``flatness''), the results of 
\cite{AEK20} provide a detailed study of the regularity of the spectral 
distribution. However, sufficiently precise quantitative bounds that are 
applicable to general random matrix models do not appear to be known to 
date.

\SkipTocEntry\subsection*{Acknowledgments}

M.T.B.\ was supported in part by NSF grant DMS-1856221, and the 
NSF-Simons Collaboration on Theoretical Foundations of Deep Learning. 
R.v.H.\ was supported in part by NSF grants DMS-1811735 and DMS-2054565, 
and the Simons Collaboration on Algorithms \& Geometry. The authors 
thank Benson Au, Tatiana Brailovskaya, Ioana Dumitriu, Antti Knowles, 
Gilles Pisier, Mark Rudelson, Dominik Schr\"oder, Joel Tropp, Nikita 
Zhivotovskiy, and Yizhe Zhu for interesting discussions, and the 
referees for helpful suggestions and feedback.

\bibliographystyle{abbrv}
\SkipTocEntry\bibliography{ref}

\end{document}